\begin{document}
\newtheorem{thm}{Theorem}[section]
\newtheorem{lem}[thm]{Lemma}
\newtheorem{cor}[thm]{Corollary}
\newtheorem{pro}[thm]{Proposition}
\theoremstyle{definition}
\newtheorem{defi}[thm]{Definition}
\newtheorem{ex}[thm]{Example}
\newtheorem{rmk}[thm]{Remark}
\newtheorem{pdef}[thm]{Proposition-Definition}
\newtheorem{condition}[thm]{Condition}
\renewcommand{\labelenumi}{{\rm(\alph{enumi})}}
\renewcommand{\theenumi}{\alph{enumi}}
\baselineskip=14pt

\newcommand {\emptycomment}[1]{} 

\newcommand{\nc}{\newcommand}
\newcommand{\delete}[1]{}

\nc{\todo}[1]{\tred{To do:} #1}

\nc{\tred}[1]{\textcolor{red}{#1}}
\nc{\tblue}[1]{\textcolor{blue}{#1}}
\nc{\tgreen}[1]{\textcolor{green}{#1}}
\nc{\tpurple}[1]{\textcolor{purple}{#1}}
\nc{\tgray}[1]{\textcolor{gray}{#1}}
\nc{\torg}[1]{\textcolor{orange}{#1}}
\nc{\tmag}[1]{\textcolor{magenta}}
\nc{\btred}[1]{\textcolor{red}{\bf #1}}
\nc{\btblue}[1]{\textcolor{blue}{\bf #1}}
\nc{\btgreen}[1]{\textcolor{green}{\bf #1}}
\nc{\btpurple}[1]{\textcolor{purple}{\bf #1}}

    \nc{\mlabel}[1]{\label{#1}}  
    \nc{\mcite}[1]{\cite{#1}}  
    \nc{\mref}[1]{\ref{#1}}  
    \nc{\meqref}[1]{\eqref{#1}}  
    \nc{\mbibitem}[1]{\bibitem{#1}} 

\delete{
    \nc{\mlabel}[1]{\label{#1}  
        { {\small\tgreen{\tt{{\ }(#1)}}}}}
    \nc{\mcite}[1]{\cite{#1}{\small{\tt{{\ }(#1)}}}}  
    \nc{\mref}[1]{\ref{#1}{\small{\tred{\tt{{\ }(#1)}}}}}  
    \nc{\meqref}[1]{\eqref{#1}{{\tt{{\ }(#1)}}}}  
    \nc{\mbibitem}[1]{\bibitem[\bf #1]{#1}} 
}

\nc{\cm}[1]{\textcolor{red}{Chengming:#1}}
\nc{\yy}[1]{\textcolor{blue}{Yanyong: #1}}
\nc{\zy}[1]{\textcolor{yellow}{Zhongyin: #1}}
\nc{\li}[1]{\textcolor{purple}{#1}}
\nc{\lir}[1]{\textcolor{purple}{Li:#1}}


\nc{\tforall}{\ \ \text{for all }}
\nc{\hatot}{\,\widehat{\otimes} \,}
\nc{\complete}{completed\xspace}
\nc{\wdhat}[1]{\widehat{#1}}

\nc{\ts}{\mathfrak{p}}
\nc{\mts}{c_{(i)}\ot d_{(j)}}

\nc{\NA}{{\bf NA}}
\nc{\LA}{{\bf Lie}}
\nc{\CLA}{{\bf CLA}}

\nc{\cybe}{CYBE\xspace}
\nc{\nybe}{NYBE\xspace}
\nc{\ccybe}{CCYBE\xspace}

\nc{\ndend}{pre-Novikov\xspace}
\nc{\calb}{\mathcal{B}}
\nc{\rk}{\mathrm{r}}
\newcommand{\g}{\mathfrak g}
\newcommand{\h}{\mathfrak h}
\newcommand{\pf}{\noindent{$Proof$.}\ }
\newcommand{\frkg}{\mathfrak g}
\newcommand{\frkh}{\mathfrak h}
\newcommand{\Id}{\rm{Id}}
\newcommand{\gl}{\mathfrak {gl}}
\newcommand{\ad}{\mathrm{ad}}
\newcommand{\add}{\frka\frkd}
\newcommand{\frka}{\mathfrak a}
\newcommand{\frkb}{\mathfrak b}
\newcommand{\frkc}{\mathfrak c}
\newcommand{\frkd}{\mathfrak d}
\newcommand {\comment}[1]{{\marginpar{*}\scriptsize\textbf{Comments:} #1}}


\nc{\disp}[1]{\displaystyle{#1}}
\nc{\bin}[2]{ (_{\stackrel{\scs{#1}}{\scs{#2}}})}  
\nc{\binc}[2]{ \left (\!\! \begin{array}{c} \scs{#1}\\
    \scs{#2} \end{array}\!\! \right )}  
\nc{\bincc}[2]{  \left ( {\scs{#1} \atop
    \vspace{-.5cm}\scs{#2}} \right )}  
\nc{\ot}{\otimes}
\nc{\sot}{{\scriptstyle{\ot}}}
\nc{\otm}{\overline{\ot}}
\nc{\ola}[1]{\stackrel{#1}{\la}}

\nc{\scs}[1]{\scriptstyle{#1}} \nc{\mrm}[1]{{\rm #1}}

\nc{\dirlim}{\displaystyle{\lim_{\longrightarrow}}\,}
\nc{\invlim}{\displaystyle{\lim_{\longleftarrow}}\,}

\nc{\bfk}{{\bf k}} \nc{\bfone}{{\bf 1}}
\nc{\rpr}{\circ}
\nc{\dpr}{{\tiny\diamond}}
\nc{\rprpm}{{\rpr}}

\nc{\mmbox}[1]{\mbox{\ #1\ }} \nc{\ann}{\mrm{ann}}
\nc{\Aut}{\mrm{Aut}} \nc{\can}{\mrm{can}}
\nc{\twoalg}{{two-sided algebra}\xspace}
\nc{\colim}{\mrm{colim}}
\nc{\Cont}{\mrm{Cont}} \nc{\rchar}{\mrm{char}}
\nc{\cok}{\mrm{coker}} \nc{\dtf}{{R-{\rm tf}}} \nc{\dtor}{{R-{\rm
tor}}}
\renewcommand{\det}{\mrm{det}}
\nc{\depth}{{\mrm d}}
\nc{\End}{\mrm{End}} \nc{\Ext}{\mrm{Ext}}
\nc{\Fil}{\mrm{Fil}} \nc{\Frob}{\mrm{Frob}} \nc{\Gal}{\mrm{Gal}}
\nc{\GL}{\mrm{GL}} \nc{\Hom}{\mrm{Hom}} \nc{\hsr}{\mrm{H}}
\nc{\hpol}{\mrm{HP}}  \nc{\id}{\mrm{id}} \nc{\im}{\mrm{im}}

\nc{\incl}{\mrm{incl}} \nc{\length}{\mrm{length}}
\nc{\LR}{\mrm{LR}} \nc{\mchar}{\rm char} \nc{\NC}{\mrm{NC}}
\nc{\mpart}{\mrm{part}} \nc{\pl}{\mrm{PL}}
\nc{\ql}{{\QQ_\ell}} \nc{\qp}{{\QQ_p}}
\nc{\rank}{\mrm{rank}} \nc{\rba}{\rm{RBA }} \nc{\rbas}{\rm{RBAs }}
\nc{\rbpl}{\mrm{RBPL}}
\nc{\rbw}{\rm{RBW }} \nc{\rbws}{\rm{RBWs }} \nc{\rcot}{\mrm{cot}}
\nc{\rest}{\rm{controlled}\xspace}
\nc{\rdef}{\mrm{def}} \nc{\rdiv}{{\rm div}} \nc{\rtf}{{\rm tf}}
\nc{\rtor}{{\rm tor}} \nc{\res}{\mrm{res}} \nc{\SL}{\mrm{SL}}
\nc{\Spec}{\mrm{Spec}} \nc{\tor}{\mrm{tor}} \nc{\Tr}{\mrm{Tr}}
\nc{\mtr}{\mrm{sk}}

\nc{\ab}{\mathbf{Ab}} \nc{\Alg}{\mathbf{Alg}}

\nc{\BA}{{\mathbb A}} \nc{\CC}{{\mathbb C}} \nc{\DD}{{\mathbb D}}
\nc{\EE}{{\mathbb E}} \nc{\FF}{{\mathbb F}} \nc{\GG}{{\mathbb G}}
\nc{\HH}{{\mathbb H}} \nc{\LL}{{\mathbb L}} \nc{\NN}{{\mathbb N}}
\nc{\QQ}{{\mathbb Q}} \nc{\RR}{{\mathbb R}} \nc{\BS}{{\mathbb{S}}} \nc{\TT}{{\mathbb T}}
\nc{\VV}{{\mathbb V}} \nc{\ZZ}{{\mathbb Z}}


\nc{\calao}{{\mathcal A}} \nc{\cala}{{\mathcal A}}
\nc{\calc}{{\mathcal C}} \nc{\cald}{{\mathcal D}}
\nc{\cale}{{\mathcal E}} \nc{\calf}{{\mathcal F}}
\nc{\calfr}{{{\mathcal F}^{\,r}}} \nc{\calfo}{{\mathcal F}^0}
\nc{\calfro}{{\mathcal F}^{\,r,0}} \nc{\oF}{\overline{F}}
\nc{\calg}{{\mathcal G}} \nc{\calh}{{\mathcal H}}
\nc{\cali}{{\mathcal I}} \nc{\calj}{{\mathcal J}}
\nc{\call}{{\mathcal L}} \nc{\calm}{{\mathcal M}}
\nc{\caln}{{\mathcal N}} \nc{\calo}{{\mathcal O}}
\nc{\calp}{{\mathcal P}} \nc{\calq}{{\mathcal Q}} \nc{\calr}{{\mathcal R}}
\nc{\calt}{{\mathcal T}} \nc{\caltr}{{\mathcal T}^{\,r}}
\nc{\calu}{{\mathcal U}} \nc{\calv}{{\mathcal V}}
\nc{\calw}{{\mathcal W}} \nc{\calx}{{\mathcal X}}
\nc{\CA}{\mathcal{A}}

\nc{\fraka}{{\mathfrak a}} \nc{\frakB}{{\mathfrak B}}
\nc{\frakb}{{\mathfrak b}} \nc{\frakd}{{\mathfrak d}}
\nc{\oD}{\overline{D}}
\nc{\frakF}{{\mathfrak F}} \nc{\frakg}{{\mathfrak g}}
\nc{\frakm}{{\mathfrak m}} \nc{\frakM}{{\mathfrak M}}
\nc{\frakMo}{{\mathfrak M}^0} \nc{\frakp}{{\mathfrak p}}
\nc{\frakS}{{\mathfrak S}} \nc{\frakSo}{{\mathfrak S}^0}
\nc{\fraks}{{\mathfrak s}} \nc{\os}{\overline{\fraks}}
\nc{\frakT}{{\mathfrak T}}
\nc{\oT}{\overline{T}}
\nc{\frakX}{{\mathfrak X}} \nc{\frakXo}{{\mathfrak X}^0}
\nc{\frakx}{{\mathbf x}}
\nc{\frakTx}{\frakT}      
\nc{\frakTa}{\frakT^a}        
\nc{\frakTxo}{\frakTx^0}   
\nc{\caltao}{\calt^{a,0}}   
\nc{\ox}{\overline{\frakx}} \nc{\fraky}{{\mathfrak y}}
\nc{\frakz}{{\mathfrak z}} \nc{\oX}{\overline{X}}

\font\cyr=wncyr10


\title[Leibniz conformal bialgebras]{Leibniz conformal bialgebras and the classical Leibniz conformal Yang-Baxter equation}

\author{Zhongyin Xu}
\address{Chern Institute of Mathematics \& LPMC, Nankai University,
Tianjin, 300071, China}
\email{zy\_xu@mail.nankai.edu.cn}

\author{Chengming Bai}
\address{Chern Institute of Mathematics \& LPMC, Nankai University, Tianjin 300071, PR China}
\email{baicm@nankai.edu.cn}

\author{Yanyong Hong}
\address{School of Mathematics, Hangzhou Normal University,
Hangzhou, 311121, China}
\email{yyhong@hznu.edu.cn}

\subjclass[2010]{17A32, 17A60, 17B38, 17B69}
\keywords{Leibniz conformal bialgebra, Leibniz-dendriform conformal algebra, classical Leibniz conformal Yang-Baxter equation, Novikov dialgebra, Novikov bi-dialgebra, Rota-Baxter operator}

\begin{abstract}

We introduce the notion of Leibniz conformal bialgebras,
presenting a bialgebra theory for Leibniz conformal algebras as
well as the conformal analogues of Leibniz bialgebras. They are
equivalently characterized in terms of matched pairs and conformal
Manin triples of Leibniz conformal algebras. In the coboundary
case, the classical Leibniz conformal Yang-Baxter equation is
introduced, whose symmetric solutions give Leibniz conformal
bialgebras. Moreover, such solutions are constructed from
$\mathcal{O}$-operators on Leibniz conformal algebras and
Leibniz-dendriform conformal algebras. On the other hand, the
notion of Novikov bi-dialgebras is introduced, which correspond to
a class of Leibniz conformal bialgebras,  lifting the
correspondence between Novikov dialgebras and a class of Leibniz
conformal algebras to the context of bialgebras. In addition, we
introduce the notion of classical duplicate Novikov Yang-Baxter
equation whose symmetric solutions produce Novikov bi-dialgebras
and thus Leibniz conformal bialgebras.

\end{abstract}

\maketitle
\tableofcontents

\section{Introduction}
The notion of Lie conformal algebras was introduced by Kac in \cite{K1,K2} as a formal language describing the algebraic properties of the operator product expansions of chiral fields in two-dimensional conformal field theory.  Moreover, Lie conformal algebras are closely related with infinite-dimensional Lie algebras satisfying the locality property \cite{K3}, local Poisson brackets in the theory of integrable evolution equations \cite{BDK} and so on. The structure theory \cite{DK}, representation theory \cite{CK,CK1} and cohomology theory \cite{BKV,DKV} of finite Lie conformal algebras have been studied and well developed.

Leibniz conformal algebras introduced in \cite{BKV} are the
conformal analogues of Leibniz algebras which were first
discovered by Bloh and named D-algebras \cite{B} and then
rediscovered by Loday \delete{and named it  Leibniz algebras}
\cite{Lo,LP}. Leibniz conformal algebras are non-commutative
generalizations of Lie conformal algebras and have close
connections to field algebras \cite{BK}. 
From the perspective of algebraic structures, a Leibniz conformal
algebra is a Leibniz pseudoalgebra \cite{W} which is a Leibniz
algebra in the pseudo-tensor category \cite{BDK1}. Thus Leibniz
conformal algebras can also be seen as generalizations of Leibniz
algebras. Recently, Leibniz conformal algebras have been widely
studied, for example, quadratic Leibniz conformal algebras and
their central extensions \cite{ZH}, the cohomology theory of
Leibniz conformal algebras \cite{Z} and so on. For more details,
see \cite{GW,DS,HY,W1,W2}.
It is well known that Lie bialgebras \delete{and Leibniz
bialgebras} have important applications in both mathematics and
mathematical physics \cite{CP,D}. A natural idea is to study
``conformal analogues" of bialgebras.
Liberati developed the theory of Lie conformal bialgebras in
\cite{L} as the conformal analogues of Lie bialgebras, which
includes the introduction of the notions of conformal classical
Yang-Baxter equation, conformal Manin triples and conformal
Drinfeld's doubles. The operator forms of conformal classical
Yang-Baxter equation were investigated in \cite{HB1}. Similarly,
the theories of antisymmetric infinitesimal conformal bialgebras
and left-symmetric conformal bialgebras were developed in
\cite{HB,HL}.
However, there is no conformal analogue of ``Leibniz bialgebras"
as far as we know. Thus the purpose of this paper is to
investigate a bialgebra theory for Leibniz conformal algebras,
namely, Leibniz conformal bialgebras, giving the conformal
analogues of Leibniz bialgebras. That is, the following diagram is
commutative.
\begin{equation}\label{diag2}
\xymatrix@C=1.5cm{
  & \txt{   Leibniz algebras } \ar[d]^{bialgebras} \ar[rr]^{conformal~~ analogues\qquad}  &&\txt{ Leibniz conformal algebras} \ar[d]^{bialgebras}         \\
  & \txt{ Leibniz bialgebras} \ar[rr]^{conformal~~ analogues\qquad} && \txt{Leibniz conformal bialgebras}               }
\end{equation}
It is worth mentioning that \cite{TS}, \cite{RS} and \cite{BB}
have defined three different kinds of Leibniz bialgebras
respectively and our main idea is to extend the study of Leibniz
bialgebras given in \cite{TS}.

In order to characterize Leibniz conformal bialgebras, we
introduce the notion of matched pairs of Leibniz conformal
algebras and conformal Manin triples of Leibniz conformal
algebras. That is, under some conditions, Leibniz conformal
bialgebras are equivalent to some matched pairs of Leibniz
conformal algebras as well as conformal Manin triples of Leibniz
conformal algebras. In the study of a special class of Leibniz
conformal bialgebras called coboundary Leibniz conformal
bialgebras,  we introduce the notion of classical Leibniz
conformal Yang-Baxter equation (CLCYBE), as a conformal analogue
of the classical  Leibniz Yang-Baxter equation. Notably, its
symmetric solutions directly give Leibniz conformal bialgebras. In
addition, the CLCYBE is interpreted in terms of its operator forms
by $\mathcal{O}$-operators on Leibniz conformal algebras. In
particular, a symmetric solution of the CLCYBE corresponds to the
symmetric part of a conformal linear map $T$, where
$T_0=T_\lambda|_{\lambda=0}$ is an $\mathcal{O}$-operator in the
conformal sense, which implies that an $\mathcal{O}$-operator on a
Leibniz conformal algebra gives a symmetric solution of the CLCYBE
in a semi-direct product Leibniz conformal algebra. Furthermore,
we introduce the notion of Leibniz-dendriform conformal algebras
and show that for a Leibniz-dendriform conformal algebra which is
finite and free as a ${\bf k}[\partial]$-module, there is a
natural $\mathcal{O}$-operator on the associated Leibniz conformal
algebra. Therefore there are constructions of symmetric solutions
of the CLCYBE and hence Leibniz conformal bialgebras from
$\mathcal{O}$-operators on Leibniz conformal algebras as well as
Leibniz-dendriform conformal algebras. In summary, they are
depicted in the diagram
\begin{equation*}
    {\tiny\xymatrix@C=0.8cm@R=0.4cm{
            \text{\tiny Leibniz-dendriform}\atop\text{\tiny conformal algebras} \ar[r] &
            \parbox{2.5cm}{\tiny\centering $\mathcal{O}$-operators on\\ Leibniz conformal\\ algebras} \ar[r] &
            \text{\tiny solutions of}\atop \text{\tiny CLCYBE} \ar[r] &
            \text{\tiny Leibniz conformal}\atop \text{\tiny bialgebras} \ar@{<->}[r] &
            \parbox{3.5cm}{\tiny\centering conformal Manin triples \\of Leibniz conformal\\ algebras}
    }}
    \label{eq:bigliediag}
\end{equation*}

\delete{In this paper, we utilize a non-degenerate skew-symmetric invariant conformal bilinear form to introduce the concepts of conformal dual representations of Leibniz conformal algebras and to define the conformal Manin triples of  Leibniz conformal algebras. 
 Subsequently, we define the notion of Leibniz conformal bialgebras and prove that Leibniz conformal bialgebras are equivalent to  some matched pairs of Leibniz conformal algebras as well as conformal Manin triples of Leibniz conformal algebras. Additionally, by \eqref{colcnd} and the dual representation of Leibniz conformal algebras, we obtain the dual representation of Novikov dialgebras, and then define the notion of Novikov bi-dialgebras. Furthermore, we show that Novikov bi-dialgebras  correspond to a class of Leibniz conformal bialgebras.}
\delete{ As in the  case of Leibniz conformal bialgebras, the
definition of coboundary Leibniz conformal bialgebras is
introduced and its study is also significative. It leads to the
formulation of the classical Leibniz conformal Yang-Baxter
equation, or the CLCYBE in short, as a conformal analogue of the
classical  Leibniz Yang-Baxter equation. Notably, its symmetric
solutions directly give Leibniz conformal bialgebras. In addition,
the CLCYBE is interpreted in terms of its operator forms by
introducing the notion of $\mathcal{O}$-operators of Leibniz
conformal algebras. Particularly, a symmetric solution of the
CLCYBE corresponds to the symmetric part of a conformal linear map
$T$, where $T_0=T_\lambda|_{\lambda=0}$ is an
$\mathcal{O}$-operator in the conformal sense, which implies that
an $\mathcal{O}$-operator on a Leibniz conformal algebra gives a
symmetric solution of the CLCYBE in a semi-direct product Leibniz
conformal algebra. Furthermore, we introduce the notion of
Leibniz-dendriform conformal algebras and show that for a
Leibniz-dendriform conformal algebra which is finite and free as a
${\bf k}[\partial]$-module, there is a natural
$\mathcal{O}$-operator on the associated with   Leibniz conformal
algebra. Therefore there are constructions of symmetric solutions
of the CLCYBE and hence triangular conformal bialgebras from
$\mathcal{O}$-operators of Leibniz conformal algebras as well as
Leibniz-dendriform conformal algebras.}

On the other hand, based on the result on \cite{ZH}, we show that
there is the following correspondence between Novikov dialgebras
and a class of Leibniz conformal algebras.
\begin{equation}\label{colcnd}
 \text{ A class of Leibniz conformal algebras} \xleftrightarrow{ \text{Proposition} ~\ref{lcnd}}  \text {Novikov dialgebras }
\end{equation}
This fact generalizes the known correspondence between Novikov
algebras and a class of Lie conformal algebras \cite{Xu}.
\begin{equation}\label{colcnd2}
 \text{ A class of Lie conformal algebras} \longleftrightarrow  \text {Novikov algebras }
\end{equation}
Note that the notion of Novikov dialgebras was introduced in
\cite{KS} and perm algebras with derivations can naturally produce
Novikov dialgebras \cite{KS}. Moreover, it is worth mentioning
that the correspondence given in (\ref{colcnd2}) was lifted to the
context of bialgebras in \cite{HBG}, which shows that Novikov
bialgebras can produce Lie conformal bialgebras. Hence it is
natural to introduce the notion of Novikov bi-dialgebras and show
that there is a correspondence between Novikov bi-dialgebras and a
class of Leibniz conformal bialgebras, lifting the correspondence
given in (\ref{colcnd}) to the context of bialgebras. Moreover, we
introduce the notion of classical duplicate Novikov Yang-Baxter
equation (CDNYBE), whose symmetric solutions produce Novikov
bi-dialgebras and show that symmetric solutions of the CDNYBE
correspond to a class of solutions of the CLCYBE.

 \delete{Novikov dialgebras,  
introduced in \cite{KS}, are  algebraic structures equipped with two operations. As shown in \cite{ZH}, a correspondence exists between Novikov dialgebras and a special class of Leibniz conformal algebras as follows.
\begin{equation}\label{colcnd}
 \text{ a class of Leibniz conformal algebras} \xleftrightarrow{ \text{Proposition} ~\ref{lcnd}~~}  \text {Novikov dialgebras }
\end{equation}
Meanwhile, it is hard to give the dual representation of this type of dialgebras equipped with two operations directly. It is worth mentioning that the dual representation of Gel'fand-Dorfman algebras can be given from the dual representation of Lie conformal algebras in \cite{HBG}. The diagram \eqref{colcnd} gives us a natural idea, that is, through this correspondence, the dual representation of Novikov dialgebras may be obtained from the dual representation of Leibniz conformal algebras. Then this leads us to lift \eqref{colcnd} to the level of bialgebras
by developing the bialgebra theories of Leibniz conformal algebras and Novikov dialgebras.}

\delete{We introduce the notion of classical duplicate Novikov classical Yang-Baxter equation, or CDNCYBE in short, for which symmetric solutions can be given as Novikov bi-dialgebras and show that symmetric solutions of the CDNCYBE correspond to a class of solutions of the CLCYBE. Moreover, through our study of the connection between Leibniz conformal algebras and Novikov dialgebras in Diagram \eqref{colcnd} we obtain the following commutative diagram.
\begin{equation}\label{diag2}
\xymatrix@C=1.5cm{
  & \txt{   solutions of \\the CDNCYBE } \ar[d]^{\text{Prop.} \ref{relcdn}} \ar[r]^{\quad \text{Coro.} \ref{condba}}  &\txt{ Novikov\\ bi-dialgebras} \ar[d]^{\text{Th.}\ref{thnblb}}         \\
  & \txt{ solutions of \\the LCYBE} \ar[u]\ar[r]^{\quad \text{Coro.} \ref{colcba}} & \txt{Leibniz \\ conformal bialgebras}\ar[u]                }
\end{equation}
}
\delete{\begin{equation}
\begin{tikzcd}
    \begin{array}{c} \text{solutions of} \\ \text{the CDNCYBE} \end{array} & {\text{NBDAs}} & \begin{array}{c} \text{Matched pairs of} \\ \text{NDAs} \end{array} \\
    \begin{array}{c} \text{solutions of} \\\text{the CLCYBE}  \end{array} & \begin{array}{c} \text{Leibniz conformal} \\ \text{bialgebras} \end{array} & \begin{array}{c} \text{conformal matched}\\\text{ pairs of Leibniz}\\\text{ conformal algebras}  \end{array}
    \arrow["{Coro. \ref{condba}}", from=1-1, to=1-2]
    \arrow["{Prop. \ref{relcdn}}", tail reversed, from=1-1, to=2-1]
    \arrow[tail reversed, from=1-2, to=1-3]
    \arrow["{Th.\ref{thnblb}}", tail reversed, from=1-2, to=2-2]
    \arrow["{Prop.\ref{ppmp2}}", tail reversed, from=1-3, to=2-3]
    \arrow["{Coro. \ref{colcba}}", from=2-1, to=2-2]
    \arrow["{Prop. \ref{pmb}}", tail reversed, from=2-2, to=2-3]
\end{tikzcd}
\end{equation}}

This paper is organized as follows.
 In Section \ref{2}, we recall the definitions of Leibniz conformal algebras and their representations, and show that there is a correspondence between Novikov dialgebras and a class of Leibniz conformal algebras.
  Furthermore, we introduce the definitions of matched pairs of Leibniz conformal algebras and conformal Manin triples of Leibniz conformal algebras, and show
  that under some conditions, a special matched pair of Leibniz conformal algebras is equivalent to a  conformal Manin triple of Leibniz conformal algebras.
In Section \ref{3}, the notion of Leibniz conformal bialgebras is
introduced and we show that under some conditions, there is an
equivalence between conformal Manin  triples of Leibniz conformal
algebras and Leibniz conformal bialgebras. In Section \ref{4}, the
notion of CLCYBE is introduced in the study of a class of Leibniz
conformal bialgebras called coboundary Leibniz conformal
bialgebras.  In addition, we introduce the definitions of
$\mathcal{O}$-operators on Leibniz conformal algebras and
Leibniz-dendriform conformal algebras to construct (symmetric)
solutions of the CLCYBE and hence give rise to Leibniz conformal
bialgebras. In Section \ref{5},  we define Novikov bi-dialgebras
and show that there is a correspondence between Novikov
bi-dialgebras and a class of Leibniz conformal bialgebras.
Furthermore, we introduce the notion of CDNYBE and show that its
symmetric solutions can produce Novikov bi-dialgebras and thus
Leibniz conformal bialgebras.

Throughout this paper, let  $\bf k$ be a field of characteristic zero. All tensors over ${\bf k}$ are denoted by $\otimes$. We denote the identity map by $\id$. Moreover, if $A$ is a vector space, then we denote the space of polynomials of $\lambda$ with coefficients in $A$ by $A[\lambda]$. For a vector space or a ${\bf k}[\partial]$-module $A$, let
\begin{equation*}
\tau:A\otimes A\rightarrow A\otimes A,\qquad a\otimes b\mapsto b\otimes a\;\;\;\;\text{for all $ a,b\in A$,}
\end{equation*}
be the flip operator. Then we denote $\tau_{12}=\tau\otimes \id$, $\tau_{23}=\id \otimes \tau$, $\tau_{13}=(\tau\otimes \id)(\id\otimes \tau)(\tau\otimes \id)$.

\delete{\cm{Since we do not use ``$\forall$" in the equations, I do not
know whether we use ``for all $a\in A$" or ``for $a\in A$".
Anyway, we should unify the notations.}}

\section{Preliminaries on Leibniz conformal algebras}\label{2}
We recall some facts about Leibniz conformal algebras and show
that there is a correspondence between Novikov dialgebras and a
class of Leibniz conformal algebras. Moreover, we introduce the
notions of dual representations, matched pairs and conformal Manin
triples of Leibniz conformal algebras and show that under some
conditions, some matched pair of Leibniz conformal algebras is
equivalent to a conformal Manin triple of Leibniz conformal
algebras.
\subsection{Leibniz conformal algebras and Novikov dialgebras}
\begin{defi}\cite{BK} \label{lca}
A {\bf Leibniz conformal algebra} $Q$ is a ${\bf k}[\partial]$-module with a $\lambda$-bracket $[\cdot_\lambda\cdot]$ which is a
${\bf k}$-bilinear map from $Q\times Q\rightarrow Q[\lambda]$, satisfying
\begin{align}
&[\partial a_\lambda b]=-\lambda[a_\lambda b],   \quad\quad\quad\quad   [ a_\lambda \partial b]=(\lambda+\partial)[a_\lambda b],\tag{conformal sesquilinearity} \label{cs}\\
&[a_\lambda[b_\mu c]]=[[a_\lambda b]_{\lambda+\mu}c]+[b_\mu[a_\lambda c]]  \tag{Jacobi identity}   \label{li} \;\;\;\text{for all $a,b,c\in Q$.}
\end{align}

A  Leibniz conformal algebra $Q$ is called {\bf finite}, if it is  finitely generated as a ${\bf k}[\partial]$-module. Otherwise, we call it {\bf infinite}.
\end{defi}
\begin{rmk}
Recall \cite{K1} that a {\bf Lie conformal algebra} $Q$ is a Leibniz conformal algebra satisfying 
\begin{equation}
[a_\lambda b]=-[b_{-\lambda-\partial} a]\tag{skew-symmetry}\;\;\;\text{for all $a, b\in Q$.}
\end{equation}
Thus Lie conformal algebras are Leibniz conformal algebras.
\end{rmk}

\delete{\begin{rmk}
In fact, \eqref{li} is also called the left Leibniz identity. Similarly, there is a right Leibniz identity:
\begin{equation*}
[a_\lambda[b_\mu c]]=[[a_\lambda b]_{\lambda+\mu}c]-[[a_\lambda c]_{-\mu-\partial}b],
\end{equation*}
for all $ a,b,c\in A$. If a  ${\bf k}[\partial]$-module $A$ with a $\lambda$-bracket $[\cdot_\lambda\cdot]$ satisfying conformal sesquilinearity and right Leibniz identity, then $A$ is called the right  Leibniz conformal algebra. the left Leibniz conformal algebra is usually called the Leibniz conformal algebra.\par
On the other hand, any Leibniz conformal algebra $A$ can be endowed with a right LCA structure   $(A,[\cdot_\lambda \cdot])'$ by $[a_\lambda b]'=-[b_{-\lambda-\partial}a]$ for all $a, b\in A$ and vice versa. 
\end{rmk}}

Recall \cite{Lo} that a {\bf Leibniz algebra} $(A,[\cdot,\cdot])$ is a vector space $A$ with a bilinear operation $[\cdot,\cdot]: A\times A\rightarrow A$, satisfying
\begin{equation*}
	[a,[b,c]]=[[a,b],c]+[b,[a,c]]\qquad \text{for all $a, b,c\in A$.}
\end{equation*}
\begin{ex}
Let $(A, [\cdot, \cdot])$ be a Leibniz algebra. Then we can
naturally define a Leibniz conformal algebra structure on
$\text{Cur}~A={\bf k}[\partial]\otimes A$ with the
$\lambda$-bracket:
\begin{equation*}
[a_\lambda b]=[a,b]\;\;\;\text{for all $a, b\in A$.}
\end{equation*}
$\text{Cur}~A$ is called the {\bf current Leibniz conformal algebra} associated with $(A, [\cdot,\cdot])$.
\end{ex}
\begin{pro}\cite{HY} \label{ll}
Let $Q={\bf k}[\partial] L$ be a Leibniz conformal algebra which is a free ${\bf k}[\partial]$-module of rank 1. Then $Q$ is either abelian or isomorphic to the Virasoro Lie conformal algebra, that is, $[L_\lambda L]=(\partial+2\lambda)L$.
\end{pro}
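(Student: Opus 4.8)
The plan is to reduce the whole statement to a single polynomial identity and then analyse it. Since $Q=\bfk[\partial]L$ is free of rank one, the $\lambda$-bracket is completely determined by $[L_\lambda L]$, which must have the form $[L_\lambda L]=f(\partial,\lambda)L$ for a unique $f\in\bfk[\partial,\lambda]$. Conformal sesquilinearity then gives $[p(\partial)L_\lambda q(\partial)L]=p(-\lambda)\,q(\lambda+\partial)\,f(\partial,\lambda)L$ for all $p,q\in\bfk[\partial]$, so the entire structure is encoded in $f$. Because the bracket axioms are compatible with $\bfk[\partial]$-sesquilinearity in each slot, it suffices to impose the Jacobi identity on the generator, i.e. for $a=b=c=L$; a direct computation of the three terms turns this into the polynomial identity
\[
f(\lambda+\partial,\mu)\,f(\partial,\lambda)=f(-\lambda-\mu,\lambda)\,f(\partial,\lambda+\mu)+f(\mu+\partial,\lambda)\,f(\partial,\mu)
\]
in $\bfk[\partial,\lambda,\mu]$. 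If $f=0$ then $Q$ is abelian; from now on I assume $f\neq0$ and aim to prove $f(\partial,\lambda)=c(\partial+2\lambda)$ with $c\in\bfk^{\times}$.

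The first and cleanest reduction uses that $\bfk[\partial,\lambda,\mu]$ is an integral domain. Setting $\mu=\lambda$ makes the last term on the right coincide with the left-hand side; after cancellation one is left with $f(-2\lambda,\lambda)\,f(\partial,2\lambda)=0$. Since $f\neq0$ forces $f(\partial,2\lambda)\neq0$, this yields $f(-2\lambda,\lambda)=0$ as a polynomial in $\lambda$, that is, $(\partial+2\lambda)\mid f$.

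The technical heart, and the step I expect to be the main obstacle, is bounding the degree. I would expand the functional equation in powers of $\mu$, treating $\partial,\lambda$ as parameters, and write $f=\sum_i P_i(\lambda)\partial^i$ with $d:=\deg_\partial f$ and $P_d\neq0$. Whenever $d\geq1$ the left-hand side has strictly smaller $\mu$-degree than the two terms on the right, so their top $\mu$-coefficients must cancel; this cancellation reads $\big((-1)^d+1\big)P_d(\lambda)Q(\partial)=0$ for a nonzero $Q$, forcing $d$ to be odd. When $d\geq3$ the left-hand side still fails to reach the next $\mu$-degree, and matching that next coefficient produces a polynomial relation whose coefficient of $\partial$ equals $d\,P_d(\lambda)$ and must vanish, which is impossible. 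Hence $d=1$, so $f=P_1(\lambda)\partial+P_0(\lambda)$; combined with $(\partial+2\lambda)\mid f$ this gives $P_0=2\lambda P_1$, i.e. $f=P(\lambda)(\partial+2\lambda)$ with $P:=P_1$.

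Finally I would substitute $f=P(\lambda)(\partial+2\lambda)$ back into the functional equation. Using the numerical identity
\[
(\lambda+\partial+2\mu)(\partial+2\lambda)-(\mu+\partial+2\lambda)(\partial+2\mu)=(\lambda-\mu)(\partial+2\lambda+2\mu),
\]
everything reduces, after cancelling the common nonzero factor $P(\lambda)(\lambda-\mu)(\partial+2\lambda+2\mu)$, to $P(\mu)=P(\lambda+\mu)$, which forces $P$ to be a constant $c$. Thus $f(\partial,\lambda)=c(\partial+2\lambda)$. If $c=0$ then $Q$ is abelian. If $c\neq0$, the $\bfk[\partial]$-linear map sending $L$ to $c$ times the Virasoro generator is a bijective homomorphism of Leibniz conformal algebras, since under it both sides of the bracket scale by $c^2$; hence $Q$ is isomorphic to the Virasoro Lie conformal algebra, completing the dichotomy.
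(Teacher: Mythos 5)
Your proposal is correct, and I verified its computations in detail: the functional equation $f(\lambda+\partial,\mu)f(\partial,\lambda)=f(-\lambda-\mu,\lambda)f(\partial,\lambda+\mu)+f(\mu+\partial,\lambda)f(\partial,\mu)$ is the right encoding of the Jacobi identity on the generator (and the reduction to the generator is legitimate, since each term of the Jacobi expression transforms by the same factor under $a\mapsto\partial a$, $b\mapsto\partial b$, $c\mapsto\partial c$); the $\mu=\lambda$ specialization does yield $f(-2\lambda,\lambda)f(\partial,2\lambda)=0$, hence $(\partial+2\lambda)\mid f$ in characteristic zero; with $d=\deg_\partial f$ and $e$ the degree of $f$ in its second variable, the coefficient of $\mu^{d+e}$ gives $((-1)^d+1)P_d(\lambda)Q(\partial)=0$, and for $d\geq 2$ the coefficient of $\mu^{d+e-1}$ gives $Q(\partial)\bigl(d\,\partial\,P_d(\lambda)-(d+e)\lambda P_d(\lambda)+2P_{d-1}(\lambda)\bigr)=0$, whose $\partial$-coefficient $dP_d(\lambda)$ forces a contradiction; and the final substitution works via your identity $(\lambda+\partial+2\mu)(\partial+2\lambda)-(\mu+\partial+2\lambda)(\partial+2\mu)=(\lambda-\mu)(\partial+2\lambda+2\mu)$, leaving $P(\mu)=P(\lambda+\mu)$, so $P$ is constant. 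Note there is nothing in the paper to compare against: the authors import this proposition from \cite{HY} without proof, so your argument is a genuinely self-contained replacement, in the spirit of the classical D'Andrea--Kac classification of rank-one Lie conformal algebras, with the interesting feature that skew-symmetry is not assumed but emerges (indeed $f=c(\partial+2\lambda)$ satisfies $f(\partial,\lambda)=-f(\partial,-\lambda-\partial)$, so every nonabelian rank-one Leibniz conformal algebra is automatically Lie). Two cosmetic points: you should say explicitly that $d=0$ is excluded because $(\partial+2\lambda)\mid f$ with $f\neq 0$ forces $\deg_\partial f\geq 1$ (your text jumps from ruling out $d\geq 3$ to ``hence $d=1$''), and the sentence ``if $c=0$ then $Q$ is abelian'' is redundant since you are already in the case $f\neq 0$; neither affects correctness.
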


Next, we recall the definition of Novikov dialgebras.

\begin{defi}\cite{KS}
Let $A$ be a vector space with two binary operations $\vdash$ and $\dashv$. If for all $a$, $b$, $c\in A$, they satisfy the following equalities
\begin{eqnarray*}
&&(a\vdash b)\dashv c=(a\vdash c)\vdash b=(a\dashv c)\vdash b,\;\;a\dashv(b\dashv c)=a\dashv(b\vdash c),\\
&&(a \dashv b)\dashv c=(a\dashv c)\dashv b,\;\;(a\dashv b)\dashv c-a\dashv (b\dashv c)=(b\vdash a)\dashv c-b\vdash (a\dashv c),\\
&&(a\vdash b)\vdash c-a\vdash (b\vdash c)=(b\vdash a)\vdash c-b\vdash (a\vdash c),
\end{eqnarray*}
then $(A,\dashv,\vdash)$ is called a {\bf(left) Novikov dialgebra}.

\end{defi}
\begin{rmk}\label{rmk-ND1}
There is also the definition of right Novikov dialgebras. A {\bf right Novikov dialgebra} is a triple $(A, \dashv, \vdash)$ where $A$ is a vector space with two binary operations satisfying
\begin{eqnarray*}
&&a \vdash (b \vdash c)=b \vdash (a\vdash c),~~a \vdash (b \dashv c)=b \dashv (a \dashv c),\;\;(a \vdash b-a \dashv b)\vdash c=0,\\
&&a \dashv (b\vdash c-b\dashv c)=0,\;\;a  \vdash (b  \vdash c-c \dashv b)=(a  \vdash b)  \vdash c-(a \vdash c) \dashv b,\\
&&a \dashv (b\vdash c-c\dashv b)=(a\dashv  b)\dashv  c-(a\dashv c)\dashv b\;\;\;\text{for all $a, b, c\in A$.}
\end{eqnarray*}
Note that such algebra structures appear in the study of quadratic
Leibniz conformal algebras  \cite{ZH}.
Moreover, it is easy to see that  $(A, \dashv, \vdash)$ is a
Novikov dialgebra if and only if  $(A, \dashv', \vdash')$ is a
right Novikov dialgebra where $ \vdash'$ and $\dashv'$ are defined
by
\begin{eqnarray*}
a\dashv' b:=b\vdash a,\;\;a\vdash' b:= b\dashv a\;\;\;\text{for all $a,b \in A$.}
\end{eqnarray*}
\end{rmk}
\delete{We can find that the quadratic Leibniz conformal algebra corresponds to a quadruple in \cite{ZH}, if $[\cdot,\cdot]$ is trivial and we set $\dashv'$ and $\vdash'$ by $a\dashv' b=b\vdash a $ and $a\vdash' b= b\dashv a$, then $(A,\dashv',\vdash')$ is a NDA, and we call $(A,\dashv,\vdash)$ a right NDA. According to Theorem 3.1 in \cite{ZH}, the following proposition can be obtained.}

Recall that a {\bf perm algebra} is a pair $(A,\cdot)$, where $A$
is a vector space, $\cdot$ is a binary operation and the following
equalities hold.
\begin{equation}
(a\cdot b)\cdot c=a\cdot (b\cdot c)=b\cdot (a\cdot c)\;\;\;\text{for all $a,b,c\in
A$}.
\end{equation}

\begin{lem} {\rm \cite{KS}} \label{lem:perm}
Let $(A,\cdot)$ be a perm algebra and $D$ be a derivation, that
is, $D$ satisfies $D(a\cdot b)=D(a)\cdot b+a\cdot D(b)$ for all
$a,b\in A$. Define two binary operations $\vdash$ and $\dashv$ by
\begin{equation}\label{eq:perm}
a\vdash b:=a\cdot D(b),\;\; a\dashv b: =D(b)\cdot a\;\;\;\text{for all $a,b\in A$.}
\end{equation}
Then $(A,\dashv,\vdash)$ is a Novikov dialgebra.
\end{lem}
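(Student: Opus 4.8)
The plan is to verify directly the five families of defining identities of a Novikov dialgebra, substituting everywhere the definitions $a\vdash b=a\cdot D(b)$ and $a\dashv b=D(b)\cdot a$ from \eqref{eq:perm}. The only two tools I need are the perm identity $(a\cdot b)\cdot c=a\cdot(b\cdot c)=b\cdot(a\cdot c)$ and the derivation rule $D(a\cdot b)=D(a)\cdot b+a\cdot D(b)$. The first thing I would record is a normal-form observation: in a perm algebra every fully bracketed triple product is independent of the bracketing and symmetric in its first two factors, since $(a\cdot b)\cdot c$, $a\cdot(b\cdot c)$ and $b\cdot(a\cdot c)$ all coincide; thus a triple product is determined by the unordered pair $\{a,b\}$ of its left factors together with its right factor $c$. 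This single fact lets me associate freely and swap the two left entries in any expression that arises.

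This observation settles the three ``associative-type'' identities at once. After substitution each term becomes a triple product in $a,b,c$ and their $D$-images, and the perm symmetry identifies the sides. For instance $(a\vdash b)\dashv c=D(c)\cdot(a\cdot D(b))=a\cdot(D(c)\cdot D(b))$, while $(a\vdash c)\vdash b=(a\cdot D(c))\cdot D(b)=a\cdot(D(c)\cdot D(b))$, and $(a\dashv c)\vdash b=(D(c)\cdot a)\cdot D(b)=a\cdot(D(c)\cdot D(b))$ as well, so the first chain of equalities holds. The $\dashv$-symmetry identity $(a\dashv b)\dashv c=(a\dashv c)\dashv b$ reduces in the same way to the common value $D(b)\cdot(D(c)\cdot a)$, and the identity $a\dashv(b\dashv c)=a\dashv(b\vdash c)$ follows after expanding $D$ of a product and matching the two resulting summands termwise via perm symmetry.

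The remaining two identities, namely $(a\dashv b)\dashv c-a\dashv(b\dashv c)=(b\vdash a)\dashv c-b\vdash(a\dashv c)$ and its $\vdash$-counterpart, are the only ones where the associator is genuinely nonzero, and here the Leibniz rule produces second-derivative terms. My expectation is that this is where the real bookkeeping lies: after substitution each side collapses, thanks to perm symmetry, to a single monomial in $D^2$, namely $-\,b\cdot(D^2(c)\cdot a)$ for the first and $-\,b\cdot(a\cdot D^2(c))$ for the second, once the several $D(b)\cdot D(c)$-type first-derivative summands cancel in pairs. The main obstacle is therefore purely organizational: keeping track of the summands generated by the derivation rule and confirming that, on each side, everything cancels except the symmetric $D^2$-term, which is the same on both sides. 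No structural input beyond the perm identity and the derivation property is needed, so once the normal-form observation is in place the verification is a finite, mechanical computation.
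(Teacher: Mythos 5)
Your proposal is correct, but there is nothing in the paper to measure it against: the paper states Lemma \ref{lem:perm} as a quoted result from \cite{KS} and supplies no proof at all, so your direct verification furnishes an argument the paper outsources to its reference. Your normal-form observation is exactly the right organizing tool: in a perm algebra the two bracketings $(x\cdot y)\cdot z$ and $x\cdot(y\cdot z)$ agree and the left pair $x,y$ may be interchanged, so a triple product is determined by its unordered left pair and its right factor. I checked your collapsed values and they are all accurate: the three terms of the first identity each equal $a\cdot(D(c)\cdot D(b))$; both sides of $(a\dashv b)\dashv c=(a\dashv c)\dashv b$ equal $D(b)\cdot(D(c)\cdot a)$; in $a\dashv(b\dashv c)=a\dashv(b\vdash c)$ the two expansions $(D^{2}(c)\cdot b)\cdot a+(D(c)\cdot D(b))\cdot a$ and $(b\cdot D^{2}(c))\cdot a+(D(b)\cdot D(c))\cdot a$ match termwise by the left-pair swap; and for the two genuinely Novikov identities, both sides of the $\dashv$-identity reduce to $-\,b\cdot(D^{2}(c)\cdot a)$, while the two sides of the $\vdash$-identity reduce to $-\,a\cdot(b\cdot D^{2}(c))$ and $-\,b\cdot(a\cdot D^{2}(c))$ respectively, which agree by the same swap. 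The only weakness is stylistic: at the crucial step you hedge (``my expectation''), when the claimed cancellations are one-line consequences of the Leibniz rule plus the left-pair swap and should simply be written out, e.g.
\begin{equation*}
(a\dashv b)\dashv c - a\dashv(b\dashv c)
= D(c)\cdot(D(b)\cdot a) - \bigl(D^{2}(c)\cdot b\bigr)\cdot a - \bigl(D(c)\cdot D(b)\bigr)\cdot a
= -\,\bigl(D^{2}(c)\cdot b\bigr)\cdot a,
\end{equation*}
with the analogous two-line computation for the right-hand side. With those displays included, your argument is a complete, self-contained proof, which is arguably preferable to the paper's bare citation for a statement this elementary.
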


Next, we give a correspondence between Novikov dialgebras and a class of Leibniz conformal algebras.
\begin{pro} \label{lcnd}
Let $(A,\dashv,\vdash)$ be a vector space with binary operations $\vdash$ and $\dashv$. Equip the free ${\bf k}[\partial]$-module $Q:={\bf k}[\partial]A$ $(={\bf k}[\partial]\otimes A)$ with the $\lambda$-bracket
\begin{equation}\label{lamnd}
[a_\lambda b]:=\partial(b\dashv a)+\lambda(a\vdash b+b\dashv a)\;\;\;\text{for all $a, b\in A.$}
\end{equation}
Then $(Q, [\cdot_\lambda\cdot])$ is a Leibniz conformal algebra if and only if $(A,\dashv,\vdash)$ is a Novikov dialgebra. We call $Q$ the {\bf Leibniz conformal algebra corresponding to} $(A,\dashv,\vdash)$.
\end{pro}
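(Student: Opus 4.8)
The plan is a direct verification of the two defining axioms of a Leibniz conformal algebra from Definition~\ref{lca}, arranged so that the Jacobi identity \eqref{li} turns into a polynomial identity whose coefficientwise content is precisely the Novikov dialgebra relations.

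First I would dispose of conformal sesquilinearity \eqref{cs}. The $\lambda$-bracket \eqref{lamnd} is prescribed only on generators $a,b\in A$ and extended to all of $Q={\bf k}[\partial]A$ by the unique ${\bf k}$-bilinear rule $[p(\partial)a_\lambda q(\partial)b]=p(-\lambda)q(\lambda+\partial)[a_\lambda b]$. By construction this extension satisfies \eqref{cs} for any choice of $\vdash$ and $\dashv$, so this axiom carries no information and holds in both directions automatically. Consequently the entire content of the statement sits in the Jacobi identity \eqref{li}. Since both sides of \eqref{li} are conformal sesquilinear in each of the three arguments (a standard consequence of \eqref{cs}) and $Q$ is generated over ${\bf k}[\partial]$ by $A$, it suffices to check \eqref{li} for $a,b,c\in A$.

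Next I would expand the three terms $[a_\lambda[b_\mu c]]$, $[[a_\lambda b]_{\lambda+\mu}c]$ and $[b_\mu[a_\lambda c]]$ by inserting \eqref{lamnd} and using sesquilinearity to push all occurrences of $\partial$ past the brackets, writing each term as an element of $Q[\lambda,\mu]$. Each term is homogeneous of total degree two in $\lambda,\mu,\partial$, so only the six monomials $\partial^2,\lambda^2,\mu^2,\lambda\partial,\mu\partial,\lambda\mu$ occur. Because $Q$ is free over ${\bf k}[\partial]$ with basis $A$, such an element is determined by the $A$-valued coefficients of these monomials; hence \eqref{li} holds if and only if the six coefficient identities in $A$ obtained by matching them hold. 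For example, the $\partial^2$-coefficients give at once $(c\dashv b)\dashv a=(c\dashv a)\dashv b$, which is one of the Novikov dialgebra relations, while the $\lambda^2$- and $\mu^2$-coefficients relate $a\vdash(c\dashv b)$ and $(b\dashv a)\vdash c$ to compositions of $\vdash$ and $\dashv$, and the mixed coefficients encode the rest.

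The main obstacle is the last step: proving that this system of six coefficient identities is equivalent, in both directions, to the defining relations of a Novikov dialgebra. The correspondence is not term-by-term — several of the Novikov axioms surface only after combining two or more of the coefficient identities (and conversely a single coefficient identity may require several axioms), with the $\lambda\mu$-coefficient being the most entangled of the six. I would organise this by first reading off the ``clean'' relations from the $\partial^2$, $\lambda^2$ and $\mu^2$ coefficients, then substituting these into the mixed-monomial identities ($\lambda\partial$, $\mu\partial$, $\lambda\mu$) to isolate the remaining Novikov relations, and finally verifying the reverse implication by checking that the full list of Novikov dialgebra axioms reproduces each of the six coefficients. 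This is pure bookkeeping of quadratic words in $\vdash$ and $\dashv$; no input beyond \eqref{cs}, \eqref{li}, \eqref{lamnd} and the freeness of $Q$ over ${\bf k}[\partial]$ is required.
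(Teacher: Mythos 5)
Your proposal is correct in outline, but it takes a genuinely different route from the paper. The paper does not verify the Jacobi identity at all: it observes that the bracket \eqref{lamnd} can be rewritten as $[a_\lambda b]=\partial(a\vdash' b)+\lambda(a\vdash' b+b\dashv' a)$ for the flipped operations $a\dashv' b:=b\vdash a$, $a\vdash' b:=b\dashv a$, invokes the known result \cite[Theorem 3.1]{ZH} (that this bracket gives a Leibniz conformal algebra if and only if $(A,\dashv',\vdash')$ is a \emph{right} Novikov dialgebra), and then uses the opposite-algebra correspondence of Remark \ref{rmk-ND1} between right and left Novikov dialgebras. You instead redo the computation behind the cited theorem from scratch: extend the bracket sesquilinearly (correctly noting this axiom is automatic and that sesquilinearity of both sides of \eqref{li} reduces the check to generators $a,b,c\in A$), expand the three Jacobi terms as homogeneous degree-two polynomials in $\lambda,\mu,\partial$, and use freeness of $Q$ over ${\bf k}[\partial]$ to match the six monomial coefficients against the Novikov dialgebra axioms. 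Your sample check is right: the $\partial^2$-coefficients give $(c\dashv b)\dashv a=(c\dashv a)\dashv b$ (the terms from $[[a_\lambda b]_{\lambda+\mu}c]$ contribute nothing in $\partial^2$), and the count matches, six monomials against the six equalities in the definition of a Novikov dialgebra, though as you say the correspondence is not term-by-term. What your approach buys is a self-contained proof that makes the coefficient-to-axiom dictionary explicit; what it costs is exactly the bookkeeping you defer in your last paragraph, which is nontrivial in volume (the $\lambda\mu$-coefficient in particular mixes several axioms) and is precisely the content the paper outsources to \cite{ZH}. To be a complete proof rather than a plan, you would still need to write out all six identities and verify the two-way equivalence; there is no conceptual obstacle, but as it stands that final step is asserted rather than performed.
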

\begin{proof}
Define
\begin{equation}\label{lamnd}
[a_\lambda b]:=\partial(a\vdash' b)+\lambda(a\vdash' b+b\dashv' a)\;\;\;\text{for all $a, b\in A$,}
\end{equation}
where $\vdash'$ and $\dashv'$ are two binary operations on $A$.
By \cite[Theorem 3.1]{ZH}, $(Q, [\cdot_\lambda\cdot])$ is a Leibniz conformal algebra if and only if $(A, \dashv', \vdash')$ is a right Novikov dialgebra. Set \begin{eqnarray*}
a\dashv b:=b\vdash' a,\;\;a\vdash b:= b\dashv' a\;\;\;\text{for all $a,b \in A$.}
\end{eqnarray*}
By Remark \ref{rmk-ND1}, $(A, \dashv', \vdash')$ is a right
Novikov dialgebra if and only if $(A, \dashv,\vdash)$ is a Novikov
dialgebra. Then the conclusion follows.
\end{proof}
\begin{cor}\label{copermlcn}
Let $(A, \cdot)$ be a perm algebra and $D$ be a derivation on $(A,
\cdot)$. Equip the free ${\bf k}[\partial]$-module $Q:={\bf
k}[\partial]A$ $(={\bf k}[\partial]\otimes A)$ with the
$\lambda$-bracket
\begin{equation}\label{lamnd}
[a_\lambda b]:=\partial(D(a)\cdot b)+\lambda(a\cdot D( b)+D(a)\cdot b)\;\;\;\text{for all $a, b\in A$.}
\end{equation}
Then $(Q, [\cdot_\lambda\cdot])$ is a Leibniz conformal algebra.
\end{cor}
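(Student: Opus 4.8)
The plan is to derive this corollary by composing the two preceding results: Lemma~\ref{lem:perm} converts the perm algebra with derivation into a Novikov dialgebra, and Proposition~\ref{lcnd} converts that Novikov dialgebra into a Leibniz conformal algebra. No genuinely new verification is required; the work is entirely a matter of matching the resulting $\lambda$-bracket with the one in the statement.

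First I would apply Lemma~\ref{lem:perm}. Since $(A,\cdot)$ is a perm algebra and $D$ is a derivation, defining
$$a\vdash b:=a\cdot D(b),\qquad a\dashv b:=D(b)\cdot a\quad\text{for all }a,b\in A$$
makes $(A,\dashv,\vdash)$ a Novikov dialgebra. I would then feed this Novikov dialgebra into Proposition~\ref{lcnd}, which guarantees that the free ${\bf k}[\partial]$-module $Q={\bf k}[\partial]A$ equipped with
$$[a_\lambda b]=\partial(b\dashv a)+\lambda(a\vdash b+b\dashv a)$$
is a Leibniz conformal algebra. The final step is to substitute the explicit expressions for $\vdash$ and $\dashv$: using $b\dashv a=D(a)\cdot b$ and $a\vdash b=a\cdot D(b)$, this bracket becomes
$$[a_\lambda b]=\partial\big(D(a)\cdot b\big)+\lambda\big(a\cdot D(b)+D(a)\cdot b\big),$$
which is precisely the $\lambda$-bracket in the statement. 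Hence $Q$ is the Leibniz conformal algebra corresponding to $(A,\dashv,\vdash)$, and the corollary follows.

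The only subtlety, and the nearest thing to an obstacle, is keeping the order of arguments straight. Because $\dashv$ is itself defined with a flip, namely $a\dashv b=D(b)\cdot a$, and the Novikov dialgebra operations are non-commutative, one must be careful that $b\dashv a=D(a)\cdot b$ rather than $D(b)\cdot a$. It is exactly this correct identification that aligns the $\partial$-coefficient $D(a)\cdot b$ and the $\lambda$-coefficient $a\cdot D(b)+D(a)\cdot b$ with the claimed formula; any transposition error here would produce a different bracket. Since both input hypotheses hold by assumption and the substitution is the identity of the two formulas, nothing further needs to be checked.
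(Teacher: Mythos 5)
Your proposal is correct and matches the paper's own proof exactly: both derive the corollary by applying Lemma~\ref{lem:perm} to obtain the Novikov dialgebra $(A,\dashv,\vdash)$ with $a\vdash b=a\cdot D(b)$, $a\dashv b=D(b)\cdot a$, and then invoking Proposition~\ref{lcnd}, with the substitution $b\dashv a=D(a)\cdot b$ giving the stated $\lambda$-bracket. Your explicit check of the argument-order subtlety in the flip is a welcome (though routine) addition the paper leaves implicit.
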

\begin{proof}
Define two binary operations $\vdash$ and $\dashv$ by
(\ref{eq:perm}).
 By Lemma~\ref{lem:perm}, $(A, \dashv, \vdash)$ is a
Novikov dialgebra. Then it follows directly from Proposition
\ref{lcnd}.
\end{proof}
\subsection{Matched pairs of Leibniz conformal algebras}
\begin{defi}\cite{K1}
Let $U$ and $V$ be two ${\bf k}[\partial]$-modules.\delete{ A {\bf left conformal linear map} $a_\lambda: U\rightarrow V[\lambda]$ such that $a_\lambda(\partial u)=-\lambda a_\lambda u$ for all $u\in U$.}
A {\bf conformal linear map} from $U$ to $V$ is a ${\bf k}$-linear map $a_\lambda: U\rightarrow V[\lambda]$ such that $a_\lambda(\partial u)=(\lambda+\partial) a_\lambda u$ for all $u\in U$.
\delete{A right conformal linear map is usually called a {\bf conformal linear map}.} The vector space of all conformal linear maps from $U$ to $V$, denoted by $\text{Chom}(U,V)$ , has a $ {\bf k}[\partial]$-module structure given by
\begin{equation*}
(\partial a)_\lambda u=-\lambda a_\lambda u\;\;\; \text{for all $a\in \text{Chom}(U,V)$ and $u\in U$.}
\end{equation*}
In particular, if $U=V$, then we denote $\text{Chom}(U,V)$ by  $\text{Cend}(V)$. If $V$ is a finite  $ {\bf k}[\partial]$-module, then  $\text{Cend}(V)$ is a Lie conformal algebra defined by
$$
[f_\lambda g]_\mu v=f_\lambda(g_{\mu-\lambda}v)-g_{\mu-\lambda}(f_\lambda v)\;\;\;\text{for all $f, g\in \text{Cend}(V)$ and $v\in V$}.
$$
We define the conformal dual of a $ {\bf k}[\partial]$-module $U$ as $ U^{*c}=\text{Chom}(U,{\bf k})$, where ${\bf k}$ is viewed as the trivial ${\bf k}[\partial]$-module, that is,
$U^{*c}=\{a:U\rightarrow {\bf k}[\lambda]~~|~~\text{ ${\bf k}$-linear and }a_\lambda(\partial u)=\lambda a_\lambda u ~~\text{for all $u\in U$}\}$. For convenience in the following, we set $\langle a,u\rangle_\lambda=a_\lambda u$ for all $a\in U^{*c}$ and $u\in U$.

Let $U$, $V$ and $W$ be ${\bf k}[\partial]$-modules. A {\bf conformal bilinear map} from $ U\times V$ to  $W$ is a ${\bf k}$-bilinear map $f_\lambda: U\times V\rightarrow W[\lambda]$, such that $f_\lambda(\partial u,v)=-\lambda f_\lambda(u,v)$ and $f_\lambda( u,\partial v)=(\lambda+\partial) f_\lambda(u,v)$
for all $u\in U$ and $ v\in V$.
\end{defi}

\begin{defi}\cite{Z}
{\bf A representation} of a Leibniz conformal algebra $(Q,[\cdot_\lambda\cdot])$ is a triple $(M,l,r)$, where $M$ is a ${\bf k}[\partial]$-module and $l,r$: $Q\rightarrow \text{Cend}(M)$ are two ${\bf k}[\partial]$-module homomorphisms satisfying
\begin{align}
\label{m1}&l(a)_\lambda(l(b)_\mu v)=l([a_\lambda b])_{\lambda+\mu}v+l(b)_\mu(l(a)_\lambda v),\\
\label{m2}&l(a)_\lambda(r(b)_{-\mu-\partial}v)=r(b)_{-\lambda-\mu-\partial}(l(a)_\lambda v)+r([a_\lambda b])_{-\mu-\partial}v,\\
\label{m3}&r(b)_{-\lambda-\mu-\partial}(r(a)_{-\mu-\partial}v)=-r(b)_{-\lambda-\mu-\partial}(l(a)_\lambda v)\;\;\;\text{for all $a, b\in Q$ and $v\in M$}.
\end{align}

A representation $(M,l,r)$ of $(Q,[\cdot_\lambda\cdot])$ is called {\bf finite}, if $M$ is a finitely generated ${\bf k}[\partial]$-module.
\end{defi}
\delete{It should be pointed out that some synonyms for expressing the fact that $(M,l,r)$ is a representation of $A$ include that $M$ is an $A$-bimodule and that $A$ acts on $M$.}

\begin{ex}
Let $(Q, [\cdot_\lambda \cdot])$ be a Leibniz conformal algebra. Define two ${\bf k}[\partial]$-module homomorphisms $L$ and $R$ from $Q$ to $\text{Cend}(Q)$
 by $L(a)_\lambda b=[a_\lambda b]$ and $R(a)_\lambda b=[b_{-\lambda-\partial}a]$ for all $a$, $b\in Q$. Then $(Q,L,R)$ is a representation of $(Q, [\cdot_\lambda \cdot])$,
 which is called the {\bf regular representation} of $(Q, [\cdot_\lambda \cdot])$.
If there is a Leibniz conformal algebra structure on the conformal
dual $Q^{*c}$ of $Q$, then we denote the regular representation of
$(Q^{\ast c}, [\cdot_\lambda \cdot])$ by $(Q^{\ast c},
\mathcal{L}, \mathcal{R})$.
\end{ex}

\delete{Define two ${\bf k}[\partial]$-module homomorphisms $L^*$ and $R^*$ from $A$ to $Cend(A^{*c}) $ by
\begin{align}\label{ld}
\langle L^*(a)_\lambda f,b\rangle_\mu=-\langle f,[a_\lambda b]\rangle_{\mu-\lambda},\qquad
\langle R^*(a)_\lambda f,b\rangle_\mu=-\langle f,[b_{-\lambda-\partial} a]\rangle_{\mu-\lambda}
\end{align}
If there is a Leibniz conformal algebra structure on the conformal dual $A^{*c}$ of $A$, then we denote the left and right $\lambda$-multiplications by $\mathcal{L}$ and $\mathcal{R}$ respectively, that is $\mathcal{L}(x)_\lambda y=[x_\lambda y]_{A^{*c}} $ and $ \mathcal{R}(x)_\lambda y=[y_{-\lambda-\partial}x]_{A^{*c}}$ for all $x,y\in A^{*c}$.}
\begin{pro}\label{ss}
Let $Q$ be a Leibniz conformal algebra.
Then $(M,l,r)$ is a representation of $Q$ if and only if  there is a Leibniz conformal algebra structure on the direct sum $Q\oplus M$ as ${\bf k}[\partial]$-modules given by
\begin{equation}\label{sdlp}
[(a+x)_\lambda(b+y)]=[a_\lambda b]+l(a)_\lambda y+r(b)_{-\lambda-\partial}x\;\;\;\text{for all $a, b\in Q$ and $x, y\in M$}.
\end{equation}
Denote this Leibniz conformal algebra by $Q\ltimes_{l,r} M$, which is called a {\bf semi-direct product} of $Q$ and $(M,l,r)$, or a {\bf semi-direct product Leibniz conformal algebra}.
\end{pro}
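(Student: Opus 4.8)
The plan is to verify that the $\lambda$-bracket \eqref{sdlp} on $Q\oplus M$ satisfies the two defining axioms of a Leibniz conformal algebra---conformal sesquilinearity \eqref{cs} and the Jacobi identity \eqref{li}---while tracking exactly which of the representation axioms \eqref{m1}--\eqref{m3} each requirement consumes, so that both directions of the equivalence drop out of the same computation.

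First I would dispose of conformal sesquilinearity, which holds in both directions without any reference to \eqref{m1}--\eqref{m3}. Writing a typical element as $a+x$ with $a\in Q$, $x\in M$, the behaviour of \eqref{sdlp} under $\partial$ in the first slot follows from three facts: the sesquilinearity of $[\cdot_\lambda\cdot]$ on $Q$; the identity $l(\partial a)=\partial\, l(a)$, valid because $l$ is a ${\bf k}[\partial]$-module homomorphism into $\text{Cend}(M)$ whose module structure is $(\partial f)_\lambda=-\lambda f_\lambda$; and the conformal linearity $r(b)_{-\lambda-\partial}(\partial x)=-\lambda\, r(b)_{-\lambda-\partial}x$. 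The second-slot identity is entirely analogous. Thus only the Jacobi identity carries genuine content.

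The core of the argument is to expand both sides of \eqref{li} for general elements $a+x$, $b+y$, $c+z$ and compare them through the direct-sum decomposition $Q\oplus M$. The $Q$-component of the identity reduces precisely to the Jacobi identity of $Q$ and is therefore automatic. For the $M$-component, the key structural observation is that \eqref{sdlp} never produces a term depending on the $M$-parts of two different arguments at once; hence the $M$-component of \eqref{li} is a sum of three groups of terms, each linear in exactly one of $x,y,z$. Since the identity must hold for all $x,y,z\in M$, it splits into three independent relations. Collecting the coefficient of $z$ reproduces \eqref{m1}; collecting the coefficient of $y$ reproduces \eqref{m2} (with $b\mapsto c$, $v\mapsto y$); and collecting the coefficient of $x$ yields the relation
\[
r([b_\mu c])_{-\lambda-\partial}x = r(c)_{-\lambda-\mu-\partial}\bigl(r(b)_{-\lambda-\partial}x\bigr) + l(b)_\mu\bigl(r(c)_{-\lambda-\partial}x\bigr).
\]

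The step I expect to be the main obstacle is matching this last relation to the axioms, since it is not literally one of \eqref{m1}--\eqref{m3} but a linear combination of two of them. The resolution is to use \eqref{m2} (already available, being equivalent to the coefficient of $y$) to rewrite $r([b_\mu c])_{-\lambda-\partial}x$; after cancelling the common term $l(b)_\mu\bigl(r(c)_{-\lambda-\partial}x\bigr)$, what remains is exactly \eqref{m3} with the spectral variables $\lambda$ and $\mu$ interchanged. This settles both implications simultaneously: given \eqref{m1}--\eqref{m3} the three coefficient relations hold, so \eqref{sdlp} defines a Leibniz conformal algebra; conversely the three relations force \eqref{m1}, \eqref{m2}, and---via the already-derived $y$-relation---also \eqref{m3}. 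The only real care needed is the bookkeeping of the conformal shifts $-\lambda-\partial$, $-\mu-\partial$, $-\lambda-\mu-\partial$ together with the relabelling of $\lambda,\mu$, where sign and variable errors are easy to make.
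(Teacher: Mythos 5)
Your proof is correct, and it is precisely the direct verification the paper has in mind (the paper's own proof is just ``It is straightforward''): sesquilinearity is formal, the $Q$-component of the Jacobi identity is the Jacobi identity of $Q$, and the $M$-component splits by linearity in $x,y,z$ into three relations matching \eqref{m1}, \eqref{m2}, and a combination that reduces to \eqref{m3} after rewriting via \eqref{m2}. Your handling of the $x$-coefficient relation --- using \eqref{m2} to eliminate $r([b_\mu c])_{-\lambda-\partial}x$, cancelling $l(b)_\mu\bigl(r(c)_{-\lambda-\partial}x\bigr)$, and recognizing \eqref{m3} with $\lambda$ and $\mu$ interchanged (harmless, as these are identities in both spectral variables) --- is exactly right in both directions.
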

\begin{proof}
It is straightforward.
\end{proof}

\begin{pro}\label{dual}
Let $(M,l,r)$ be a finite representation of a Leibniz conformal algebra $(Q, [\cdot_\lambda \cdot])$, which is free as a ${\bf k}[\partial]$-module.
Let $l^\ast $ and $r^* $  be two ${\bf k}[\partial]$-module homomorphisms from $Q$ to $\text{Cend}(M^{*c}) $ given by
\begin{equation}\label{lrm}
\langle l^*(a)_\lambda f,v\rangle_\mu=-\langle f,l(a)_\lambda v\rangle_{\mu-\lambda},\quad
\langle r^*(a)_\lambda f,v\rangle_\mu=-\langle f,r(a)_{\lambda} v\rangle_{\mu-\lambda}a,
\end{equation}
for all $a\in A$, $f\in M^{*c}$ and $v\in M$. Then $(M^{\ast c},l^\ast,-l^\ast-r^\ast)$ is a representation of  $(Q, [\cdot_\lambda \cdot])$, which is called the {\bf dual representation} of $(M,l,r)$.
\end{pro}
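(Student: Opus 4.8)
The plan is to verify directly that $(M^{*c},l^*,-l^*-r^*)$ satisfies the three defining axioms \eqref{m1}, \eqref{m2} and \eqref{m3} of a representation, writing $L:=l^*$ and $R:=-l^*-r^*$ for short. A preliminary step is to confirm that the formulas in \eqref{lrm} really do define conformal endomorphisms of $M^{*c}$ and that $a\mapsto L(a)$ and $a\mapsto R(a)$ are ${\bf k}[\partial]$-module homomorphisms $Q\to\text{Cend}(M^{*c})$; this follows from the conformal sesquilinearity built into the pairing $\langle\cdot,\cdot\rangle$. The main tool is that, since $M$ is finite and free as a ${\bf k}[\partial]$-module, an element of $M^{*c}$ (or of $M^{*c}[\lambda,\mu,\nu]$) is completely determined by its pairings $\langle\cdot,v\rangle$ against all $v\in M$. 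Hence each identity among conformal operators on $M^{*c}$ may be tested by pairing both sides with an arbitrary $v\in M$ at a generic spectral parameter.

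First I would treat \eqref{m1} for $L=l^*$. Pairing $\langle L(a)_\lambda(L(b)_\mu F),v\rangle_\nu$ and applying the defining relation $\langle l^*(c)_\sigma G,w\rangle_\rho=-\langle G,l(c)_\sigma w\rangle_{\rho-\sigma}$ twice peels off the two left actions and yields $\langle F,l(b)_\mu(l(a)_\lambda v)\rangle_{\nu-\lambda-\mu}$. Performing the same reduction on the two terms of the right-hand side of \eqref{m1} and then invoking the original axiom \eqref{m1} for $(M,l,r)$ reproduces exactly this expression. The only bookkeeping to watch is that each application of $l^*(c)_\sigma$ shifts the spectral slot of the pairing by $-\sigma$, so the cumulative shift is $-\lambda-\mu$ on both sides.

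The substantive steps are \eqref{m2} and \eqref{m3}, where the specific combination $R=-l^*-r^*$ (rather than $r^*$ alone) is what makes the identities close. For each I would expand $R=-l^*-r^*$, pair with an arbitrary $v\in M$ as above, and use the defining relations for $l^*$ and $r^*$ together with conformal sesquilinearity \eqref{cs} to interpret the $\partial$-dependent spectral slots $-\mu-\partial$ and $-\lambda-\mu-\partial$. Each dual identity then reduces to a ${\bf k}$-linear combination of the original axioms \eqref{m1}, \eqref{m2} and \eqref{m3}; the $-l^*$ contribution in $R$ supplies exactly the terms that are missing when one dualizes through $r^*$ alone, reflecting that a general (non-Lie) Leibniz action does not dualize to a representation via the naive right dual.

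I expect the one real obstacle to be organizational rather than conceptual: keeping the spectral-variable substitutions error-free as the dual actions cross the pairing, in particular correctly applying \eqref{cs} each time a $\partial$ occurs inside a spectral argument such as $-\mu-\partial$ or $-\lambda-\mu-\partial$. Once the pairings are correctly set up, every identity follows from \eqref{m1}--\eqref{m3} for the original representation together with the non-degeneracy of $\langle\cdot,\cdot\rangle$, and no further input is required. (Alternatively, one could route the whole verification through Proposition~\ref{ss} by checking that the semidirect product $Q\ltimes_{L,R}M^{*c}$ is a Leibniz conformal algebra, but the direct pairing computation is more transparent.)
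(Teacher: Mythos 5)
Your proposal is correct and takes essentially the same route as the paper: the paper's proof likewise verifies the representation axioms by pairing against an arbitrary $v\in M$, peeling off the dual actions $l^*$ and $-l^*-r^*$ via \eqref{lrm}, and reducing each identity to the axioms \eqref{m1}--\eqref{m3} for $(M,l,r)$, carrying out \eqref{m2} in detail and declaring \eqref{m1} and \eqref{m3} similar. The only difference is presentational --- you work out \eqref{m1} explicitly while the paper works out \eqref{m2} --- and your cautions about the spectral-shift bookkeeping match exactly the substitutions performed in the paper's displayed computation.
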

\begin{proof}
Let $a$, $b\in Q$, $f\in M^{*c}$ and $v\in M$. We have
{\small
\begin{align*}
&\langle l^\ast(a)_\lambda(-(l^\ast+r^\ast)(b)_{-\mu-\partial}f)
+(l^\ast+r^\ast)(b)_{-\lambda-\mu-\partial}(l^\ast(a)_\lambda f)
+(l^\ast+r^\ast)([a_\lambda b])_{-\mu-\partial}f,v\rangle_\theta\\
=&\langle(l^\ast+r^\ast)(b)_{-\mu-\partial}f, l(a)_\lambda v\rangle_{\theta-\lambda}-\langle l^\ast(a)_\lambda f, (l+r)(b)_{-\lambda-\mu+\theta}v\rangle_{\lambda+\mu}-\langle f, (l+r)([a_\lambda b])_{-\mu+\theta}v\rangle_\mu\\
=&\langle f,-(l+r)(b)_{-\lambda-\mu+\theta}(l(a)_\lambda v) +l(a)_\lambda((l+r)(b)_{-\lambda-\mu+\theta}v)-(l+r)([a_\lambda b])_{-\mu+\theta}v \rangle_\mu\\
=&-\langle f, l(b)_{-\lambda-\mu+\theta}(l(a)_\lambda v)-l(a)_\lambda(l(b)_{-\lambda-\mu+\theta}v)+l([a_\lambda b])_{-\mu+\theta}v\rangle_\mu \\
&-\langle f,r(b)_{\theta-\lambda-\partial}(l(a)_\lambda v)-l(a)_\lambda(r(b)_{\theta-\partial}v)+r([a_\lambda b])_{\theta-\partial}v \rangle_\mu=0,
\end{align*}}
which means that \eqref{m2} holds.
Similarly, we can verify that \eqref{m1} and \eqref{m3} hold. Therefore, $(M^{\ast c},l^\ast,-l^\ast-r^\ast)$ is a representation of  $(Q, [\cdot_\lambda \cdot])$.
\end{proof}
\begin{ex}
Let $(Q, [\cdot_\lambda \cdot])$ be a finite Leibniz conformal algebra which is free as a ${\bf k}[\partial]$-module. By Proposition \ref{dual}, the dual representation of the regular representation $(Q, L, R)$ is $(Q^{\ast c}, L^\ast, -L^\ast-R^\ast)$.
\end{ex}

\begin{defi}
Let $Q_1$ and $Q_2$ be two Leibniz conformal algebras. If $(Q_2,l,r)$ is a representation of $Q_1$, $(Q_1,\phi,\psi)$ is a representation of $Q_2$ and the following equations
\begin{align}
\label{mp1}&[x_\lambda(r(a)_{-\mu-\partial}y)]+r(\phi(y)_\mu a)_{-\lambda-\partial}x=r(a)_{-\lambda-\mu-\partial}([x_\lambda y])+[y_\mu(r(a)_{-\lambda-\partial}x)]+r(\phi(x)_\lambda a)_{-\mu-\partial}y,\\
\label{mp2}&l(a)_\lambda([x_\mu y])=[(l(a)_\lambda x)_{\lambda+\mu}y]+l(\psi(x)_{-\lambda-\partial}a)_{\lambda+\mu}y+[x_\mu(l(a)_\lambda y)]+r(\psi(y)_{-\lambda-\partial}a)_{-\mu-\partial}x,\\
\label{mp3}&[x_\lambda(l(a)_\mu y)]+r(\psi(y)_{-\mu-\partial}a)_{-\lambda-\partial}x=l(\phi(x)_\lambda a)_{\lambda+\mu}y+l(a)_\mu([x_\lambda y])+[(r(a)_{-\lambda-\partial}x)_{\lambda+\mu}y],\\
\label{mp4}&[a_\lambda(\psi(x)_{-\mu-\partial}b)]+\psi(l(b)_\mu x)_{-\lambda-\partial}a=\psi(x)_{-\lambda-\mu-\partial}[a_\lambda b]+[b_\mu (\psi(x)_{-\lambda-\partial}a)]+\psi(l(a)_\lambda x)_{-\mu-\partial}b,\\
\label{mp5}&[a_\lambda(\phi(x)_\mu b)]+\psi(r(b)_{-\mu-\partial}x)_{-\lambda-\partial}a=[(\psi(x)_{-\lambda-\partial }a)_{\lambda+\mu}b]+\phi(l(a)_\lambda x)_{\lambda+\mu}b+\phi(x)_\mu([a_\lambda b]),\\
&[(\phi(x)_\lambda a)_{\lambda+\mu}b]+\phi(r(a)_{-\lambda-\partial}x)_{\lambda+\mu}b+[(\psi(x)_{-\mu-\partial}a)_{\lambda+\mu}b]+\phi(l(a)_\mu x)_{\lambda+\mu}b=0, \label{mp6}
\end{align}
hold for all $a,b\in Q_1$ and $x,y\in Q_2$, then $(Q_1,Q_2;(l,r),(\phi,\psi) )$ is called a {\bf matched pair} of  Leibniz conformal algebras $Q_1$ and $Q_2$.
\end{defi}

\delete{It should be pointed that the matched pair of $A$ and $B$ is equivalent to the bicross product of $A$ and $B$ in \cite{HY}. By this relation, we can define a Leibniz conformal algebra structure on the matched pair of Leibniz conformal algebras.}
\begin{pro}\cite{HY}\label{mplca}
Let $(Q_1,Q_2;(l,r),(\phi,\psi))$ be a matched pair of Leibniz conformal algebras. Then there is a Leibniz conformal algebra structure on the direct sum $Q_1\oplus Q_2$ of ${\bf k}[\partial]$-modules  defined by
\begin{equation}\label{bp1}
[(a+x)_\lambda(b+y)]=[a_\lambda b]+\phi(x)_\lambda b+\psi(y)_{-\lambda-\partial}a+[x_\lambda y]+l(a)_\lambda y+r(b)_{-\lambda-\partial}x,
\end{equation}
for all $a,b\in Q_1$, $x,y\in Q_2$. Denote this Leibniz conformal
algebra by $Q_1\Join Q_2$, which is called a {\bf bicrossed
product} of $Q_1$ and $Q_2$ associated with the matched pair
$(Q_1,Q_2;(l,r),(\phi,\psi))$ of Leibniz conformal algebras.
Moreover, any Leibniz conformal algebra $E=Q_1\oplus Q_2$ where
the sum is the direct sum of ${\bf k}[\partial]$-modules such that
$Q_1$ and $Q_2$ are two Leibniz conformal subalgebras of $E$, is
isomorphic to $Q_1\Join Q_2$  associated with some matched pair of
Leibniz conformal algebras.
\end{pro}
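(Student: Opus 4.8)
The plan is to prove both implications by directly translating the single structural axiom on $Q_1\oplus Q_2$---the Jacobi identity \eqref{li}---into the representation axioms \eqref{m1}--\eqref{m3} for the two actions together with the compatibility conditions \eqref{mp1}--\eqref{mp6}, exploiting that all axioms involved are multilinear.

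First I would treat the forward direction: assuming $(Q_1,Q_2;(l,r),(\phi,\psi))$ is a matched pair, show that \eqref{bp1} defines a Leibniz conformal algebra. Conformal sesquilinearity \eqref{cs} is immediate term by term. The brackets on $Q_1$ and $Q_2$ are sesquilinear by hypothesis; each of $l(a),r(b),\phi(x),\psi(y)$ lies in the appropriate $\text{Cend}$ and so acts conformally in its second argument; and since $l,r,\phi,\psi$ are ${\bf k}[\partial]$-module homomorphisms, the identity $(\partial f)_\lambda=-\lambda f_\lambda$ in $\text{Cend}$ supplies the correct $-\lambda$ in the first argument. The shifted arguments $-\lambda-\partial$ on the right actions are exactly what is needed here: since $r(b)\in\text{Cend}(Q_2)$ one gets $r(b)_{-\lambda-\partial}(\partial x)=(-\lambda-\partial+\partial)r(b)_{-\lambda-\partial}x=-\lambda\,r(b)_{-\lambda-\partial}x$, and likewise $\psi(\partial y)_{-\lambda-\partial}a=(\lambda+\partial)\psi(y)_{-\lambda-\partial}a$, so \eqref{cs} holds for every summand of \eqref{bp1}.

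For the Jacobi identity \eqref{li} I would use ${\bf k}$-bilinearity to reduce to the $2^3=8$ cases in which each argument lies purely in $Q_1$ or purely in $Q_2$. The two pure cases reproduce the Jacobi identities of $Q_1$ and $Q_2$. Each of the six mixed cases, after substituting \eqref{bp1} twice and projecting the resulting identity onto its $Q_1$- and $Q_2$-components, splits into two scalar identities, giving $2\times 6=12$ identities in total; I expect these to be exactly \eqref{m1}--\eqref{m3} for $(Q_2,l,r)$, the three analogous axioms for $(Q_1,\phi,\psi)$, and the six equations \eqref{mp1}--\eqref{mp6}, a clean case-by-case match. Concretely, placing the single $Q_2$-element in the first, second, or third slot of $[a_\lambda[b_\mu c]]$ should produce, on the $Q_2$-side, the three representation axioms and, on the $Q_1$-side, three of the mp-equations; the symmetric placement of a single $Q_1$-element yields the representation axioms for $(\phi,\psi)$ and the remaining mp-equations. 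For the converse, given a Leibniz conformal algebra $E=Q_1\oplus Q_2$ with $Q_1,Q_2$ subalgebras, I would define the four actions by projecting the cross brackets: for $a\in Q_1$, $y\in Q_2$ decompose $[a_\lambda y]=\psi(y)_{-\lambda-\partial}a+l(a)_\lambda y$ into its $Q_1$- and $Q_2$-parts, and for $x\in Q_2$, $b\in Q_1$ decompose $[x_\lambda b]=\phi(x)_\lambda b+r(b)_{-\lambda-\partial}x$ similarly (the subalgebra hypothesis guarantees $[a_\lambda b]\in Q_1[\lambda]$ and $[x_\lambda y]\in Q_2[\lambda]$, so only the cross brackets carry new data). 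One checks these projections are ${\bf k}[\partial]$-module homomorphisms into the relevant $\text{Cend}$ using sesquilinearity of $E$; recovering $\psi(y)_\mu a$ and $r(b)_\mu x$ from the shifted expressions requires the formal substitution $\mu\leftrightarrow -\lambda-\partial$, which is the only mildly delicate point. The representation and compatibility conditions then fall out of the Jacobi identity of $E$ by the same case/projection analysis read in reverse, and \eqref{bp1} holds by construction.

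The main obstacle will be the precise bookkeeping of the spectral parameters rather than any conceptual difficulty. Because $E$ is Leibniz rather than Lie there is no skew-symmetry to collapse the six mixed cases, so each must be handled independently; moreover the $\partial$ produced by the inner bracket $[v_\mu w]$ interacts with the outer bracket through sesquilinearity (e.g.\ $[a_\lambda\partial(\cdots)]=(\lambda+\partial)[a_\lambda\cdots]$) and with the shifted right-action arguments, so matching each expanded term to the correct occurrence of $\lambda,\mu,\lambda+\mu,-\lambda-\partial,-\mu-\partial$ in \eqref{mp1}--\eqref{mp6} demands careful tracking of the arguments throughout.
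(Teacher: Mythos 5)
Your proposal is correct and is essentially the standard argument: the paper itself gives no proof of this proposition, citing \cite{HY}, where the result is established by exactly the route you describe — expanding the Jacobi identity on $Q_1\oplus Q_2$ by ${\bf k}$-bilinearity into the eight cases, projecting the six mixed cases onto the two components to recover the representation axioms \eqref{m1}--\eqref{m3} for both actions together with \eqref{mp1}--\eqref{mp6}, and conversely defining the four actions by projecting the cross brackets (with the substitution $\lambda\mapsto-\mu-\partial$ for the right actions). The only minor caveat is that the case-by-case match is exact only up to linear recombination of the twelve projected identities (for instance, the $Q_1$-component of the triple $(x,a,b)$ with $x\in Q_2$, $a,b\in Q_1$ yields \eqref{mp6} only after subtracting a relabeled instance of \eqref{mp5}), which your hedged phrasing already accommodates.
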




\subsection{Conformal Manin triples of Leibniz conformal algebras}
\begin{defi}
Let $M$ be a ${\bf k}[\partial]$-module. A {\bf conformal bilinear form} on $M$ is a ${\bf k}$-bilinear map $B_\lambda(\cdot,\cdot): M\times M\rightarrow {\bf k}[\lambda]$ satisfying
\begin{equation*}
B_\lambda(\partial u,v)=-\lambda B_\lambda(u,v), \,\,\,\,\,\,\,\,\,\,B_\lambda(u,\partial v)=\lambda B_\lambda(u,v)\;\;\;\text{for all $u, v\in M$.}
\end{equation*}
A conformal bilinear form $B_\lambda(\cdot,\cdot)$ on $M$ is called {\bf skew-symmetric} if
\begin{equation*}
B_\lambda(u,v)=-B_{-\lambda}(v,u)\;\;\;\text{for all $u, v\in M$}.
\end{equation*}
\end{defi}

\begin{rmk}
If there is a conformal bilinear form on a ${\bf k}[\partial]$-module $M$, then we have a ${\bf k}[\partial]$-module homomorphism
$\phi: M\rightarrow M^{\ast c}$ sending $v$ to $\phi_v$ where $\phi_v$ is defined by
\begin{equation*}
(\phi_v)_\lambda w=B_\lambda(v,w)\;\;\;\text{for all $v, w\in M$}.
\end{equation*}
Moreover, if $\phi$ is an isomorphism of ${\bf k}[\partial]$-modules, then this conformal bilinear form is called
{\bf non-degenerate}.
\end{rmk}

\delete{
\begin{defi}
A {\bf quadratic Leibniz conformal algebra} is a Leibniz conformal algebra $(Q,[\cdot_\lambda\cdot])$ equipped with a non-degenerate skew-symmetric conformal bilinear form $B_\lambda(\cdot,\cdot)$ such that the following invariant condition holds:
\begin{equation}\label{stlca}
B_\mu (a,[b_\lambda c])=B_{-\lambda}([a_\mu c]+[c_{-\mu-\partial}a],b)\;\;\;\text{for all $a, b, c\in Q$}.
\end{equation}
Denote it by $(Q, B_\lambda(\cdot,\cdot))$.
\end{defi}

\begin{ex}
Let $(Q_{f(\lambda, \partial)}={\bf k}[\partial] a\oplus{\bf k}[\partial]b, [\cdot_\lambda \cdot])$ be the Leibniz conformal algebra defined by
\begin{equation}\label{abf}
[a_\lambda a]=f(\lambda,\partial)b,\qquad [a_\lambda b]=[b_\lambda a]=[b_\lambda b]=0,
\end{equation}
where $f(\lambda,\partial)\in {\bf k}[\lambda,\partial]$. 
If the polynomial $f(\lambda,\partial)$ satisfies
\begin{eqnarray}
 f(\lambda,\partial)=-f(\partial,\lambda)-f(-\lambda-\partial,\lambda),
\end{eqnarray}
then $(Q_{f(\lambda, \partial)}, B_\lambda(\cdot,\cdot))$ is a quadratic Leibniz conformal algebra with the following conformal bilinear form:
\begin{equation} \label{abb}
B_\lambda(a,b)=-B_\lambda(b,a)=1,\qquad B_\lambda(a,a)=B_\lambda(b,b)=0.
\end{equation}
In particular, $(Q_{\lambda}, B_\lambda(\cdot,\cdot))$ is a quadratic Leibniz conformal algebra. \delete{then $(A={\bf k}[\partial] a\oplus{\bf k}[\partial]b,[\cdot_\lambda\cdot] )$ where $[\cdot_\lambda\cdot ]$ is given by \eqref{abf}  is a STLeibniz conformal algebra with the conformal bilinear form $B_\lambda(\cdot,\cdot)$ defined by \eqref{abb}.}
\end{ex}

\begin{ex}
Let $(A,\omega(\cdot,\cdot))$ be a skew-symmetric quadratic Leibniz algebra given in \cite{F}, that is, $A$ is a Leibniz algebra equipped with a non-degenerate skew-symmetric bilinear form $\omega(\cdot,\cdot)$ satisfying
$$
\omega(a,[b,c])=\omega([a,c]+[c,a],b),\;\;\; a, b, c\in A.
$$
Then $(\text{Cur}~ A,\omega_\lambda(\cdot,\cdot))$ is a quadratic Leibniz conformal algebra with a conformal bilinear form $\omega_\lambda(\cdot,\cdot) $ defined by
$$
\omega_\lambda(f(\partial)a,g(\partial)b)=f(-\lambda)g(\lambda)\omega(a,b),\;\;f(\partial),g(\partial)\in{\bf k}[\partial],\;\;a, b\in A.
$$
\end{ex}}

\begin{defi}
A {\bf conformal Manin triple} of Leibniz conformal algebras is a triple of finite Leibniz conformal algebras $(Q,Q_0,Q_1)$, where
\begin{enumerate}
\item 
there is a non-degenerate skew-symmetric conformal bilinear form $B_\lambda(\cdot,\cdot)$ on $Q$ such that the following invariant condition holds:
\begin{equation}\label{stlca}
B_\mu (a,[b_\lambda c])=B_{-\lambda}([a_\mu c]+[c_{-\mu-\partial}a],b)\;\;\;\text{for all $a, b, c\in Q$},
\end{equation}
\item $Q_0$ and $Q_1$ are Leibniz conformal subalgebras of $Q$, where $Q=Q_0\oplus Q_1$ as a direct sum of ${\bf k}[\partial]$-modules,
\item $Q_0$ and $Q_1$ are isotropic with respect to $B_\lambda(\cdot,\cdot)$, i.e. $B_\lambda(Q_i,Q_i)=0$ for $i=0,1$.
\end{enumerate}

\end{defi}
\delete{\begin{rmk}
Let $(Q,Q_0,Q_1)$ be a conformal Manin triple of Leibniz conformal algebras. By Proposition \ref{mplca}, $Q$ is isomorphic to  $Q_0\Join Q_1$  associated with some matched pair of Leibniz conformal algebras $Q_0$ and $Q_1$.
\end{rmk}

\cm{I doubt whether we should keep the above remark since it seems
quite obvious}}
\begin{ex}
Let $Q$ be a finite Leibniz conformal algebra which is free as a ${\bf k}[\partial]$-module. Then $(Q\ltimes_{L^*,-L^*-R^*} Q^{*c},Q,Q^{*c} )$ is a conformal Manin triple of Leibniz conformal algebras, where the non-degenerate skew-symmetric conformal bilinear form $B_\lambda(\cdot,\cdot)$ on $Q\ltimes_{L^*,-L^*-R^*} Q^{*c}$  is given by
\begin{equation}\label{mt}
B_\lambda(a+f,b+g)=\langle f,b\rangle_\lambda-\langle g,a\rangle_{-\lambda}\;\;\;\text{for all $a, b\in Q$ and $f, g\in Q^{\ast c}$}.
\end{equation}
\delete{Since for all $a,b,c\in A $ and $x,y,z\in A^{*c}$ , we have
\begin{align*}
&B_\mu(a+x,[(b+y)_\lambda (c+z)])=B_\mu(a+x,[b_\lambda c]+L^*(b)_\lambda z-(L^*+R^*)(c)_{\lambda-\partial}y )\\
=&\langle x,[b_\lambda c]\rangle_\mu-\langle L^*(b)_\lambda z- (L^*+R^*)(c)_{\lambda-\partial}y ,a\rangle_{-\mu}\\
=&\langle x,[b_\lambda c]\rangle_\mu+\langle z,[b_\lambda a]\rangle_{-\lambda-\mu}-\langle y,[c_{-\lambda-\mu} a]+[a_\mu c]\rangle_\lambda,
\end{align*}
and
\begin{align*}
&B_{-\lambda}([(a+x)_\mu (c+z)]+[(c+z)_{-\mu-\partial} (a+x)],b+y)\\
=&B_{-\lambda}([a_\mu c]-R^*(c)_{-\mu-\partial}x+[c_{-\mu-\partial}a]-R^*(a)_\mu z,b+y )\\
=&-\langle R^*(c)_{-\mu-\partial}x+ R^*(a)_\mu z,b\rangle_{-\lambda}-\langle  y, [a_\mu c]+[c_{-\mu-\partial}a]\rangle_\lambda\\
=&\langle x,[b_\lambda c]\rangle_\mu+\langle z,[b_\lambda a]\rangle_{-\lambda-\mu}-\langle  y, [a_\mu c]+[c_{-\mu-\partial}a]\rangle_\lambda,
\end{align*}
then the conformal bilinear form $B_\lambda(\cdot,\cdot)$ satisfies invariant condition \eqref{stlca}.
Thus $(A\ltimes_{L^*,-L^*-R^*} A^{*c},R,A^{*c} )$ is a conformal Manin triple of Leibniz conformal algebras.}

\end{ex}
In the sequel, for convenience, we set $\langle a,f\rangle_{-\lambda}:=-\langle f,a\rangle_\lambda$ for all $a\in Q$ and $f\in Q^{\ast c}$.
\begin{thm}\label{thmm}
Let $Q$ and $Q^{*c}$ be two finite Leibniz conformal algebras which are free as  ${\bf k}[\partial]$-modules. Then $(Q\bowtie Q^{*c},Q,Q^{*c})$ is a conformal Manin triple of Leibniz conformal algebras where the invariant skew-symmetric conformal bilinear form on $Q\oplus Q^{*c}$ is given by \eqref{mt} if and only if $(Q,Q^{*c};(L^*,-L^*-R^*),(\mathcal{L}^*,-\mathcal{L}^*-\mathcal{R}^*))$ is a matched pair of Leibniz conformal algebras.
\end{thm}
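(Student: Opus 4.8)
The plan is to reduce both implications to Proposition~\ref{mplca} and then to check directly that, for the dual-representation data, the form \eqref{mt} carries all the properties demanded of a conformal Manin triple. First I would recall from Proposition~\ref{dual} that $(Q^{*c},L^*,-L^*-R^*)$ is a representation of $Q$ and, applying the same proposition to the regular representation of $Q^{*c}$, that $(Q,\mathcal{L}^*,-\mathcal{L}^*-\mathcal{R}^*)$ is a representation of $Q^{*c}$. Hence the two halves of the proposed matched pair are automatically representations, and the only content of the matched-pair hypothesis is the compatibility system \eqref{mp1}--\eqref{mp6}. By Proposition~\ref{mplca}, this system holds if and only if the bracket \eqref{bp1}, specialized to these four actions, endows $Q\oplus Q^{*c}$ with a Leibniz conformal algebra structure in which $Q$ and $Q^{*c}$ are subalgebras. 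Thus it suffices to prove that, for this bracket, the form \eqref{mt} makes $(Q\oplus Q^{*c},Q,Q^{*c})$ a conformal Manin triple.

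Several of the axioms are immediate. Skew-symmetry of \eqref{mt} follows from the convention $\langle a,f\rangle_{-\lambda}=-\langle f,a\rangle_\lambda$; the isotropy $B_\lambda(Q,Q)=B_\lambda(Q^{*c},Q^{*c})=0$ is built into the definition of \eqref{mt}; and non-degeneracy follows from $Q$ being finite and free, so that $(Q^{*c})^{*c}\cong Q$ and the map $Q\oplus Q^{*c}\to(Q\oplus Q^{*c})^{*c}$ induced by $B$ is an isomorphism. The crux is therefore the invariance \eqref{stlca}. I would verify it by expanding $B_\mu(a,[b_\lambda c])$ and $B_{-\lambda}([a_\mu c]+[c_{-\mu-\partial}a],b)$ for $a,b,c$ ranging over homogeneous choices in $Q$ and $Q^{*c}$. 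By trilinearity and isotropy the two cases with all three arguments in one summand vanish, and the surviving mixed cases are handled by reading off the cross-brackets from \eqref{bp1} and then applying the defining relations \eqref{lrm} for $L^*,R^*,\mathcal{L}^*,\mathcal{R}^*$; in each case both sides collapse to the pairing of a single $\lambda$-bracket of $Q$ (respectively of $Q^{*c}$) against the remaining argument.

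The main obstacle is the conformal bookkeeping of the terms of the form $[c_{-\mu-\partial}a]$, where the derivation $\partial$ in the shifted bracket variable must be transported across the pairing. The key device is that, for $w\in Q^{*c}$ and $v\in Q$, conformal sesquilinearity gives $\langle(-\mu-\partial)^n w,v\rangle_{-\lambda}=(-\mu-\lambda)^n\langle w,v\rangle_{-\lambda}$, so that the substitution $\sigma\mapsto-\mu-\partial$ inside a bracket becomes the numerical substitution $\sigma\mapsto-\mu-\lambda$ once paired against an element at form-variable $-\lambda$. Carrying this out, the spurious internal brackets (for instance the $[u_\mu v]$-type terms arising from the $L^*$-part of $[h_{-\mu-\partial}u]$ and from $[u_\mu h]$) cancel, leaving exactly the left-hand side, which confirms \eqref{stlca}. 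Granting this computation, Proposition~\ref{mplca} yields the forward implication: a matched pair produces the Leibniz conformal algebra $Q\bowtie Q^{*c}$, which together with the verified properties of \eqref{mt} is a conformal Manin triple. For the converse, a conformal Manin triple is in particular a Leibniz conformal algebra structure on $Q\oplus Q^{*c}$ with $Q,Q^{*c}$ as subalgebras, so Proposition~\ref{mplca} returns a matched pair; the invariance \eqref{stlca} together with \eqref{mt} then forces the four actions to be precisely $L^*,-L^*-R^*,\mathcal{L}^*,-\mathcal{L}^*-\mathcal{R}^*$ via \eqref{lrm}, so the matched pair is the asserted one.
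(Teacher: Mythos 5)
Your proposal is correct and follows essentially the same route as the paper: the forward direction builds the bicrossed product from the matched pair via Proposition~\ref{mplca} and verifies the invariance \eqref{stlca} of the form \eqref{mt} by the same direct pairing computation (the paper treats general elements $a+f,b+g,c+h$ at once, which amounts to your case-by-case mixed-component expansion), and the converse likewise invokes Proposition~\ref{mplca} to obtain some matched pair $(l,r),(\phi,\psi)$ and then uses \eqref{stlca} with \eqref{mt} to force $l=L^*$, $r=-L^*-R^*$, $\phi=\mathcal{L}^*$, $\psi=-\mathcal{L}^*-\mathcal{R}^*$. The only difference is that you spell out details the paper calls obvious (non-degeneracy, and that the dual-representation data are representations via Proposition~\ref{dual}), which is harmless.
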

\begin{proof}
Let $(Q,Q^{*c};(L^*,-L^*-R^*),(\mathcal{L}^*,-\mathcal{L}^*-\mathcal{R}^*))$ be a matched pair of Leibniz conformal algebras. Then there is a Leibniz conformal algebra structure on $Q\oplus Q^{*c}$ with the $\lambda$-bracket given by \eqref{bp1}. Obviously, $B_\lambda(\cdot,\cdot)$ is non-degenerate and skew-symmetric. For all $a,b,c\in Q$ and $f,g,h\in Q^{*c}$,
we have
\begin{align*}
&B_\mu(a+f,[{(b+g)}_\lambda (c+h)])\\
=&B_\mu(a+f,[b_\lambda c]+\mathcal{L}^*(g)_\lambda c +(-\mathcal{L}^*-\mathcal{R}^*)(h)_{-\lambda-\partial}b+[g_\lambda z]+L^*(b)_\lambda h+(-L^*-R^*)(c)_{-\lambda-\partial}g )\\
=&\langle f,[b_\lambda c]\rangle_\mu+\langle f,\mathcal{L}^*(g)_\lambda c\rangle_\mu -\langle f,\mathcal{L}^*(h)_{-\lambda-\partial}b+\mathcal{R}^*(h)_{-\lambda-\partial}b\rangle_\mu
-\langle[g_\lambda h ],a\rangle_{-\mu}\\
&-\langle L^*(b)_\lambda h,a\rangle_{-\mu}+\langle L^*(c)_{-\lambda-\partial}g,a\rangle_{-\mu}+\langle R^*(c)_{-\lambda-\partial}g,a\rangle_{-\mu}\\
=&\langle f,[b_\lambda c]\rangle_\mu+\langle c,[g_\lambda f]\rangle_{-\mu-\lambda}-\langle b,[h_{-\mu-\lambda}f]\rangle_\lambda-\langle b,[f_\mu h]\rangle_\lambda+\langle a,[g_\lambda h]\rangle_\mu\\
&+\langle h,[b_\lambda a]\rangle_{-\lambda-\mu}-\langle g,[c_{-\lambda-\mu }a]\rangle_\lambda-\langle g,[a_\mu c]\rangle_\lambda.
\end{align*}
On the other hand, we have
\begin{align*}
&B_{-\lambda}( [{(a+f)}_\mu (c+h)]+[{(c+h)}_{-\mu-\partial} (a+f)] ,b+g)\\
=&B_{-\lambda}([a_\mu c]+\mathcal{L}^*(f)_\mu c-(\mathcal{L}^*+\mathcal{R}^*)(h)_{-\mu-\partial}a+[f_\mu h]+L^*(a)_\mu h-(L^*+R^*)(c)_{-\mu-\partial}f,b+g)\\
&+B_{-\lambda}([c_{-\mu-\partial} a]+\mathcal{L}^*(h)_{-\mu-\partial} a-(\mathcal{L}^*+\mathcal{R}^*)(f)_\mu c+[h_{-\mu-\partial} f]+L^*(c)_{-\mu-\partial} f-(L^*+R^*)(a)_\mu h,b+g)\\
=&\langle [a_\mu c]+[c_{-\mu-\partial }a],g\rangle_{-\lambda}-\langle c,[f_\mu g]\rangle_{-\lambda-\mu}+\langle a,[h_{-\lambda-\mu}g]\rangle_\mu+\langle a,[g_\lambda h]\rangle_\mu\\
&+\langle c,[g_\lambda f]\rangle_{-\lambda-\mu}+\langle[f_\mu h]+[h_{-\mu-\partial}f],b\rangle_{-\lambda}+\langle f,[b_\lambda c]\rangle_\mu+\langle h,[b_\lambda a]\rangle_{-\lambda-\mu}\\
=&\langle f,[b_\lambda c]\rangle_\mu+\langle c,[g_\lambda f]\rangle_{-\lambda-\mu}-\langle b,[h_{-\lambda-\mu} f]\rangle_\lambda-\langle b,[f_\mu h]\rangle_\lambda
+\langle a,[g_\lambda h]\rangle_\mu\\
&+\langle h,[b_\lambda a]\rangle_{-\lambda-\mu}-\langle g,[c_{-\lambda-\mu}a]\rangle_\lambda-\langle g,[a_\mu c]\rangle_\lambda.
\end{align*}
Thus $B_{\lambda}(\cdot,\cdot)$ satisfies  \eqref{stlca}. Therefore, $(Q\bowtie Q^{*c},Q,Q^{*c})$ is a conformal Manin triple of Leibniz conformal algebras where the invariant skew-symmetric conformal bilinear form $B_{\lambda}(\cdot,\cdot)$ on $Q\oplus Q^{*c}$ is given by \eqref{mt}.

Conversely, let $(Q\Join Q^{*c},Q,Q^{*c})$ be a conformal Manin triple of Leibniz conformal algebras with the non-degenerate skew-symmetric invariant bilinear form $B_\lambda(\cdot,\cdot)$ given by \eqref{mt}. Assume that $Q\Join Q^{*c}$ is a bicrossed product associated with a matched pair $(Q_1,Q_2;(l,r),(\phi,\psi))$ of Leibniz conformal algebras. Therefore, for all $a\in Q$, $f\in Q^{*c}$, assume that
\begin{align*}
&[a_\lambda f]=l(a)_\lambda f+\psi(f)_{-\lambda-\partial}a,\;\;[f_\lambda a]=\phi(f)_\lambda a+r(a)_{-\lambda-\partial}f.
\end{align*}
\delete{for some ${\bf k}[\partial]$-module homomorphisms $l,~r:Q\rightarrow \text{Cend}(Q^{*c})$ and $\phi,~\psi:Q^{*c}\rightarrow \text{Cend}(Q)$. }Let $a$, $b\in Q$ and $f$, $g\in Q^{*c}$. By \eqref{stlca}, we get
\begin{eqnarray*}
&&\langle g,\psi(f)_{-\lambda-\partial}a\rangle_\mu=B_\mu(g,[a_\lambda f])=B_{-\lambda}([g_\mu f]+[f_{-\mu-\partial}g], a)=-\langle a,[g_\mu f]+[f_{-\mu-\lambda}g]\rangle_\lambda\\
&&\qquad=\langle\mathcal{L}^*(f)_{-\lambda-\partial}a+\mathcal{R}^*(f)_{-\lambda-\partial}a,g\rangle_{-\mu}=-\langle g,(\mathcal{L}^*+\mathcal{R}^*)(f)_{-\lambda-\partial} a\rangle_\mu,
\end{eqnarray*}
which means that $\psi=-\mathcal{L}^*-\mathcal{R}^*$. Moreover, we have
\begin{align*}
&\langle l(a)_\lambda f,b\rangle_\mu=B_{\mu}([a_\lambda f],b)
=-B_{-\lambda}([b_{-\mu}f]+[f_{\mu-\partial}b], a )=-\langle f,[a_\lambda b]\rangle_{\mu-\lambda}
=\langle L^*(a)_\lambda f,b\rangle_\mu,
\end{align*}
which means that $l=L^*$. Similarly, we can get $\phi=\mathcal{L}^*$ and $r=-L^*-R^*$.
Hence, $(Q,Q^{*c};(L^*,-L^*-R^*),(\mathcal{L}^*,-\mathcal{L}^*-\mathcal{R}^*))$ is a matched pair of Leibniz conformal algebras.
\end{proof}

\section{Leibniz conformal bialgebras}\label{3}
We introduce the notion of  Leibniz conformal bialgebras and then
establish the equivalences among some matched pairs of Leibniz
conformal algebras, conformal Manin triples of Leibniz conformal
algebras, and Leibniz conformal bialgebras.

First, we introduce the notion of Leibniz conformal coalgebras.
\begin{defi}
A {\bf Leibniz conformal coalgebra} is a ${\bf k}[\partial]$-module $Q$ endowed with a ${\bf k}[\partial]$-module homomorphism $\Delta :Q\rightarrow Q\otimes Q$ such that
\begin{equation}\label{co-eq}
(\id\otimes \Delta)\Delta(a)=(\Delta\otimes \id)\Delta(a)+\tau_{12} (\id\otimes \Delta)\Delta (a)\;\;\;\text{for all $a\in Q$},
\end{equation}
where the module action of ${\bf k}[\partial] $ on $Q\otimes Q$ is defined as
$\partial (a\otimes b)=(\partial a)\otimes b+a\otimes (\partial b)$ for all $a, b\in Q$.
\end{defi}

Let $Q$ be a ${\bf k}[\partial]$-module. For all $f, g\in Q^{*c}$ and $a,b\in Q$, we define
\begin{equation*}
\langle f\otimes g,a\otimes b\rangle_{\lambda,\mu}=\langle f,a\rangle_\lambda\langle g,b\rangle_\mu.
\end{equation*}

\begin{pro}\label{ppc}
\begin{enumerate}
\item Let $(Q,\Delta)$ be a finite Leibniz conformal coalgebra. Then $Q^{*c}$ is a Leibniz conformal algebra with the following $\lambda$-bracket:
\begin{equation}\label{ca}
\langle[f_\lambda g],a\rangle_\mu=\langle f\otimes g,\Delta (a)\rangle_{\lambda,\mu-\lambda}\;\;\; \text{for all $f, g\in Q^{\ast c}$ and $a\in Q$}.
\end{equation}
\item Let $Q$ be a finite Leibniz conformal algebra which is free as a  ${\bf k}[\partial]$-module, that is, $Q=\sum^n_{i=1}{\bf k}[\partial]a^i$, where $\{a^1, \cdots, a^n \}$ is a ${\bf k}[\partial]$-basis of $Q$. Then $Q^{*c}=\text{Chom}(A,{\bf k})=\sum^n_{i=1}{\bf k}[\partial]a_i $, where $\{a_1, \cdots, a_n \}$ is the dual ${\bf k}[\partial]$-basis in the sense that $(a_i)_\lambda a^j=\delta_{ij} $, is a Leibniz conformal coalgebra with the following co-bracket:
\begin{equation}\label{cob}
\Delta(f)=\sum_{i,j}f_\mu([a^i_\lambda a^j])(a_i\otimes a_j)|_{\lambda=\partial\otimes \id,\mu=-\partial\otimes \id-\id\otimes\partial}.
\end{equation}
More precisely, if $[a^i_\lambda
a^j]=\sum_k P^{ij}_k(\lambda,\partial)a^k$, where
$P^{ij}_k(\lambda,\partial)\in {\bf k}[\lambda,\partial]$, then
the co-bracket is
\begin{equation}
\Delta(a_k)=\sum_{i,j}Q^{ij}_k(\partial\otimes \id,\id\otimes\partial)a_i\otimes a_j,
\end{equation}
where $Q^{ij}_k(x,y)=P^{ij}_k(x,-x-y)$.
\end{enumerate}
\end{pro}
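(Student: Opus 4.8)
The plan is to prove the two parts in turn, reducing the co-Jacobi identity in part (b) to the computation carried out in part (a). For part (a), I would first check that \eqref{ca} genuinely defines a $\lambda$-bracket, i.e. that for fixed $f,g$ the assignment $a\mapsto\langle f\otimes g,\Delta(a)\rangle_{\lambda,\mu-\lambda}$ is, for each $\lambda$, a conformal functional depending polynomially on $\lambda$. Writing $\Delta(a)=\sum a_{(1)}\otimes a_{(2)}$, this amounts to the relation $\langle[f_\lambda g],\partial a\rangle_\mu=\mu\langle[f_\lambda g],a\rangle_\mu$, which follows from $\Delta$ being a ${\bf k}[\partial]$-module homomorphism together with the conformal-dual rule $\langle h,\partial u\rangle_\nu=\nu\langle h,u\rangle_\nu$: the two exponents $\lambda$ and $\mu-\lambda$ add up to $\mu$. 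The two sesquilinearity relations $[\partial f_\lambda g]=-\lambda[f_\lambda g]$ and $[f_\lambda\partial g]=(\lambda+\partial)[f_\lambda g]$ are then obtained by the same short computation, using $\langle\partial f,u\rangle_\lambda=-\lambda\langle f,u\rangle_\lambda$ on $Q^{*c}$ and $\langle\partial h,a\rangle_\mu=-\mu\langle h,a\rangle_\mu$ for $h=[f_\lambda g]$.

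The core of part (a) is to match the Jacobi identity \eqref{li} for $[\cdot_\lambda\cdot]$ on $Q^{*c}$ with the co-Jacobi identity \eqref{co-eq} for $\Delta$. Introducing the threefold pairing $\langle f\otimes g\otimes h,x\otimes y\otimes z\rangle_{\alpha,\beta,\gamma}=\langle f,x\rangle_\alpha\langle g,y\rangle_\beta\langle h,z\rangle_\gamma$, I would pair each of the three terms of \eqref{li} against an arbitrary $a\in Q$ with an auxiliary variable $\theta$ and unwind the nested brackets through \eqref{ca}. The outcome is
\begin{align*}
&\langle[f_\lambda[g_\mu h]],a\rangle_\theta=\langle f\otimes g\otimes h,(\id\otimes\Delta)\Delta(a)\rangle_{\lambda,\mu,\theta-\lambda-\mu},\\
&\langle[[f_\lambda g]_{\lambda+\mu}h],a\rangle_\theta=\langle f\otimes g\otimes h,(\Delta\otimes\id)\Delta(a)\rangle_{\lambda,\mu,\theta-\lambda-\mu},\\
&\langle[g_\mu[f_\lambda h]],a\rangle_\theta=\langle f\otimes g\otimes h,\tau_{12}(\id\otimes\Delta)\Delta(a)\rangle_{\lambda,\mu,\theta-\lambda-\mu},
\end{align*}
where the flip $\tau_{12}$ appears in the last line precisely because $g$ occupies the outer slot there. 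Since $Q$ is finite and free, the threefold conformal pairing is non-degenerate and separates the points of $Q\otimes Q\otimes Q$, so \eqref{li} holds on $Q^{*c}$ if and only if \eqref{co-eq} holds on $Q$; as the latter is assumed, $(Q^{*c},[\cdot_\lambda\cdot])$ is a Leibniz conformal algebra.

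For part (b), I would first record that the two displayed formulas for $\Delta$ agree: pairing $a_\ell$ against $[a^i_\lambda a^j]=\sum_k P^{ij}_k(\lambda,\partial)a^k$ gives $\langle a_\ell,[a^i_\lambda a^j]\rangle_\mu=P^{ij}_\ell(\lambda,\mu)$ by the dual-basis relation $(a_\ell)_\mu a^k=\delta_{\ell k}$ and $(a_\ell)_\mu(\partial u)=\mu(a_\ell)_\mu u$, and the substitution $\lambda=\partial\otimes\id$, $\mu=-\partial\otimes\id-\id\otimes\partial$ turns $P^{ij}_\ell(\lambda,\mu)$ into $Q^{ij}_\ell(\partial\otimes\id,\id\otimes\partial)$ with $Q^{ij}_\ell(x,y)=P^{ij}_\ell(x,-x-y)$, as in \eqref{cob}. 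Next I would check that $\Delta$ is a ${\bf k}[\partial]$-module homomorphism; on the dual basis this is the identity $\Delta(\partial a_\ell)=(\partial\otimes\id+\id\otimes\partial)\Delta(a_\ell)$, which follows from $(\partial a_\ell)_\mu=-\mu(a_\ell)_\mu$ and the substitution $-\mu\mapsto\partial\otimes\id+\id\otimes\partial$. Finally, rather than verify \eqref{co-eq} afresh, I would invoke the equivalence established in part (a) with the roles of algebra and coalgebra interchanged. Since $Q$ is finite free, $Q^{*c}$ is finite free and the canonical map $Q\to(Q^{*c})^{*c}$ is an isomorphism; applying the part-(a) construction to the homomorphism $\Delta\colon Q^{*c}\to Q^{*c}\otimes Q^{*c}$ produces a $\lambda$-bracket on $(Q^{*c})^{*c}$, and the key step is to check on the basis that, under $Q\cong(Q^{*c})^{*c}$, this induced bracket coincides with the original bracket of $Q$. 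This is a direct dual-basis computation in which the substitution defining $Q^{ij}_\ell$ and the conformal-dual sign conventions conspire to reproduce the structure constants $P^{ij}_\ell$. The part-(a) equivalence then forces $\Delta$ to satisfy \eqref{co-eq}, because the original bracket of $Q$ satisfies \eqref{li}.

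I expect the main obstacle to be the consistent bookkeeping of the conformal variables: the shift $\mu\mapsto\mu-\lambda$ in \eqref{ca}, the opposing sign rules $\langle\partial f,u\rangle_\lambda=-\lambda\langle f,u\rangle_\lambda$ versus $\langle f,\partial u\rangle_\lambda=\lambda\langle f,u\rangle_\lambda$ for the conformal dual, and the substitution $\mu=-\partial\otimes\id-\id\otimes\partial$ in part (b) must all be tracked so that the three terms in part (a) land on $(\id\otimes\Delta)\Delta$, $(\Delta\otimes\id)\Delta$ and $\tau_{12}(\id\otimes\Delta)\Delta$ with matching exponents, and so that the double-dual bracket in part (b) reproduces the original one exactly. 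As an alternative to the duality argument, a direct verification of \eqref{co-eq} from the explicit coefficients $Q^{ij}_\ell$ and the Jacobi identity of $Q$ is also available, at the cost of a longer computation.
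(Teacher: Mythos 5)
Your proposal is correct, but for part (b) it takes a genuinely different route from the paper. The paper's proof (deferred to the proof of Liberati's Proposition 2.13, and visible in the authors' deleted draft) handles (a) exactly as you do — pairing the three terms of the Jacobi identity against $a$ to land on $(\id\otimes\Delta)\Delta$, $(\Delta\otimes\id)\Delta$ and $\tau_{12}(\id\otimes\Delta)\Delta$ — but for (b) it verifies \eqref{co-eq} \emph{directly}: it expands $(\id\otimes\Delta)\Delta(a_k)$ and $(\Delta\otimes\id)\Delta(a_k)$ in the coefficients $Q^{ij}_k$, extracts from the Jacobi identity on basis elements the polynomial identity $\sum_{j}P^{lr}_j(\mu,\lambda+\partial)P^{ij}_k(\lambda,\partial)=\sum_{j}P^{il}_j(\lambda,-\lambda-\mu)P^{jr}_k(\lambda+\mu,\partial)+\sum_{j}P^{ir}_j(\lambda,\mu+\partial)P^{lj}_k(\mu,\partial)$, and translates it through $Q^{ij}_k(x,y)=P^{ij}_k(x,-x-y)$. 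You instead reduce (b) to (a) by double duality: identify $Q\cong(Q^{*c})^{*c}$, check on the dual basis that the bracket induced by $\Delta$ recovers the original structure constants $P^{ij}_k$, and then use the (a)-equivalence in the converse direction to force \eqref{co-eq}. This buys you a shorter, more conceptual argument at the cost of two ingredients the paper never needs: a separation property of the conformal pairing on triple tensors, and a sign-consistent double-dual identification (with $\langle a,f\rangle_\mu:=\langle f,a\rangle_{-\mu}$ the brackets match on the nose; the paper's signed convention $\langle a,f\rangle_{-\lambda}=-\langle f,a\rangle_\lambda$, introduced later for the Manin-triple form, would flip the sign, so your bookkeeping caveat is exactly right). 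One inaccuracy worth fixing: in part (a) you invoke ``$Q$ finite and free'' and claim an if-and-only-if, but freeness is not a hypothesis of (a), and for a torsion module $Q^{*c}$ does \emph{not} separate points; fortunately the implication you actually use there (co-Jacobi $\Rightarrow$ Jacobi) needs no separation at all, since an element of $Q^{*c}$ is zero precisely when all its pairings vanish. The separation argument belongs only in part (b), where it legitimately applies because $Q^{*c}$ is free of finite rank.
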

\begin{proof}
It follows directly by the proof of \cite[Proposition 2.13]{L}.
\end{proof}

\delete{It is easy to check that the $\lambda$-bracket  defined by \eqref{ca} satisfies \eqref{cs}.
For all $a\in R$ and $f,g,h\in A^{*c}$, we have
\begin{align*}
&\langle [f_\lambda[g_\mu h]],a  \rangle_\theta=\langle f\otimes [g_\mu h],\Delta (a) \rangle_{\lambda,\theta-\lambda}=\langle f\otimes g\otimes h,(Id\otimes\Delta)\circ \Delta(a)\rangle_{\lambda,\mu,\theta-\lambda-\mu},\\
&\langle [[f_\lambda g]_{\lambda+\mu} h],a  \rangle_\theta=\langle [f_\lambda g]\otimes h,\Delta (a) \rangle_{\lambda+\mu,\theta-\lambda-\mu}=\langle f\otimes g\otimes h, (\Delta\otimes Id)\circ\Delta(a) \rangle_{ \lambda,\mu,\theta-\lambda-\mu},\\
&\langle [g_\mu[f_\lambda h]],a \rangle_\theta=\langle g\otimes [f_\lambda h],\Delta(a)\rangle_{\mu,\theta-\mu}=\langle f\otimes g\otimes h, \tau_{12}(Id\otimes\Delta)\circ\Delta(a) \rangle_{ \lambda,\mu,\theta-\lambda-\mu},
\end{align*}
and then by \eqref{co-eq}, we get  the $\lambda$-bracket satisfies \eqref{li}. Thus the conclusion holds.
\end{proof}

\begin{pro} \label{cb}
Let $A$ be a finite Leibniz conformal algebra which is free as a  ${\bf k}[\partial]$-module, that is, $A=\sum^n_{i=1}{\bf k}[\partial]a^i$, where $\{a^i \}_{i=1}^n$ is a ${\bf k}[\partial]$-basis of $A$. Then $A^{*c}=\text{Chom}(A,\mathbb{C})=\sum^n_{i=1}{\bf k}[\partial]a_i $, where $\{a_i \}_{i=1}^n$ is a dual ${\bf k}[\partial]$-basis, is a Leibniz conformal coalgebra with the following co-bracket:
\begin{equation}\label{cob}
\Delta(f)=\sum_{i,j}f_\mu([a^i_\lambda a^j])(a_i\otimes a_j)|_{\lambda=\partial\otimes Id,\mu=-\partial\otimes Id-Id\otimes\partial}.
\end{equation}
Moreover precisely, if $[a^i_\lambda a^j]=\sum_k P^{ij}_k(\lambda,\partial)a^k$, where $P^{ij}_k$ are some polynomials in $\lambda$ and $\partial$, then the co-bracket is
\begin{equation}
\Delta(a_k)=\sum_{i,j}Q^{ij}_k(\partial\otimes Id,Id\otimes\partial)a_i\otimes a_j,
\end{equation}
where $Q^{ij}_k(x,y)=P^{ij}_k(x,-x-y)$.
\end{pro}
\begin{proof}
By \eqref{cob}, we have
\begin{align*}
\Delta(a_k)&=\sum_{i,j}{a_k}_\mu([{a^i}_\lambda a^j])(a_i\otimes a_j)|_{\lambda=\partial\otimes Id,\mu=-\partial\otimes Id-Id\otimes\partial}\\
&=\sum_{i,j}{a_k}_\mu(\sum_s P^{ij}_s(\lambda,\partial)a^s )(a_i\otimes a_j)|_{\lambda=\partial\otimes Id,\mu=-\partial\otimes Id-Id\otimes\partial}\\
&=\sum_{i,j }P^{ij}_k(\lambda,\mu)(a_i\otimes a_j)|_{\lambda=\partial\otimes Id,\mu=-\partial\otimes Id-Id\otimes\partial}\\
&=\sum_{i,j } Q^{ij}_k(\partial\otimes Id,Id\otimes\partial)a_i\otimes a_j.
\end{align*}
Next, we just need to check \eqref{co-eq}. We have
\begin{align*}
&(Id\otimes \Delta)\Delta(a_k)\\
=&(Id\otimes \Delta)\sum_{i,j} Q^{ij}_k(\partial\otimes Id,Id\otimes \partial)a_i\otimes a_j\\
=&\sum_{i,j,l,r}Q^{ij}_k(\partial\otimes Id\otimes Id,Id\otimes\partial\otimes Id+Id\otimes Id\otimes\partial)Q^{lr}_j(Id\otimes\partial\otimes Id,Id\otimes Id\otimes\partial)(a_i\otimes a_l\otimes a_r )
\end{align*}
and
\begin{align*}
&(\Delta\otimes Id)\Delta(a_k)\\
=&\sum_{i,j,l,r}Q^{ij}_k(\partial\otimes Id\otimes Id+Id\otimes\partial\otimes Id,Id\otimes Id\otimes\partial)Q^{lr}_i(\partial\otimes Id\otimes Id,Id\otimes\partial\otimes Id)(a_l\otimes a_r\otimes a_j )\\
=&\sum_{i,j,l,r}Q^{jr}_k(\partial\otimes Id\otimes Id+Id\otimes\partial\otimes Id,Id\otimes Id\otimes\partial)Q^{rl}_j(\partial\otimes Id\otimes Id,Id\otimes\partial\otimes Id)(a_i\otimes a_l\otimes a_r ).
\end{align*}
On the other hand, by \eqref{li} we can get
\begin{align*}
&[a^i_\lambda[a^l_\mu a^r]]-[[a^i_\lambda a^l]_{\lambda+\mu}a^r ]-[a^l_\mu[a^i_\lambda a^r] ]=0\\
=&[a^i_\lambda (\sum_j P^{lr}_j(\mu,\partial)a^j)]-[(\sum_j P^{il}_j(\lambda,\partial) a^j)_{\lambda+\mu}a^r]
-[a^l_\mu (P^{ir}_j(\lambda,\partial)a^j)]\\
=&\sum_{j,k}(P^{lr}_j(\mu,\lambda+\partial)P_k^{ij}(\lambda,\partial)
-P^{il}_j(\lambda,-\lambda-\mu)P^{jr}_k(\lambda+\mu,\partial)-P^{ir}_j(\lambda,\mu+\partial)P^{lj}_k(\mu,\partial))a^k,
\end{align*}
which means that
\begin{equation*}
\sum_{j}P^{lr}_j(\mu,\lambda+\partial)P_k^{ij}(\lambda,\partial)=
\sum_{j}P^{il}_j(\lambda,-\lambda-\mu)P^{jr}_k(\lambda+\mu,\partial)+\sum_{j}P^{ir}_j(\lambda,\mu+\partial)P^{lj}_k(\mu,\partial)).
\end{equation*}
By $P^{ij}_k(x,-x-y) = Q^{ij}_k(x,y)$, we can get \eqref{co-eq}.
Hence the conclusion holds.
\end{proof}}

\begin{cor}
 \delete{Let $(Q={\bf k}[\partial]a,\Delta)$ be a Leibniz conformal coalgebra. which is free and rank 1 as a ${\bf k}[\partial]$-module and $\Delta (p(\partial)a)=p(\partial^{\otimes^2})(\partial a\otimes a-a\otimes \partial a)$ with $\partial^{\otimes^2}=\partial \otimes \id+\id\otimes \partial$. Then it is a Leibniz conformal coalgebra and $(Q^{*c},\Delta^{*c})$ is isomorphic to Vir as Lie conformal algebras. Moreover,}
 Let $(Q, \Delta)$ be a Leibniz conformal coalgebra where $Q$ is free of rank $1$ as a ${\bf k}[\partial]$-module. Then $(Q, \Delta)\simeq ({\bf k}[\partial]a, \Delta^\prime)$ with $\Delta^\prime(a)=0$ or $\Delta^\prime(a)=c(\partial a\otimes a-a\otimes \partial a)$, $c\in {\bf k}\backslash\{0\}$. 
\end{cor}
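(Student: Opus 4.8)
The plan is to pass to the conformal dual and invoke the rank-one classification on the algebra side. Since $Q={\bf k}[\partial]a$ is finite and free, Proposition~\ref{ppc} endows $Q^{*c}$ with a Leibniz conformal algebra structure whose $\lambda$-bracket is determined by $\Delta$ via \eqref{ca}; moreover $Q^{*c}={\bf k}[\partial]L$ is again free of rank one, where $L$ denotes the dual basis element normalized by $\langle L,a\rangle_\lambda=1$. By Proposition~\ref{ll}, $Q^{*c}$ is either abelian or isomorphic to the Virasoro Lie conformal algebra, and the idea is to read off the two corresponding possibilities for $\Delta$ by transporting the bracket back through \eqref{ca}.

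First I would pin down the bracket of $Q^{*c}$ in the fixed basis $L$. Writing $[L_\lambda L]=P(\lambda,\partial)L$ for a polynomial $P$, a rescaling argument identifies the two isomorphism classes with $P=0$ and $P(\lambda,\partial)=c(\partial+2\lambda)$, $c\in{\bf k}\backslash\{0\}$: every ${\bf k}[\partial]$-module automorphism of the free rank-one module ${\bf k}[\partial]L$ is multiplication by a nonzero scalar $q$, and under $L\mapsto qL$ the datum $P$ is replaced by $qP$, so the Virasoro normal form $\partial+2\lambda$ sweeps out exactly the scalar multiples $c(\partial+2\lambda)$.

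Next I would transport this back to $\Delta$. Write $\Delta(a)=h(\partial\otimes\id,\,\id\otimes\partial)(a\otimes a)$ for a polynomial $h(x,y)$, which is legitimate since $\Delta$ is a ${\bf k}[\partial]$-module map out of a free rank-one module. Evaluating \eqref{ca} at $f=g=L$ and using the conformal-dual conventions $\langle L,\partial^i a\rangle_\lambda=\lambda^i$ and $\langle\partial^k L,a\rangle_\mu=(-\mu)^k$, the left-hand side becomes $P(\lambda,-\mu)$ while the right-hand side becomes $h(\lambda,\mu-\lambda)$, giving $h(\lambda,\mu-\lambda)=P(\lambda,-\mu)$. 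Substituting $s=\lambda$, $t=\mu-\lambda$ turns the Virasoro datum $P(\lambda,-\mu)=c(2\lambda-\mu)$ into $h(s,t)=c(s-t)$, whence $\Delta(a)=c(\partial a\otimes a-a\otimes\partial a)$; the abelian case $P=0$ forces $h=0$, i.e.\ $\Delta=0$. For $c\neq0$ one may further rescale the generator $a$ to normalize $c$ if desired, which yields the asserted forms up to isomorphism.

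The main obstacle I anticipate is purely bookkeeping: keeping the two sign conventions of the conformal dual straight (the pairing $\langle L,\partial u\rangle_\lambda=\lambda\langle L,u\rangle_\lambda$ on inputs versus $\langle\partial L,u\rangle_\lambda=-\lambda\langle L,u\rangle_\lambda$ on the dual module) and correctly implementing the shift $\mu\mapsto\mu-\lambda$ built into \eqref{ca}. As a cross-check, one can bypass the duality and expand \eqref{co-eq} directly: with $x_i$ the derivation on the $i$-th tensor factor, the co-Leibniz identity becomes the polynomial identity $h(x_1,x_2+x_3)h(x_2,x_3)=h(x_1+x_2,x_3)h(x_1,x_2)+h(x_2,x_1+x_3)h(x_1,x_3)$, and analyzing it recovers $h(x,y)=c(x-y)$; this is more laborious, which is why the dual route is preferable.
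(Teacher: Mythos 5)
Your proposal is correct and follows exactly the paper's route: the paper proves this corollary by combining Proposition~\ref{ppc} (dualizing the coalgebra to a rank-one Leibniz conformal algebra structure on $Q^{*c}$ via \eqref{ca}) with Proposition~\ref{ll} (abelian or Virasoro), which is precisely your argument, with the rescaling normalization and the transport computation $h(\lambda,\mu-\lambda)=P(\lambda,-\mu)$ spelled out in detail. Your details check out (including the cross-check polynomial identity for \eqref{co-eq}), so this is a faithful, fully worked-out version of the paper's own proof.
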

\begin{proof}
It follows directly by Propositions \ref{ll} and \ref{ppc}.
\end{proof}

In the sequel, we always set $
\partial^{\otimes^2}:=\partial \otimes \id+\id\otimes \partial.
$

Next, we introduce the definition of Leibniz conformal bialgebras.
\begin{defi}
Let $(Q,[\cdot_\lambda\cdot])$ be a Leibniz conformal algebra and $(Q,\Delta)$ be a Leibniz conformal coalgebra. Then $(Q,[\cdot_\lambda\cdot],\Delta)$ is called a {\bf Leibniz conformal bialgebra} if the following conditions hold:
\begin{align}
\label{lb1}&(R(a)_{\lambda-\partial\otimes \id}\otimes \id) \Delta(b)=\tau ((R(b)_{-\lambda-\partial\otimes \id}\otimes \id) \Delta(a)),\\
\label{lb2}& \Delta([a_\lambda b])=(\id\otimes R(b)_{-\lambda-\partial^{\otimes^2}}-L(b)_{-\lambda-\partial^{\otimes^2}}\otimes \id-R(b)_{-\lambda-\partial^{\otimes^2}}\otimes \id) (\tau+\id)\Delta(a)\\
&\qquad \qquad  +(\id\otimes L(a)_\lambda+L(a)_\lambda\otimes \id)\Delta(b)\;\;\;\text{for all $a, b\in Q$.} \nonumber
\end{align}
\end{defi}

We give a remark to illustrate the
relationship between Leibniz bialgebras and Leibniz conformal
algebras.
\begin{rmk} Recall \cite{TS} that a Leibniz bialgebra is a triple $(A, [\cdot,\cdot], \delta)$, where $(A, [\cdot,\cdot])$ is a Leibniz algebra, $(A, \delta)$ is a Leibniz coalgebra in the sense that
    $$
    (\id\otimes \delta)\delta(a)=(\delta\otimes \id)\delta(a)+\tau_{12} (\id\otimes \delta)\delta (a)\;\;\;\text{for all $a\in A$},
    $$
 and they satisfy the following equalities
\begin{align*}
&(R_A(a)\otimes \id) \delta(b)=\tau ((R_A(b)\otimes \id) \delta(a)),\\
&\delta([a, b])=(\id\otimes R_A(b)-L_A(b)\otimes \id-R_A(b)\otimes \id) (\tau+\id)\delta(a) +(\id\otimes L_A(a)+L_A(a)_\lambda\otimes \id)\delta(b),
\end{align*}
for all $a, b\in A$, where $L_A(a)(b)=[a,b]=R_A(b)(a)$.

Let $(A, [\cdot,\cdot], \delta)$ be a Leibniz bialgebra. Then $(\text{Cur}\;A={\bf k}[\partial]\otimes A, [\cdot_\lambda \cdot], \Delta)$ is a Leibniz conformal bialgebra, where $(\text{Cur}\;A, [\cdot_\lambda \cdot])$ is the current Leibniz conformal algebra associated with  $(A, [\cdot,\cdot])$ and $\Delta$ is a ${\bf k}[\partial]$-module homomorphism defined by
\begin{eqnarray*}
\Delta(a)=\delta(a)\;\;\;\text{for all $a\in A$.}
\end{eqnarray*}
$(\text{Cur}\;A={\bf k}[\partial]\otimes A, [\cdot_\lambda \cdot], \Delta)$ is called the {\bf current Leibniz conformal bialgebras} associated with $(A, [\cdot,\cdot], \delta)$.

Conversely, let $(Q,[\cdot_\lambda\cdot],\Delta)$ be a Leibniz conformal bialgebra and $\pi: Q\rightarrow Q/\partial Q$ be the natural projection. Set $[a_\lambda b]=\sum_{i=0}^{n_{a,b}}\frac{\lambda^i}{i!}a_{(i)}b$ for all $a$, $b\in Q$. Then it is easy to see that $(Q/\partial Q, [\cdot,\cdot], \delta)$ is a Leibniz bialgebra, where $[\cdot,\cdot]$ and $\delta$ are defined by
\begin{eqnarray*}
[a+\partial Q, b+\partial Q]:=\pi(a_{(0)} b),
 \;\;\;\delta(a+\partial Q):=(\pi\otimes \pi)\Delta(a)\;\;\; \text{for all $a$, $b\in Q$.}
\end{eqnarray*}
Hence in the above sense, Leibniz conformal bialgebras are regarded as ``conformal analogues" of Leibniz bialgebras.
\end{rmk}

\begin{pro} \label{pmb}
Let $(Q, [\cdot_\lambda \cdot])$ be a finite Leibniz conformal algebra which is free as a ${\bf k}[\partial]$-module. Suppose that there is a Leibniz conformal algebra structure on $Q^{\ast c}$
which is obtained from a ${\bf k}[\partial]$-module homomorphism $\Delta: Q\rightarrow Q\otimes Q$.
Then $(Q,Q^{*c};(L^*,-L^*-R^*),(\mathcal{L}^*,-\mathcal{L}^*-\mathcal{R}^*))$ is a matched pair of Leibniz conformal algebras if and only if $(Q,[\cdot_\lambda\cdot],\Delta)$ is a Leibniz conformal bialgebra.
\end{pro}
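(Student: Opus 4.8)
The plan is to observe first that the two \emph{representation} requirements in the definition of a matched pair are automatic here, so that the real content of the statement is the equivalence between the six compatibility conditions \eqref{mp1}--\eqref{mp6} and the two bialgebra conditions \eqref{lb1}--\eqref{lb2}. Indeed, $(Q^{*c},L^*,-L^*-R^*)$ is exactly the dual representation of the regular representation $(Q,L,R)$ of $Q$, hence a representation of $Q$ by Proposition \ref{dual}. Dually, the hypothesis that $\Delta$ equips $Q^{*c}$ with a Leibniz conformal algebra structure (via \eqref{ca}, equivalently that $\Delta$ satisfies the coalgebra axiom \eqref{co-eq} by Proposition \ref{ppc}) means $(Q^{*c},\mathcal{L},\mathcal{R})$ is its regular representation; its dual is $(Q,\mathcal{L}^*,-\mathcal{L}^*-\mathcal{R}^*)$ under the identification $(Q^{*c})^{*c}\cong Q$ (valid since $Q$ is finite and free), so this is a representation of $Q^{*c}$, again by Proposition \ref{dual}. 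Therefore the quadruple is a matched pair if and only if \eqref{mp1}--\eqref{mp6} hold.

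The core of the argument is then a translation: each of \eqref{mp1}--\eqref{mp6} is an identity in $Q$ or in $Q^{*c}$, and I would pair it against a test element of the opposite space, using the defining relations \eqref{lrm} for $L^*,R^*,\mathcal{L}^*,\mathcal{R}^*$ together with \eqref{ca} relating the $Q^{*c}$-bracket to $\Delta$. Because $Q$ is finite and free, the pairing $\langle\cdot,\cdot\rangle_\lambda$ is faithful, so each matched pair condition becomes equivalent to a condition stated purely in terms of $\Delta$ and $[\cdot_\lambda\cdot]$. I expect that the conditions \eqref{mp1}, \eqref{mp3}, \eqref{mp6} translate (after invoking the Leibniz conformal algebra axioms and \eqref{co-eq}) into the symmetry condition \eqref{lb1}, with some of the three becoming mutually equivalent or redundant, while \eqref{mp2}, \eqref{mp4}, \eqref{mp5} assemble into the cocycle-type compatibility \eqref{lb2} expressing $\Delta([a_\lambda b])$ through the operators $L(a)$, $R(b)$ and $\Delta$. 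Transposing the dual actions through $\Delta$ is what converts the operator-form conditions of the matched pair into the co-operator form appearing in \eqref{lb1}--\eqref{lb2}.

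The main obstacle will be the conformal-variable bookkeeping. When a $\text{Cend}$-valued expression is paired against a vector, an auxiliary spectral parameter (call it $\theta$) appears, and one must track how the shifts $-\mu-\partial$, $-\lambda-\partial^{\otimes^2}$ and the placement of $\partial$ on the two legs of $Q\otimes Q$ interact after the substitutions $\lambda=\partial\otimes\id$, $\mu=-\partial\otimes\id-\id\otimes\partial$ implicit in \eqref{cob}. A secondary subtlety is checking that the flip $\tau$ and the symmetrizer $\tau+\id$ in \eqref{lb1}--\eqref{lb2} arise correctly from the symmetry between the two slots of $\Delta$; this is precisely where the convention $\langle a,f\rangle_{-\lambda}=-\langle f,a\rangle_\lambda$ must be applied carefully, and where the interchange of $\lambda$ and $\mu$ in the two arguments of $\Delta$ produces the transposition.

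Once this dictionary between the two sets of conditions is in place, the equivalence follows in both directions: each step of the translation is reversible, since it consists of faithful pairings and the reversible identities \eqref{lrm}, \eqref{ca}. Thus \eqref{mp1}--\eqref{mp6} hold if and only if \eqref{lb1}--\eqref{lb2} hold, which is the assertion of the proposition.
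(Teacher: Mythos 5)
Your proposal is correct and takes essentially the same route as the paper: the representation requirements are indeed automatic (Proposition \ref{dual} applied to the regular representations, with $(Q^{*c})^{*c}\cong Q$ since $Q$ is finite and free), and the paper likewise reduces the statement to translating \eqref{mp1}--\eqref{mp6} into \eqref{lb1}--\eqref{lb2} through the faithful pairing, carried out by comparing coefficients against a dual ${\bf k}[\partial]$-basis (structure constants $P^{ij}_k$, $H^{ij}_k$) together with coordinate-free pairing computations. The only discrepancy is in your predicted grouping: the paper establishes \eqref{lb1}$\Leftrightarrow$\eqref{mp6}$\Leftrightarrow$\eqref{mp3} and, given \eqref{lb1}, \eqref{lb2}$\Leftrightarrow$\eqref{mp5}$\Leftrightarrow$\eqref{mp2}, with \eqref{mp4}$\Leftrightarrow$\eqref{mp1} deduced from \eqref{mp5} and \eqref{mp6} rather than attached to \eqref{lb1} alone --- a bookkeeping point that your hedge about mutual equivalence and redundancy already anticipates.
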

\begin{proof}
Let $\{a^1, \cdots, a^n\}$ be a ${\bf k}[\partial]$-basis of $Q$, $\{a_1, \cdots, a_n\}$ be the dual ${\bf k}[\partial]$-basis of $Q^{*c}$ and $I=\{1, \cdots, n\}$. Set
\begin{equation*}
[{a^i}_\lambda a^j]=\sum_k P^{ij}_k(\lambda,\partial) a^k,\qquad \quad [{a_i}_\lambda a_j]=\sum_k H^{ij}_k(\lambda,\partial) a_k,
\end{equation*}
where $P^{ij}_k(\lambda,\partial) $, $H^{ij}_k(\lambda,\partial) \in {\bf k}[\lambda,\partial]$.
 By  Proposition \ref{ppc}, we get
$\Delta(a^k)=\sum_{i,j}Q_k^{ij}(\partial\otimes \id, \id\otimes \partial)a^i\otimes a^j$, where $Q^{ij}_k(x,y)=H^{ij}_k(x,-x-y)$.
Since
\begin{equation*}
\langle \mathcal{L}^*(a_i)_\lambda a^j, a_k\rangle_\mu=-\langle{a^j},[{a_i}_\lambda a_k]\rangle_{\mu-\lambda}=-H^{ik}_j(\lambda,\mu-\lambda),
\end{equation*}
we get $\mathcal{L}^*(a_i)_\lambda a^j=-\sum_k H^{ik}_j(\lambda,-\lambda-\partial)a^k $.
Similarly, we can get
\begin{align*}
&\mathcal{R}^*(a_i)_\lambda a^j=-\sum_k H^{ki}_j(\partial,-\lambda-\partial)a^k,\qquad\qquad
L^*(a^i)_\lambda a_j=-\sum_k P^{ik}_j(\lambda,-\lambda-\partial)a_k, \\
&R^*(a^i)_\lambda a_j=-\sum_k P^{ki}_j(\partial,-\lambda-\partial)a_k.
\end{align*}
Considering the coefficient of $a^j\otimes a^h$ in
$(R(a^i)_{\lambda-\partial\otimes \id}\otimes \id)\Delta(a^k)=\tau ((R(a^k)_{-\lambda-\partial\otimes \id}\otimes \id) \Delta(a^i))$,
we get
\begin{align}\label{ba1}
\sum_{t}H^{th}_k(\lambda,-\lambda-\id\otimes\partial)P^{ti}_j(-\lambda,\partial\otimes \id)=
\sum_{t}H^{tj}_i(-\lambda,\lambda-\partial\otimes \id)P^{tk}_h(\lambda,\id\otimes\partial).
\end{align}
On the other hand, \eqref{mp6} holds for all $a,b\in Q$ and $x\in Q^{*c}$ if and only if
\begin{align*}
-\langle\mathcal{L}^*(R^*(a^i)_{-\lambda-\partial}a_j)_{\lambda+\mu}a^k, a_h\rangle_\theta
=\langle[(\mathcal{R}^*(a_j)_{-\mu-\partial}a^i)_{\lambda+\mu}a^k], a_h\rangle_\theta\;\;\;\text{for all $i,j,k,h\in I$},
\end{align*}
holds if and only if
\begin{equation}\label{ba2}
\sum_{t}H^{th}_k(\lambda+\mu,-\lambda-\mu+\theta)P^{ti}_j(-\lambda-\mu,\lambda)=
\sum_{t}H^{tj}_i(-\lambda-\mu,\mu)P^{tk}_h(\lambda+\mu,-\theta)\;\;\;\text{for all $i,j,k,h\in I$},
\end{equation}
holds.
Obviously, \eqref{ba1} is equivalent to \eqref{ba2} by replacing $\lambda$ by $\lambda+\mu$, $\id\otimes \partial $ by $-\theta$ and $\partial\otimes \id $ by $\lambda$ in \eqref{ba1}. Then \eqref{lb1} holds if and only if \eqref{mp6} holds.

 Note that \eqref{mp5} holds for all $a,b\in Q$ and $x\in Q^{*c}$ if and only if for all $ i,j,k,h\in I$,
\begin{align*}
&\langle[{a^i}_\lambda (\mathcal{L}^*(a_j)_\mu a^k)]+(\mathcal{L}^*+\mathcal{R}^*)((L^*+R^*)(a^k)_{-\mu-\partial} a_j)_{-\lambda-\partial}a^i, a_h\rangle_\theta\\
=&\langle\mathcal{L}^*(a_j)_\mu [{a^i}_\lambda a^k]-[(\mathcal{L}^*(a_j)_\mu a^i)_{\lambda+\mu}a^k]+\mathcal{L}^*((L^*+R^*)(a^i)_{-\mu-\partial}a_j)_{\lambda+\mu}a^k, a_h\rangle_\theta
\end{align*}
holds, which is equivalent to
{\small \begin{equation}\label{ba3}
\begin{aligned}
&\sum_t(P^{kt}_j(-\lambda-\mu+\theta,\mu)+P^{tk}_j(\lambda-\theta,\mu))(H^{th}_i(-\lambda+\theta,\lambda)+H^{ht}_i(-\theta,\lambda))\\
&-\sum_t H^{jt}_k(\mu,-\mu-\lambda+\theta)P^{it}_h(\lambda,-\theta)= -\sum_t P^{ik}_t(\lambda,\mu-\theta)H^{jh}_t(\mu,-\mu+\theta)\\
&+ \sum_t(H^{jt}_i(\mu,\lambda)P^{tk}_h(\lambda+\mu,-\theta)+(P^{it}_j(\lambda,\mu)+P^{ti}_j(-\lambda-\mu,\mu))H^{th}_k(\lambda+\mu,-\lambda-\mu+\theta)).
\end{aligned}
\end{equation}}
On the other hand, \eqref{lb2} holds if and only if for all $i,k\in I$,
\begin{equation}\label{ba4}
\begin{aligned}
\Delta([{a^i}_\lambda a^k])=&(\id\otimes R(a^k)_{-\lambda-\partial^{\otimes^2}}-L(a^k)_{-\lambda-\partial^{\otimes^2}}\otimes \id-R(a^k)_{-\lambda-\partial^{\otimes^2}}\otimes \id )(\tau+\id)\Delta(a^i)\\
&+(\id\otimes L(a^i)_\lambda+L(a^i)_\lambda\otimes \id)\Delta(a^k)
\end{aligned}
\end{equation}
holds. Considering the coefficient of $a^j\otimes a^h$ in \eqref{ba4}, we get
{\small
\begin{equation}\label{ba5}
\begin{aligned}
&\sum_t P^{ik}_t(\lambda,\partial^{\otimes^2})H^{jh}_t(\partial\otimes \id,-\partial^{\otimes^2})
=\sum_t (H^{jt}_i(\partial\otimes \id,\lambda)P^{tk}_h(\lambda+\partial\otimes \id,\id\otimes \partial) \\
&+H^{tj}_i(-\lambda-\partial\otimes \id,\lambda)P^{tk}_h(\lambda+\partial\otimes \id,\id\otimes\partial)
-H^{th}_i(-\lambda-\id\otimes\partial,\lambda)P^{kt}_j(-\lambda-\partial^{\otimes^2},\partial\otimes \id)\\
&-H^{ht}_i(\id\otimes \partial,\lambda)P^{kt}_j(-\lambda-\partial^{\otimes^2},\partial\otimes \id)
-H^{th}_i(-\lambda-\id\otimes \partial,\lambda)P^{tk}_j(\lambda+\id\otimes\partial,\partial\otimes \id)\\
&-H^{ht}_i(\id\otimes\partial,\lambda)P^{tk}_j(\lambda+\id\otimes\partial,\partial\otimes \id)
+H^{jt}_k(\partial\otimes \id,-\partial\otimes \id-\lambda-\id\otimes\partial)P^{it}_h(\lambda,\id\otimes\partial)\\
&+H^{th}_k(\lambda+\partial\otimes \id,-\lambda-\partial^{\otimes^2})P^{it}_j(\lambda,\partial\otimes \id)).
\end{aligned}
\end{equation}}
By \eqref{lb1}, we have
\begin{equation*}
\sum_t H^{th}_k(\lambda+\partial\otimes \id,-\lambda-\partial^{\otimes^2})P^{ti}_j(-\lambda-\partial\otimes \id,\partial\otimes \id)
=\sum_t H^{tj}_i(-\lambda-\partial\otimes \id,\lambda)P^{tk}_h(\lambda+\partial\otimes \id,\id\otimes\partial).
\end{equation*}
Then it is clear that \eqref{ba3} is equivalent to \eqref{ba5} by replacing $\mu$ by $\partial\otimes \id$ and $\theta$ by $-\id\otimes\partial$. Moreover, \eqref{mp4} can be deduced by \eqref{mp5} and \eqref{mp6}.

For all $a,b\in Q$ and $x,y\in Q^{*c}$, by some calculations we have
\begin{align*}
&\langle L^*(a)_\lambda([x_\mu y])-[(L^*(a)_\lambda x)_{\lambda+\mu}y]+L^*((\mathcal{L}^*+\mathcal{R}^*)(x)_{-\lambda-\partial}a)_{\lambda+\mu}y\\
\quad&-[x_\mu(L^*(a)_\lambda y)]-(L^*+R^*)((\mathcal{L}^*+\mathcal{R}^*)(y)_{-\lambda-\partial}a)_{-\mu-\partial}x ,b\rangle_\theta\\
=&\langle-\mathcal{L}^*(x)_\mu([a_\lambda b])-\mathcal{L}^*(L^*(a)_\lambda x)_{\lambda+\mu}b+[((\mathcal{L}^*+\mathcal{R}^*)(x)_{-\lambda-\partial}a)_{\lambda+\mu}b]\\
\quad&+[a_\lambda (\mathcal{L}^*(x)_\mu b)]+(\mathcal{L}^*+\mathcal{R}^*)((L^*+R^*)(b)_{-\mu-\partial}x)_{-\lambda-\partial}a  ,y \rangle_{\lambda+\mu-\theta},
\end{align*}
which means that \eqref{mp2} is equivalent to \eqref{mp5}. Similarly, 
we can obtain that \eqref{mp3} is equivalent to \eqref{mp6} and  \eqref{mp1} is equivalent to \eqref{mp4}.
Thus, $(Q,Q^{*c};(L^*,-L^*-R^*),(\mathcal{L}^*,-\mathcal{L}^*-\mathcal{R}^*))$ is a matched pair of Leibniz conformal algebras if and only if \eqref{lb1} and \eqref{lb2} hold.
\end{proof}

By Theorem \ref{thmm} and Proposition \ref{pmb}, we have the
following conclusion.
\begin{cor}\label{commb}
Let $(Q, [\cdot_\lambda \cdot])$ be a finite Leibniz conformal algebra which is free as a ${\bf k}[\partial]$-module. Suppose that there is a Leibniz conformal algebra structure on $Q^{\ast c}$
which is obtained from a ${\bf k}[\partial]$-module homomorphism $\Delta: Q\rightarrow Q\otimes Q$.
Then the following conditions are equivalent:
\begin{enumerate}
\item $(Q,Q^{*c};(L^*,-L^*-R^*),(\mathcal{L}^*,-\mathcal{L}^*-\mathcal{R}^*))$ is a matched pair of Leibniz conformal algebras;
\item $(Q\oplus Q^{*c},Q,Q^{*c}) $ is a conformal Manin triple of Leibniz conformal algebras $Q$ and $Q^{*c}$, where the invariant  skew-symmetric bilinear form $B_\lambda(\cdot,\cdot)$ on $Q\oplus Q^{*c} $ is given by \eqref{mt};
\item $(Q,[\cdot_\lambda\cdot],\Delta)$  is a Leibniz conformal bialgebra.
\end{enumerate}
\end{cor}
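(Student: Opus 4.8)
The plan is to recognize that this corollary is not a fresh computation but a splicing together of the two equivalences already established, with condition (1) serving as the common hinge. Specifically, Theorem \ref{thmm} supplies the equivalence (1)$\iff$(2), and Proposition \ref{pmb} supplies (1)$\iff$(3); transitivity of ``if and only if'' then yields the full three-way equivalence. So the entire task reduces to checking that the hypotheses of those two results are in force under the standing assumptions of the corollary, and that the objects named in conditions (2) and (3) are literally the ones appearing in Theorem \ref{thmm} and Proposition \ref{pmb}.

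First I would verify the hypotheses. The corollary assumes that $(Q,[\cdot_\lambda\cdot])$ is a finite Leibniz conformal algebra that is free as a ${\bf k}[\partial]$-module, and that $Q^{*c}$ carries a Leibniz conformal algebra structure arising from a ${\bf k}[\partial]$-module homomorphism $\Delta\colon Q\rightarrow Q\otimes Q$. Since $Q$ is finite and free, its conformal dual $Q^{*c}=\text{Chom}(Q,{\bf k})$ is again finite and free of the same rank, so \emph{both} $Q$ and $Q^{*c}$ qualify as finite free Leibniz conformal algebras. This is exactly what Theorem \ref{thmm} requires, and the correspondence in Proposition \ref{ppc} guarantees that the co-bracket $\Delta$ and the $\lambda$-bracket on $Q^{*c}$ determine each other, which is precisely the input Proposition \ref{pmb} uses.

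Next I would apply the two results directly. Theorem \ref{thmm} states that $(Q\bowtie Q^{*c},Q,Q^{*c})$ is a conformal Manin triple, with the invariant skew-symmetric conformal bilinear form given by \eqref{mt}, if and only if $(Q,Q^{*c};(L^*,-L^*-R^*),(\mathcal{L}^*,-\mathcal{L}^*-\mathcal{R}^*))$ is a matched pair; this is verbatim the equivalence of (2) and (1). Proposition \ref{pmb} states that this same matched-pair condition holds if and only if $(Q,[\cdot_\lambda\cdot],\Delta)$ is a Leibniz conformal bialgebra; this is verbatim the equivalence of (1) and (3). Chaining these gives (1)$\iff$(2) and (1)$\iff$(3), hence all three are equivalent.

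The only point demanding genuine attention, and thus the nearest thing to an obstacle, is a bookkeeping one: one must confirm that the bilinear form specified in condition (2) is the \emph{same} form \eqref{mt} that Theorem \ref{thmm} uses, and that the semidirect/bicrossed product $Q\oplus Q^{*c}$ appearing in (2) is built from the very representations $(L^*,-L^*-R^*)$ and $(\mathcal{L}^*,-\mathcal{L}^*-\mathcal{R}^*)$ named in (1). Both are manifestly so by construction, so no additional verification of the defining identities \eqref{mp1}--\eqref{mp6}, \eqref{lb1}--\eqref{lb2}, or \eqref{stlca} is needed here; all of that work has already been discharged in the proofs of Theorem \ref{thmm} and Proposition \ref{pmb}. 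The corollary therefore follows immediately.
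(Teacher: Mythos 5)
Your proposal is correct and matches the paper's argument exactly: the paper derives Corollary \ref{commb} simply by combining Theorem \ref{thmm} (equivalence of (1) and (2)) with Proposition \ref{pmb} (equivalence of (1) and (3)). Your additional hypothesis-checking (finiteness and freeness of $Q^{*c}$, agreement of the bilinear form \eqref{mt} and the representations) is sound and merely makes explicit what the paper leaves implicit.
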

\delete{\begin{rmk}
Let $A$ and $A^{*c}$ be two finite Leibniz conformal algebras which are free as ${\bf k}[\partial]$-modules.
If $(A$, $A^{*c}$; $(L^*,-L^*-R^*)$, $(\mathcal{L}^*,-\mathcal{L}^*-\mathcal{R}^*))$ is a Leibniz conformal bialgebra, then $(A^{*c}$, $A$; $(\mathcal{L}^*,-\mathcal{L}^*-\mathcal{R}^*)$, $(L^*,-L^*-R^*))$ is also a Leibniz conformal bialgebra.
\end{rmk}}

\section{Coboundary Leibniz conformal bialgebras and classical Leibniz conformal Yang-Baxter equation}\label{4}
We define the notion of coboundary Leibniz conformal bialgebras
which leads us to introduce the notion of classical Leibniz
conformal Yang-Baxter equation (CLCYBE). Moreover, the notions of
$\mathcal{O}$-operators on Leibniz conformal algebras and
Leibniz-dendriform conformal algebras are introduced to  construct
symmetric solutions of the CLCYBE.

For convenience, set $\partial_1:=\partial\otimes\id\otimes \id$, $\partial_2:=\id \otimes \partial\otimes \id$, $\partial_3:=\id\otimes\id\otimes \partial$ and $\partial^{\otimes^3}:=\partial_1+ \partial_2+\partial_3$.
\subsection{Coboundary Leibniz conformal bialgebras}\label{subsection1}

Let $Q$ be a ${\bf k}[\partial]$-module. Define a linear map $F:Q\rightarrow \text{End}_{\bf k}(Q\otimes Q)$  by
\begin{equation}\label{f}
F(a)=L(a)_\lambda\otimes \id+R(a)_\lambda\otimes \id-\id\otimes
R(a)_\lambda|_{\lambda=-\partial^{\otimes^2}}\;\;\;\text{for
all $a\in Q$}.
\end{equation}
If an element $r\in Q\otimes Q$ satisfies $F(a)r=0$ for all $a\in Q$, then we call $r$  {\bf invariant}.

\begin{defi}
For a Leibniz conformal bialgebra $(Q,[\cdot_\lambda\cdot],\Delta)$, if there exists $r\in Q\otimes Q$ such that
\begin{equation}\label{dr}
\Delta(a)=\Delta_r(a)=F(a)r\;\;\;\text{for all $a\in Q$},
\end{equation}
where $F$ is given by \eqref{f}, then $(Q,[\cdot_\lambda\cdot],\Delta=\Delta_r)$ is called a {\bf coboundary Leibniz conformal bialgebra}.
\end{defi}
\begin{defi}
Let $Q$ be a Leibniz conformal algebra and $r=\sum_i a_i\otimes b_i\in Q\otimes Q$. Define
\begin{equation}
[[r,r]]=[{r_{12}}_{\mu} r_{13}]|_{\mu=\partial_2}+[{r_{23}}_{\mu} r_{13}]|_{\mu=\partial_2}-[{r_{12}}_{\mu} r_{23}]|_{\mu=\partial_1}-[{r_{23}}_{\mu} r_{12}]|_{\mu=-\partial_1-\partial_2},
\end{equation}
where
\begin{align*}
&[{r_{12}}_{\mu} r_{13}]=\sum_{i,j}[{a_i}_\mu a_j ]\otimes b_i\otimes b_j,\quad
[{r_{23}}_{\mu} r_{13}]=\sum_{i,j}{a_i}\otimes a_j \otimes [{b_j}_\mu b_i],\\
&[{r_{12}}_{\mu} r_{23}]=\sum_{i,j}a_i\otimes [{b_i}_\mu a_j ]\otimes b_j,\quad
[{r_{23}}_{\mu} r_{12}]=\sum_{i,j}a_i\otimes [{a_j}_\mu b_i ]\otimes b_j.
\end{align*}
Then $[[r,r]]=0\;\text{mod}\;(\partial^{\otimes^3} )$ in $Q\otimes Q \otimes Q$
is called the {\bf classical Leibniz conformal Yang-Baxter equation (CLCYBE)} in $Q$.
\end{defi}

\begin{pro}\label{pprtr}
Let $Q$ be a Leibniz conformal algebra  and $r\in Q\otimes
Q$. Then $r$ is a solution of the CLCYBE in $Q$ if and only if
$\tau r$ is a solution of the CLCYBE in $Q$.
\end{pro}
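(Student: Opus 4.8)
The plan is to prove the stronger statement that $\tau_{13}$ intertwines the two sides, namely $[[\tau r,\tau r]]\equiv\tau_{13}\big([[r,r]]\big)\pmod{\partial^{\otimes^3}}$, and then to deduce the equivalence from the fact that $\tau_{13}$ is a ${\bf k}[\partial]$-module automorphism of $Q\otimes Q\otimes Q$ that preserves the submodule $\partial^{\otimes^3}(Q\otimes Q\otimes Q)$. Writing $r=\sum_i a_i\otimes b_i$, so that $\tau r=\sum_i b_i\otimes a_i$, I would first expand both $[[r,r]]$ and $[[\tau r,\tau r]]$ into their four defining terms and compare them one term at a time under $\tau_{13}$.

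The two terms with $\mu$ specialized to $\partial_2$ (those coming from $[{r_{12}}_\mu r_{13}]$ and $[{r_{23}}_\mu r_{13}]$) are the easy ones: since $\tau_{13}$ fixes the middle tensor slot, it fixes $\partial_2$, and after swapping the summation indices $i\leftrightarrow j$ one sees directly that $\tau_{13}$ carries the $[{r_{12}}_\mu r_{13}]$-term of $[[r,r]]$ exactly onto the $[{r_{23}}_\mu r_{13}]$-term of $[[\tau r,\tau r]]$ and vice versa, with no modular correction.

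The remaining two terms (from $[{r_{12}}_\mu r_{23}]$ and $[{r_{23}}_\mu r_{12}]$) form the crux of the argument. Applying $\tau_{13}$ to the $\mu=\partial_1$ term moves the active derivative from slot $1$ to slot $3$, so that $\tau_{13}$ produces the same tensor as the corresponding term of $[[\tau r,\tau r]]$ but with $\mu$ specialized to $\partial_3$ in place of $-\partial_1-\partial_2$; symmetrically, the $\mu=-\partial_1-\partial_2$ term is sent to one carrying $\mu=-\partial_2-\partial_3$ in place of $\partial_1$. Here I would invoke the congruences $\partial_3\equiv-\partial_1-\partial_2$ and $\partial_1\equiv-\partial_2-\partial_3$ modulo $\partial^{\otimes^3}=\partial_1+\partial_2+\partial_3$: since $\partial^{\otimes^3}$ commutes with every $\partial_i$, replacing one specialization of $\mu$ by the other changes each such term only by an element of $\partial^{\otimes^3}(Q\otimes Q\otimes Q)$. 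Thus these two images under $\tau_{13}$ agree with the remaining two terms of $[[\tau r,\tau r]]$ modulo $\partial^{\otimes^3}$.

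Adding the four comparisons gives $[[\tau r,\tau r]]\equiv\tau_{13}\big([[r,r]]\big)\pmod{\partial^{\otimes^3}}$. Since $\tau_{13}$ is invertible and commutes with $\partial^{\otimes^3}$ (the total derivative being symmetric under permutation of the three slots), it restricts to a bijection of $\partial^{\otimes^3}(Q\otimes Q\otimes Q)$; hence $[[r,r]]\equiv0$ if and only if $\tau_{13}\big([[r,r]]\big)\equiv0$ if and only if $[[\tau r,\tau r]]\equiv0$ modulo $\partial^{\otimes^3}$, which is the asserted equivalence (the converse being in any case immediate from $\tau(\tau r)=r$). The only real work is the bookkeeping of how the $\mu$-specializations transform under $\tau_{13}$ and the repeated use that their discrepancy is a multiple of $\partial^{\otimes^3}$.
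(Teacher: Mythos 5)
Your proof is correct and takes essentially the same route as the paper's: both establish the key identity $[[\tau r,\tau r]]\equiv\tau_{13}[[r,r]]\ (\mathrm{mod}\ \partial^{\otimes^3})$ and conclude from the invertibility of $\tau_{13}$ and its compatibility with $\partial^{\otimes^3}(Q\otimes Q\otimes Q)$. Your explicit bookkeeping of how the specializations $\mu=\partial_1$ and $\mu=-\partial_1-\partial_2$ transform under $\tau_{13}$, using $\partial_3\equiv-\partial_1-\partial_2$ and $\partial_1\equiv-\partial_2-\partial_3$ modulo $\partial^{\otimes^3}$, simply spells out details the paper leaves implicit.
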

\begin{proof}
	Set $r=\sum_i a_i\otimes b_i$.
In $Q\otimes Q \otimes Q$, we have
\begin{eqnarray*}
[[\tau r,\tau r]]&=&\sum_{i,j}[{b_i}_\mu b_j ]\otimes a_i\otimes a_j|_{\mu=\partial_2}+{b_i}\otimes b_j \otimes [{a_j}_\mu a_i]|_{\mu=\partial_2}\\
&&\quad -b_i\otimes [{a_i}_\mu b_j ]\otimes a_j|_{\mu=\partial_1}
-b_i\otimes [{b_j}_\mu a_i ]\otimes a_j|_{\mu=-\partial_1-\partial_2}\;\;\; (\text{mod}~~\partial^{\otimes^3})\\
&=&\tau_{13}[[r,r]]\;\;(\text{mod}~~\partial^{\otimes^3}).
\end{eqnarray*}
Thus  this conclusion holds.
\end{proof}

\begin{pro}
Let $Q$ be a Leibniz conformal algebra and $r=\sum_i a_i\otimes b_i\in Q\otimes Q$. Define a co-bracket $\Delta_r: Q\rightarrow Q\otimes Q$ by \eqref{dr}. Then $(Q,\Delta_r)$ is a Leibniz conformal coalgebra if and only if for all $a\in Q$ the following equality holds:

\begin{equation}\label{rcoeq}
\begin{aligned}
&(\id\otimes\id\otimes R(a)_{-\partial^{\otimes^3} }-\id\otimes (L+R)(a)_{-\partial^{\otimes^3} }\otimes \id)( \tau_{12}[[r,r]]-\sum_j F(a_j)(r-\tau (r))\otimes b_j)\\
&+\sum_j( (L+R)(a_j)_{\partial_3}\otimes \id\otimes \id)(\tau \circ F(a)(r-\tau(r))\otimes b_j)\\
&+((L+R)(a)_{-\partial^{\otimes^3}}\otimes \id\otimes \id)[[r,r]]=0.
\end{aligned}
\end{equation}
\end{pro}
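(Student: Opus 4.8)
The plan is to reduce the coalgebra axiom \eqref{co-eq} to a single computation and then match it term by term with \eqref{rcoeq}. Writing $r=\sum_i a_i\otimes b_i$ and recalling from \eqref{dr} and \eqref{f} that
\begin{equation*}
\Delta_r(a)=F(a)r=\big(L(a)_\lambda a_i\otimes b_i+R(a)_\lambda a_i\otimes b_i-a_i\otimes R(a)_\lambda b_i\big)|_{\lambda=-\partial^{\otimes^2}},
\end{equation*}
the condition that $(Q,\Delta_r)$ be a Leibniz conformal coalgebra is equivalent to the vanishing, for all $a\in Q$, of
\begin{equation*}
D(a):=(\id\otimes\Delta_r)\Delta_r(a)-(\Delta_r\otimes\id)\Delta_r(a)-\tau_{12}(\id\otimes\Delta_r)\Delta_r(a).
\end{equation*}
Thus it suffices to prove the identity that $D(a)$ equals the left-hand side of \eqref{rcoeq}; the asserted equivalence then follows at once, since \eqref{rcoeq} states precisely that this left-hand side is zero.

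First I would expand each of the three summands of $D(a)$ by inserting a second copy of $r$. Applying $\id\otimes\Delta_r$ or $\Delta_r\otimes\id$ replaces the relevant tensor factor of $\Delta_r(a)=F(a)r$ by $F(\cdot)r$, producing sums over $i,j$ of elements of $Q\otimes Q\otimes Q$ in which $a$ acts exactly once, through $L(a)$ and/or $R(a)$, while two copies of $r$ become entangled through a single $\lambda$-bracket. The spectral parameters must be tracked carefully: every $\lambda$ carried by $F$ is to be read as $-\partial^{\otimes^2}$ on the two slots it occupies, which on $Q^{\otimes 3}$ becomes a partial sum of $\partial_1,\partial_2,\partial_3$ dictated by sesquilinearity \eqref{cs} and totalling $-\partial^{\otimes^3}$.

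The core of the argument is to move the single $a$-action to the outside of each term. Using the regular-representation relations \eqref{m1}--\eqref{m3} for $(Q,L,R)$ together with the Jacobi identity \eqref{li}, nested brackets of the shape $[a_\lambda[x_\mu y]]$ are rewritten so that the action of $a$ factors through the operators $L(a)$, $R(a)$, or $(L+R)(a)$ with spectral parameter $-\partial^{\otimes^3}$, placed in slot $1$ or slot $3$ exactly as in \eqref{rcoeq}; the residual double-$r$ expressions are then identified with the four bracketed terms defining $[[r,r]]$, suitably permuted by $\tau_{12}$ and $\tau$. The asymmetry of $r$ is what generates the correction terms: the discrepancy between acting on $r$ and on $\tau r$ in the inner slots collects into $\sum_j F(a_j)(r-\tau r)\otimes b_j$ and its twisted counterpart $\sum_j\big((L+R)(a_j)_{\partial_3}\otimes\id\otimes\id\big)(\tau\circ F(a)(r-\tau r)\otimes b_j)$.

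I expect the main obstacle to be precisely this bookkeeping of spectral parameters under the substitution $\lambda=-\partial^{\otimes^2}$ (which becomes $-\partial^{\otimes^3}$ on the triple tensor), combined with the repeated use of the mixed relations \eqref{m2} and \eqref{m3}: one must verify that the $a$-action landing in the middle tensor slot reassembles correctly into the operators $\id\otimes(L+R)(a)_{-\partial^{\otimes^3}}\otimes\id$ and $\id\otimes\id\otimes R(a)_{-\partial^{\otimes^3}}$ of \eqref{rcoeq}, with no stray terms surviving outside the $[[r,r]]$ and correction pieces. Since every step uses an axiom as an exact identity rather than an inequality, all manipulations are reversible, so the resulting identity expressing $D(a)$ as the left-hand side of \eqref{rcoeq} yields the equivalence in both directions.
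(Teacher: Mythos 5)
Your proposal follows essentially the same route as the paper's proof: expand $(\Delta_r\otimes\id)\Delta_r(a)+\tau_{12}(\id\otimes\Delta_r)\Delta_r(a)-(\id\otimes\Delta_r)\Delta_r(a)$ directly, track the substituted spectral parameters $\lambda=-\partial^{\otimes^2}$ via sesquilinearity, and use the Jacobi identity to reassemble everything into the $[[r,r]]$-terms and the $(r-\tau(r))$-correction terms of \eqref{rcoeq} (the paper organizes exactly this computation into nine blocks $A_1,\dots,A_9$ plus one auxiliary Jacobi-identity verification, with reversibility giving the equivalence just as you argue). The only slip is a sign convention: your $D(a)$ is the negative of the quantity the paper computes, so the identity to prove is $D(a)=-\bigl(\text{LHS of \eqref{rcoeq}}\bigr)$, which does not affect the stated equivalence.
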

\begin{proof}
For all $a\in Q$, we have
\begin{align*}
(\Delta_{r}\otimes \id)\Delta_{r}(a)+\tau_{12} (\id\otimes \Delta_{r})\Delta_{r} (a)-(\id\otimes \Delta_{r})\Delta_{r}(a)=\sum^9_{k=1} A_k,
\end{align*}
where
\begin{flalign*}
 A_1=&\sum_{i,j}(-[{a_i}_{ -\partial^{\otimes^2}\otimes \id} a_j ]\otimes b_j\otimes [{b_i}_{\partial^{\otimes^2}\otimes \id}a]- [{a_j}_{ \partial_2} a_i]\otimes b_j\otimes [{b_i}_{\partial^{\otimes^2}\otimes \id}a]\\
&+a_j\otimes [{b_j}_{\partial_1}{a_i}]\otimes[{b_i}_{\partial^{\otimes^2}\otimes \id}a]
+a_j\otimes a_i\otimes[{b_j}_{\partial_1 }[{b_i}_{\partial_2}a] ]-a_i\otimes a_j\otimes[{b_j}_{\partial_2 }[{b_i}_{\partial_1}a] ])\\
=&(\id\otimes \id\otimes R(a)_{-\partial^{\otimes^3} })(a_j\otimes [{b_j}_{\partial_1}{a_i}]\otimes b_i+ a_i\otimes a_j\otimes[{b_i}_{\partial_1} b_j]\\
&- [{a_i}_{ -\partial^{\otimes^2}\otimes \id} a_j ]\otimes b_j\otimes b_i-[{a_j}_{ \partial_2} a_i]\otimes b_j\otimes b_i ) \\
=&(\id\otimes \id\otimes R(a)_{-\partial^{\otimes^3} })( \tau_{12}[[r,r]]-\sum_j F(a_j)(r-\tau (r))\otimes b_j)\\
A_2=&\sum_{i,j}([[a_{-\partial^{\otimes^3}}a_i]_{-\partial^{\otimes^2}\otimes \id}a_j]\otimes b_j\otimes b_i+[[{a_i}_{\partial_3}a]_{-\partial^{\otimes^2}\otimes \id}a_j]\otimes b_j\otimes b_i)=0,\\
A_3=&\sum_{i,j}(-[[{b_i}_{\partial_2}a]_{ -\partial_1-\partial_3} a_j]\otimes a_i\otimes b_j-[{a_j}_{\partial_3}[{b_i}_{\partial_2}a] ]\otimes a_i\otimes b_j),\\
A_4=&\sum_{i,j}([{a_j}_{\partial_2}[a_{-\partial^{\otimes^3}}a_i]]\otimes b_j\otimes b_i-[a_{-\partial^{\otimes^3} }a_i]\otimes [{b_i}_{ -\id\otimes\partial^{\otimes^2}} a_j ]\otimes b_j-[a_{-\partial^{\otimes^3}}a_i]\otimes [{a_j}_{ \partial_3} b_i]\otimes b_j\\
&+[a_{-\partial^{\otimes^3}}a_i]\otimes a_j\otimes[{b_j}_{\partial_2}b_i])
=(L(a)_{-\partial^{\otimes^3}}\otimes \id\otimes \id)[[r,r]]+\sum_{i,j}([[{a_i}_{\partial_2}a]_{-\partial_1-\partial_3}]\otimes b_i\otimes b_j),\\
A_5=&\sum_{i,j}([{b_i}_{ -\partial_1-\partial_3} a_j ]\otimes [a_{-\partial^{\otimes^3} }a_i]\otimes b_j+ [{a_j}_{ \partial_3} b_i]\otimes  [a_{-\partial^{\otimes^3}}a_i]\otimes b_j -a_j\otimes[a_{-\partial^{\otimes^3}}a_i]\otimes[{b_j}_{\partial_1}b_i] )\\
=&-(\id\otimes L(a)_{-\partial^{\otimes^3} }\otimes \id)\tau_{12}[[r,r]]+\sum_{i,j}(b_i\otimes[a_{-\partial^{\otimes^3}}[{a_i}_{\partial_1}a_j]]\otimes b_j),\\
A_6=&\sum_{i,j}(-a_j\otimes [{b_j}_{\partial_1}[a_{-\partial^{\otimes^3}}a_i]   ] \otimes b_i+a_i\otimes  [[{b_i}_{\partial_1}a]_{ -\id\otimes\partial^{\otimes^2}} a_j]\otimes b_j)
=\sum_{i,j}-a_i\otimes  [a_{-\partial^{\otimes^3} }[{b_i}_{\partial_1} a_j]\otimes b_j, \\
A_7=&\sum_{i,j}([{a_j}_{\partial_2}[{a_i}_{\partial_3}a]]\otimes b_j\otimes b_i-[{a_i}_{\id\otimes\partial^{\otimes^2}}a]\otimes [{b_i}_{ -\id\otimes\partial^{\otimes^2}} a_j ] \otimes b_j   -  [{a_i}_{\id\otimes\partial^{\otimes^2} }a] \otimes [{a_j}_{ \partial_3} b_i] \otimes b_j \\
&  + [{a_i}_{\id\otimes\partial^{\otimes^2}}a]\otimes a_j \otimes[{b_j}_{\partial_2}b_i])
=(R(a)_{-\partial^{\otimes^3} }\otimes \id\otimes \id)[[r,r]]+\sum_{i,j}([{a_j}_{\partial_3}[{a_i}_{\partial_2}a]  ]\otimes b_i\otimes b_j),\\
A_8=&\sum_{i,j}([{b_i}_{ -\partial_1-\partial_3} a_j ]\otimes [a_{-\partial^{\otimes^3} }a_i]\otimes b_j+ [{a_j}_{ \partial_3} b_i]\otimes  [a_{-\partial^{\otimes^3}}a_i]\otimes b_j
-a_j\otimes[a_{-\partial^{\otimes^3}}a_i]\otimes[{b_j}_{\partial_1}b_i] )  \\
=&-(\id\otimes R(a)_{-\partial^{\otimes^3} }\otimes \id)\tau_{12}[[r,r]]+\sum_{i,j}b_i\otimes[[{a_i}_{\partial_1} a_j]_{\partial_1+\partial_3}a]\otimes b_j, \\
A_9=&\sum_{i,j}(a_i\otimes[{a_j}_{\partial_3}[{b_i}_{\partial_1}a] ]\otimes  b_j -a_j\otimes [{b_j}_{\partial_1}[{a_i}_{\partial_3}a]  ] \otimes b_i)=-\sum_{i,j}a_i\otimes[[{b_i}_{\partial_1}a_j]_{ \partial_1+\partial_3}a]\otimes b_j.&
\end{flalign*}
Thus we have
{\small
\begin{flalign*}
    &\sum^9_{k=1} A_k -(\id\otimes \id\otimes R(a)_{-\partial^{\otimes^3} })( \tau_{12}[[r,r]]-\sum_j F(a_j)(r-\tau (r))\otimes b_j)\\
&-((L+R)(a)_{-\partial^{\otimes^3}}\otimes \id\otimes \id)[[r,r]]+(\id\otimes (L+R)(a)_{-\partial^{\otimes^3} }\otimes \id)\tau_{12}[[r,r]]\\
    &-\sum_j( (L+R)(a_j)_{\partial_3}\otimes \id\otimes \id)(\tau \circ(F(a))(r-\tau(r))\otimes b_j)\\
    =&\sum_{i,j}(b_i\otimes[a_{-\partial^{\otimes^3}}[{a_i}_{\partial_1}a_j]]\otimes b_j -a_i\otimes  [a_{-\partial^{\otimes^3} }[{b_i}_{\partial_1} a_j]\otimes b_j   +b_i\otimes[[{a_i}_{\partial_1} a_j]_{\partial_1+\partial_3}a]\otimes b_j    \\
    & -a_i\otimes[[{b_i}_{\partial_1}a_j]_{ \partial_1+\partial_3}a]\otimes b_j
    -[{b_i}_{-\partial_1-\partial_3}a_j]\otimes[a_{-\partial^{\otimes^3}}a_i]\otimes b_j- [{b_i}_{ -\partial_1-\partial_3}a_j]\otimes[{a_i}_{\partial_1+\partial_3}a]\otimes b_j\\
    &+[{a_i}_{-\partial_1-\partial_3}a_j]\otimes[a_{-\partial^{\otimes^3}}b_i]\otimes b_j+ [{a_i}_{ -\partial_1-\partial_3}a_j]\otimes[{b_i}_{\partial_1+\partial_3}a]\otimes b_j      -[{a_j}_{\partial_3}b_i]\otimes[a_{-\partial^{\otimes^3}}a_i]\otimes b_j \\
    &- [{a_j}_{\partial_3}b_i]\otimes[{a_i}_{\partial_1+\partial_3}a]\otimes b_j
    +[{a_j}_{\partial_3}a_i]\otimes[a_{-\partial^{\otimes^3}}b_i]\otimes b_j+ [{a_j}_{ \partial_3}a_i]\otimes[{b_i}_{\partial_1+\partial_3}a]\otimes b_j  )\\
    =&(\id\otimes (L+R)(a)_{-\partial^{\otimes^3} }\otimes \id)\sum_j F(a_j)(r-\tau (r))\otimes b_j,&
\end{flalign*}}
which means that \eqref{rcoeq} holds, since
{\small
\begin{flalign*}
&\sum_{i,j}([[{a_i}_{\partial_2}a]_{-\partial_1-\partial_3}]\otimes b_i\otimes b_j+[{a_j}_{\partial_3}[{a_i}_{\partial_2}a]  ]\otimes b_i\otimes b_j    -[[{b_i}_{\partial_2}a]_{ -\partial_1-\partial_3} a_j]\otimes a_i\otimes b_j \\
&-[{a_j}_{\partial_3}[{b_i}_{\partial_2}a] ]\otimes a_i\otimes b_j)\\
=&\sum_j( (L+R)(a_j)_{\partial_3}\otimes \id\otimes \id)(\tau \circ(F(a))(r-\tau(r))\otimes b_j)    +\sum_{i,j}(-[{b_i}_{-\partial_1-\partial_3}a_j]\otimes[a_{-\partial^{\otimes^3}}a_i]\otimes b_j  \\
&- [{b_i}_{ -\partial_1-\partial_3}a_j]\otimes[{a_i}_{\partial_1+\partial_3}a]\otimes b_j
+[{a_i}_{-\partial_1-\partial_3}a_j]\otimes[a_{-\partial^{\otimes^3}}b_i]\otimes b_j+ [{a_i}_{ -\partial_1-\partial_3}a_j]\otimes[{b_i}_{\partial_1+\partial_3}a]\otimes b_j\\
&-[{a_j}_{\partial_3}b_i]\otimes[a_{-\partial^{\otimes^3}}a_i]\otimes b_j- [{a_j}_{\partial_3}b_i]\otimes[{a_i}_{\partial_1+\partial_3}a]\otimes b_j   +[{a_j}_{\partial_3}a_i]\otimes[a_{-\partial^{\otimes^3}}b_i]\otimes b_j \\
&+ [{a_j}_{ \partial_3}a_i]\otimes[{b_i}_{\partial_1+\partial_3}a]\otimes b_j).&
\end{flalign*}}
 Therefore, this conclusion holds.
\end{proof}

\begin{thm}\label{thm-cob}
Let $Q$ be  a Leibniz conformal algebra  and $r=\sum_i a_i\otimes b_i\in Q\otimes Q$.
The triple $(Q, [\cdot_\lambda\cdot], \Delta_r)$ is a Leibniz conformal bialgebra where  $\Delta_r$ is defined by \eqref{dr}
 if and only if \eqref{rcoeq} and the following conditions hold for all $a$, $b\in Q:$
\begin{eqnarray}
&&\tau(R(b)_{-\lambda-\partial\otimes \id}\otimes \id)F(a)(r-\tau(r))=0,\label{dlb1}\\
&&((L+R)(b)_{-\lambda-\partial^{\otimes^2}}\otimes \id)\tau( F(a)(r-\tau(r)))
-\tau(R(b)_{-\lambda-\partial^{\otimes^2}}\otimes \id )( F(a)(r-\tau(r)))=0.\label{dlb2}
\end{eqnarray}

\end{thm}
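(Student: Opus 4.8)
The plan is to unwind the definition of a Leibniz conformal bialgebra: the triple $(Q,[\cdot_\lambda\cdot],\Delta_r)$ is such a bialgebra precisely when (i) $(Q,\Delta_r)$ is a Leibniz conformal coalgebra, i.e.\ \eqref{co-eq} holds; (ii) the compatibility \eqref{lb1} holds; and (iii) the compatibility \eqref{lb2} holds. By the preceding proposition, condition (i) is equivalent to \eqref{rcoeq}. Hence it suffices to translate \eqref{lb1} into \eqref{dlb1} and \eqref{lb2} into \eqref{dlb2} after substituting $\Delta=\Delta_r=F(\cdot)r$ everywhere.

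For \eqref{lb1}, I would first substitute $\Delta_r(a)=F(a)r$ and $\Delta_r(b)=F(b)r$ into both sides. The key step is to establish the auxiliary identity
\[(R(a)_{\lambda-\partial\otimes\id}\otimes\id)F(b)r=\tau\big((R(b)_{-\lambda-\partial\otimes\id}\otimes\id)F(a)(\tau r)\big)\quad\text{for all }r\in Q\otimes Q,\]
which should hold for arbitrary $r$ as a consequence of conformal sesquilinearity, the Jacobi identity \eqref{li}, and the representation relations \eqref{m1}--\eqref{m3} for the regular representation $(Q,L,R)$. Granting this identity and using its linearity in $r$, the difference between the two sides of \eqref{lb1} equals $-\tau\big((R(b)_{-\lambda-\partial\otimes\id}\otimes\id)F(a)(r-\tau r)\big)$, so \eqref{lb1} holds if and only if \eqref{dlb1} holds. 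In particular, when $r$ is symmetric the combination $r-\tau r$ vanishes and \eqref{lb1} is automatic, which is consistent with the role played by symmetric solutions.

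For \eqref{lb2}, I would again substitute $\Delta_r$ so that the left-hand side becomes $F([a_\lambda b])r$ while the right-hand side is the stated combination of $F(a)r$ and $F(b)r$ twisted by $L(b),R(b),L(a)$. The plan is to expand $F([a_\lambda b])r$ using $L(c)_\lambda x=[c_\lambda x]$ and $R(c)_\lambda x=[x_{-\lambda-\partial}c]$, and to reduce every doubly-bracketed term via the conformal Jacobi identity \eqref{li}, carefully tracking the conformal weights under the substitution $\lambda=-\partial^{\otimes^2}$ and the slotwise derivations $\partial\otimes\id$, $\id\otimes\partial$. Matching the expansions term by term, all contributions that are symmetric under $r\mapsto\tau r$ should cancel by the algebra axioms (this is exactly where the coboundary map $a\mapsto F(a)r$ behaves like a $1$-cocycle), and the residual collects into $((L+R)(b)_{-\lambda-\partial^{\otimes^2}}\otimes\id)\tau(F(a)(r-\tau r))-\tau(R(b)_{-\lambda-\partial^{\otimes^2}}\otimes\id)(F(a)(r-\tau r))$; along the way I would invoke \eqref{lb1}, equivalently \eqref{dlb1}, to eliminate the remaining symmetric slot-one terms. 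Thus, modulo \eqref{dlb1}, condition \eqref{lb2} is equivalent to \eqref{dlb2}.

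Assembling the three equivalences, $(Q,[\cdot_\lambda\cdot],\Delta_r)$ is a Leibniz conformal bialgebra if and only if \eqref{rcoeq}, \eqref{dlb1} and \eqref{dlb2} all hold. I expect the main obstacle to be the \eqref{lb2} computation: expanding $F([a_\lambda b])r$ through the Jacobi identity produces a large number of terms, and the delicate part is the bookkeeping of the conformal variables, namely correctly propagating $-\partial^{\otimes^2}$ and $-\partial^{\otimes^3}$ through nested brackets and reconciling the weight substitutions on each tensor slot, so that the symmetric contributions cancel cleanly and only the $r-\tau r$ terms of \eqref{dlb2} survive. The \eqref{lb1} step is comparatively routine once the auxiliary identity above is in place.
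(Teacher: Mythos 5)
Your plan is correct and follows essentially the same route as the paper: the paper likewise splits the bialgebra conditions into the coalgebra axiom (handled by the preceding proposition, i.e.\ \eqref{rcoeq}) plus \eqref{lb1} and \eqref{lb2}, and verifies by direct substitution of $\Delta_r$ that the difference of the two sides of \eqref{lb1} equals $-\tau\big((R(b)_{-\lambda-\partial\otimes \id}\otimes \id)F(a)(r-\tau(r))\big)$ — your auxiliary identity is exactly a repackaging of this computation, the double-bracket terms cancelling via the Leibniz consequence $[[x_\lambda y]_{\lambda+\mu}z]=-[[y_\mu x]_{\lambda+\mu}z]$ — while the difference for \eqref{lb2} collects, through a term-by-term expansion into four blocks, into the left-hand side of \eqref{dlb2}. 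The only minor deviation is that the paper obtains the equivalence \eqref{lb2}$\Leftrightarrow$\eqref{dlb2} unconditionally, without invoking \eqref{dlb1}, whereas you claim it only modulo \eqref{dlb1}; this weaker intermediate statement still yields the stated theorem.
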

\begin{proof}
We only need to check that $\Delta_r$ satisfies \eqref{lb1} and \eqref{lb2} if and only if (\ref{dlb1}) and (\ref{dlb2}) hold.
By  \eqref{lb1}, we have
\begin{flalign*}
&(R(a)_{\lambda-\partial\otimes \id}\otimes \id)\Delta_r(b)-\tau ((R(b)_{-\lambda-\partial\otimes \id}\otimes \id) \Delta_r(a))\\
=&\sum_i([[b_{-\lambda-\id\otimes\partial}a_i]_{-\lambda}a ]\otimes b_i  +[[{a_i}_{\id\otimes\partial}b]_{-\lambda}a ]\otimes b_i -[{a_i}_{-\lambda}a]\otimes[{b_i}_\lambda b]\\
&- b_i\otimes [[a_{\lambda-\partial\otimes \id}a_i]_\lambda b]-b_i\otimes[[{a_i}_{\partial\otimes \id}a ]_\lambda b]+[{b_i}_{-\lambda}a] \otimes[{a_i}_{\lambda}b])&\\
=&\sum_i(-[{a_i}_{-\lambda}a]\otimes[{b_i}_\lambda b]+[{b_i}_{-\lambda}a] \otimes[{a_i}_{\lambda}b])=-\tau(R(b)_{-\lambda-\partial\otimes \id}\otimes \id)F(a)(r-\tau(r)),
\end{flalign*}
which means that \eqref{lb1} holds if and only if \eqref{dlb1} holds.
By \eqref{lb2}, we have
{\small
\begin{flalign*}
&\Delta_r([a_\lambda b])-(\id\otimes R(b)_{-\lambda-\partial^{\otimes^2}}-L(b)_{-\lambda-\partial^{\otimes^2}}\otimes \id-R(b)_{-\lambda-\partial^{\otimes^2}}\otimes \id)(\tau+\id)\Delta_r(a)\\
&-(\id\otimes L(a)_\lambda+L(a)_\lambda\otimes \id)\Delta_r(b)\\
=&\sum_i([[a_\lambda b]_{-\partial^{\otimes^2}} a_i]\otimes b_i+[{a_i}_{\id\otimes \partial}[a_\lambda b]]\otimes b_i-a_i\otimes [{b_i}_{\partial\otimes \id}[a_\lambda b] ]          -([b_{-\lambda-\partial^{\otimes^2}}a_i]\otimes[a_\lambda b_i]  \\
&+[{a_i}_{\lambda+\id\otimes \partial}b]\otimes[a_\lambda b_i]-a_i\otimes[a_\lambda[{b_i}_{\partial\otimes \id}b]]   +[a_\lambda[b_{-\lambda-\partial^{\otimes^2}}a_i]]\otimes b_i+[a_\lambda[{a_i}_{\id\otimes\partial}b]]\otimes b_i    \\
&-[a_\lambda a_i]\otimes[{b_i}_{\lambda+\partial\otimes \id}b]     )
-([a_\lambda a_i]\otimes [{b_i}_{\lambda+\partial\otimes \id}b]+[{a_i}_{-\lambda-\partial\otimes \id}a]\otimes[{b_i}_{\lambda+\partial\otimes \id}b]\\
& -a_i\otimes[[{b_i}_{\partial\otimes \id}a]_{\lambda+\partial\otimes \id}b]    +b_i\otimes[[a_\lambda a_i]_{\lambda+\partial\otimes \id}b]+b_i\otimes[[{a_i}_{\partial\otimes \id}a]_{\lambda+\partial\otimes \id}b]
-[b_{-\lambda-\partial^{\otimes^2}}[a_\lambda a_i]]\otimes b_i   \\
&  -[{b_i}_{-\lambda-\partial\otimes \id}a]\otimes[{a_i}_{\lambda+\partial\otimes \id}b]   -[b_{-\lambda-\partial^{\otimes^2}}[{a_i}_{\id\otimes\partial}a]]\otimes b_i+[b_{-\lambda-\partial^{\otimes^2}} a_i  ]\otimes[{b_i}_{-\lambda-\id\otimes\partial}a]   \\
&     -[b_{-\lambda-\partial^{\otimes^2}}b_i ]\otimes[a_\lambda a_i]   -[b_{-\lambda-\partial^{\otimes^2}}b_i]\otimes[{a_i}_{-\lambda-\id\otimes\partial}a]+[b_{-\lambda-\partial^{\otimes^2}}[{b_i}_{\id\otimes\partial}a]]\otimes a_i       \\
&   -[[a_\lambda a_i]_{\lambda+\id\otimes\partial}b]\otimes b_i      -[[{a_i}_{\id\otimes\partial}a]_{\lambda+\id\otimes\partial}b]\otimes b_i    +[{a_i}_{\lambda+\id\otimes\partial}b]\otimes[{b_i}_{-\lambda-\id\otimes\partial} a]\\
& -[{b_i}_{\lambda+\id\otimes\partial}b ]\otimes[a_\lambda a_i] -[{b_i}_{\lambda+\id\otimes\partial}b]\otimes[{a_i}_{-\lambda-\id\otimes\partial}a]   +[[{b_i}_{\id\otimes\partial}a]_{\lambda+\id\otimes\partial}b]\otimes a_i )   )=\sum_{j=1}^4B_j,&
\end{flalign*}}
where
{\small
\begin{flalign*}
B_1=& \sum_i([{a_i}_{\id\otimes \partial}[a_\lambda b]]\otimes b_i-[{a_i}_{\lambda+\id\otimes \partial}b]\otimes[a_\lambda b_i]-[a_\lambda[{a_i}_{\id\otimes\partial}b]]\otimes b_i   +[{b_i}_{\lambda+\id\otimes\partial}b ]\otimes[a_\lambda a_i]  \\
&-[[{b_i}_{\id\otimes\partial}a]_{\lambda+\id\otimes\partial}b]\otimes a_i)
=(R(b)_{-\lambda-\partial^{\otimes^2}}\otimes \id)(\tau-\id)(F(a)(r-\tau(r))),\\
B_2=&-\sum_i( [b_{-\lambda-\partial^{\otimes^2}}a_i]\otimes[a_\lambda b_i]-[b_{-\lambda-\partial^{\otimes^2}}[{a_i}_{\id\otimes\partial}a]]\otimes b_i+[b_{-\lambda-\partial^{\otimes^2}} a_i  ]\otimes[{b_i}_{-\lambda-\id\otimes\partial}a]\\
&-[b_{-\lambda-\partial^{\otimes^2}}b_i ]\otimes[a_\lambda a_i]-[b_{-\lambda-\partial^{\otimes^2}}b_i]\otimes[{a_i}_{-\lambda-\id\otimes\partial}a]+[b_{-\lambda-\partial^{\otimes^2}}[{b_i}_{\id\otimes\partial}a]]\otimes a_i)&\\
=&(L(b)_{-\lambda-\partial^{\otimes^2} }\otimes \id)\tau(F(a)(r-\tau(r))),\\
B_3=&-\sum_i([{a_i}_{-\lambda-\partial\otimes \id}a]\otimes[{b_i}_{\lambda+\partial\otimes \id}b]-[{b_i}_{-\lambda-\partial\otimes \id}a]\otimes[{a_i}_{\lambda+\partial\otimes \id}b])&\\
=&-\tau(R(b)_{-\lambda-\partial^{\otimes^2}}\otimes \id)(F(a)(r-\tau(r))),\\
B_4=&-\sum_i([{a_i}_{\lambda+\id\otimes\partial}b]\otimes[{b_i}_{-\lambda-\id\otimes\partial} a]-[{b_i}_{\lambda+\id\otimes\partial}b]\otimes[{a_i}_{-\lambda-\id\otimes\partial}a])
=(R(b)_{-\lambda-\partial^{\otimes^2}}\otimes \id)(F(a)(r-\tau(r))).
\end{flalign*}}
Thus, \eqref{lb2} holds if and only if \eqref{dlb2} holds, finishing the proof.
\end{proof}
By Theorem \ref{thm-cob}, we get the following conclusion.
\begin{cor}\label{colcba}
Let $(Q, [\cdot_\lambda \cdot])$ be a Leibniz conformal algebra
and $r\in Q\otimes Q$ be a solution of the CLCYBE in $Q$. Then
$(Q,[\cdot_\lambda\cdot],\Delta_r) $ is a Leibniz conformal
bialgebra if $r-\tau(r)$ is invariant, and we call it {\bf a
quasi-triangular Leibniz conformal bialgebra}. In particular, if
$r$ is symmetric,  we call $(Q,[\cdot_\lambda\cdot],\Delta_r) $
{\bf a  triangular Leibniz conformal bialgebra}.

\end{cor}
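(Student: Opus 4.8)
The plan is to read this off Theorem \ref{thm-cob}, which already reduces ``$(Q,[\cdot_\lambda\cdot],\Delta_r)$ is a Leibniz conformal bialgebra'' to the three conditions \eqref{rcoeq}, \eqref{dlb1} and \eqref{dlb2}. The single hypothesis I will exploit throughout is that $r-\tau(r)$ is invariant, which by the definition of invariance following \eqref{f} says precisely that $F(c)(r-\tau(r))=0$ for every $c\in Q$. So the whole argument amounts to substituting this one identity into the three conditions and using that $r$ solves the CLCYBE.

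First I would dispose of \eqref{dlb1} and \eqref{dlb2}. In both equations every summand on the left-hand side is a conformal operator (a suitable $R(b)$ or $(L+R)(b)$, possibly composed with $\tau$) applied to the element $F(a)(r-\tau(r))$. Since this element vanishes by invariance, \eqref{dlb1} and \eqref{dlb2} hold automatically for all $a,b\in Q$, with no further computation.

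The remaining work is \eqref{rcoeq}. Setting $F(a_j)(r-\tau(r))=0$ and $F(a)(r-\tau(r))=0$ annihilates every summand of \eqref{rcoeq} carrying such a factor, so the identity reduces to showing that
\[
\Bigl(\id\otimes\id\otimes R(a)_{-\partial^{\otimes^3}}-\id\otimes (L+R)(a)_{-\partial^{\otimes^3}}\otimes \id\Bigr)\tau_{12}[[r,r]]+\Bigl((L+R)(a)_{-\partial^{\otimes^3}}\otimes \id\otimes \id\Bigr)[[r,r]]=0 .
\]
At this point I invoke the CLCYBE: since $r$ is a solution, $[[r,r]]=\partial^{\otimes^3}W$ for some $W\in Q\otimes Q\otimes Q$, and because $\tau_{12}$ commutes with the total derivative $\partial^{\otimes^3}$, the term $\tau_{12}[[r,r]]=\partial^{\otimes^3}\tau_{12}(W)$ is again a $\partial^{\otimes^3}$-multiple.

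The crux, and the step I expect to be the only genuine obstacle, is an annihilation lemma: for any conformal-linear operator $T(a)_\lambda$ (here $R(a)_\lambda$ or $(L+R)(a)_\lambda$, both lying in $\text{Cend}(Q)$) placed in a single tensor slot, conformal sesquilinearity $T(a)_\lambda(\partial u)=(\lambda+\partial)T(a)_\lambda(u)$ transports the slot's derivative through the operator while the other two slots commute, giving
\[
\bigl(\cdots\otimes T(a)_\lambda\otimes\cdots\bigr)(\partial^{\otimes^3}W)=(\lambda+\partial^{\otimes^3})\,\bigl(\cdots\otimes T(a)_\lambda\otimes\cdots\bigr)(W).
\]
Evaluating at $\lambda=-\partial^{\otimes^3}$ makes the prefactor $\lambda+\partial^{\otimes^3}$ vanish, so each of the three surviving terms above (operators in slots $3$, $2$, and $1$ respectively, applied to the $\partial^{\otimes^3}$-multiples $\tau_{12}[[r,r]]$ and $[[r,r]]$) is zero. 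This verifies \eqref{rcoeq}, and Theorem \ref{thm-cob} then yields that $(Q,[\cdot_\lambda\cdot],\Delta_r)$ is a Leibniz conformal bialgebra. The triangular case is immediate: if $r=\tau(r)$ then $r-\tau(r)=0$, which is trivially invariant, so it is a special instance of the quasi-triangular statement.
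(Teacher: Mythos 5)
Your proof is correct and follows essentially the same route as the paper, which derives the corollary directly from Theorem \ref{thm-cob}: invariance of $r-\tau(r)$ kills \eqref{dlb1}, \eqref{dlb2} and all the $F(\cdot)(r-\tau(r))$ summands of \eqref{rcoeq}, while the CLCYBE disposes of the remaining $[[r,r]]$ terms. Your ``annihilation lemma'' --- that a conformal operator acting in one tensor slot, evaluated at $\lambda=-\partial^{\otimes^3}$, annihilates the image of $\partial^{\otimes^3}$ by sesquilinearity --- is exactly the (correct) justification the paper leaves implicit in its one-line deduction.
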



\subsection{Operator forms of the classical Leibniz conformal Yang-Baxter equation}


First we recall the notion of { $\mathcal{O}$-operators} on Leibniz conformal algebras.
\begin{defi}\cite{FC}
Let $Q$ be a Leibniz conformal algebra and $(M,l,r)$ be a representation of $Q$.
 A ${\bf k}[\partial]$-module homomorphism $T: M\rightarrow Q$ is called an {\bf $\mathcal{O}$-operator} on $Q$ associated to   $(M, l, r)$ if for all $x,y\in M$, it satisfies
\begin{equation}\label{oeq}
[T(x)_\lambda T(y)]=T(l(T(x))_\lambda y+ r(T(y))_{-\lambda-\partial} x ).
\end{equation}

\end{defi}

Let $Q$ be a finite Leibniz conformal algebra which is free as a ${\bf k}[\partial]$-module. Define a linear map $\sigma: Q\otimes Q\rightarrow \text{Chom}(Q^{*c},Q) $ \delete{and $\eta: \text{Chom}(Q^{*c},Q) \rightarrow Q\otimes Q$} by
\begin{align}
&\langle f,\sigma(a\otimes b)_\lambda g\rangle_\mu=\langle g\otimes f,a\otimes b\rangle_{-\lambda-\mu,\mu}\;\;\;\;\text{for all $f$, $g\in Q^{*c}$ and $a$, $b\in Q$}. \label{dl1}
\end{align}
 Thus $\sigma (a\otimes b)_\lambda g=\langle g,a\rangle_{-\lambda-\partial^Q}b$, where $\partial^Q$ represents the action of $\partial$ on $Q$. Moreover, $\sigma$ is a ${\bf k}[\partial]$-module isomorphism \cite{HB}.

\begin{lem}\label{lecar}
Let $Q$ be a finite Leibniz conformal algebra which is free as a ${\bf k}[\partial]$-module and $r\in Q\otimes Q$. Define a co-bracket $\Delta_r: Q\rightarrow Q\otimes Q$ by \eqref{dr}. Then $Q^{*c}$ is endowed with a Leibniz conformal algebra structure $(Q^{*c}, {[\cdot_\lambda \cdot]}^*)$, where the $\lambda$-bracket is given by
\begin{equation}
[f_\lambda  g]^*=L^*(\sigma(r)_0f)_\lambda g -(L^*+R^*)(\sigma(\tau r)_0g)_{-\lambda-\partial}f\;\;\;\;\text{for all $f$, $g\in Q^{\ast c}$},
\end{equation}
where $\sigma(r)_0f=\sigma(r)_\lambda f|_{\lambda=0}$.
\end{lem}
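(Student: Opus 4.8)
The plan is to derive the bracket $[\cdot_\lambda\cdot]^*$ from the coalgebra $(Q,\Delta_r)$ and then rewrite it in the stated operator form; the Leibniz conformal algebra axioms will come for free, so the whole content of the lemma is an identity between two $\lambda$-brackets on $Q^{*c}$. Concretely, by Proposition~\ref{ppc}(1) the conformal dual of the finite Leibniz conformal coalgebra $(Q,\Delta_r)$ is a Leibniz conformal algebra whose bracket is characterized by $\langle[f_\lambda g]^*,a\rangle_\mu=\langle f\otimes g,\Delta_r(a)\rangle_{\lambda,\mu-\lambda}$ for all $f,g\in Q^{*c}$ and $a\in Q$. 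It therefore suffices to show that this dual bracket agrees with $L^*(\sigma(r)_0f)_\lambda g-(L^*+R^*)(\sigma(\tau r)_0g)_{-\lambda-\partial}f$.

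To do this I would write $r=\sum_i a_i\otimes b_i$, insert $\Delta_r(a)=F(a)r$, and pair with $f\otimes g$ at $(\lambda,\mu-\lambda)$, evaluating the substitution $\nu=-\partial^{\otimes^2}$ term by term. The decisive bookkeeping observation is that, after pairing, $\partial\otimes\id$ acts as multiplication by $\lambda$ and $\id\otimes\partial$ as multiplication by $\mu-\lambda$; hence in the $L(a)$-summand one has $\nu\mapsto-\mu$, whereas in each $R(a)$-summand the internal $\partial$ produced by $R(a)_\nu(-)=[(-)_{-\nu-\partial}a]$ cancels the matching part of $-\partial^{\otimes^2}$, so that the surviving spectral parameter is just $\id\otimes\partial$ in one case and $\partial\otimes\id$ in the other. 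After this simplification the three contributions become, respectively,
\[
\sum_i\langle f,[a_{-\mu}a_i]\rangle_\lambda\langle g,b_i\rangle_{\mu-\lambda},\quad
\sum_i\langle f,[(a_i)_{\mu-\lambda}a]\rangle_\lambda\langle g,b_i\rangle_{\mu-\lambda},\quad
-\sum_i\langle f,a_i\rangle_\lambda\langle g,[(b_i)_\lambda a]\rangle_{\mu-\lambda}.
\]

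Then I would identify these sums in operator form. Using $\sigma(r)_0f=\sum_i\langle f,a_i\rangle_{-\partial}b_i$ and conformal sesquilinearity one computes $[(\sigma(r)_0f)_\lambda a]=\sum_i\langle f,a_i\rangle_\lambda[(b_i)_\lambda a]$, so that by \eqref{lrm} the third sum equals exactly $\langle L^*(\sigma(r)_0f)_\lambda g,a\rangle_\mu$. Symmetrically, with $\sigma(\tau r)_0g=\sum_i\langle g,b_i\rangle_{-\partial}a_i$ and the remark that applying $(L^*+R^*)(\cdot)_{-\lambda-\partial}$ to $f$ and pairing with $a$ at $\mu$ is the same as setting the spectral parameter to $\mu-\lambda$ (because $\langle\partial h,a\rangle_\mu=-\mu\langle h,a\rangle_\mu$, so $-\lambda-\partial\mapsto\mu-\lambda$), the $L^*$- and $R^*$-parts of $-\langle(L^*+R^*)(\sigma(\tau r)_0g)_{-\lambda-\partial}f,a\rangle_\mu$ reproduce the second and first sums. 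Summing the three matches yields the asserted formula.

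The hardest part is precisely this final matching: keeping the two independent $\partial$-actions on the tensor slots and the several spectral variables straight across the successive substitutions $\nu=-\partial^{\otimes^2}$ and $\eta=-\lambda-\partial$. Each elementary move is only conformal sesquilinearity together with the definitions of $L^*$, $R^*$ and $\sigma$, but the terms must be grouped so that the $R^*$-contribution pairs with the $[a_{-\mu}a_i]$-sum and the $L^*$-contribution with the $[(a_i)_{\mu-\lambda}a]$-sum; testing the groupings on the rank-one Virasoro case is a convenient consistency check before writing out the general computation.
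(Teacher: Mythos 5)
Your proposal is correct and follows essentially the same route as the paper: both dualize $\Delta_r$ through the pairing \eqref{ca} and match the resulting bracket against the operator formula (your three sums and the $\nu=-\partial^{\otimes^2}$, $-\lambda-\partial\mapsto\mu-\lambda$ bookkeeping reproduce exactly the paper's computation), the only difference being that you work coordinate-free with $r=\sum_i a_i\otimes b_i$ while the paper expands everything in a ${\bf k}[\partial]$-basis with structure polynomials $P^{ij}_k$ and $r_{ij}$. One shared caveat: your appeal to Proposition \ref{ppc}(1) presupposes that $(Q,\Delta_r)$ satisfies the coalgebra identity \eqref{co-eq}, which for general $r$ is not automatic (it is governed by \eqref{rcoeq}), but the paper's own proof is equally silent on this point, so your argument is on the same footing as the original.
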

\begin{proof}
Using the assumptions from the proof of Proposition \ref{pmb}, and letting $r=\sum_{i,j}r_{ij}(\partial\otimes \id,\id\otimes\partial) a^i\otimes a^j$ where $r_{ij}(\lambda,\mu)\in {\bf k}[\lambda,\mu]$, we have
\begin{align*}
\Delta_r(a^k)=&(L(a^k)_{\partial^{\otimes^2}}\otimes \id+R(a^k)_{\partial^{\otimes^2}}\otimes \id-\id\otimes R(a^k)_{\partial^{\otimes^2}})r\\
=&\sum_{i,j,s}(r_{ij}(-\id\otimes\partial,\id\otimes\partial)P_s^{ki}(-\partial^{\otimes^2},\partial\otimes \id)a^s\otimes a^j+r_{ij}(-\id\otimes\partial,\id\otimes\partial)P_s^{ik}(\id\otimes \partial,\partial\otimes \id)\\
&a^s\otimes a^j-r_{ij}(\partial\otimes \id,-\partial\otimes
\id)P_s^{jk}(\partial\otimes \id,\id\otimes\partial)a^i\otimes
a^s),
\end{align*}
and $ \sigma(r)_\lambda(a_i)=\sum_j
r_{ij}(-\lambda-\partial,\partial)a^j$. Then the facts that
\begin{align*}
&\langle {a_m}_\lambda a_n,a^k\rangle_\mu=\langle a_m\otimes a_n,\Delta_r (a^k)\rangle_{\lambda,\mu-\lambda}\\
=&\sum_ir_{in}(\lambda-\mu,\mu-\lambda)(P^{ki}_m(-\mu,\lambda)+P^{ik}_m(\mu-\lambda,\lambda))-\sum_jr_{mj}(\lambda,-\lambda)P^{jk}_n(\lambda,\mu-\lambda)
\end{align*}
and
\begin{align*}
&\langle L^*(\sigma(r)_0a_m)_\lambda a_n -(L^*+R^*)(\sigma(\tau r)_0a_n)_{-\lambda-\partial}a_m,a^k \rangle_\mu\\
=&\langle \sum_j r_{mj}L^*(a^j)_\lambda a_n,a^k \rangle_\mu+\sum_i r_{in}(\lambda-\mu,\mu-\lambda)\langle (L^*+R^*)(a^i)_{-\lambda+\mu}a_m,a^k\rangle_\mu\\
=&-\sum_jr_{mj}(\lambda,-\lambda)P^{jk}_n(\lambda,\mu-\lambda)+\sum_ir_{in}(\lambda-\mu,\mu-\lambda)(P^{ki}_m(-\mu,\lambda)+P^{ik}_m(\mu-\lambda,\lambda))
\end{align*}
imply that the conclusion holds.
\end{proof}

\begin{thm}\label{thdr}
Let $Q$ be a finite Leibniz conformal algebra which is free as a ${\bf k}[\partial]$-module and $r\in Q\otimes Q$. Then $r$ is a solution of the CLCYBE in $Q$ if and only if $\sigma(\tau r)\in \text{Chom}(Q^{*c},Q)$ satisfies 
\begin{equation}
[{\sigma(\tau r)_0x}_\lambda \sigma(\tau r)_0y]=\sigma(\tau r)_0(L^*(\sigma( r)_0x)_\lambda y-(L^*+R^*)(\sigma(\tau r)_0y)_{-\lambda-\partial}x)\;\;\;\;\text{for all $x, y\in Q^{*c}$}.
\end{equation}
Moreover, if $r-\tau r $ is invariant, then $r$ is a solution of the CLCYBE in $Q$ if and only if $\sigma( r)$ satisfies
\begin{equation}
[{\sigma(r)_0x}_\lambda \sigma(r)_0y]=\sigma(r)_0(L^*(\sigma(r)_0x)_\lambda y-(L^*+R^*)(\sigma(\tau r)_0y)_{-\lambda-\partial}x)\;\;\;\text{for all $x,y\in Q^{*c}$}.
\end{equation}

\end{thm}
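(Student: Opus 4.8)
The right-hand side of the first asserted identity is, by Lemma~\ref{lecar}, exactly $\sigma(\tau r)_0[x_\lambda y]^{*}$, so the equation states that $\sigma(\tau r)_0$ is a homomorphism from $(Q^{*c},[\cdot_\lambda\cdot]^{*})$ to $(Q,[\cdot_\lambda\cdot])$; the plan is to prove directly that this homomorphism property is equivalent to the CLCYBE by a dual pairing argument. Write $r=\sum_i a_i\otimes b_i$, so that \eqref{dl1} gives $\sigma(\tau r)_0 x=\sum_i\langle x,b_i\rangle_{-\partial^Q}a_i$ and $\sigma(r)_0 x=\sum_i\langle x,a_i\rangle_{-\partial^Q}b_i$ for all $x\in Q^{*c}$. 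Both sides of the equation lie in $Q[\lambda]$, and since $Q$ is finite and free the equation holds for all $x,y$ if and only if its pairing $\langle z,\cdot\rangle_\theta$ against an arbitrary $z\in Q^{*c}$ agrees for all $z$ and all values of the free variables. I would therefore expand $\langle z,[{\sigma(\tau r)_0 x}_\lambda\sigma(\tau r)_0 y]\rangle_\theta$ and the pairing of the right-hand side, using the dual-representation formulas \eqref{lrm} together with $L(a)_\lambda b=[a_\lambda b]$ and $R(a)_\lambda b=[b_{-\lambda-\partial}a]$ for the regular representation.

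Each such pairing collapses to a sum over $i,j$ of a product of two scalar pairings $\langle x,-\rangle\langle y,-\rangle$ (or $\langle x,-\rangle\langle z,-\rangle$, etc.) weighted by a single internal $\lambda$-bracket among the $a$'s and $b$'s, and these are precisely the pairings of $z\otimes x\otimes y$ against the four terms of $[[r,r]]$. Concretely, $[{\sigma(\tau r)_0 x}_\lambda\sigma(\tau r)_0 y]$ reproduces $[{r_{12}}_{\mu}r_{13}]$; the term $\sigma(\tau r)_0(L^{*}(\sigma(r)_0 x)_\lambda y)$ reproduces $[{r_{23}}_{\mu}r_{13}]$; and the $L^{*}$- and $R^{*}$-summands of $\sigma(\tau r)_0((L^{*}+R^{*})(\sigma(\tau r)_0 y)_{-\lambda-\partial}x)$ reproduce $[{r_{23}}_{\mu}r_{12}]$ and $[{r_{12}}_{\mu}r_{23}]$ respectively. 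I would then verify that the shifts $-\lambda-\partial$ in \eqref{lrm} and the $-\partial^Q$ inside $\sigma(-)_0$ generate exactly the substitutions $\mu=\partial_2$, $\mu=\partial_1$, $\mu=-\partial_1-\partial_2$ built into the definition of $[[r,r]]$, and that the congruence $\bmod\ \partial^{\otimes^3}$ corresponds to evaluating the third conformal slot at minus the sum of the other two; this last point is exactly what forces the $\lambda=0$ evaluations $\sigma(r)_0,\sigma(\tau r)_0$ to appear in the statement. Non-degeneracy of the pairing between $Q$ and $Q^{*c}$ then yields that $r$ solves the CLCYBE if and only if the first equation holds.

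For the second assertion, granted the first part it suffices to prove that, when $s:=r-\tau r$ is invariant, the equation with $\sigma(\tau r)_0$ in the outer position is equivalent to the one with $\sigma(r)_0$. Writing $\sigma(r)_0=\sigma(\tau r)_0+\sigma(s)_0$, the difference of the two equations is governed entirely by $\sigma(s)_0$: its left-hand side is $[{\sigma(s)_0 x}_\lambda\sigma(\tau r)_0 y]+[{\sigma(\tau r)_0 x}_\lambda\sigma(s)_0 y]+[{\sigma(s)_0 x}_\lambda\sigma(s)_0 y]$ and its right-hand side is $\sigma(s)_0(L^{*}(\sigma(r)_0 x)_\lambda y-(L^{*}+R^{*})(\sigma(\tau r)_0 y)_{-\lambda-\partial}x)$. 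I would translate the invariance $F(a)s=0$ (equivalently $\Delta_r(a)=\Delta_{\tau r}(a)$) through the isomorphism $\sigma$ of \eqref{dl1} into an intertwining identity for $\sigma(s)_0$ between the adjoint action of $Q$ and the dual action $(L^{*},-L^{*}-R^{*})$, and substitute it into both sides to see that they coincide; Proposition~\ref{pprtr} is available to keep the roles of $r$ and $\tau r$ interchangeable throughout.

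The conceptual content is modest: both parts are essentially the statement that $\sigma$ turns the three tensor legs of $[[r,r]]$ into the three conformal pairings of the operator identity. The real obstacle is bookkeeping — propagating every $-\lambda-\partial$ shift from \eqref{lrm} and every $-\partial^Q$ from $\sigma$ correctly through the four terms, and checking that the internal substitutions $\mu=\partial_1,\partial_2,-\partial_1-\partial_2$ together with the reduction modulo $\partial^{\otimes^3}$ align exactly with the single free $\lambda$ and the test variable $\theta$. Getting all signs and shifts consistent, rather than any structural difficulty, is where the work lies; the translation of invariance into an intertwining identity in the third paragraph is the only other point demanding care.
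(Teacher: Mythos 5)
Your first half is sound and is essentially the paper's own proof: pair $[[r,r]]$ against $z\otimes x\otimes y$ with $z$ the test functional, observe that reduction modulo $\partial^{\otimes^3}$ amounts to evaluating the three conformal weights at $(\lambda,\mu,-\lambda-\mu)$ (which is exactly what forces the $\lambda=0$ evaluations $\sigma(r)_0,\sigma(\tau r)_0$), match the four terms of $[[r,r]]$ with the four terms of the operator identity as you list them, and invoke non-degeneracy. Your term-by-term correspondence agrees with the paper's computation up to renaming the test variable.

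The ``moreover'' part, however, has a genuine gap. Write $s=r-\tau r$ and $S=\sigma(s)_0$, $P=\sigma(\tau r)_0$. Dualizing $F(a)s=0$ through $\sigma$ yields exactly \emph{one} intertwining relation, namely
\[
S\big((L^*+R^*)(a)_{-\lambda-\partial}x\big)=-[{(Sx)}_\lambda\, a]\qquad\text{for all } a\in Q,\ x\in Q^{*c},
\]
because $F(a)=L(a)_\lambda\otimes \id+R(a)_\lambda\otimes \id-\id\otimes R(a)_\lambda$ carries only $R(a)$ in the second tensor slot: $S$ intertwines the \emph{right} actions ($-L^*-R^*$ of the dual representation with $R$ of the regular one), and reading the same scalar identity in the other slot gives the adjoint of this relation, not an independent one. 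Substituting it with $a=Py$ into your difference
\[
[Sx_\lambda\, Py]+[Px_\lambda\, Sy]+[Sx_\lambda\, Sy]=S\big(L^*(\sigma(r)_0x)_\lambda y-(L^*+R^*)(Py)_{-\lambda-\partial}x\big)
\]
cancels $[Sx_\lambda\, Py]$ against the $(L^*+R^*)$-term, but leaves the residual requirement
\[
[\sigma(r)_0x_\lambda\, Sy]=S\big(L^*(\sigma(r)_0x)_\lambda y\big),
\]
a \emph{left}-action intertwining statement which, paired against a test functional, is equivalent to $(L(a)_\lambda\otimes \id+\id\otimes L(a)_\lambda)s=0$ for $a=\sigma(r)_0x$ --- a condition not implied by $F(a)s=0$. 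So the difference of the two operator identities does not vanish identically under invariance; it vanishes only when the CLCYBE itself holds, which is precisely what you may not assume when proving the equivalence in both directions. The paper avoids this by never subtracting the identities: it recomputes the defect of the second identity from scratch, showing its pairing against $x\otimes y\otimes z$ equals $\langle x\otimes y\otimes z,\ \tau_{12}[[r,r]]-\sum_j F(a_j)(r-\tau r)\otimes b_j\rangle_{\lambda,-\lambda-\mu,\mu}$; invariance kills the $F$-terms, and since $\tau_{12}$ is invertible and commutes with reduction modulo $\partial^{\otimes^3}$, the condition $\tau_{12}[[r,r]]\equiv 0$ is equivalent to $[[r,r]]\equiv 0$. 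To repair your argument you would have to carry out this direct pairing computation for the second identity; the residual left-intertwining relation you would otherwise need is false in general.
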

\begin{proof}
Set $r=\sum_i a_i\otimes b_i$. By \eqref{dl1},  we have
$$
 \sigma(r)_\lambda x=\sum_i\langle x,a_i\rangle_{-\lambda-\partial^Q} b_i,\qquad
\sigma(\tau r)_\lambda x=\sum_i\langle x,b_i\rangle_{-\lambda-\partial^Q} a_i\;\;\;\text{for all $x\in Q^{*c}$}.
$$
It is obvious that the following cases are equivalent:
\begin{enumerate}
\item $[[r,r]]$ \text{mod} $(\partial^{\otimes^3})=0$;
\item $\langle x\otimes y\otimes z,[[r,r]] ~~\text{mod}~~(\partial^{\partial^3})\rangle_{\lambda,\mu,\xi}=0 $ for all $x,y,z\in Q^{*c}$;
\item $\langle x\otimes y\otimes z,[[r,r]] \rangle_{\lambda,\mu,\xi}=0 $ \text{mod} $(\lambda+\mu+\xi)$ for all $x,y,z\in Q^{*c}$.
\end{enumerate}
For all $x,y,z\in Q^{*c}$, we have
\begin{align*}
&\sum_{i,j}\langle x\otimes y\otimes z,[{a_i}_{\partial_2}a_j]\otimes b_i\otimes b_j\rangle_{\lambda,\mu,\xi}
=\sum_{i,j}\langle x,[{a_i}_{\mu}a_j] \rangle_\lambda\langle y,b_i\rangle_\mu\langle z,b_j\rangle_\xi\\
=&\langle x,[(\sum_i\langle y,b_i\rangle_\mu {a_i})_\mu\sum_j\langle z,b_j\rangle_\xi a_j]\rangle_\lambda
=\langle x,[\sigma(\tau r)_0(y)_\mu\sigma(\tau r)_{-\xi-\mu-\lambda}z]\rangle_\lambda,
\end{align*}
and
\begin{align*}
&\sum_{i,j}\langle x\otimes y\otimes z,a_i\otimes a_j\otimes [{b_j}_{\partial_2}b_i]\rangle_{\lambda,\mu,\xi}\\
=&\sum_i\langle x,a_i\rangle_\lambda\langle z,[\sigma(r)_0(y)_\mu b_i]\rangle_\xi=-\sum_i\langle x,a_i\rangle_\lambda\langle L^*(\sigma(r)_0(y))_\mu z,b_i\rangle_{\mu+\xi}\\
=&-\sum_i\langle x,\langle L^*(\sigma(r)_0(y))_\mu z,b_i\rangle_{\mu+\xi}a_i\rangle_\lambda=-\langle x,\sigma(\tau r)_{-\lambda-\mu-\xi}(L^*(\sigma(r)_0(y))_\mu z)\rangle_\lambda.
\end{align*}
Similarly, we can get
\begin{align*}
&\sum_{i,j}\langle x\otimes y\otimes z,a_i\otimes [{b_i}_{\partial_1}a_j]\otimes b_j\rangle_{\lambda,\mu,\xi}= -\langle x,\sigma(\tau r)_0(R^*(\sigma(\tau r)_{-\lambda-\mu-\xi}(z))_{-\mu-\partial}y)\rangle_\lambda,\\
&\sum_{i,j}\langle x\otimes y\otimes z,a_i\otimes [{a_j}_{-\partial^{\otimes^2}}b_i]\otimes b_j\rangle_{\lambda,\mu,\xi}= -\langle x,\sigma(\tau r)_0(L^*(\sigma(\tau r)_{-\lambda-\mu-\xi}(z))_{-\mu-\partial}y) \rangle_\lambda.
\end{align*}
Thus, $[[r,r]]$ mod $(\partial^{\otimes^3})=0$ if and only if
\begin{equation}\label{dr0}
[\sigma(\tau r)_0(y)_\mu\sigma(\tau r)_0(z)]-\sigma(\tau r)_{0}(L^*(\sigma(r)_0(y))_\mu z)+\sigma(\tau r)_0((L^*+R^*)(\sigma(\tau r)_{0}(z))_{-\mu-\partial}y)=0
\end{equation}
for all $y,z\in Q^{*c}$. The first conclusion follows by replacing $\mu$ by $\lambda$ in \eqref{dr0}.
On the other hand, we have
{\small
\begin{align*}
&\langle z,[{\sigma(r)_0x}_\lambda \sigma(r)_0y]-\sigma(r)_0(L^*(\sigma(r)_0x)_\lambda y-(L^*+R^*)(\sigma(\tau r)_0y)_{-\lambda-\partial}x) \rangle_\mu\\
=&\sum_{i,j}\langle x\otimes y\otimes z, a_i\otimes a_j\otimes [{b_i}_\lambda b_j]+ a_j\otimes [{b_j}_\lambda a_i]\otimes  b_i
-[{a_j}_{-\mu-\lambda}a_i ]\otimes b_j\otimes b_i -[{a_i}_\mu a_j]\otimes b_j\otimes b_i\rangle_{\lambda,-\lambda-\mu,\mu}\\
=&\sum_{j}\langle x\otimes y\otimes z,\tau_{12}[[r,r]]-F(a_j)(r-\tau r)\rangle_{\lambda,-\lambda-\mu,\mu},
\end{align*}}
for all $x,y,z\in Q^{*c}$, which means that the second conclusion holds.
\end{proof}

By Proposition \ref{pprtr}, Lemma \ref{lecar} and Theorem
\ref{thdr}, we obtain the following conclusion.
\begin{cor}
Let $Q$ be a finite Leibniz conformal algebra which is free as a ${\bf k}[\partial]$-module and $r$ be a solution of the CLCYBE in $Q$. Then the following conclusions hold:
\begin{enumerate}
\item $\delta(\tau r)_0$ is a Leibniz conformal algebra homomorphism from $(Q^{*c},[\cdot_\lambda\cdot]^*)$ to $(Q,[\cdot_\lambda\cdot])$.
\item If  $(Q,[\cdot_\lambda\cdot],\Delta_r)$ is a quasi-triangular Leibniz conformal bialgebra,
then
$\delta(r)_0$ and $\delta(\tau r)_0$ are both Leibniz conformal algebra homomorphisms from $(Q^{*c},[\cdot_\lambda\cdot]^*)$ to $(Q,[\cdot_\lambda\cdot])$. Particularly, if $r$ is symmetric, then $\delta(r)_0$ is an $\mathcal{O}$-operator on $Q$ associated to   $(Q^{*c}, L^\ast, -L^\ast-R^\ast)$.
\end{enumerate}
\end{cor}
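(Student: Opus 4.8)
The plan is to read off both conclusions directly from Lemma~\ref{lecar} and Theorem~\ref{thdr}, since the operator-form identities of Theorem~\ref{thdr} become, after substituting the dual bracket of Lemma~\ref{lecar}, exactly the homomorphism conditions. First I would record two preliminary observations. By Lemma~\ref{lecar} the Leibniz conformal algebra structure on $Q^{*c}$ is
\[
[f_\lambda g]^*=L^*(\sigma(r)_0 f)_\lambda g-(L^*+R^*)(\sigma(\tau r)_0 g)_{-\lambda-\partial}f,
\]
and both $\sigma(r)_0$ and $\sigma(\tau r)_0$ are ${\bf k}[\partial]$-module homomorphisms: since $\sigma(\tau r)\in\text{Chom}(Q^{*c},Q)$ is conformal linear, it satisfies $\sigma(\tau r)_\lambda(\partial f)=(\lambda+\partial)\sigma(\tau r)_\lambda f$, and evaluation at $\lambda=0$ yields $\sigma(\tau r)_0(\partial f)=\partial\,\sigma(\tau r)_0 f$ (and likewise for $\sigma(r)_0$).

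For conclusion (a), I would use that $r$ solves the CLCYBE, so the first identity of Theorem~\ref{thdr} holds; substituting the bracket above, its right-hand side is precisely $\sigma(\tau r)_0([x_\lambda y]^*)$, whence $[{\sigma(\tau r)_0 x}_\lambda \sigma(\tau r)_0 y]=\sigma(\tau r)_0([x_\lambda y]^*)$. Combined with the ${\bf k}[\partial]$-linearity just noted, this is exactly the assertion that $\sigma(\tau r)_0$ is a Leibniz conformal algebra homomorphism from $(Q^{*c},[\cdot_\lambda\cdot]^*)$ to $(Q,[\cdot_\lambda\cdot])$.

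For conclusion (b), quasi-triangularity means by Corollary~\ref{colcba} that $r$ solves the CLCYBE and $r-\tau r$ is invariant; hence (a) already gives that $\sigma(\tau r)_0$ is a homomorphism. To treat $\sigma(r)_0$ I would invoke the ``moreover'' clause of Theorem~\ref{thdr}: invariance of $r-\tau r$ yields the analogous identity for $\sigma(r)$, whose right-hand side equals $\sigma(r)_0([x_\lambda y]^*)$, so $\sigma(r)_0$ is a homomorphism too. Finally, in the symmetric case $\tau r=r$, this identity reads $[{\sigma(r)_0 x}_\lambda \sigma(r)_0 y]=\sigma(r)_0(L^*(\sigma(r)_0 x)_\lambda y+(-L^*-R^*)(\sigma(r)_0 y)_{-\lambda-\partial}x)$, which is verbatim the defining equation \eqref{oeq} of an $\mathcal{O}$-operator $T=\sigma(r)_0$ on $Q$ associated to $(Q^{*c},L^*,-L^*-R^*)$, taking $l=L^*$ and the right action $-L^*-R^*$.

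There is no genuine obstacle here: the only bookkeeping is to check that the right-hand sides of the Theorem~\ref{thdr} identities coincide termwise with $\sigma(\cdot)_0$ applied to $[x_\lambda y]^*$, and that evaluation at $\lambda=0$ preserves ${\bf k}[\partial]$-linearity. Once these are in place the corollary follows with no further computation.
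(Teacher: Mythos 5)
Your proposal is correct and matches the paper's own (very terse) justification, which simply derives the corollary from Proposition~\ref{pprtr}, Lemma~\ref{lecar} and Theorem~\ref{thdr}: the dual bracket of Lemma~\ref{lecar} makes the right-hand sides of the identities in Theorem~\ref{thdr} literally $\sigma(\tau r)_0([x_\lambda y]^*)$ resp.\ $\sigma(r)_0([x_\lambda y]^*)$, and the symmetric case $\tau r=r$ turns the second identity into the defining equation \eqref{oeq} of an $\mathcal{O}$-operator associated to $(Q^{*c},L^*,-L^*-R^*)$. Your additional observation that conformal linearity evaluated at $\lambda=0$ gives ${\bf k}[\partial]$-linearity of $\sigma(r)_0$ and $\sigma(\tau r)_0$ is exactly the bookkeeping the paper leaves implicit.
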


Let $Q$ be a Leibniz conformal algebra and $(M,l_Q,r_Q)$ be a representation of $Q$, where $Q$ and $M$ are finite and free as ${\bf k}[\partial]$-modules. Then $(M^{*c},l^*_Q,$ $-l^*_Q-r^*_Q)$ is a representation of $Q$  by Proposition \ref{dual}. By \cite[Proposition 6.1]{BKL}, there is
an isomorphism of ${\bf k}[\partial]$-modules $\varphi: M^{*c}\otimes Q\rightarrow\text{Chom}(M,Q) $ defined by
$$
\varphi(f\otimes a)_\lambda v=\langle v,f\rangle_{-\lambda-\partial^Q}a\;\;\;\text{for all $a\in Q$, $v\in M$, $f\in M^{*c}$}.
$$
Therefore, the inverse ${\bf k}[\partial]$-module isomorphism of $\varphi$ is $\eta: \text{Chom}(M,Q)\rightarrow M^{*c}\otimes Q$ given by
$$\langle x\otimes g, \eta(T)\rangle_{\lambda,\mu}=\langle g,T_{-\lambda-\mu} (x)\rangle_\mu\;\; \;\text{for all $x\in M$, $g\in Q^{\ast c}$ and $T\in \text{Chom}(M,Q)$.}$$
Consequently, for any $T\in \text{Chom}(M,Q)$, we have $\eta(T)\in  M^{*c}\otimes Q\subset (Q\ltimes_{l^*_Q,-l^{*}_Q-r^{*}_Q}M^{*c})\otimes (Q\ltimes_{l^*_Q,-l^{*}_Q-r^{*}_Q}M^{*c})$.

\begin{thm}\label{thamc}
Let $Q$ be a Leibniz conformal algebra, $(M,l_Q,r_Q)$ be a
representation of $Q$ and both $Q$ and $M$ are finite and free as ${\bf k}[\partial]$-modules. Let $T\in
\text{Chom}(M,Q)$ and $\eta (T)\in M^{*c}\otimes Q\subset
(Q\ltimes_{l^*_Q,-l^{*}_Q-r^{*}_Q}M^{*c})\otimes
(Q\ltimes_{l^*_Q,-l^{*}_Q-r^{*}_Q}M^{*c})$. Then $r=\eta
(T)+\tau\eta(T)$ is a symmetric solution of the CLCYBE in the
Leibniz conformal algebra
$Q\ltimes_{l^*_Q,-l^{*}_Q-r^{*}_Q}M^{*c}$ if and only if
$T_0=T_\lambda|_{\lambda=0}$ is an $\mathcal{O}$-operator on $Q$
associated to   $(M,l_Q,r_Q)$.
\end{thm}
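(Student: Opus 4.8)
The plan is to convert the CLCYBE for $r$ into the operator identity furnished by Theorem~\ref{thdr} and then to read that identity off componentwise along the splitting $E^{*c}=Q^{*c}\oplus M$, where $E=Q\ltimes_{l^*_Q,-l^*_Q-r^*_Q}M^{*c}$ denotes the ambient semi-direct product Leibniz conformal algebra. Since $r=\eta(T)+\tau\eta(T)$ is symmetric by construction, $r-\tau r=0$ is trivially invariant, so the second half of Theorem~\ref{thdr} applies verbatim: $r$ is a solution of the CLCYBE in $E$ if and only if $\sigma(r)$ satisfies
\[
[{\sigma(r)_0x}_\lambda \sigma(r)_0y]=\sigma(r)_0\big(L^*(\sigma(r)_0x)_\lambda y-(L^*+R^*)(\sigma(r)_0y)_{-\lambda-\partial}x\big)
\]
for all $x,y\in E^{*c}$, where here $L^*,R^*$ denote the dual of the \emph{regular} representation of $E$ acting on $E^{*c}$.

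The first computation I would carry out is an explicit description of $\sigma(r)_0$. Using the defining formula \eqref{dl1} for $\sigma$ together with $\eta(T)\in M^{*c}\otimes Q$ and the identification $E^{*c}=Q^{*c}\oplus M$ (legitimate because $M$ is finite and free), I expect $\sigma(\eta(T))_\lambda$ to vanish on $Q^{*c}$ and to coincide on $M$ with $\varphi(\eta(T))_\lambda=T_\lambda$, while $\sigma(\tau\eta(T))_\lambda$ vanishes on $M$ and carries $Q^{*c}$ into $M^{*c}$ by an operator $S_\lambda$ adjoint to $T$. Hence $\sigma(r)_0$ sends $v\in M$ to $T_0v\in Q$ and $g\in Q^{*c}$ to $S_0g\in M^{*c}$; in particular $\sigma(r)_0|_M=T_0$.

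Next I would substitute this decomposition into the operator equation and separate its $Q$-valued and $M^{*c}$-valued parts. Two structural facts drive the reduction: $M^{*c}$ is an abelian ideal of $E$, so $[{M^{*c}}_\lambda M^{*c}]=0$; and by Proposition~\ref{dual} the restriction of $(L^*,-L^*-R^*)$ to $Q$ acting on $M\subset E^{*c}$ is the double dual of $(M,l_Q,r_Q)$, namely $(l_Q,r_Q)$ itself, so that $L^*(a)|_M=l_Q(a)$ and $(L^*+R^*)(a)|_M=-r_Q(a)$ for $a\in Q$. Feeding $x=v_1,\ y=v_2\in M$ into the equation then collapses the right-hand side to $T_0\big(l_Q(T_0v_1)_\lambda v_2+r_Q(T_0v_2)_{-\lambda-\partial}v_1\big)$ and the left-hand side to $[{T_0v_1}_\lambda T_0v_2]$, which is precisely the $\mathcal{O}$-operator identity \eqref{oeq} for $T_0$. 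This yields the ``only if'' direction immediately, since any solution makes the operator equation hold for all $x,y$, in particular on $M\times M$.

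For the ``if'' direction I would grant the $\mathcal{O}$-operator identity and check that the three remaining component pairs ($x\in Q^{*c},y\in M$; $x\in M,y\in Q^{*c}$; and $x,y\in Q^{*c}$) of the operator equation are then automatic. Their $Q$-valued parts vanish identically, and after rewriting the actions of $Q$ and of $M^{*c}$ on $E^{*c}$ through $l_Q$, $r_Q$ and $S_0$, the $M^{*c}$-valued parts should reduce to the representation axioms \eqref{m1}--\eqref{m3} of $(M,l_Q,r_Q)$, which hold by hypothesis. The main obstacle is exactly this bookkeeping: faithfully tracking how the regular representation of $E$ and its dual split over $Q^{*c}\oplus M$, keeping the $-\lambda-\partial$ shifts and the pairing conventions $\langle a,f\rangle_{-\lambda}=-\langle f,a\rangle_\lambda$ straight, and confirming that the cross-terms collapse to the representation identities rather than producing new constraints. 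Once these components are seen to be automatic, the equivalence follows.
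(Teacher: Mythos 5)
Your proposal is correct in substance but takes a genuinely different route from the paper. The paper proves Theorem \ref{thamc} by direct computation: it fixes ${\bf k}[\partial]$-bases of $Q$, $M$ and $M^{*c}$, writes $T_\lambda(v_i)=\sum_j g_{ij}(\lambda,\partial)a^j$, computes $\eta(T)$ and $r$ explicitly, and expands $[[r,r]]$ modulo $(\partial^{\otimes^3})$ term by term in the semi-direct product until the summands regroup into tensor components, each of which vanishes precisely when \eqref{oeq} holds for $T_0$; Theorem \ref{thdr} is never invoked. You instead apply the operator criterion of Theorem \ref{thdr} to $E=Q\ltimes_{l_Q^*,-l_Q^*-r_Q^*}M^{*c}$ (legitimately: $E$ is finite and free, and $r-\tau r=0$ is trivially invariant, so the second half of that theorem applies with $\sigma(\tau r)=\sigma(r)$), identify $E^{*c}\cong Q^{*c}\oplus M$, verify $\sigma(r)_0|_M=T_0$, note that the double dual of $(M^{*c},l_Q^*,-l_Q^*-r_Q^*)$ is $(M,l_Q,r_Q)$ by two applications of Proposition \ref{dual}, and read the operator identity off componentwise. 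This is more conceptual and replaces the paper's long coordinate expansion by structural facts, at the cost of the adjoint and sign bookkeeping you flag; the paper's computation, conversely, is self-contained and independent of Theorem \ref{thdr}.

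One prediction in your sketch needs correcting, though it does not break the argument. In the ``if'' direction the mixed components ($x\in M,\ y\in Q^{*c}$ and $x\in Q^{*c},\ y\in M$) do \emph{not} reduce to the representation axioms \eqref{m1}--\eqref{m3}: pairing them against elements of $M$ and unwinding the adjoint $S_0$ of $T_0$ shows that the first is the adjoint reformulation of the $\mathcal{O}$-operator identity and the second a symmetrized consequence of it (applied to the pairs $(v,w)$ and $(w,v)$), while the $(Q^{*c},Q^{*c})$ component is $0=0$ because $M^{*c}$ is an abelian ideal of $E$ and elements of $Q^{*c}$ pair trivially with $[E_\lambda M^{*c}]\subset M^{*c}$. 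Since the $\mathcal{O}$-operator identity is exactly what is granted in that direction, all remaining components still hold and your equivalence closes; but the mechanism is a re-encoding of \eqref{oeq} through the pairing, not the axioms of the representation, and had the cross-terms produced constraints outside the granted identity your plan would have failed at precisely this point.
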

\begin{proof}
Let $\{ a^1,\cdots,a^n\}$ be a ${\bf k}[\partial]$-basis of $Q$,
 $\{v^1,\cdots,v^m\}$ be a ${\bf k}[\partial]$-basis of $M$ and $\{v_1,\cdots,v_m\}$ be a ${\bf k}[\partial]$-basis of $M^{*c}$. Assume that for all $i=1,\cdots,m$,
$$
T_\lambda(v_i)=\sum_{j=1}^n g_{ij}(\lambda,\partial)a^j,\;\; g_{ij}(\lambda,\partial)\in{\bf k}[\lambda,\partial].$$ Then we have
$
\eta(T)=\sum_{i}^m\sum_j^n g_{ij}(-\partial^{\otimes^2},\id\otimes \partial)v^*_i\otimes a^j.
$
Thus we obtain
$$
r=\eta (T)+\tau\eta(T)=\sum_{i}^m\sum_j^n( g_{ij}(-\partial^{\otimes^2},\id\otimes \partial)v^*_i\otimes a^j+ g_{ij}(-\partial^{\otimes^2}, \partial\otimes \id)a^j \otimes v^*_i ).
$$
On the other hand, by \eqref{lrm}, we have
\begin{equation*}
l^*_Q (a^i)_\lambda v_j^*=-\sum_t\langle v^*_j,l_Q(a^i)_\lambda v_t\rangle_{-\lambda-\partial}v^*_t,\qquad r^*_Q (a^i)_\lambda v_j^*=-\sum_t\langle v^*_j,r_Q(a^i)_\lambda v_t\rangle_{-\lambda-\partial}v^*_t.
\end{equation*}
Then we have
{\small
\begin{flalign*}
 &[[r,r]]\quad\text{mod $(\partial^{\otimes^3} )$}\\
=&\sum^m_{i,t}\sum^n_{s,j}(g_{ij}(0, \partial_2) g_{ts}(-\partial^{\otimes^3},\partial^{\otimes^2}\otimes \id) [{v^*_i}_{\partial_2}a^s ]\otimes a^j\otimes v^*_t\\
&+g_{ts}(0,-\partial_2)g_{ij}(-\partial^{\otimes^3},\partial_3)[{a^s}_{\partial_2}v^*_i]\otimes v^*_t\otimes a^j+g_{ij}(0,- \partial_2) g_{ts}(-\partial^{\otimes^3},\partial^{\otimes^2}\otimes \id)[{a^j}_{\partial_2}a^s ]\otimes v^*_i\otimes v^*_t\\
&+g_{ts}(0,-\partial_2)g_{ij}(-\partial^{\otimes^3},\id\otimes\partial^{\otimes^2}) v^*_i \otimes v^*_t \otimes[{a^s}_{\partial_2}a^j]+g_{ts}(0,\partial_2)g_{ij}(-\partial^{\otimes^3},\id\otimes\partial^{\otimes^2}) v^*_i \otimes a^s \otimes[{v^*_t}_{\partial_2}a^j]\\
&+g_{ij}(0,-\partial_2)g_{ts}(-\partial^{\otimes^3},\partial_1) a^s\otimes v^*_i \otimes  [{a^j}_{\partial_2}v^*_t]-g_{ij}(0,-\partial_1)g_{ts}(-\partial^{\otimes^3},\partial^{\otimes^2}\otimes \id) v^*_i \otimes  [{a^j}_{\partial_1}a^s] \otimes v^*_t&\\
&-g_{ij}(0,-\partial_1)g_{ts}(-\partial^{\otimes^3},\partial_3) v^*_i \otimes  [{a^j}_{\partial_1}v^*_t] \otimes a^s-g_{ij}(0,\partial_1)g_{ts}(-\partial^{\otimes^3},\partial^{\otimes^2}\otimes \id)a^j  \otimes  [{v^*_i}_{\partial_1}a^s] \otimes v^*_t\\
&-g_{ij}(0,-\partial_1)g_{ts}(-\partial^{\otimes^3},\partial_3)v^*_i \otimes  [{ v^*_t}_{-\partial^{\otimes^2}}a^j] \otimes a^s -g_{ij}(0,-\partial_1)g_{ts}(-\partial^{\otimes^3},\partial^{\otimes^2}\otimes \id) v^*_i \otimes  [{a^s }_{-\partial^{\otimes^2}}a^j] \otimes  v^*_t
\\
&-g_{ij}(0,\partial_1)g_{ts}(-\partial^{\otimes^3},\partial^{\otimes^2}\otimes \id) a^j  \otimes  [{a^s }_{-\partial^{\otimes^2}}v^*_i] \otimes  v^*_t) \quad\text{mod $(\partial^{\otimes^3} )$}\\
=&\sum_{i,t}^m([{v^*_i}_{\partial_2}T_0(v_t) ]\otimes T_0(v_i)\otimes v^*_t
+[{T_0(v_t)}_{\partial_2}v^*_i]\otimes v^*_t\otimes T_0(v_i)   +[{T_0(v_i)}_{\partial_2} T_0(v_t) ]\otimes v^*_i\otimes v^*_t   \\
&+v^*_i \otimes v^*_t \otimes[{T_0(v_t)}_{\partial_2}T_0(v_i)]
+v^*_i \otimes T_0(v_t) \otimes[{v^*_t}_{\partial_2} T_0(v_i)]
+T_0(v_t)\otimes v^*_i \otimes  [{T_0(v_i)}_{\partial_2}v^*_t]\\
&-v^*_i \otimes  [{T_0(v_i)}_{\partial_1} T_0(v_t)] \otimes v^*_t
-v^*_i \otimes  [{T_0(v_i)}_{\partial_1}v^*_t] \otimes T_0(v_t)\\
&-T_0(v_i) \otimes  [{v^*_i}_{\partial_1} T_0(v_t)] \otimes v^*_t
-v^*_i \otimes  [{ v^*_t}_{-\partial^{\otimes^2}}T_0(v_i)] \otimes T_0(v_t)\\
&-v^*_i \otimes  [{T_0(v_t) }_{-\partial^{\otimes^2}}T_0(v_i)] \otimes  v^*_t
 -T_0(v_i)  \otimes  [{T_0(v_t) }_{-\partial^{\otimes^2}}v^*_i] \otimes  v^*_t)\quad\text{mod $(\partial^{\otimes^3} )$}.&
\end{flalign*}}
Since $T_0$ is a ${\bf k}[\partial]$-module homomorphism,  we have
{\small\begin{flalign*}
&\sum_{i,t}^m[{v^*_i}_{\partial_2}T_0(v_t) ]\otimes T_0(v_i)\otimes v^*_t\quad\text{mod $(\partial^{\otimes^3} )$}
=-\sum_{i,t}^m(l_Q^*+r_Q^*)(T_0(v_t))_{-\partial^{\otimes^2}\otimes \id}v^*_i\otimes T_0(v_i)\otimes v_t^* \quad\text{mod $(\partial^{\otimes^3} )$}\\
=&-\sum_{i,t}^m(l_Q^*+r_Q^*)(T_0(v_t))_{\partial_3}v^*_i\otimes T_0(v_i)\otimes v_t^* \quad\text{mod $(\partial^{\otimes^3} )$}\\
=&\sum_{i,t,j}^m\langle v^*_i, (l_Q+r_Q)(T_0(v_t))_{\partial_3} v_j\rangle_{\partial_2}v^*_j\otimes T_0(v_i)\otimes v_t^* \quad\text{mod $(\partial^{\otimes^3} )$}\\
=&\sum_{i,t,j}^mv^*_j\otimes T_0(\langle v^*_i, (l_Q+r_Q)(T_0(v_t))_{\partial_3} v_j\rangle_{\partial_2}v_i)\otimes v_t^* \quad\text{mod $(\partial^{\otimes^3} )$}\\
=&\sum_{t,j}^mv^*_j\otimes T_0((l_Q+r_Q)(T_0(v_t))_{\partial_3}v_j)\otimes v_t^* \quad\text{mod $(\partial^{\otimes^3} )$}.&
\end{flalign*}}
Similarly, we get
{\small
\begin{flalign*}
&[[r,r]]\quad\text{mod $(\partial^{\otimes^3} )$}\\
=&\sum_{i,t}^m ( ([T_0(v_i)_{\partial_2}T_0(v_t)]-T_0(l_Q(T_0(v_i))_{\partial_2}v_t) - T_0( r_Q(T_0(v_t))_{\partial_3}v_i)  )\otimes v^*_i\otimes v^*_t\\
&+v^*_i\otimes( T_0(l_Q( T_0(v_i))_{\partial_1}v_t+r_Q(T_0(v_t))_{\partial_3}v_i)-[T_0(v_i)_{\partial_1} T_0(v_t) ]  )\otimes v^*_t&
\end{flalign*}
\begin{flalign*}
\quad&+v^*_i\otimes( T_0(l_Q( T_0(v_t))_{\partial_3}v_i+r_Q(T_0(v_i))_{\partial_1}v_t)-[T_0(v_t)_{\partial_3} T_0(v_i) ]  )\otimes v^*_t&\\
&+v^*_i\otimes v^*_t\otimes ([T_0(v_t)_{\partial_2}T_0(v_i)]- T_0(l_Q( T_0(v_t))_{\partial_2}v_i+r_Q(T_0(v_i))_{\partial_1}v_t)   )   )\quad\text{mod $(\partial^{\otimes^3} )$}.
\end{flalign*}}
Thus $r$ is a solution of the CLCYBE in the Leibniz conformal algebra $Q\ltimes_{l^*_Q,-l^{*}_Q-r^{*}_Q}M^{*c}$ if and only if for all $i,t\in\{1,\cdots,m\}$,
$$
[T_0(v_i)_{\lambda}T_0(v_t)]=T_0(l_Q(T_0(v_i))_{\lambda}v_t) +T_0( r_Q(T_0(v_t))_{-\lambda-\partial}v_i).
$$
Hence this conclusion holds.
\end{proof}

In what follows, we recall the notion of Leibniz-dendriform conformal algebras.
\begin{defi}\cite{GW}
A {\bf Leibniz-dendriform conformal algebra (LDCA)} is a triple $(Q,\lhd_\lambda,\rhd_\lambda)$ consisting of a ${\bf k}[\partial]$-module $Q$ and two $\lambda$-products $\lhd_\lambda$, $\rhd_\lambda: Q\times Q\rightarrow Q[\lambda]$ which satisfy the following conditions:
\begin{align}
&(\partial a)\lhd_\lambda b=-\lambda a\lhd_\lambda b,\quad a\lhd_\lambda (\partial b)=(\lambda+\partial)a\lhd_\lambda b,\label{ldca00}\\
&(\partial a)\rhd_\lambda b=-\lambda a\rhd_\lambda b,\quad a\rhd_\lambda (\partial b)=(\lambda+\partial)a\rhd_\lambda b,\label{ldca01}\\
&a\rhd_\lambda (b\rhd_\mu c+b\lhd_\mu c)=(a\rhd_\lambda b)\rhd_{\lambda+\mu}c+b\lhd_\mu(a\rhd_\lambda c), \label{ldca1}\\
&a\lhd_\lambda(b\rhd_\mu c)=(a\lhd_\lambda b)\rhd_{\lambda+\mu}c+b\rhd_\mu(a\rhd_\lambda c+a\lhd_\lambda c),\label{ldca2}\\
&a\lhd_\lambda (b\lhd_\mu c)=(a\rhd_\lambda b+a\lhd_\lambda b)\lhd_{\lambda+\mu}c+b\lhd_\mu(a\lhd_\lambda c)\label{ldca3}\;\;\text{for all $ a, b, c\in Q$.}
\end{align}
\end{defi}
\delete{
Let $(Q,\lhd_\lambda,\rhd_\lambda)$ and $(Q',\lhd'_\lambda,\rhd'_\lambda)$ be two LDCAs. A ${\bf k}[\partial]$-module homomorphism $f: Q\rightarrow Q'$ is called a homomorphism of LDCAs if for all $a,b\in Q$,
\begin{equation*}
f(a\lhd_\lambda b)=f(a)\lhd'_\lambda f(b),\qquad f(a\rhd_\lambda b)=f(a)\rhd'_\lambda f(b).
\end{equation*}
In this way, we define a category $\mathcal{LD}^c$ whose objects
are all LDCAs and morphisms are all homomorphisms of LDCAs. }

\begin{rmk}\cite{GW}
Let   $(Q,\lhd_\lambda,\rhd_\lambda)$ be a LDCA. Define $[\cdot_\lambda \cdot]_{\lhd,\rhd}$ by
\begin{equation}\label{ldca-lca}
[a_\lambda b]_{\lhd,\rhd}=a\rhd_\lambda b+a\lhd_\lambda b\;\;\;\text{for all $a, b\in Q$}.
\end{equation}
Then $(Q,[\cdot_\lambda \cdot]_{\lhd,\rhd})$ is a Leibniz conformal algebra.
\delete{\begin{enumerate}
\item
If the operation $\rhd_\lambda=0$, then $(Q,\lhd_\lambda) $  is exactly a Leibniz conformal algebra.
\item
If the operation $\rhd_\lambda$ satisfies $a\rhd_{-\lambda-\partial}b=-b\lhd_\lambda a$ for all $a,b\in Q$, then $(Q,\lhd_\lambda)  $ is a left-symmetric conformal algebra.
\item
Define $[\cdot_\lambda \cdot]_{\lhd,\rhd}$ by
\begin{equation}\label{ldca-lca}
[a_\lambda b]_{\lhd,\rhd}=a\rhd_\lambda b+a\lhd_\lambda b\;\;\;\text{for all $a, b\in Q$}.
\end{equation}
Then $(Q,[\cdot_\lambda \cdot]_{\lhd,\rhd})$ is a Leibniz conformal algebra.
\end{enumerate}}
\end{rmk}

Let $(Q,\lhd_\lambda,\rhd_\lambda)$ be a LDCA. Define two ${\bf k}[\partial]$-module homomorphisms $L_\lhd$ and $R_\rhd$ from $Q$ to $\text{Cend}(Q)$ by
$L_\lhd(a)_\lambda b=a\lhd_\lambda b$, $ R_\rhd(a)_{-\lambda-\partial}b=b\rhd_\lambda a$ for all $a, b\in Q$.

\begin{pro}\label{Idrb}
Let $(Q,\lhd_\lambda,\rhd_\lambda)$ be a LDCA. Then
$(Q,L_\lhd,R_\rhd)$ is a representation  of the Leibniz conformal
algebra $(Q,[\cdot_\lambda\cdot]_{\lhd,\rhd})$, where
$[\cdot_\lambda\cdot]_{\lhd,\rhd}$ is defined by
\eqref{ldca-lca}. Moreover, the identity map $\id: Q\rightarrow Q$
is an $\mathcal{O}$-operator  on $(Q,[\cdot_\lambda
\cdot]_{\lhd,\rhd})$ associated to   $(Q,L_\lhd,R_\rhd) $.
\end{pro}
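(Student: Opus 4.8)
The plan is to prove the statement in two parts: first that $(Q,L_\lhd,R_\rhd)$ is a representation of $(Q,[\cdot_\lambda\cdot]_{\lhd,\rhd})$ by checking the three axioms \eqref{m1}, \eqref{m2}, \eqref{m3}, and then that $\id$ satisfies the $\mathcal{O}$-operator equation \eqref{oeq}. The underlying dictionary I would set up once and reuse is
\[
L_\lhd(a)_\lambda v=a\lhd_\lambda v,\qquad R_\rhd(b)_{-\lambda-\partial}v=v\rhd_\lambda b,\qquad [a_\lambda b]_{\lhd,\rhd}=a\rhd_\lambda b+a\lhd_\lambda b .
\]
Before touching \eqref{m1}--\eqref{m3} I would note that $L_\lhd(a),R_\rhd(a)\in\text{Cend}(Q)$ and that $L_\lhd,R_\rhd$ are ${\bf k}[\partial]$-module homomorphisms; both facts are exactly the conformal sesquilinearity conditions \eqref{ldca00} and \eqref{ldca01} (e.g. $a\lhd_\lambda(\partial v)=(\lambda+\partial)(a\lhd_\lambda v)$ gives conformal linearity of $L_\lhd(a)$, and $(\partial a)\lhd_\lambda v=-\lambda\,a\lhd_\lambda v$ gives the homomorphism property since $(\partial f)_\lambda v=-\lambda f_\lambda v$ in $\text{Cend}(Q)$).

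For the three module axioms I would substitute the dictionary and read off the result. Axiom \eqref{m1} becomes $a\lhd_\lambda(b\lhd_\mu v)=(a\rhd_\lambda b+a\lhd_\lambda b)\lhd_{\lambda+\mu}v+b\lhd_\mu(a\lhd_\lambda v)$, which is precisely \eqref{ldca3} with $c=v$. For \eqref{m2}, computing the three terms gives $l(a)_\lambda(r(b)_{-\mu-\partial}v)=a\lhd_\lambda(v\rhd_\mu b)$, $r(b)_{-\lambda-\mu-\partial}(l(a)_\lambda v)=(a\lhd_\lambda v)\rhd_{\lambda+\mu}b$, and $r([a_\lambda b])_{-\mu-\partial}v=v\rhd_\mu(a\rhd_\lambda b+a\lhd_\lambda b)$, so \eqref{m2} is exactly \eqref{ldca2} after relabelling $(b,c)\mapsto(v,b)$. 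In both cases the only care needed is the spectral-parameter bookkeeping when the operand already carries a parameter, which is where I expect the computation to be most error-prone: applying $R_\rhd(b)_{-\lambda-\mu-\partial}$ to $a\lhd_\lambda v\in Q[\lambda]$ must be matched against the defining relation $R_\rhd(b)_{-\nu-\partial}w=w\rhd_\nu b$ with $\nu=\lambda+\mu$, using conformal linearity to absorb the $\partial$ in the index.

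The subtle point, and the main obstacle, is \eqref{m3}: it is not the verbatim translation of a single LDCA axiom. Rewriting both sides through $R_\rhd$ turns \eqref{m3} into
\[
(v\rhd_\mu a)\rhd_{\lambda+\mu}b+(a\lhd_\lambda v)\rhd_{\lambda+\mu}b=0 .
\]
I would obtain this by deriving each summand separately. From \eqref{ldca1} with $(a,b,c)\mapsto(v,a,b)$ and parameters $(\lambda,\mu)\mapsto(\mu,\lambda)$ one gets $(v\rhd_\mu a)\rhd_{\lambda+\mu}b=v\rhd_\mu(a\rhd_\lambda b+a\lhd_\lambda b)-a\lhd_\lambda(v\rhd_\mu b)$, while from \eqref{ldca2} with $(b,c)\mapsto(v,b)$ one gets $(a\lhd_\lambda v)\rhd_{\lambda+\mu}b=a\lhd_\lambda(v\rhd_\mu b)-v\rhd_\mu(a\rhd_\lambda b+a\lhd_\lambda b)$; adding these two identities cancels everything and yields $0$, so \eqref{m3} holds. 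Thus \eqref{m3} is a consequence of \eqref{ldca1} together with \eqref{ldca2}, and all three representation axioms are verified.

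Finally, the $\mathcal{O}$-operator claim is immediate from the dictionary: taking $T=\id$, $M=Q$, $(l,r)=(L_\lhd,R_\rhd)$,
\[
[\id(x)_\lambda \id(y)]_{\lhd,\rhd}=[x_\lambda y]_{\lhd,\rhd}=x\rhd_\lambda y+x\lhd_\lambda y
=L_\lhd(x)_\lambda y+R_\rhd(y)_{-\lambda-\partial}x
=\id\!\left(L_\lhd(\id(x))_\lambda y+R_\rhd(\id(y))_{-\lambda-\partial}x\right),
\]
which is exactly \eqref{oeq}. Hence $\id$ is an $\mathcal{O}$-operator on $(Q,[\cdot_\lambda\cdot]_{\lhd,\rhd})$ associated to $(Q,L_\lhd,R_\rhd)$, completing the proof.
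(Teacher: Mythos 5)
Your proof is correct and follows exactly the direct-verification route the paper intends: it declares the result ``straightforward,'' meaning precisely the substitution checks you carry out, with \eqref{m1} being \eqref{ldca3}, \eqref{m2} being \eqref{ldca2} after relabelling, \eqref{m3} reducing (as you show by adding the two instances of \eqref{ldca1} and \eqref{ldca2}) to $(v\rhd_\mu a)\rhd_{\lambda+\mu}b+(a\lhd_\lambda v)\rhd_{\lambda+\mu}b=0$, and the $\mathcal{O}$-operator identity \eqref{oeq} holding by inspection for $T=\id$. Your explicit handling of the spectral-parameter bookkeeping for $R_\rhd$ and the sesquilinearity checks is a welcome elaboration of what the paper omits, but it is the same argument.
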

\begin{proof}
It is straightforward.
\delete{It just need to check that $(Q,L_\lhd,R_\rhd)$ satisfies \eqref{m1}-\eqref{m3}.
For all $a,b,c\in Q$, we have
\begin{align*}
&L_\lhd(a)_\lambda(L_\lhd(b)_\mu c)=a\lhd_\lambda(b\lhd_\mu c)=(a\rhd_\lambda+a\lhd_\lambda b)\lhd_{\lambda+\mu}c+b\lhd_\mu(a\lhd_\lambda c)\\
=&L_\lhd([a_\lambda b])_{\lambda+\mu}c+L_\lhd(b)_\mu(L_\lhd(a)_\lambda c),
\end{align*}
which means that \eqref{m1} holds. Similarly, we can verify that \eqref{m2} and \eqref{m3} hold.
Moreover, we have
\begin{equation*}
\id(L_\lhd(\id(a)))_\lambda b+R_\rhd(\id(b))_{-\lambda-\partial}a=a\lhd_\lambda b+a\rhd_\lambda y=[\id(a)_\lambda \id(b)]_{\lhd,\rhd},
\end{equation*}
which means that $\id$ is an $\mathcal{O}$-operator on  $(Q,[\cdot_\lambda \cdot]_{\lhd,\rhd})$ associated to
$(Q,L_\lhd,R_\rhd) $.}
\end{proof}
\delete{
\begin{pro}\label{mlr}
Let  $(Q,\lhd_\lambda,\rhd_\lambda)$  and $(Q',\lhd'_\lambda,\rhd'_\lambda)$ be two LDCAs and $f$ be an LDCA homomorphism from $Q$ to $Q'$. Then $(f,f)$ is a homomorphism of $\mathcal{O}$-operators from $\id_Q$ to $\id_{Q'}$.
\end{pro}
\begin{proof}
This conclusion naturally holds.
\end{proof}}

\begin{pro} \cite[Proposition 2.8]{GW} \label{mld}
Let $T:M\rightarrow Q$ be an $\mathcal{O}$-operator on $(Q,[\cdot_\lambda \cdot])$ associated to   $(M,l_Q,r_Q)$. Then  there is an LDCA structure on $M$ given by
\begin{align*}
x\lhd_\lambda y=l_Q(T(x))_\lambda y,\qquad  x\rhd_\lambda y=r_Q(T(y))_{-\lambda-\partial}x\;\;\;\text{for all $x, y\in M$}.
\end{align*}
\end{pro}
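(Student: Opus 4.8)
The plan is to verify directly that the two $\lambda$-products $x\lhd_\lambda y=l_Q(T(x))_\lambda y$ and $x\rhd_\lambda y=r_Q(T(y))_{-\lambda-\partial}x$ satisfy the five defining axioms \eqref{ldca00}--\eqref{ldca3} of an LDCA, by translating each axiom, through these definitions, into an identity in the representation $(M,l_Q,r_Q)$ and then recognizing it as one of the representation axioms \eqref{m1}--\eqref{m3}, after using the $\mathcal{O}$-operator relation \eqref{oeq} where a $\lambda+\mu$ product must be collapsed. The single recurring device is that the argument inside $T$ in \eqref{oeq} is exactly $x\lhd_\lambda y+x\rhd_\lambda y$, so \eqref{oeq} reads $T(x\lhd_\lambda y+x\rhd_\lambda y)=[T(x)_\lambda T(y)]$; this is what lets a term such as $(x\rhd_\lambda y+x\lhd_\lambda y)\lhd_{\lambda+\mu}z$ turn into $l_Q([T(x)_\lambda T(y)])_{\lambda+\mu}z$.

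First I would dispose of the conformal sesquilinearity axioms \eqref{ldca00} and \eqref{ldca01}, which are formal. Since $T$ is a ${\bf k}[\partial]$-module homomorphism and $l_Q,r_Q\colon Q\to\text{Cend}(M)$ are ${\bf k}[\partial]$-module homomorphisms, one has $l_Q(T(\partial x))=l_Q(\partial T(x))=\partial\,l_Q(T(x))$, and the ${\bf k}[\partial]$-action on $\text{Cend}(M)$ together with the conformal linearity of each $l_Q(a)$ and $r_Q(a)$ yields all four sesquilinearity relations immediately.

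The core is the three compatibility identities, which I would treat in increasing difficulty. For \eqref{ldca3}, rewriting the left side as $l_Q(T(x))_\lambda(l_Q(T(y))_\mu z)$ and applying $T(x\rhd_\lambda y+x\lhd_\lambda y)=[T(x)_\lambda T(y)]$ to the first right-hand term turns the identity into exactly \eqref{m1} with $a=T(x),\,b=T(y),\,v=z$. For \eqref{ldca2}, the same device on $y\rhd_\mu(x\rhd_\lambda z+x\lhd_\lambda z)$ produces $r_Q([T(x)_\lambda T(z)])_{-\mu-\partial}y$, and the whole identity becomes precisely \eqref{m2} with $a=T(x),\,b=T(z),\,v=y$. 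The identity \eqref{ldca1} is the only one needing two representation axioms: applying \eqref{oeq} to the argument $y\rhd_\mu z+y\lhd_\mu z$ turns the left side into $r_Q([T(y)_\mu T(z)])_{-\lambda-\partial}x$; substituting \eqref{m2} into the mixed term $l_Q(T(y))_\mu(r_Q(T(z))_{-\lambda-\partial}x)$ on the right leaves a discrepancy that is exactly \eqref{m3} with $a=T(y),\,b=T(z),\,v=x$.

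The main obstacle I anticipate is purely bookkeeping: keeping the conformal variables straight under the substitutions $\lambda\mapsto-\lambda-\partial$ built into $\rhd_\lambda$ and into $r_Q(\cdot)_{-\lambda-\partial}$, and treating $T$ as extended ${\bf k}[\mu]$-linearly so that the $\mu$-dependent argument $y\rhd_\mu z+y\lhd_\mu z$ is sent to the $\lambda$-bracket $[T(y)_\mu T(z)]\in Q[\mu]$. No genuinely new idea is required beyond \eqref{oeq} and \eqref{m1}--\eqref{m3}; the content of the statement is exactly that the dendriform axioms are a repackaging of the representation axioms once the two products are expressed through $T$, $l_Q$ and $r_Q$.
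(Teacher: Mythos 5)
Your proposal is correct and matches the intended argument: the paper itself defers to \cite[Proposition 2.8]{GW}, whose proof (and the verification sketched in an earlier draft of this paper) is exactly your direct check that, after rewriting $\lhd_\lambda$, $\rhd_\lambda$ through $T$, $l_Q$, $r_Q$ and collapsing $T(x\lhd_\lambda y+x\rhd_\lambda y)=[T(x)_\lambda T(y)]$ via \eqref{oeq}, the axioms \eqref{ldca3}, \eqref{ldca2}, \eqref{ldca1} reduce to \eqref{m1}, \eqref{m2}, and \eqref{m2} combined with \eqref{m3}, respectively, with sesquilinearity formal. Your variable matchings $(a,b,v)=(T(x),T(y),z)$, $(T(x),T(z),y)$, $(T(y),T(z),x)$ are all correct as stated.
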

\delete{\begin{proof}
\eqref{ldca00} and \eqref{ldca01} are naturally hold since $l_Q  $ and $r_Q: Q\rightarrow Cend(M) $.
For all $x,y,z\in M$, we have
\begin{align*}
&r_Q(T(r_Q(T(z))_{-\mu-\partial}y +l_Q(T(y))_\mu z))_{-\lambda-\partial}x=r_Q([T(y)_\mu T(z)] )_{-\lambda-\partial}x\\
=&r_Q(T(z))_{-\lambda-\mu-\partial}(r_Q(T(y))_{-\lambda-\partial}x)+l_Q(T(y))_\mu(r_Q(T(z))_{-\lambda-\partial}x),
\end{align*}
which means that \eqref{ldca1} holds. Similarly, we can verify that \eqref{ldca2} and \eqref{ldca3} hold and hence $(M,\lhd_\lambda,\rhd_\lambda)$ is an LDCA.

\end{proof}
\begin{ex}
Let $T:M\rightarrow Q$ be an $\mathcal{O}$-operator on a Leibniz conformal algebra $(Q,[\cdot_\lambda\cdot] )$ associated to   a representation $(M,l_Q,r_Q=0)$ of $Q$. Then there is an Leibniz conformal algebra structure on $M$ given by
$$
[x_\lambda y]=l_Q(T(x))_\lambda y,
$$
for all $x,y\in M$.
\end{ex}}

\delete{
\begin{pro}\label{mrl}
Let $T:M\rightarrow Q$ and $T':M'\rightarrow Q'$ be two $\mathcal{O}$-operators and $(\alpha,\beta)$ be a homomorphism of $\mathcal{O}$-operators from $T$ to $T'$. Then $\alpha$ is a homomorphism of  LDCAs from $(M,\lhd_\lambda,\rhd_\lambda)$ to $(M',\lhd'_\lambda,\rhd'_\lambda)$ which are defined by Proposition \ref{mld}.
\end{pro}
\begin{proof}
For all $x,y\in M$, we have
\begin{align*}
&\alpha(x\rhd_\lambda y)=\alpha(r_Q(T(y))_{-\lambda-\partial}x)=r'_Q(\beta(T(y))_{-\lambda-\partial}\alpha(x))
=r'_Q(T'(\alpha(y))_{-\lambda-\partial}\alpha(x))=\alpha(x)\rhd'_\lambda\alpha(y).
\end{align*}
and
\begin{align*}
&\alpha(x\lhd_\lambda y)=\alpha(l_Q(T(x))_{\lambda}y)=l'_Q(\beta(T(x))_{\lambda}\alpha(y))
=l'_Q(T'(\alpha(x))_{\lambda}\alpha(y))=\alpha(x)\lhd'_\lambda\alpha(y).
\end{align*}
Thus the conclusion holds.
\end{proof}
By Proposition \ref{Idrb}, Proposition \ref{mlr}, Proposition \ref{mld} and Proposition \ref{mrl}  we can define two functor $\mathcal{F}: \mathcal{LD}^c\rightarrow \mathfrak{R}_L$ and $\mathcal{G}: \mathfrak{R}_L\rightarrow \mathcal{LD}^c$ by
\begin{align*}
&\mathcal{F}:(Q,\lhd_\lambda,\rhd_\lambda)\rightarrow \id_Q,\qquad \mathcal{F}: f\rightarrow (f,f),\\
&\mathcal{G}: T\rightarrow (M,\lhd_\lambda,\rhd_\lambda),\qquad \mathcal{G}:(\alpha,\beta)\rightarrow \alpha,
\end{align*}
where $(Q,\lhd_\lambda,\rhd_\lambda)$ is any LDCA, $T: M\rightarrow Q$ is any $\mathcal{O}$-operator on $Q$ associated to   $M$, $f: Q\rightarrow Q'$ is any LDCA homomorphism and $(\alpha,\beta)$ is any
homomorphism of  $\mathcal{O}$-operators.\par

\begin{thm}
Let $\mathcal{F}: \mathcal{LD}^c\rightarrow \mathfrak{R}_L$ and $\mathcal{G}: \mathfrak{R}_L\rightarrow \mathcal{LD}^c$ be two functors defined above.
Then $\mathcal{F}$ is a left adjoint of $\mathcal{G}$.
\end{thm}
\begin{proof}
Let $(Q,\lhd_\lambda,\rhd_\lambda)$ and $(Q',\lhd'_\lambda,\rhd'_\lambda)$ be two LDCAs and $T:M\rightarrow Q'$ be an $\mathcal{O}$-operator on the Leibniz conformal algebra $Q'$ associated to   $M$.
We just need to prove that $Hom_{\mathfrak{R}_L}(\mathcal{F}(Q),T)\cong Chom_{\mathcal{LD}^c}(Q,\mathcal{G}(T))$.
Since $\mathcal{F}(Q)=\id_Q$ is an $\mathcal{O}$-operator on $(Q,[\cdot_\lambda\cdot]_{\lhd,\rhd})$ with $(Q, l_\lhd, r_\rhd)$, by Proposition \ref{mld} we have $\mathcal{G}(\mathcal{F}(Q))=\mathcal{G}(\id_Q)=Q$, and by Proposition \ref{mlr} and Proposition \ref{mrl} we have $\mathcal{G}(\mathcal{F}(f))=\mathcal{G}(f,f)=f$. Thus $\mathcal{G}\circ \mathcal{F}=\id_{\mathcal{LD}^c}$.
Then $Hom_{\mathfrak{R}_L}(\mathcal{F}(Q),T)\cong Chom_{\mathcal{LD}^c}(Q,\mathcal{G}(T))= Chom_{\mathcal{LD}^c}(\mathcal{G}(\mathcal{F}(Q)),\mathcal{G}(T))$.\par
There is a natural linear map $\mathcal{G}:Hom_{\mathfrak{R}_L}(\mathcal{F}(Q),T)\rightarrow  Chom_{\mathcal{LD}^c}(\mathcal{G}(\mathcal{F}(Q)),\mathcal{G}(T))$. It is clear that $\mathcal{G}$ is surjective.
By the definition of the homomorphism of $\mathcal{O}$-operators, for all $(\alpha,\beta)\in Hom_{\mathfrak{R}_L}(\mathcal{F}(Q),T)$ we have  $T\circ\alpha=\beta$ which means that $ker(\mathcal{G})=0$.
Therefore, $Hom_{\mathfrak{R}_L}(\mathcal{F}(Q),T)\cong Chom_{\mathcal{LD}^c}(Q,\mathcal{G}(T))$ which implies that
$\mathcal{F}$ is a left adjoint of $\mathcal{G}$.
\end{proof}  }
Finally, we give a construction of symmetric solutions of the CLCYBE from LDCAs.
\begin{thm}
Let $(Q,\lhd_\lambda,\rhd_\lambda )$ be a finite LDCA which is free as a ${\bf k}[\partial]$-module. Then
$$
r=\sum_{i=1}^n(a_i\otimes a^i+a^i\otimes a_i ) $$
is a solution of the CLCYBE in the Leibniz conformal algebra $Q\ltimes_{l^*_\lhd,-l^*_\lhd-r^*_\rhd}Q^{*c}$, where $\{a^i\}_{i=1}^n$ is a ${\bf k}[\partial]$-basis of $Q$ and $\{a_i\}_{i=1}^n $ is the dual ${\bf k}[\partial]$-basis of $Q^{*c}$.
\end{thm}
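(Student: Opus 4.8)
The plan is to deduce this statement directly from Proposition \ref{Idrb} and Theorem \ref{thamc}, by recognizing $r$ as precisely the element $\eta(T)+\tau\eta(T)$ produced by the $\mathcal{O}$-operator construction. First I would apply Proposition \ref{Idrb} to the given LDCA $(Q,\lhd_\lambda,\rhd_\lambda)$, which gives that $(Q,L_\lhd,R_\rhd)$ is a representation of the associated Leibniz conformal algebra $(Q,[\cdot_\lambda\cdot]_{\lhd,\rhd})$ and that the identity map $\id\colon Q\to Q$ is an $\mathcal{O}$-operator on $(Q,[\cdot_\lambda\cdot]_{\lhd,\rhd})$ associated to $(Q,L_\lhd,R_\rhd)$.

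Next I would set $M=Q$ with the representation $(M,l_Q,r_Q)=(Q,L_\lhd,R_\rhd)$, so that $M^{*c}=Q^{*c}$ and the relevant semi-direct product is exactly $Q\ltimes_{L_\lhd^*,-L_\lhd^*-R_\rhd^*}Q^{*c}=Q\ltimes_{l^*_\lhd,-l^*_\lhd-r^*_\rhd}Q^{*c}$. To invoke Theorem \ref{thamc} I must exhibit a conformal linear map $T\in\text{Chom}(Q,Q)$ whose value at $\lambda=0$ is the $\mathcal{O}$-operator $\id$. The natural choice is the conformal linear map determined on the basis by $T_\lambda(a^i)=a^i$ and extended by conformal linearity, for which $T_0=T_\lambda|_{\lambda=0}=\id$ as a ${\bf k}[\partial]$-module homomorphism. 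Writing $T_\lambda(a^i)=\sum_j g_{ij}(\lambda,\partial)a^j$ in the notation of the proof of Theorem \ref{thamc}, here $g_{ij}=\delta_{ij}$, so the explicit formula for $\eta(T)$ recorded there gives $\eta(T)=\sum_{i,j}g_{ij}(-\partial^{\otimes^2},\id\otimes\partial)a_i\otimes a^j=\sum_{i=1}^n a_i\otimes a^i$.

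Therefore $r=\eta(T)+\tau\eta(T)=\sum_{i=1}^n(a_i\otimes a^i+a^i\otimes a_i)$ is exactly the element in the statement, and since $T_0=\id$ is an $\mathcal{O}$-operator, Theorem \ref{thamc} would then yield that $r$ is a symmetric solution of the CLCYBE in $Q\ltimes_{l^*_\lhd,-l^*_\lhd-r^*_\rhd}Q^{*c}$. The step I expect to require the most care is the identification $\eta(T)=\sum_i a_i\otimes a^i$: one must track the ${\bf k}[\partial]$-module isomorphism $\eta\colon\text{Chom}(Q,Q)\to Q^{*c}\otimes Q$ and confirm that the conformal lift $T$ of the identity maps to this Casimir-type element, checking that the conventions for the dual basis and the substitution of $\partial^{\otimes^2}$ line up so that no spurious polynomial factors survive. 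Everything else is a direct citation of the two established results.
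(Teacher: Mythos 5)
Your proposal is correct and matches the paper's own proof exactly: the paper likewise cites Proposition \ref{Idrb} to get that $\id$ is an $\mathcal{O}$-operator on $(Q,[\cdot_\lambda\cdot]_{\lhd,\rhd})$ associated to $(Q,L_\lhd,R_\rhd)$ and then concludes by Theorem \ref{thamc}. Your extra verification that $\eta(T)=\sum_i a_i\otimes a^i$ (via $g_{ij}=\delta_{ij}$, so $T_0=\id$) is a detail the paper leaves implicit, and you carry it out correctly.
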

\begin{proof}
By Proposition \ref{Idrb}, $T=\id:Q\rightarrow Q$ is an
$\mathcal{O}$-operator on $(Q,[\cdot_\lambda
\cdot]_{\lhd,\rhd})$ associated
to $(Q,L_\lhd,R_\rhd)$. Then the conclusion follows from  Theorem
\ref{thamc}.
\end{proof}

\delete{
\cm{Since we only need to consider the correspondence between
Leibniz conformal bialgebras and Novikov bi-dialgebras, we might
consider to delete the following parts:
\begin{enumerate}
\item From Definition 5.1 to Corollary 5.5. \item From Definition
5.15 (with the above 2 lines) to Theorem 5.19
\end{enumerate}
}}

\section{A class of Leibniz conformal bialgebras corresponding to Novikov bi-dialgebras }\label{5}
We introduce the notion of Novikov bi-dialgebras and show that
there is a correspondence between Novikov bi-dialgebras and a
class of Leibniz conformal bialgebras. Furthermore, we introduce
the notion of classical duplicated Novikov Yang-Baxter equation
whose symmetric solutions give Novikov bi-dialgebras. In addition,
we show that symmetric solutions of the classical duplicated
Novikov Yang-Baxter equation correspond to a class of symmetric
solutions of the CLCYBE. In this section, we assume that $A$ is finite-dimensional as a vector space.

\subsection{Novikov bi-dialgebras } \delete{First, we use the notion of semi-direct
products to give the definition of representations of Novikov
dialgebras.
\begin{defi}
Let $(A,\dashv,\vdash)$ be a Novikov dialgebra, $V$ be a vector space and $l_A,r_A,\phi_A,\psi_A$ be four linear maps from $A$ to $\text{End}_{\bf k}(V)$.
Then $(V,l_A,r_A,\phi_A,\psi_A)$ is called a {\bf representation} of $(A,\dashv,\vdash)$ if
 $A\oplus V$ as a direct sum of $\bf k$-vector spaces can be endowed with a Novikov dialgebra structure as follows:
\begin{equation}\label{sdndp}
    (a+x)\dashv(b+y)=a\dashv b+l_A(a)y+ r_A(b)x,\quad
    (a+x)\vdash(b+y)=a\vdash b+\phi_A(a)y+\psi_A(b)x,
\end{equation}
for all $a, b\in A$ and $ x, y\in V$.
\end{defi}
If $(V,l_A,r_A,\phi_A,\psi_A)$ is a representation of $(A,\dashv,\vdash)$, we  set
\begin{eqnarray*}
  l_{\star A}(a)x=-l_A(a)x+r_A(a)x+\phi_A(a)x-\psi_A(a)x,\;\;\cm{ {\rm for}\;{\rm all}\;} a, b\in A,\;\;x\in V.
  \end{eqnarray*}

\begin{rmk}
Let $(A,\dashv,\vdash)$ be a Novikov dialgebra. Define four linear maps $L_\dashv,R_\dashv,L_\vdash,R_\vdash: A\rightarrow \text{End}_{\bf k}(V)$ by
\begin{equation}
 L_\dashv(a)b=a\dashv b=R_\dashv(b)a,\qquad L_\vdash(a)b=a\vdash b=R_\vdash(b)a,\qquad a,b\in A.
\end{equation}
Then $(A,L_\dashv,R_\dashv,L_\vdash,R_\vdash)$  is a representation of $(A,\dashv,\vdash)$, which is called the {\bf regular representation} of $(A,\dashv,\vdash)$. For convenience, we set $a\star b=-a\dashv b-b\vdash a+a\vdash b+b\dashv a=L_\star (a)b$, where $a$, $b\in A$.
\end{rmk}

\delete{
\begin{align}
& l(a \dashv b)x=r(b)(l(a)x),\label{ndam1}\\
&r(b)(r(a)x)=r(a)(r(b)x),\label{ndam2}\\
&l(a\vdash b)x=\psi(b)(\phi(a)x)=\psi(b)(l(a)x),\label{ndam3}\\
&r(b)(\phi(a)x)=\phi(a \vdash b)x=\phi(a \dashv b)x,\label{ndam4}\\
&r(b)(\psi(a)x)=\psi( a)(\psi(b)x)=\psi(a)(r(b) x ),\label{ndam5}\\
&l(a)(l(b)x)=l(a)(\phi(b)x),\label{ndam6}\\
&l(a)(r(b)x)=l(a)(\psi(b)x),\label{ndam7}\\
&r(a\dashv b)x=r(a\vdash b)x,\label{ndam8}\\
&l(a\dashv b )x-l(a)(l(b)x )=l(b\vdash a)x-\phi(b)(l(a)x),\label{ndam9}\\
&r(b)(l(a)x)-l(a)(r(b)x)=r(b)(\psi(a)x)-\psi(a \dashv b)x,\label{ndam10}\\
&r(b)(r(a)x)-r(a\dashv b)x=r(b)(\phi(a)x)-\phi(a)(r(b)x),\label{ndam11}\\
&\phi(a\dashv b)x-\phi(a)(\phi(b)x)=\phi(b\vdash a)x-\phi(b)(\phi(a)x),\label{ndam12}\\
&\psi(b)(l(a)x)-\phi(a)(\psi(b)x)=\psi(b)(\psi(a)x)-\psi(a\vdash b)x,\label{ndam13}
\end{align}
for all $a,b\in A$ and $x\in V$.}


\delete{
\begin{pro}\label{dsmn}
Let $A$ be a NDA.
Then $(V,l_A,r_A,\phi_A,\psi_A)$ is a representation of $A$ if and only if  $A\oplus V$ as a direct sum of $\bf k$-vector spaces can be endowed with a natural NDA structure as follows$:$
\begin{equation}\label{sdndp}
(a,x)\dashv(b,y)=(a\dashv b,l_A(a)y+ r_A(b)x),\quad
(a,x)\vdash(b,y)=(a\vdash b,\phi_A(a)y+\psi_A(b)x),
\end{equation}
for all $a,b\in A$ and $x,y\in V$.
Denote this NDA by $A\ltimes_{l_A,r_A,\phi_A,\psi_A} V$ which is called the semi-direct product of $A$ and $V$.
\end{pro}
\begin{proof}
It can be obtained by some straightforward calculation.
\end{proof}}

\begin{pro}\label{rplcnd}
Let $(\bar{A}={\bf k}[\partial]A,[\cdot_\lambda\cdot])$ be the Leibniz conformal algebra corresponding to a Novikov dialgebra $(A,\dashv,\vdash)$ and $M={\bf k}[\partial]V$ which is free as a ${\bf k}[\partial]$-module on $V$. Define two ${\bf k}[\partial]$-module homomorphisms $l_{r_A,\phi_A}$ and $r_{l_A,\psi_A}:\bar{A}\rightarrow \text{Cend}(M)$ by
\begin{align*}
&l_{r_A,\phi_A}(a)_\lambda v=\partial(r_A(a)v)+\lambda(\phi_A(a)v+r_A(a)v),\\
&r_{l_A,\psi_A}(a)_\lambda v=\partial(l_A(a)v)+(-\lambda-\partial)(\psi_A(a)v+l_A(a)v),\;\;a\in A, v\in V,
\end{align*}
where $l_A,r_A,\phi_A,\psi_A: A\rightarrow \text{End}_{\bf k}(V)$. Then $(M,l_{r_A,\phi_A},r_{l_A,\psi_A})$ is a representation of $(\bar{A},[\cdot_\lambda\cdot])$ if and only if  $(V,l_A,r_A,\phi_A,\psi_A)$ is a representation of $(A,\dashv,\vdash)$.
\end{pro}
\begin{proof}
Let $a,b\in A$ and $u,v\in V$. Note that
\begin{align*}
[(a+u)_\lambda(b+v)]=&[a_\lambda b]+l_{r_A,\phi_A}(a)_\lambda v+r_{l_A,\psi_A}(b)_{-\lambda-\partial}u\\
=&\partial(b\dashv a)+\lambda(b\dashv a+a\vdash b)+\partial(r_A(a)v)+\lambda(\phi_A(a)v+r_A(a)v)\\
&+\partial(l_A(b)u)+\lambda(\psi_A(b)u+l_A(b)u)\\
=&\partial(b\dashv a +r_A(a)v+l_A(b)u)+\lambda(b\dashv a+a\vdash b+\phi_A(a)v+r_A(a)v+\\ &\psi_A(b)u+l_A(b)u).
\end{align*}
We define
\begin{align*}
(a+u)\dashv_{A,V}(b+v)=a\dashv b+r_A(b)u+l_A(a)v,\quad (a+u)\vdash_{A,V}(b+v)=a\vdash b+\phi_A(a)v+\psi_A(b)u.
\end{align*}
By Proposition \ref{ss}, $(M,l_{r_A,\phi_A},r_{l_A,\psi_A})$ is a representation of $\bar{A}$ if and only if $\bar{A}\oplus M $ is a Leibniz conformal algebra with the $\lambda$-bracket above. It is clear that $\bar{A}\oplus M={\bf k}[\partial](A\oplus V) $ is a Leibniz conformal algebra if and only if $(A\oplus V,\dashv_{A,V},\vdash_{A,V}) $ is a Novikov dialgebra by Proposition \ref{lcnd}. Hence the conclusion holds.
\end{proof}
Let $U$ and $V$ be two vector spaces. For a linear map $\varphi: U\rightarrow \text{End}_{\bf k}(V)$, define its dual map $\varphi^*:U\rightarrow \text{End}_{\bf k}(V^*)$ by $\langle \varphi^*(a)f,v\rangle=-\langle f,\varphi(a)v\rangle$ for all $a\in U$, $f\in V^*$ and $v\in V$.

\begin{pro}\label{pro-dual}
Let $(A,\dashv,\vdash)$ be a Novikov dialgebra and $\bar{A}$ be the Leibniz conformal algebra corresponding to  $(A,\dashv,\vdash)$.
Suppose that $(V,l_A,r_A,\phi_A,\psi_A)$ is a representation of $(A,\dashv,\vdash)$ and $M={\bf k}[\partial]V$  is free as a ${\bf k}[\partial]$-module on $V$. Then $(M^{*c},l^*_{r_A,\phi_A},-l^*_{r_A,\phi_A}-r^*_{l_A,\psi_A})=({\bf k}[\partial]V^*,$ $ l_{-r^*_A,\phi^*_A+r^*_A },r_{{l_\star^*}_A,\psi^*_A-r^*_A })$ is a representation of $\bar{A}$.
\end{pro}
\begin{proof}
Note that $M^{*c}={\bf k}[\partial]V^*$. For all $a\in A$, $f\in V^*$ and $v\in V$, we have
\begin{align*}
&\langle l^*_{r_A,\phi_A}(a)_\lambda f,v\rangle_\mu=-\langle f,l_{r_A,\phi_A}(a)_\lambda v \rangle_{\mu-\lambda}\\
=&-\langle f,\partial(r_A(a)v)+\lambda(\phi_A(a)v+r_A(a)v)\rangle_{\mu-\lambda}
=-\langle f,\mu r_A(a)v)+ \lambda \phi_A(a)v\rangle\\
=&\langle (\mu r^*_A(a) +\lambda \phi^*_A(a))f,v\rangle
=\langle \partial (-r^*_A(a) f)+\lambda ((\phi^*_A+r^*_A)(a)f-r^*_A(a)f ),v \rangle_\mu\\
=&\langle l_{-r^*_A,\phi^*_A+r^*_A } (a)_\lambda f,v \rangle_\mu,
\end{align*}
and
\begin{align*}
&\langle (-l^*_{r_A,\phi_A}-r^*_{l_A,\psi_A})(a)_\lambda f,v\rangle_\mu=\langle f, (l_{r_A,\phi_A}+r_{l_A,\psi_A})(a)_\lambda v \rangle_{\mu-\lambda}\\
=&\langle f, \partial((l_A+r_A)(a)v)+\lambda((\phi_A+r_A)(a)v) +(-\lambda-\partial)((\psi_A+l_A)(a)v) \rangle_{\mu-\lambda}\\
=&\langle \partial({l_\star^*}_A(a)f)+(-\lambda-\partial)({l_\star^*}_A(a)f +(\psi^*_A-r^*_A)(a)f        ),v\rangle_\mu\\
=&\langle r_{{l_\star^*}_A,\psi^*_A-r^*_A } (a)_\lambda f,v \rangle_\mu.
\end{align*}
Thus by Proposition \ref{dual}, this conclusion holds.
\end{proof}
The following corollary follows straightforwardly from Propositions \ref{rplcnd} and \ref{pro-dual}.
\begin{cor}
Let $(A,\dashv,\vdash)$ be a Novikov dialgebra and  $(V,l_A,r_A,\phi_A,\psi_A)$ be a representation of $A$. Then $(V^*,{l_\star^*}_A,-r_A^*,\phi^*_A+r^*_A,\psi^*_A-r^*_A)$ is also a representation of $(A,\dashv,\vdash)$.\delete{ which is called the {\bf dual representation} of $(V,l_A,r_A,\phi_A,\psi_A) $.}
\end{cor}

\delete{
\begin{proof}
We just need to  verify that  $-l^*_A-\psi^*_A+r^*_A+\phi^*_A,-r_A^*,\phi^*_A+r^*_A,\psi^*_A-r^*_A$ satisfy \eqref{ndam1}-\eqref{ndam13} by  replacing $l,r,\phi,\psi$ by $-l^*_A-\psi^*_A+r^*_A+\phi^*_A,-r^*_A,\phi^*_A+r^*_A,\psi^*_A-r^*_A$.

For all $a,b\in A$ $f\in V^*$ and $v\in V$, we have
\begin{align*}
& \langle (-l^*_A-\psi^*_A+r^*_A+\phi^*_A)(a \dashv b)f+r^*(b)((-l^*_A-\psi^*_A+r^*_A+\phi^*_A) (a)f),v\rangle\\
=&\langle f, (l_A+\psi_A-r_A-\phi_A)(a \dashv b)v+ (-{l_\star}_A)(a)(r(b)v)\rangle.
\end{align*}
Since $(V,l_A,r_A,\phi_A,\psi_A)$ is an $A$-bimodule,
then by \eqref{ndam1}, \eqref{ndam5} and \eqref{ndam10}, we get $(l_A+\psi_A)(a\dashv b)v-(l+\psi)(a)(r(b)v)=0 $ and by \eqref{ndam2}, \eqref{ndam4} and \eqref{ndam11}, we get $-(r_A+\phi_A)(a\dashv b)v+(r_A+\phi_A)(a)(r(b)v)=0$, which means that \eqref{ndam1} holds. Then the conclusion follows from some similar calculations.
\end{proof}}

\delete{

\begin{defi}
Let $(A,\dashv,\vdash)$ and $(B,\dashv,\vdash)$ be two Novikov dialgebras. Let $l_A,r_A,\phi_A,\psi_A: A\rightarrow End_\mathbb{C}(B)$ and $l_B,r_B,\phi_B,\psi_B: B\rightarrow End_\mathbb{C}(A)$ be linear maps. If there is a Novikov dialgebra structure on the direct sum $A\oplus B$ of the underlying vector spaces of $A$ and $B$ given by
\begin{align}
&(a,x)\dashv (b,y)=(a\dashv b+l_B(x)b+r_B(y)a,x\dashv y+l_A(a)y+r_A(b)x),\\
&(a,x)\vdash (b,y)=(a\vdash b+\phi_B(x)b+\psi_B(y)a,x\vdash y+\phi_A(a)y+\psi_A(b)x),
\end{align}
for all $a,b\in A$ and $x,y\in B$. Then we call the resulting octuple $(A,B;(l_A,r_A,\phi_A,\psi_A), (l_B,r_B,\phi_B,$ $\psi_B))$ a {\bf matched pair} of Novikov dialgebras.
\end{defi}

\delete{
\begin{pro}
Let $(A,\dashv,\vdash)$ and $(B,\dashv,\vdash)$ be two NDAs. The octuple $(A,B;(l_A,r_A,\phi_A,\psi_A), (l_B,r_B,$ $\phi_B,\psi_B))$ is a matched pair of NDAs $A$ and $B$ if and only if $(A,l_B,r_B,\phi_B,\psi_B)$ is a $B$-bimodule, $(B,l_A,r_A,\phi_A,\psi_A)$ is an $A$-bimodule and they  satisfy the compatibility conditions as follows:
\begin{flalign}
&\label{ndmp1}r_B(x)(a\dashv b)=(r_B(x)a)\dashv b+l_B(l_A(a)x)b,\\
&\label{ndmp2}(l_B(x)a)\dashv b+l_B(r_A(a)x)b=(l_B(x)b)\dashv a+l_B(r_A(b)x)a,\\
&\label{ndmp3}r_B(x)(a\vdash b)=(\psi_B(x)a)\vdash b+\phi_B(\phi_A(a)x)b=(r_B(x)a)\vdash b+\phi_B(l_A(a)x)b,\\
&\label{ndmp4}(\psi_B(x)a)\dashv b+l_B(\phi_A(a)x)b=\psi_B(x)(a\vdash b)=\psi_B(x)(a\dashv b),\\
&\label{ndmp5}(\phi_B(x)a)\dashv b+l_B(\psi_A(a)x)b=(\phi_B(x)b)\vdash a+\phi_B(\psi_A(b)x)a=(l_B(x)b)\vdash a+\phi_B(r_A(b)x)a,\\
&\label{ndmp6}a\dashv(r_B(x)b)+r_B(l_A(b)x)a=a\dashv(\psi_B(x)b)+r_B(\phi_A(b)x)a,&
\end{flalign}
\begin{flalign}
&\label{ndmp7}a\dashv(l_B(x)b)+r_B(r_A(b)x)a=a\dashv(\phi_B(x)b)+r_B(\psi_A(b)x)a,\\
&\label{ndmp8}l_B(x)(a\dashv b)=l_B(x)(a\vdash b),\\
&\label{ndmp9}r_B(x)(a\dashv b)-a\dashv(r_B(x)b)-r_B(l_A(b)x)a=r_B(x)(b\vdash a)-b\vdash(r_B(x)a)-\psi_B(l_A(a)x)b,\\
&\label{ndmp10}(r_B(x)a)\dashv b+l_B(l_A(a)x)b-a\dashv(l_B(x)b)-r_B(r_A(b)x)a\\
&\qquad=(\phi_B(x)a)\dashv b+l_B(\psi_A(a)x)b-\phi_B(x)(a\dashv b),\nonumber\\
&\label{ndmp11}(l_B(x)a)\dashv b+l_B(r_A(a)x)b-l_B(x)(a\dashv b)\\
&\qquad=(\psi_B(x)a)\dashv b+l_B(\phi_A(a)x)b-a\vdash(l_B(x)b)-\psi_B(r_A(b)x)a,\nonumber\\
&\label{ndmp12}\phi_B(x)(a\vdash b)=-\phi_B(\phi_A(a)x-\psi_A(a)x)b+(\phi_B(x)a-\psi_B(x)a)\vdash b\\
&\qquad+\psi_B(\psi_A(b)x)a+a\vdash(\phi_B(x)b),\nonumber\\
&\label{ndmp13}\psi_B(x)(a\vdash b-b\vdash a)=a\vdash(\psi_B(x)b)-b\vdash(\psi_B(x)a)+\psi_B(\phi_A(b)x)a-\psi_B(\phi_A(a)x)b,\\
&\label{ndmp14}r_A(a)(x\dashv y)=(r_A(a)x)\dashv y+l_A(l_B(x)a)y,\\
&\label{ndmp15}(l_A(a)x)\dashv y+l_A(r_B(x)a)y=(l_A(a)y)\dashv x+l_A(r_B(y)a)x,\\
&\label{ndmp16}r_A(a)(x\vdash y)=(\psi_A(a)x)\vdash y+\phi_A(\phi_B(x)a)y=(r_A(a)x)\vdash y+\phi_A(l_B(x)a)y,\\
&\label{ndmp17}(\psi_A(a)x)\dashv y+l_A(\phi_B(x)a)y=\psi_A(a)(x\vdash y)=\psi_A(a)(x\dashv y),\\
&\label{ndmp18}(\phi_A(a)x)\dashv y+l_A(\psi_B(x)a)y=(\phi_A(a)y)\vdash x+\phi_A(\psi_B(y)a)x=(l_A(a)y)\vdash x+\phi_A(r_B(y)a)x,\\
&\label{ndmp19}x\dashv(r_A(a)y)+r_A(l_B(y)a)x=x\dashv(\psi_A(a)y)+r_A(\phi_B(y)a)x,\\
&\label{ndmp20}x\dashv(l_A(a)y)+r_A(r_B(y)a)x=x\dashv(\phi_A(a)y)+r_A(\psi_B(y)a)x,\\
&\label{ndmp21}l_A(a)(x\dashv y)=l_A(a)(x\vdash y),\\
&\label{ndmp22}r_A(a)(x\dashv y)-x\dashv(r_A(a)y)-r_A(l_B(y)a)x=r_A(a)(y\vdash x)-y\vdash(r_A(a)x)-\psi_A(l_B(x)a)y,\\
&\label{ndmp23}(r_A(a)x)\dashv y+l_A(l_B(x)a)y-x\dashv(l_A(a)y)-r_A(r_B(y)a)x\\
&\qquad=(\phi_A(a)x)\dashv y+l_A(\psi_B(x)a)y-\phi_A(a)(x\dashv y),\nonumber\\
&\label{ndmp24}(l_A(a)x)\dashv y+l_A(r_B(x)a)y-l_A(a)(x\dashv y)\\
&\qquad=(\psi_A(a)x)\dashv y+l_A(\phi_B(x)a)y-x\vdash(l_A(a)y)+\psi_A(r_B(y)a)x,\nonumber\\
&\label{ndmp25}\phi_A(a)(x\vdash y)=-\phi_A(\phi_B(x)a-\psi_B(x)a)y+(\phi_A(a)x-\psi_A(a)x)\vdash y\\
&\qquad+\psi_A(\psi_B(y)a)x+x\vdash(\phi_A(a)y),\nonumber\\
&\label{ndmp26}\psi_A(a)(x\vdash y-y\vdash x)=x\vdash(\psi_A(a)y)-y\vdash(\psi_A(a)x)+\psi_A(\phi_B(y)a)x-\psi_A(\phi_B(x)a)y,&
\end{flalign}
for all $a,b\in A$ and $x,y\in B$.
\end{pro}
\begin{proof}
The conclusion can be obtained by direct calculation, and we omit the details.
\end{proof}}

Next, we give a correspondence between matched pairs of Novikov dialgebras and certain matched pairs of
the corresponding Leibniz conformal algebras as follows.
\begin{pro}\label{ppmp2}
Let $\bar{A}={\bf k}[\partial] A$ and $\bar{B}={\bf k}[\partial] B$ be the Leibniz conformal algebras corresponding to Novikov dialgebras $(A,\dashv_A,\vdash_A)$ and $(B,\dashv_B,\vdash_B)$ respectively. Let $l_A,r_A,\phi_A,\psi_A:A\rightarrow End(B)$ and $l_B,r_B,\phi_B,\psi_B :B\rightarrow End(A) $ be linear maps. Then $(\bar{A},\bar{B};(l_{r_A,\phi_A },r_{l_A,\psi_A} ),(\phi_{r_B,\phi_B },\psi_{l_B,\psi_B} ) ) $ is a matched pair of Leibniz conformal algebras if and only if   $(A,B;(l_A,r_A,\phi_A,\psi_A), (l_B,r_B,$ $\phi_B,\psi_B))$ is a matched pair of Novikov dialgebras.
\end{pro}
\begin{proof}
The proof of this proposition is similar to Proposition \ref{rplcnd}.
\end{proof}}}

First, we introduce the notion of Novikov co-dialgebras.
\begin{defi}
A {\bf Novikov co-dialgebra} is a triple $(A,\delta,\Delta)$ where
$ \Delta, \delta: A\rightarrow A\otimes A$ are linear maps satisfying
\begin{align}
&(\delta\otimes \id)\delta(a)=\tau_{23}(\delta\otimes \id)\delta (a),\label{ncda1}\\
&(\Delta\otimes \id)\delta(a)=\tau_{23}(\Delta\otimes \id)\Delta(a)=\tau_{23}(\delta\otimes \id)\Delta(a),\label{ncda2}\\
&(\id\otimes \delta)\delta(a)=(\id\otimes \Delta)\delta(a),\label{ncda3}\\
&(\delta \otimes \id-\id\otimes \delta)\delta(a)=\tau_{12}((\Delta\otimes \id)\delta(a)-(\id\otimes \delta)\Delta(a) ),\label{ncda4}\\
&(\Delta\otimes \id-\id\otimes
\Delta)\Delta(a)=\tau_{12}(\Delta\otimes \id-\id\otimes
\Delta)\Delta(a)\;\;\;{\rm for}\;{\rm all}\;a\in
A.\label{ncda5}
\end{align}
\end{defi}
It is easy to check that $(A,\delta,\Delta)$ is a Novikov co-dialgebra if and only if $(A^*,\dashv,\vdash)$ is a Novikov dialgebra defined by
$$\langle x\dashv y, a\rangle=\langle x\otimes y,\delta(a)\rangle, \qquad \langle x\vdash y,a\rangle=\langle x\otimes y,\Delta(a)\rangle\;\; \;  {\rm for}\;{\rm all}\; x, y\in A^\ast,\;a, b\in A.$$
\begin{pro}
Let $\bar{A}={\bf k}[\partial]A$ be the free $ {\bf k}[\partial]$-module on a vector space $A$. Suppose that $\delta$ and $\Delta$ are ${\bf k}[\partial]$-module homomorphisms from $\bar{A}$ to $\bar{A}\otimes \bar{A}$ such that $\delta (A)\subset A\otimes A$ and $\Delta(A) \subset A\otimes A$. Define a ${\bf k}[\partial]$-module homomorphism $\alpha: \bar{A}\rightarrow \bar{A}\otimes \bar{A}$ by
\begin{equation}\label{cmnda}
\alpha(a)=(\partial\otimes \id)\Delta(a)-\tau(\partial\otimes
\id)\delta(a)\;\;\; {\rm for}\;{\rm all}\; a\in A.
\end{equation}
Then $(\bar{A},\alpha)$ is a Leibniz conformal coalgebra if and only if $(A,\delta,\Delta )$ is a Novikov co-dialgebra.
\end{pro}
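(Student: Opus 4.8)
The plan is to reduce the statement to Proposition~\ref{lcnd} by dualizing. Since $A$ is finite-dimensional, $\bar A={\bf k}[\partial]A$ is a finite free ${\bf k}[\partial]$-module, and the hypotheses guarantee that $\alpha=(\partial\otimes\id)\circ\Delta-\tau\circ(\partial\otimes\id)\circ\delta$ is a ${\bf k}[\partial]$-module homomorphism $\bar A\to\bar A\otimes\bar A$ (both $\partial\otimes\id$ and $\tau$ commute with the total action $\partial(u\otimes v)=\partial u\otimes v+u\otimes\partial v$, hence are ${\bf k}[\partial]$-module endomorphisms of $\bar A\otimes\bar A$). Thus $(\bar A,\alpha)$ is a candidate Leibniz conformal coalgebra, and by Proposition~\ref{ppc} (together with the finite-freeness of $\bar A$, which makes the pairing \eqref{ca} a perfect one) it is a Leibniz conformal coalgebra if and only if the induced $\lambda$-bracket $[\cdot_\lambda\cdot]_\alpha$ on $\bar A^{*c}$ given by $\langle[f_\lambda g]_\alpha,a\rangle_\mu=\langle f\otimes g,\alpha(a)\rangle_{\lambda,\mu-\lambda}$ is a Leibniz conformal algebra structure; under this correspondence the coalgebra identity \eqref{co-eq} is exactly the Jacobi identity \eqref{li} for $[\cdot_\lambda\cdot]_\alpha$.

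The key computation is to identify this induced bracket. Using the canonical identification $\bar A^{*c}\cong{\bf k}[\partial]A^{*}$, I would evaluate $\langle[x_\lambda y]_\alpha,a\rangle_\mu$ for generators $x,y\in A^{*}$ and $a\in A$. Writing $\Delta(a),\delta(a)\in A\otimes A$ and using $\langle x,\partial u\rangle_\lambda=\lambda\langle x,u\rangle_\lambda$ for $x\in\bar A^{*c}$ together with the defining relations $\langle x\dashv y,a\rangle=\langle x\otimes y,\delta(a)\rangle$ and $\langle x\vdash y,a\rangle=\langle x\otimes y,\Delta(a)\rangle$, the two summands of $\alpha$ contribute $\lambda\langle x\vdash y,a\rangle$ and $-(\mu-\lambda)\langle y\dashv x,a\rangle$ respectively, so that $\langle[x_\lambda y]_\alpha,a\rangle_\mu=\lambda\langle x\vdash y,a\rangle+(\lambda-\mu)\langle y\dashv x,a\rangle$. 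On the other hand, the $\lambda$-bracket $[x_\lambda y]=\partial(y\dashv x)+\lambda(x\vdash y+y\dashv x)$ of Proposition~\ref{lcnd} applied to $(A^{*},\dashv,\vdash)$ satisfies, using $\langle\partial f,a\rangle_\mu=-\mu\langle f,a\rangle_\mu$ on the degree-zero part $A^{*}$, the same identity $\langle[x_\lambda y],a\rangle_\mu=\lambda\langle x\vdash y,a\rangle+(\lambda-\mu)\langle y\dashv x,a\rangle$. Since an element of $\bar A^{*c}$ is determined by its pairings with $a\in A$ (because $\langle z,\partial^k a\rangle_\mu=\mu^k\langle z,a\rangle_\mu$) and both brackets are conformal sesquilinear, their agreement on the generators $A^{*}\times A^{*}$ forces $[\cdot_\lambda\cdot]_\alpha$ to coincide with the bracket of Proposition~\ref{lcnd} for $(A^{*},\dashv,\vdash)$.

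With this identification in hand, the proof concludes by chaining three equivalences: $(\bar A,\alpha)$ is a Leibniz conformal coalgebra $\iff$ $(\bar A^{*c},[\cdot_\lambda\cdot]_\alpha)$ is a Leibniz conformal algebra (previous step and Proposition~\ref{ppc}) $\iff$ $(A^{*},\dashv,\vdash)$ is a Novikov dialgebra (Proposition~\ref{lcnd}, since $[\cdot_\lambda\cdot]_\alpha$ is exactly the bracket corresponding to $(A^{*},\dashv,\vdash)$) $\iff$ $(A,\delta,\Delta)$ is a Novikov co-dialgebra (the remark preceding the statement, which says $(A,\delta,\Delta)$ is a Novikov co-dialgebra precisely when the dual operations $\dashv,\vdash$ on $A^{*}$ form a Novikov dialgebra). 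The main obstacle is the middle paragraph: keeping track of the $\partial$-shifts produced by $\partial\otimes\id$ and the transposition $\tau$, of the two distinct sign conventions $\langle x,\partial u\rangle_\lambda=\lambda\langle x,u\rangle_\lambda$ and $\langle\partial f,\cdot\rangle_\mu=-\mu\langle f,\cdot\rangle_\mu$, and of the swap of $x$ and $y$ induced by $\tau$, so that the induced bracket matches the $\lambda$-bracket of Proposition~\ref{lcnd} exactly rather than up to some stray sign or flip. Once the bracket is correctly identified, everything else is a formal chain of previously established correspondences.
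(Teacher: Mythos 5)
Your proposal is correct, but it proves the proposition by a genuinely different route than the paper. The paper's proof is a direct computation on the coalgebra side: it expands $(\id\otimes\alpha)\alpha(a)$ and $(\alpha\otimes\id)\alpha(a)$ using \eqref{cmnda}, writes the co-Leibniz identity \eqref{co-eq} for $\alpha$, and then matches components of $\bar{A}^{\otimes 3}$ of the various bidegrees in $\partial$ (the terms in $A\otimes A\otimes\partial^2 A$ give \eqref{ncda1}, those in $\partial^2A\otimes A\otimes A$ and $A\otimes\partial^2A\otimes A$ give \eqref{ncda2}, and so on through \eqref{ncda5}), so the five co-dialgebra axioms drop out one at a time. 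You instead dualize: you identify $\bar{A}^{*c}\cong{\bf k}[\partial]A^*$, check that the bracket induced by $\alpha$ via \eqref{ca} is exactly the bracket $[x_\lambda y]=\partial(y\dashv x)+\lambda(x\vdash y+y\dashv x)$ of Proposition \ref{lcnd} applied to the dual operations on $A^*$ (your sign computation is right: both sides pair with $a\in A$ to $\lambda\langle x\vdash y,a\rangle+(\lambda-\mu)\langle y\dashv x,a\rangle$, and sesquilinearity plus agreement on the ${\bf k}[\partial]$-generators $A^*$ forces equality everywhere), and then chain Proposition \ref{ppc}, Proposition \ref{lcnd}, and the remark that $(A,\delta,\Delta)$ is a Novikov co-dialgebra iff $(A^*,\dashv,\vdash)$ is a Novikov dialgebra. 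Two caveats, both of which you handle or which are harmless in context: Proposition \ref{ppc}(a) is stated only in the direction coalgebra $\Rightarrow$ algebra, so you need the converse, which does follow from the nondegeneracy of the triple pairing $\langle f\otimes g\otimes h,\cdot\rangle_{\lambda,\mu,\nu}$ on a finite free ${\bf k}[\partial]$-module (you flag this as "perfect pairing"; strictly it is a small extension of the cited proposition that you should verify); and your argument genuinely uses the finite-dimensionality of $A$, which is legitimate since the section assumes it, whereas the paper's component-matching computation never dualizes and would work for $A$ of arbitrary dimension. In exchange, your route avoids the long expansion entirely and makes the conceptual content transparent — the proposition is literally the conformal-dual of Proposition \ref{lcnd} — while the paper's computation has the advantage of producing the explicit axioms \eqref{ncda1}--\eqref{ncda5} as the component equations, which is how the notion of Novikov co-dialgebra is calibrated in the first place.
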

\begin{proof}
 Let $a\in A$. By the definition of $\alpha$, we have
\begin{align*}
(\id\otimes \alpha)\alpha(a)=&(\id\otimes \alpha)((\partial\otimes \id)\Delta(a)-\tau(\partial\otimes \id)\delta(a))\\
=&(\partial\otimes \partial\otimes \id)(\id\otimes \Delta)\Delta(a)-(\partial\otimes \id \otimes \partial )\tau_{23}(\id\otimes \delta)\Delta(a)\\
&- \partial_2(\id\otimes\partial^{\otimes^2})(\id\otimes\Delta ) \tau\delta(a)+\partial_3(\id\otimes \partial^{\otimes^2})\tau_{23}(\id\otimes\delta)\tau\delta(a),\\
(\alpha\otimes \id)\alpha(a)=&(\alpha\otimes \id)((\partial\otimes \id)\Delta(a)-\tau(\partial\otimes \id)\delta(a))\\
=&\partial_1(\partial^{\otimes^2}\otimes \id)(\Delta\otimes \id)\Delta(a)-\partial_2(\partial^{\otimes^2}\otimes \id)\tau_{12}(\delta\otimes \id)\Delta(a)\\
&-(\partial\otimes \id\otimes \partial)(\Delta\otimes \id)\tau\delta(a)+(\id\otimes \partial\otimes\partial)\tau_{12}(\delta\otimes \id)\tau\delta(a).
\end{align*}
Then we consider
\begin{eqnarray}\label{eq-co}
(\id\otimes \alpha)\alpha(a)=(\alpha\otimes
\id)\alpha(a)+\tau_{12}(\id\otimes \alpha)\alpha(a) \;\;\; {\rm
for}\;{\rm all}\; a\in A.
\end{eqnarray}
Considering the terms of (\ref{eq-co})  in  $ A\otimes A\otimes \partial^2 A$, we can get
$$
\tau_{13}(\delta\otimes
\id)\delta(a)=\tau_{13}\tau_{23}(\delta\otimes
\id)\delta(a)\;\;\;{\rm for}\;{\rm all}\; a\in A,
$$
which is equivalent to  \eqref{ncda1}. Similarly, by considering
the terms of (\ref{eq-co}) in $\partial^2 A\otimes A\otimes A$ and
$A\otimes \partial^2 A\otimes A$, we get that \eqref{ncda2} holds.
By considering the terms of (\ref{eq-co}) in $\partial A\otimes
A\otimes \partial A$ and $A\otimes\partial A\otimes \partial A$,
we obtain \eqref{ncda3}. Moreover, we have \eqref{ncda4} and
\eqref{ncda5} by considering the terms of (\ref{eq-co}) in
$A\otimes \partial A\otimes \partial A$ and $\partial A\otimes
\partial A\otimes A$. \delete{Finally, we get \eqref{co-eq} holds
if and only if \eqref{ncda1}-\eqref{ncda5} hold.}Then this conclusion holds.
\end{proof}

\delete{
\begin{pro}  \label{prmpndba}
Let $A$ and $A^*$ be two Novikov dialgebras. Then $(A,A^*;(-l^*-\psi^*+r^*+\phi^*,-r^*,\phi^*+r^*,\psi^*-r^*),(-L^*-g^*+R^*+f^*,-R^*,f^*+R^*,g^*-R^*)) $ is a matched pair if and only if $\delta$ and $\Delta$ satisfy
\begin{flalign}
&\label{ndba1}(l(a)\otimes \id)\Delta(b)=-(\id\otimes (l+\psi)(b))\delta(a),\\
&\label{ndba2}(\id\otimes l(b))\delta (a)=(l(a)\otimes \id)\tau\delta(b),\\
&\label{ndba3}(\id\otimes (l+\psi)(a))(\delta(b)+\tau\Delta(b))=(\psi(b)\otimes \id)\Delta(a),\\
&\label{ndba4}\delta(a\dashv b)=(r(b)\otimes \id)\delta(a)+(\id\otimes (-l_\star)(a))(\id-\tau)(\delta(b)-\Delta(b)),\\
&\label{ndba5}\delta(a\vdash b)=(\psi(b)\otimes \id)(\delta(a)-\Delta(a))+(\id\otimes(\phi+r)(a))(\delta(b)+\tau\Delta(b)),\\
&\label{ndba6}\Delta(a\dashv b)=(\id\otimes (l+\psi)(a))(\id-\tau)(\Delta(b)-\delta(b))+(r(b)\otimes \id)\Delta(a),\\
&\label{ndba7}\Delta(a\vdash b)=((\psi-r)(b)\otimes \id)(\delta(a)-\Delta(a))+(\id\otimes (\phi+r)(a))(\tau\delta(b)+\Delta(b)),\\
&\label{ndba8}((l+\psi)(b)\otimes \id)(\delta(a)-\Delta(a))+(\id\otimes(-l_\star)(b))\tau(\delta(a)-\Delta(a))\\
&-((\phi+r)(a)\otimes \id)\delta(b)+(\id\otimes (\phi+r)(a))\tau\Delta(b)=0,\nonumber\\
&\label{ndba9}(\id\otimes \psi(a))(\id-\tau)(\delta(b)-\Delta(b))-(r(b)\otimes \id)(\Delta(a)+\tau\delta(a))\\
&=-(\id\otimes r(b))(\Delta(a)+\tau\delta(a))+((\psi-r)(a)\otimes \id)(\id-\tau)(\Delta(b)-\delta(b)),\nonumber&
\end{flalign}
for all $a,b\in A$.
\end{pro}
\begin{proof}
Let $A$ and $A^*$ be two Novikov dialgebras. Consider their bimodules $(A^*,-l^*-\psi^*+r^*+\phi^*,-r^*,\phi^*+r^*,\psi^*-r^*)$ and $(A,-L^*-g^*+R^*+f^*,-R^*,f^*+R^*,g^*-R^*)$.
For all $a,b\in A$ and $x,y\in A^*$, \eqref{ndmp6} becomes
\begin{align*}
&a\dashv(-R^*(x)b)-R^*((-l^*-\psi^*+r^*+\phi^* )(b)x )a-a\dashv((g^*-R^*)(x)b)+R^*((r^*+\phi^* )(b)x )a\\
=&R^*((l^*+\psi^*)(b)x)a-a\dashv(g^*(x)b).
\end{align*}
Furthermore, by straightforward  calculation, we have
\begin{align*}
&\langle R^*((l^*+\psi^*)(b)x)a- a\dashv(g^*(x)b),y\rangle\\
=&\langle (\id\otimes (l+\psi)(b))\delta (a)+(l(a)\otimes \id)\Delta(b) ,y\otimes x\rangle
\end{align*}
which means that \eqref{ndmp6} is equivalent to  \eqref{ndba1}.\par
Similarly, if \eqref{ndba1} holds then \eqref{ndmp7} is equivalent to \eqref{ndba2}.
\eqref{ndmp3} is equivalent to \eqref{ndba3} and \eqref{ndba5}.
\eqref{ndmp1} is equivalent to \eqref{ndba4}.
If \eqref{ndba4} and \eqref{ndba5} hold, then
\eqref{ndmp4} is equivalent to \eqref{ndba6} and \eqref{ndba7}.
If \eqref{ndmp1} and \eqref{ndmp3} hold, then
 \eqref{ndmp9} is equivalent to \eqref{ndba8}.
\eqref{ndmp5} is equivalent to \eqref{ndba9}  and
$(\psi(b)\otimes \id)(\Delta(a)+\tau\delta(a))=(\id\otimes \psi(a))(\delta(b)+\tau\Delta(b))$ which can be deduced by \eqref{ndba1}-\eqref{ndba3}.

\eqref{ndmp2} becomes
\begin{align*}
&((-L^*-g^*+R^*+f^*)(x)a)\dashv b+(-L^*-g^*+R^*+f^*)(-r^*(a)x)b\\
=&((-L^*-g^*+R^*+f^*)(x)b)\dashv a+(-L^*-g^*+R^*+f^*)(-r^*(b)x)a.
\end{align*}
Furthermore, by straightforward calculation, we have
\begin{align*}
&\langle ((-L^*-g^*+R^*+f^*)(x)a)\dashv b+(-L^*-g^*+R^*+f^*)(-r^*(a)x)b,y\rangle\\
=&\langle (\id\otimes r(b)-r(b)\otimes \id)(\id-\tau)(\delta(a)-\Delta(a)),x\otimes y\rangle,\\
&-\langle ((-L^*-g^*+R^*+f^*)(x)b)\dashv a+(-L^*-g^*+R^*+f^*)(-r^*(b)x)a,y\rangle\\
=&\langle (\id\otimes r(a)-r(a)\otimes \id)(\id-\tau)(\delta(b)-\Delta(b)),x\otimes y\rangle.
\end{align*}
Hence  by \eqref{ndba9} we can deduce that \eqref{ndmp2}. \par
Similarly, by \eqref{ndmp4} we can deduce that \eqref{ndmp8}, by \eqref{ndmp2} and \eqref{ndmp5} we can deduce that \eqref{ndmp10}, and by \eqref{ndmp2} and \eqref{ndmp4}, we can deduce that \eqref{ndmp11}. Furthermore, by \eqref{ndba5}, \eqref{ndba7} and \eqref{ndba9}, we can deduce that \eqref{ndmp12} and by \eqref{ndba4}-\eqref{ndba7} and \eqref{ndba8}, we can deduce that \eqref{ndmp13}. Hence, we show that \eqref{ndmp1}-\eqref{ndmp13} are equivalent to \eqref{ndba1}-\eqref{ndba9}.

On the other hand, \eqref{ndmp14} becomes
\begin{align*}
(-r^*)(a)(x\dashv y)+(r^*(a)x)\dashv y-(-l^*-\psi^*+r^*+\phi^*)((-L^*-g^*+f^*+R^*)(x)a)y=0.
\end{align*}
Furthermore, by straightforward calculation, we have
\begin{align*}
&\langle (-r^*)(a)(x\dashv y)+(r^*(a)x)\dashv y-(-l^*-\psi^*+r^*+\phi^*)((-L^*-g^*+f^*+R^*)(x)a)y,b\rangle\\
=&\langle x\dashv y, b\dashv a\rangle -\langle r^*(a)x, R^*(y)b \rangle+\langle y,  (-l_\star)((-L^*-g^*+f^*+R^*)(x)a) b\rangle\\
=&-\langle x,R^*(y)(b\dashv a) \rangle+\langle x, (R^*(y)b)\dashv a\rangle\\
&+\langle (-l^*-\psi^*+r^*+\phi^*)(b)y,  (-L^*-g^*+f^*+R^*)(x)a\rangle\\
=&\langle x,-R^*(y)(b\dashv a)+ (R^*(y)b)\dashv a\rangle-\langle (-L-g+f+R)(x)((-l^*-\psi^*+r^*+\phi^*)(b)y),a  \rangle\\
=&\langle x, -R^*(y)(b\dashv a)+ (R^*(y)b)\dashv a - (-L^*-g^*+f^*+R^*)((-l^*-\psi^*+r^*+\phi^*)(b)y)a\rangle,
\end{align*}
which means that \eqref{ndmp14} can be deduced by \eqref{ndmp1}. Similarly
\eqref{ndmp15}-\eqref{ndmp26} can be deduced by \eqref{ndmp2}-\eqref{ndmp13}. Therefore, the conclusion holds.
\end{proof}}

\begin{defi}
Let $(A,\dashv,\vdash)$ be a Novikov dialgebra and $(A,\delta,\Delta)$ be a Novikov co-dialgebra. If $\Delta$ and $\delta$ satisfy the following conditions for all $a, b\in A$: 
{\small
\begin{flalign}
    &\label{ndba1}( L_\dashv(a)\otimes \id)\Delta(b)=-(\id\otimes ( L_\dashv +R_\vdash)(b))\delta(a),\\
    &\label{ndba2}(\id\otimes  L_\dashv( b ))\delta (a)=( L_\dashv (a)\otimes \id)\tau\delta(b),\\
    &\label{ndba3}(\id\otimes (L_\dashv+R_\vdash)(a))(\delta(b)+\tau\Delta(b))=(R_\vdash(b)\otimes \id)\Delta(a),\\
    &\label{ndba4}\delta(a\dashv b)=(R_\dashv(b)\otimes \id)\delta(a)+(\id\otimes L_\star(a))(\id-\tau)(\delta(b)-\Delta(b)),\\
    &\label{ndba5}\delta(a\vdash b)=(R_\vdash(b)\otimes \id)(\delta(a)-\Delta(a))+(\id\otimes(L_\vdash+ R_\dashv)(a))(\delta(b)+\tau\Delta(b)),\\
    &\label{ndba6}\Delta(a\dashv b)=(\id\otimes (L_\dashv+R_\vdash)(a))(\id-\tau)(\Delta(b)-\delta(b))+(R_\dashv(b)\otimes \id)\Delta(a),\\
    &\label{ndba7}\Delta(a\vdash b)=((R_\vdash-R_\dashv)(b)\otimes \id)(\delta(a)-\Delta(a))+(\id\otimes (L_\vdash+R_\dashv)(a))(\tau\delta(b)+\Delta(b)),\\
    &\label{ndba8}((L_\dashv+R_\vdash)(b)\otimes \id)(\delta(a)-\Delta(a))+(\id\otimes L_\star(b))\tau(\delta(a)-\Delta(a))\\
    &-((L_\vdash+R_\dashv)(a)\otimes \id)\delta(b)+(\id\otimes (L_\vdash+R_\dashv)(a))\tau\Delta(b)=0,\nonumber\\
    &\label{ndba9}(\id\otimes R_\vdash(a))(\id-\tau)(\delta(b)-\Delta(b))-(R_\dashv(b)\otimes \id)(\Delta(a)+\tau\delta(a))\\
    &=-(\id\otimes R_\dashv(b))(\Delta(a)+\tau\delta(a))+((R_\vdash-R_\dashv)(a)\otimes \id)(\id-\tau)(\Delta(b)-\delta(b)),\nonumber&
\end{flalign}}
then $(A,\dashv,\vdash,\delta,\Delta)$ is called a {\bf Novikov bi-dialgebra}.
\end{defi}
\delete{It should be pointed out that if $ \vdash=\dashv=\circ$
then a Novikov dialgebra $(A,\dashv,\vdash)$ is a Novikov algebra
$(A,\circ)$. Moreover, we can find that in this case, $l=\phi$ and
$r=\psi$ implies $(V^*,l^*+r^*,-r^*)$ is a dual representation of
$A$. However this bialgebra theory is not equal to Novikov
bialgebras.} Next, we give a correspondence between Novikov
bi-dialgebras and a class of Leibniz conformal bialgebras as
follows.
\begin{thm}\label{thnblb}
Let $\bar{A}={\bf k}[\partial]A$ be the Leibniz conformal algebra
corresponding to a Novikov dialgebra $ (A,\dashv,\vdash)$. Suppose
that $\delta$ and $\Delta $ are two ${\bf k}[\partial]$-module
homomorphisms from $\bar{A}$ to $\bar{A}\otimes \bar{A}$ such that
$\delta(A)\subset A\otimes A$ and $\Delta(A)\subset A\otimes A$.
Then $(\bar{A},[\cdot_\lambda \cdot],\alpha) $ is a Leibniz
conformal bialgebra if and only if
$(A,\dashv,\vdash,\delta,\Delta)$  is a Novikov bi-dialgebra,
where $\alpha$ is given by \eqref{cmnda}.  We call
$(\bar{A},[\cdot_\lambda \cdot],\alpha) $ the {\bf Leibniz
conformal bialgebra corresponding to a Novikov bi-dialgebra}
$(A,\dashv,\vdash,\delta,\Delta)$.
\end{thm}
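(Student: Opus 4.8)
The plan is to decompose the Leibniz conformal bialgebra structure on $(\bar A,[\cdot_\lambda\cdot],\alpha)$ into its algebra, coalgebra, and compatibility parts, and to match each with the corresponding piece of Novikov bi-dialgebra data. First, $(\bar A,[\cdot_\lambda\cdot])$ is a Leibniz conformal algebra precisely when $(A,\dashv,\vdash)$ is a Novikov dialgebra by Proposition~\ref{lcnd} (this is the hypothesis that $\bar A$ is the Leibniz conformal algebra corresponding to $(A,\dashv,\vdash)$), and $(\bar A,\alpha)$ is a Leibniz conformal coalgebra precisely when $(A,\delta,\Delta)$ is a Novikov co-dialgebra by the proposition immediately preceding this theorem, which accounts for \eqref{ncda1}--\eqref{ncda5}. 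It then remains to show that the bialgebra compatibility \eqref{lb1}--\eqref{lb2} is equivalent, given these, to the nine mixed conditions \eqref{ndba1}--\eqref{ndba9}.

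Since $A$ is finite-dimensional, $\bar A$ is free of finite rank over ${\bf k}[\partial]$ and $\bar A^{*c}={\bf k}[\partial]A^{*}$. The co-bracket $\alpha$ equips $\bar A^{*c}$ with the Leibniz conformal algebra structure of Proposition~\ref{ppc}(a), and the first key step is a lemma asserting that this structure coincides with the one assigned by Proposition~\ref{lcnd} to the dual Novikov dialgebra $(A^{*},\dashv,\vdash)$, defined by $\langle x\dashv y,a\rangle=\langle x\otimes y,\delta(a)\rangle$ and $\langle x\vdash y,a\rangle=\langle x\otimes y,\Delta(a)\rangle$. Indeed, pairing \eqref{ca} against $a\in A$ and substituting $\alpha(a)=(\partial\otimes\id)\Delta(a)-\tau(\partial\otimes\id)\delta(a)$ from \eqref{cmnda}, together with the identity $(\partial h)_\mu u=-\mu\,h_\mu u$ on $\bar A^{*c}$, one finds $\langle[f_\lambda g],a\rangle_\mu=\lambda\langle f\vdash g,a\rangle-(\mu-\lambda)\langle g\dashv f,a\rangle$, which is exactly $\langle\partial(g\dashv f)+\lambda(f\vdash g+g\dashv f),a\rangle_\mu$. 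Hence the two Leibniz conformal algebra structures on $\bar A^{*c}$ agree, and the co-dialgebra datum $(\delta,\Delta)$ is faithfully encoded by the dialgebra $A^{*}$.

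By Corollary~\ref{commb}, $(\bar A,[\cdot_\lambda\cdot],\alpha)$ is a Leibniz conformal bialgebra if and only if $(\bar A,\bar A^{*c};(L^{*},-L^{*}-R^{*}),(\mathcal L^{*},-\mathcal L^{*}-\mathcal R^{*}))$ is a matched pair of Leibniz conformal algebras. I would then express the four structure maps through the Novikov-dialgebra data: using Propositions~\ref{rplcnd} and~\ref{pro-dual} and its corollary, the representation $(\bar A^{*c},L^{*},-L^{*}-R^{*})$ of $\bar A$ is the one induced by the dual representation $(A^{*},L_\star^{*},-R_\dashv^{*},L_\vdash^{*}+R_\dashv^{*},R_\vdash^{*}-R_\dashv^{*})$ of the Novikov dialgebra $A$, and symmetrically $(\bar A,\mathcal L^{*},-\mathcal L^{*}-\mathcal R^{*})$ is induced by the dual representation of the regular representation of $A^{*}$ realized on $A$. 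Thus the conformal matched pair is exactly the one induced from a matched pair of Novikov dialgebras $(A,A^{*};\dots)$, and after cancelling common $\partial$-factors and comparing coefficients of the independent monomials in $\lambda$ and $\mu$, the six conformal matched-pair identities \eqref{mp1}--\eqref{mp6} split into the nine Novikov-dialgebra compatibility relations; a termwise dualization then identifies these with \eqref{ndba1}--\eqref{ndba9}.

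The hard part is the bookkeeping in this last translation. Each of \eqref{mp1}--\eqref{mp6} is polynomial in $\lambda,\mu$, and because $\alpha$ intertwines $\delta$ and $\Delta$ through the flip $\tau$, every term couples both co-operations, so matching coefficients is what converts the six parametrized identities into the nine separate relations; one must check that no extra constraints appear and that the $\partial$-actions turn into the correct spectral parameters exactly as in the lemma of the second paragraph. I expect the cleanest route is to perform the reduction to a matched pair of Novikov dialgebras once, then dualize the regular and dual representations entry by entry, paying particular attention to the sign conventions concealed in $L_\star$, in $-R_\dashv^{*}$ and $R_\vdash^{*}-R_\dashv^{*}$, and in the $-\mu$ arising from $(\partial h)_\mu u=-\mu\,h_\mu u$, which are the places where errors are most likely to creep in.
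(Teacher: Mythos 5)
Your proposal is correct in outline, but it takes a genuinely different route from the paper's own proof. The paper proves Theorem~\ref{thnblb} by direct computation: it substitutes $\alpha$ from \eqref{cmnda} and the $\lambda$-bracket $[a_\lambda b]=\partial(b\dashv a)+\lambda(a\vdash b+b\dashv a)$ into the two compatibility conditions \eqref{lb1} and \eqref{lb2}, expands, and compares coefficients of the monomials $\lambda^2$, $\lambda(\partial A\otimes A)$, $\partial^2 A\otimes A$, $\partial A\otimes\partial A$, etc., obtaining \eqref{lb1}~$\Leftrightarrow$~\eqref{ndba1}--\eqref{ndba3} and \eqref{lb2}~$\Leftrightarrow$~\eqref{ndba4}--\eqref{ndba9}; no duals and no matched pairs appear (the algebra and coalgebra parts are handled exactly as in your first paragraph). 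You instead dualize: your key lemma --- that the conformal structure which $\alpha$ induces on $\bar{A}^{*c}={\bf k}[\partial]A^{*}$ via \eqref{ca} coincides with the structure Proposition~\ref{lcnd} assigns to the dual Novikov dialgebra $(A^{*},\dashv,\vdash)$ --- is correct (your computation with $\langle f,\partial u\rangle_\lambda=\lambda\langle f,u\rangle_\lambda$ and $(\partial h)_\mu=-\mu h_\mu$ checks out), and your identification of $(\bar{A}^{*c},L^{*},-L^{*}-R^{*})$ as induced, via Propositions~\ref{rplcnd} and~\ref{pro-dual}, from $(A^{*},L_\star^{*},-R_\dashv^{*},L_\vdash^{*}+R_\dashv^{*},R_\vdash^{*}-R_\dashv^{*})$ is also right; Corollary~\ref{commb} applies because $A$ is assumed finite-dimensional throughout Section~\ref{5}. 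What your route buys is that it factors the theorem through results already established (Proposition~\ref{pmb}), so the only genuinely new computation is the translation of the matched-pair identities \eqref{mp1}--\eqref{mp6} into \eqref{ndba1}--\eqref{ndba9}, and it yields the conformal Manin triple characterization of these bialgebras as a byproduct; what the paper's route buys is self-containedness, with a single coefficient comparison replacing the duality machinery. Two caveats you should discharge if you carry this out: matched pairs of Novikov dialgebras are nowhere defined in the paper, so you must define them (by requiring the prescribed operations on $A\oplus A^{*}$ to form a Novikov dialgebra) and prove the analogue of Proposition~\ref{rplcnd} for bicrossed products; and the symmetric identification on the $A^{*}$ side, namely that $(\bar{A},\mathcal{L}^{*},-\mathcal{L}^{*}-\mathcal{R}^{*})$ is induced from the dual of the regular representation of $A^{*}$, requires repeating your lemma and Proposition~\ref{pro-dual} with the roles of $A$ and $A^{*}$ exchanged. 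The final bookkeeping is then comparable in length to the paper's direct expansion, so your route is not shorter, only differently organized.
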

\begin{proof}
Let $a,b\in A$ and set $\Delta(a)=\sum a_{(1)}\otimes a_{(2)}$ and $\delta(a)=\sum a^{(1)}\otimes a^{(2)}$.
Then we have
{\small
\begin{align*}
&(R(a)_{\lambda-\partial\otimes \id}\otimes \id)\alpha(b)-\tau((R(b)_{-\lambda-\partial\otimes \id}\otimes \id)\alpha(a))\\
=&(R(a)_{\lambda-\partial\otimes \id}\otimes \id)((\partial\otimes \id)\Delta(b)-\tau(\partial\otimes \id)\delta(b))
-\tau((R(b)_{-\lambda-\partial\otimes \id}\otimes \id)((\partial\otimes \id)\Delta(a)-\tau(\partial\otimes \id)\delta(a))\\
=&\sum [{\partial b_{(1)}}_{-\lambda}a]\otimes b_{(2)}-\sum[b^{(2)}_{-\lambda}a]\otimes\partial b^{(1)}-\tau(\sum[\partial {a_{(1)}}_\lambda b]\otimes a_{(2)}-[a^{(2)}_{\lambda}b]\otimes\partial a^{(1)} )\\
=&\sum(\lambda(\partial(a\dashv b_{(1)})-\lambda(a\dashv b_{(1)}+b_{(1)}\vdash a)\otimes b_{(2)})
)  +\sum(-\partial(a\dashv b^{(2)})\otimes \partial b^{(1)} \\
&+\lambda(a\dashv b^{(2)}+b^{(2)}\vdash a  )\otimes \partial b^{(1)}) + \sum( a_{(2)}\otimes (\lambda\partial(b\dashv a_{(1)}) + \lambda^2 (b\dashv a_{(1)}+a_{(1)}\vdash b)))\\
&+ \sum (\partial a^{(1)}\otimes ( \partial(b\dashv a^{(2)})+\lambda(a^{(2)}\vdash b+  b\dashv a^{(2)} ))).
\end{align*}}
Thus \eqref{lb1} holds if and only if
{\small
\begin{align*}
&(L_\dashv(a)\otimes \id)\Delta(b)=-(\id\otimes (L_\dashv+R_\vdash)(b))\delta(a),\quad((L_\dashv+R_\vdash)a\otimes \id)\Delta(b)=(\id\otimes (L_\dashv+R_\vdash)b)\tau\Delta(a),\\
&(L_\dashv(a)\otimes \id)\tau\delta(b)=(\id\otimes L_\dashv(b))\delta(a),
\end{align*}}
by considering the coefficients of $\lambda(\partial A\otimes A)$,
$\lambda^2 A\otimes A$ and $\partial^2 A\otimes A $.
 That is, \eqref{lb1} holds if and only if
\eqref{ndba1}-\eqref{ndba3} hold. \par Similarly, we have
{\small
\begin{flalign*}
&(\id\otimes R(b)_{-\lambda-\partial^{\otimes^2}}-L(b)_{-\lambda-\partial^{\otimes^2}}\otimes \id-R(b)_{-\lambda-\partial^{\otimes^2}}\otimes \id)(\tau+\id)\Delta(a)\\
&+(\id\otimes L(a)_\lambda+L(a)_\lambda\otimes \id)\Delta(b)-\alpha([a_\lambda b])\\
=&(\partial\otimes \partial)((\id\otimes L_\dashv(b))(\id-\tau)(\Delta(a)-\delta(a))+( (L_\dashv-L_\vdash)(b)\otimes \id )(\id-\tau)(\Delta(a)-\delta(a))\\
&+(\id\otimes R_\dashv(a))\Delta(b)-(R_\dashv(a)\otimes \id)\tau\delta(b))      +(\partial^2\otimes \id)((\id\otimes (L_\dashv+R_\vdash)(b))(\id-\tau)(\Delta(a)-\delta(a))  \\
&+(R_\dashv(a)\otimes \id)\Delta(b) )+(\id\otimes \partial^2)((L_\star(b)\otimes \id)(\id-\tau)(\delta(a)-\Delta(a))-(\id\otimes R_\dashv(a))\tau\delta(b))\\
&+\lambda(\partial\otimes \id)((\id\otimes (L_\dashv+R_\vdash)(b))((\id-\tau)(\Delta(a)-\delta(a))-\tau(\Delta(a)-\delta(a))) \\
&+((L_\vdash-L_\dashv)(b)\otimes \id)(\delta(a)-\Delta(a))+(\id\otimes (L_\vdash+R_\dashv)(a))\Delta(b)+(L_\vdash+2R_\dashv\otimes \id)\Delta(b)    ) \\
&+\lambda(\id\otimes \partial)( (\id\otimes L_\dashv(b))\tau(\delta(a)-\Delta(a))+( L_\star(b)\otimes \id )(\id-\tau)(\delta(a)-\Delta(a)) \\
&+( L_\star(b)\otimes \id )(\delta(a)-\Delta(a)) -( \id\otimes (L_\vdash+2R_\dashv)a+(L_\vdash+R_\dashv)(a)\otimes \id)\tau \delta(b)  ) \\
&+\lambda^2((\id\otimes (L_\dashv+R_\vdash)(b))\tau(\delta(a)-\Delta(a)) +(L_\star(b)\otimes \id ) (\delta(a)-\Delta(a)) \\
& -(\id\otimes (L_\vdash+R_\dashv)(a))\tau\delta(b)+((L_\vdash+R_\dashv)\otimes \id)\Delta(b)    )-(\partial\otimes \partial)(\Delta(b\dashv a)-\tau\delta(b\dashv a)) \\
&-(\partial^2\otimes \id)\Delta(b\dashv a)+(\id\otimes \partial^2)\tau\delta(b\dashv a)
-\lambda((\partial\otimes \id)\Delta(b\dashv a+a\vdash b )\\
&+(\id\otimes \partial)\tau\delta(b\dashv a+a\vdash b ))=0.
\end{flalign*}}
Then by considering the coefficients of $\partial^2 A\otimes A$,
$A\otimes \partial^2 A$ and $\lambda^2 A\otimes A$ in the above
equality, we obtain that the three equalities are equivalent to
\eqref{ndba6}, \eqref{ndba4} and \eqref{ndba8} respectively. Furthermore by considering the
coefficients of $\partial A\otimes
\partial A $, $\lambda(\partial A\otimes A)$ and $\lambda(A\otimes
\partial A) $ in the above equality, we get that the three
equalities  are equivalent to \eqref{ndba9}, \eqref{ndba7} and
\eqref{ndba5} respectively. Hence the conclusion holds.
\end{proof}

\subsection{The  classical duplicate Novikov Yang-Baxter equation }\

Let $ (A,\dashv,\vdash)$ be a Novikov dialgebra and  $r=\sum_i a_i\otimes b_i\in A\otimes A $.
We define
\begin{equation*}
N(r)=\sum_{i,j} a_i\vdash a_j\otimes b_i\otimes b_j+a_i\otimes (b_i\vdash a_j-b_i\dashv a_j)\otimes b_j+a_i\otimes a_j\otimes (b_j\vdash b_i+b_i\dashv b_j).
\end{equation*}
The equation $N(r)=0$ is called the {\bf classical duplicate Novikov Yang-Baxter equation (CDNYBE)} in $A$.
In addition, we define
\begin{equation*}
M(r)=\sum_{i,j} a_i\vdash b_j\otimes b_i\otimes a_j+a_i\otimes (b_i\vdash b_j-b_i\dashv b_j)\otimes a_j+a_i\otimes b_j\otimes (a_j\vdash b_i+b_i\dashv a_j).
\end{equation*}
It is clear that if $r$ is symmetric, then $N(r)=M(r)$.

\begin{pro}
Let $ (A,\dashv,\vdash)$ be a Novikov dialgebra and $r=\sum_i a_i\otimes b_i\in A\otimes A $. Define
\begin{align}
&\Delta_r: A\rightarrow A\otimes A,\qquad  \Delta_r(a)=( (L_\dashv-L_\vdash)(a)\otimes \id-\id\otimes(L_\dashv+R_\vdash)(a))r,\label{ndadr1}\\
&\delta_r: A\rightarrow A\otimes A,\qquad  \delta_r(a)=(\id\otimes
{L_\star}(a)+L_\dashv(a)\otimes \id)\tau r \;\;\;{\rm
for}\;{\rm all}\; a\in A. \label{ndadr2}
\end{align}
Then $(A,\Delta_r,\delta_r)$ is a Novikov co-dialgebra if and only if the following conditions hold for all $a\in A:$

{\small
\begin{flalign}
&\label{1ncda}(\id-\tau_{23})(\id\otimes \id \otimes (L_\star(a) )(\tau_{13}N(r)-N(\tau r))\\
&+(\id-\tau_{23})\sum_i(\id\otimes \id\otimes L_\star(a_i))(b_i\otimes (\id\otimes L_\dashv(a))(r-\tau r))=0,\nonumber\\
&\label{2ncda}-(\id\otimes \id\otimes L_\star(a))\tau_{23}\tau_{12}N(r)
+\tau_{12}\sum_i b_i\otimes (\id\otimes (L_\dashv+R_\vdash)(a\dashv a_i-a\vdash a_i))(r-\tau r)& \\
&-( \id\otimes (L_\dashv+R_\vdash)(a)\otimes \id)\tau_{12}M(\tau r) +(\id\otimes (L_\vdash+R_\dashv)(b_i)\otimes \id)(a_i\otimes (L_\dashv(a)\otimes \id)(r-\tau r) ) \nonumber\\
&-\tau_{23}( (L_\dashv( a\dashv a_i-a\vdash a_i )\otimes \id )(r-\tau r)\otimes b_i  )=0,\nonumber\\
&\label{3ncda}\sum_i(L_\dashv(a\dashv a_i-a\vdash a_i)\otimes \id+\id\otimes (L_\dashv+R_\vdash)(a\vdash a_i-a\dashv a_i ))(r-\tau r)\otimes b_i & \\
&+(\id\otimes \id\otimes (L_\dashv+R_\vdash)(a))(\tau_{23}\tau_{12}M(\tau r)+\tau_{13}M(\tau r)-M(\tau r)\nonumber\\
&+\sum_i ( b_i\otimes (\id\otimes L_\dashv(a_i))r-a_i\otimes (\id \otimes L_\dashv (b_i)) \tau r)  )=0,\nonumber\\
&\label{4ncda}(L_\dashv(a)\otimes \id\otimes \id)(M(\tau r)+\tau_{23}M(\tau r)-\tau_{12}\tau_{23}M(\tau r)
+\sum_i((R_\vdash(b_i)\otimes \id)r\otimes a_i \\
&-(R_\vdash(a_i)\otimes \id)\tau r\otimes b_i))+\sum_i b_i\otimes ((\id\otimes (L_\dashv+R_\vdash)(a\star a_i)-L_\dashv(a\star a_i)\otimes \id)(r-\tau r))=0 \nonumber\\
&\label{5ncda}(L_\dashv(a)\otimes \id\otimes \id)(N(\tau r)-\tau_{12}N(\tau r)+\sum_i((R_\vdash-R_\dashv)(b_i)\otimes \id)(r-\tau r)\otimes a_i)\\
&+(\id\otimes (L_\dashv-L_\vdash)(a)\otimes \id)(\tau_{12}\tau_{23}N(r) -\tau_{23}N(\tau r)+\sum_i b_i\otimes (L_\dashv(a_i)\otimes \id)(r-\tau r))\nonumber \\
&+\sum_i(\id\otimes \id\otimes L_\star(a_i))(b_i\otimes (\id\otimes (L_\dashv+R_\vdash)(a))(r-\tau r))=0,\nonumber\\
&\label{6ncda}((L_\dashv-L_\vdash)(a)\otimes \id\otimes \id)N(r)-\sum_i(R_\vdash(a\dashv a_i-a\vdash a_i)\otimes \id)(r-\tau r)\otimes b_i\\
&+\sum_i ((R_\dashv-R_\vdash)(a_i)\otimes \id\otimes \id)( (R_\vdash(a)\otimes \id)(r-\tau r)\otimes b_i )-(\id\otimes (L_\dashv-L_\vdash)(a)\otimes \id)\tau_{12}N(r)\nonumber\\
&+\sum_i(\id\otimes R_\vdash(a\vdash a_i-a\dashv a_i) +  (\id \otimes (R_\dashv-R_\vdash)(a_i) )(\id\otimes R_\vdash(a) )  )(r-\tau r)\otimes b_i\nonumber\\
&+\sum_i (\id\otimes \id \otimes (L_\dashv+R_\vdash)(a) )(\id-\tau_{12})(a_i\otimes (\id\otimes L_\dashv(b_i))(r-\tau r)  )=0.  \nonumber&
\end{flalign}}
\end{pro}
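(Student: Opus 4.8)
The plan is to verify directly the six tensor identities constituting the definition of a Novikov co-dialgebra, namely \eqref{ncda1}, the two equalities packaged in \eqref{ncda2}, \eqref{ncda3}, \eqref{ncda4} and \eqref{ncda5}, by substituting the coboundary formulas \eqref{ndadr1} and \eqref{ndadr2} and reducing each identity, after simplification by the defining relations of $(A,\dashv,\vdash)$, to exactly one of the six conditions \eqref{1ncda}--\eqref{6ncda}. This mirrors the treatment of the coboundary co-bracket for Leibniz conformal coalgebras given in the proposition preceding Theorem~\ref{thm-cob}, where the coassociativity-type identity \eqref{co-eq} was expanded into nine pieces and reorganized around $[[r,r]]$.

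First I would record the expanded, index-wise forms of the two co-operations: writing $r=\sum_i a_i\otimes b_i$, one has
\begin{align*}
\delta_r(a)&=\sum_i\big(b_i\otimes (a\star a_i)+(a\dashv b_i)\otimes a_i\big),\\
\Delta_r(a)&=\sum_i\big((a\dashv a_i-a\vdash a_i)\otimes b_i-a_i\otimes (a\dashv b_i+b_i\vdash a)\big).
\end{align*}
Composing two co-operations (for instance $(\delta_r\otimes\id)\delta_r$, $(\Delta_r\otimes\id)\delta_r$, $(\id\otimes\delta_r)\Delta_r$, and so on) produces sums in $A\otimes A\otimes A$ that are quadratic in the coefficients of $r$. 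The essential structural observation is that, upon applying the Novikov dialgebra relations, the genuinely quadratic parts assemble precisely into the tensors $N(r)$ and $M(r)$ together with their permutations under $\tau_{12}$, $\tau_{13}$ and $\tau_{23}$, while the remaining pieces collect into the correction terms of the shape $(\cdots)(b_i\otimes(\cdots)(r-\tau r))$ appearing on the right-hand sides of \eqref{1ncda}--\eqref{6ncda}. These $r-\tau r$ corrections are exactly what survives because $r$ is not assumed symmetric, so each interchange of $a_i$ and $b_i$ contributes a factor $r-\tau r$.

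Concretely I would match \eqref{ncda1} (quadratic in $\delta_r$ alone) to \eqref{1ncda}, the two halves of \eqref{ncda2} to \eqref{2ncda} and \eqref{3ncda}, the identity \eqref{ncda3} to \eqref{4ncda}, and the mixed identities \eqref{ncda4} and \eqref{ncda5} to \eqref{5ncda} and \eqref{6ncda} respectively, with the precise pairing forced by the expansion; in each case one checks that the two sides differ by a combination that vanishes identically once the Novikov dialgebra axioms are imposed. The main obstacle is purely the volume and bookkeeping of the expansion: each composition spreads into many monomials, and the reorganization into $N(r)$, $M(r)$ and the $r-\tau r$ corrections demands careful, repeated use of the seven defining identities of $(A,\dashv,\vdash)$ together with the interplay of $\tau_{12},\tau_{23},\tau_{13}$, with sign and index tracking the principal source of error.

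A conceptually lighter route, which I would use at least as a cross-check, passes through the map $\alpha(a)=(\partial\otimes\id)\Delta_r(a)-\tau(\partial\otimes\id)\delta_r(a)$. By the proposition identifying Leibniz conformal coalgebras with Novikov co-dialgebras, it then suffices to show that $(\bar A,\alpha)$ is a Leibniz conformal coalgebra; and since $\alpha$ is itself a Leibniz conformal coboundary of the form \eqref{dr} attached to an element of $\bar A\otimes\bar A$ determined by $r$, the coalgebra condition \eqref{rcoeq} expands, on collecting the coefficients of the monomials in $\partial_1,\partial_2,\partial_3$ and $\lambda$ exactly as in the proof of Theorem~\ref{thnblb}, into the same six conditions. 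Either way the heart of the argument is the same reorganization of the quadratic part into $N(r)$ and $M(r)$, so I expect the bulk of the effort to lie there.
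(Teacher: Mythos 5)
Your proposal is correct and takes essentially the same route as the paper: the paper's proof likewise substitutes the expanded forms $\delta_r(a)=\sum_i\bigl(b_i\otimes(a\star a_i)+(a\dashv b_i)\otimes a_i\bigr)$ and $\Delta_r(a)=\sum_i\bigl((a\dashv a_i-a\vdash a_i)\otimes b_i-a_i\otimes(a\dashv b_i+b_i\vdash a)\bigr)$ into the co-dialgebra axioms and reorganizes the quadratic terms into $N(r)$, $M(\tau r)$ and their permutations plus $(r-\tau r)$ corrections, with exactly your pairing \eqref{ncda1}$\Leftrightarrow$\eqref{1ncda}, \eqref{ncda2}$\Leftrightarrow$\eqref{2ncda} and \eqref{3ncda}, \eqref{ncda3}$\Leftrightarrow$\eqref{4ncda}, \eqref{ncda4}$\Leftrightarrow$\eqref{5ncda}, \eqref{ncda5}$\Leftrightarrow$\eqref{6ncda}. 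The paper carries out only the \eqref{ncda1} case explicitly and handles the rest as ``similar,'' so your conformal cross-check via $\alpha$ is a harmless extra rather than a divergence.
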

\begin{proof}
We just need to prove that \eqref{1ncda}-\eqref{6ncda} are equivalent to \eqref{ncda1}-\eqref{ncda5}.
For all $a\in A $, we have
{\small
\begin{flalign*}
&(\delta_r\otimes \id)\delta_r(a)-\tau_{23}((\delta_r\otimes \id)\delta_r(a))\\
=&\sum_{i,j}(\id-\tau_{23})(  b_j\otimes (b_i\star a_j)\otimes (a\star a_i) +(b_i\dashv b_j)\otimes a_j\otimes (a\star a_i) + b_j\otimes ((a\dashv b_i)\star a_j  )\otimes a_i \\
&+(a\dashv b_i)\dashv b_j\otimes a_j\otimes a_i        )
=(\id-\tau_{23})(\id\otimes \id\otimes L_\star(a) )(\tau_{13}N(r)-N(\tau r))\\
&+(\id-\tau_{23})\sum_{i,j}(b_i\otimes b_j\otimes a\star (a_i\dashv a_j) +  b_j\otimes ((a\dashv b_i)\star a_j  )\otimes a_i)  )\\
=&(\id-\tau_{23}) \{    (\id\otimes \id \otimes L_\star(a) )(\tau_{13}N(r)-N(\tau r))+\sum_i(\id\otimes \id\otimes L_\star(a_i))(b_i\otimes (\id\otimes L_\dashv(a))(r-\tau r)) \},&
\end{flalign*}}
which means that \eqref{ncda1} is equivalent to \eqref{1ncda}.
Similarly, we have
\begin{eqnarray*}
\eqref{ncda2} \Longleftrightarrow \eqref{2ncda}\;{\rm
and}\;\eqref{3ncda},\;\; \eqref{ncda3} \Longleftrightarrow
\eqref{4ncda},\;\;  \eqref{ncda4} \Longleftrightarrow
\eqref{5ncda},\;\;  \eqref{ncda5} \Longleftrightarrow
\eqref{6ncda}.\;\;
\end{eqnarray*}
Thus this conclusion holds.
\end{proof}

\begin{thm}\label{thm-cob-dN}
Let $ (A,\dashv,\vdash)$ be a Novikov dialgebra and $r\in
A\otimes A $. The maps $\Delta_r$ and $\delta_r$ given by
\eqref{ndadr1} and \eqref{ndadr2} respectively are
co-multiplications of a Novikov bi-dialgebra  $A$ if and only if
they satisfy \eqref{1ncda}-\eqref{6ncda} and the following
conditions for all $a,b\in A:$
\begin{flalign}
&\label{1nbda}(L_\dashv(a)\otimes \id)(\id\otimes (L_\dashv+R_\vdash)(b))(r-\tau r)=0,\\
&\label{2nbda}(L_\dashv(a)\otimes \id)(\id\otimes L_\dashv(b))(r-\tau r)=0,\\
&\label{3nbda}(R_\vdash(b)\otimes \id)(\id\otimes (L_\dashv+R_\vdash)(a))(r-\tau r)=0,\\
&\label{4nbda}(\id\otimes L_\star(a))(\id\otimes R_\vdash(b) +(R_\vdash-R_\dashv)(b)\otimes \id)(r-\tau r)=0,\\
&\label{7nbda}((R_\vdash-R_\dashv)(b)\otimes \id)(L_\dashv(a)\otimes \id+\id\otimes L_\star(a))(r-\tau r)=0,\\
&\label{8nbda}(\id\otimes L_\star(b))(\id\otimes L_\dashv(a)+ L_\star(a)\otimes \id)(r-\tau r)\\
&+((L_\dashv+R_\vdash)(b)\otimes \id)(\id\otimes (L_\dashv+R_\vdash)(a))(r-\tau r)=0,\nonumber\\
&\label{9nbda}(R_\vdash(b)\otimes \id)(\id\otimes R_\vdash(a))(r-\tau r)+((R_\vdash-R_\dashv)(a)\otimes \id)(\id\otimes (R_\vdash-R_\dashv)(b))(r-\tau r)&\\
&+( R_\dashv(b)\otimes \id)(R_\vdash(a)\otimes \id)(r-\tau r)-(
R_\dashv(a)\otimes \id)(R_\vdash(b)\otimes \id)(r-\tau
r)=0.\nonumber
\end{flalign}
\end{thm}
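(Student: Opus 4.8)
The plan is to follow the pattern already established for the coboundary case of Leibniz conformal bialgebras in Theorem~\ref{thm-cob}, separating the two halves of the definition of a Novikov bi-dialgebra. By the preceding proposition, under the coboundary ansatz $(A,\Delta_r,\delta_r)$ is a Novikov co-dialgebra, i.e.\ \eqref{ncda1}--\eqref{ncda5} hold, if and only if \eqref{1ncda}--\eqref{6ncda} hold. Hence it remains only to prove that, when $\Delta=\Delta_r$ and $\delta=\delta_r$ are given by \eqref{ndadr1} and \eqref{ndadr2}, the nine compatibility conditions \eqref{ndba1}--\eqref{ndba9} are jointly equivalent to \eqref{1nbda}--\eqref{4nbda} together with \eqref{7nbda}--\eqref{9nbda}.

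First I would substitute the two coboundary expressions into each of \eqref{ndba1}--\eqref{ndba9} and expand every operator-valued term. The operative mechanism, exactly as in Theorem~\ref{thm-cob}, is that the terms in which $r$ enters ``symmetrically'' are annihilated by the defining identities of the Novikov dialgebra $(A,\dashv,\vdash)$ --- that is, the relations among $\dashv$ and $\vdash$, equivalently the identities governing $L_\dashv,R_\dashv,L_\vdash,R_\dashv,L_\star$ --- so that each compatibility condition collapses to a statement depending only on the antisymmetric combination $r-\tau r$. I expect \eqref{ndba1}, \eqref{ndba2} and \eqref{ndba3} to reduce directly to \eqref{1nbda}, \eqref{2nbda} and \eqref{3nbda}. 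The four conditions \eqref{ndba4}--\eqref{ndba7}, which control $\delta_r(a\dashv b)$, $\delta_r(a\vdash b)$, $\Delta_r(a\dashv b)$ and $\Delta_r(a\vdash b)$, should collapse after using the co-dialgebra identities just established: two of them become automatically satisfied --- which is precisely why no surviving condition is numbered $5$ or $6$ --- while the remaining content folds into \eqref{4nbda} and \eqref{7nbda}. Finally I expect \eqref{ndba8} and \eqref{ndba9} to yield \eqref{8nbda} and \eqref{9nbda}.

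The genuine difficulty is bookkeeping rather than conceptual: each of \eqref{ndba4}--\eqref{ndba7} expands into a long alternating sum of rank-three tensors, and the cancellations must be read off by applying the correct Novikov dialgebra axiom in the correct tensor slot. To keep this tractable I would organize every expansion, as in the $B_1+B_2+B_3+B_4$ decomposition of Theorem~\ref{thm-cob}, into a ``trivial part'' that vanishes by the dialgebra identities regardless of symmetry, plus a ``remainder'' carrying the $r-\tau r$ constraint. As a structural guide and consistency check, Theorem~\ref{thnblb} says that $(\Delta_r,\delta_r)$ corresponds under \eqref{cmnda} to the coboundary co-bracket $\alpha$ on $\bar A={\bf k}[\partial]A$; each reduced condition among \eqref{1nbda}--\eqref{4nbda} and \eqref{7nbda}--\eqref{9nbda} should therefore be exactly the image of the corresponding reduction \eqref{dlb1}--\eqref{dlb2} obtained for the CLCYBE in Theorem~\ref{thm-cob}, which both pins down the expected form of the answer and provides an independent route to the equivalence.
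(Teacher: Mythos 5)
Your proposal follows essentially the paper's own route: the co-dialgebra half is delegated to the preceding proposition, and the paper then substitutes \eqref{ndadr1}--\eqref{ndadr2} into \eqref{ndba1}--\eqref{ndba9}, expands, and cancels via the Novikov dialgebra identities until each compatibility condition depends only on $r-\tau r$ (it carries out \eqref{ndba1}$\Leftrightarrow$\eqref{1nbda} in full and asserts the remaining reductions similarly). One bookkeeping correction to your predicted grouping: \eqref{ndba5} and \eqref{ndba6} are not automatically satisfied but instead collapse, jointly with \eqref{ndba3}, into the single condition \eqref{3nbda}, while \eqref{ndba4} and \eqref{ndba7} reduce one-to-one to \eqref{4nbda} and \eqref{7nbda} --- this, rather than vacuousness of two conditions among \eqref{ndba4}--\eqref{ndba7}, is why no surviving condition is numbered $5$ or $6$.
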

\begin{proof} $r=\sum_i a_i\otimes b_i$.
For all $a,b\in A$, we have
\begin{align*}
&(L_\dashv(a)\otimes \id)\Delta_r(b)+(\id\otimes (L_\dashv+R_\vdash)(b))\delta_r(a)\\
=&\sum_i((a\dashv (b\dashv a_i)-a\dashv(b\vdash a_i))\otimes b_i -(a\dashv a_i)\otimes(b\dashv b_i+b_i\dashv b)\\
&+(a\dashv b_i)\otimes (b\dashv a_i+a_i\vdash b)+b_i\otimes (b\dashv (a\star a_i)+(a\star a_i)\vdash b)     )\\
=&-(L_\dashv(a)\otimes \id)(\id\otimes (L_\dashv+R_\vdash)(b))(r-\tau r),
\end{align*}
which means that \eqref{ndba1} is equivalent to \eqref{1nbda}.
Similarly, we have
\begin{align*}
&\eqref{ndba2}\Longleftrightarrow \eqref{2nbda}, \quad \eqref{ndba3}, \eqref{ndba5} \; {\rm and }\; \eqref{ndba6}  \Longleftrightarrow \eqref{3nbda},\quad
\eqref{ndba4}\Longleftrightarrow \eqref{4nbda},\\
&\eqref{ndba7}\Longleftrightarrow \eqref{7nbda},\quad \eqref{ndba8}\Longleftrightarrow \eqref{8nbda},\quad \eqref{ndba9}\Longleftrightarrow \eqref{9nbda}.
\end{align*}
\delete{
obtain that \eqref{ndba2}-\eqref{ndba9}  are
equivalent to \eqref{2nbda}-\eqref{9nbda}. \cm{Similarly as the
proof of Proposition 5.5, give the exact correspondence in  terms
of $\Longleftrightarrow$} }Thus this conclusion holds.
\end{proof}

\begin{cor}\label{condba}
Let $ (A,\dashv,\vdash)$ be a Novikov dialgebra. If $r\in A\otimes
A $ is a symmetric solution of the CDNYBE in  $A$, then $
(A,\dashv,\vdash,\delta_r,\Delta_r)$ is a Novikov bi-dialgebra,
where  $\Delta_r$ and $\delta_r$ are defined by \eqref{ndadr1} and
\eqref{ndadr2} respectively.
\end{cor}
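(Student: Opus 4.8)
The plan is to reduce everything to Theorem~\ref{thm-cob-dN}. That theorem states that $\Delta_r$ and $\delta_r$, defined by \eqref{ndadr1} and \eqref{ndadr2}, are the co-multiplications of a Novikov bi-dialgebra precisely when the co-dialgebra axioms \eqref{1ncda}--\eqref{6ncda} together with the compatibility conditions \eqref{1nbda}--\eqref{9nbda} all hold. So the whole task is to show that these identities are forced by the two hypotheses of the corollary: that $r$ is symmetric and that it satisfies the CDNYBE, i.e. $N(r)=0$.

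First I would use symmetry in its crudest form. Since $r$ is symmetric, $r=\tau r$, so $r-\tau r=0$. Each of \eqref{1nbda}--\eqref{9nbda} is a linear operator applied to $(r-\tau r)$, so every one of them collapses to $0=0$ with no further computation. The same factor $(r-\tau r)$ also appears in a large share of the summands of \eqref{1ncda}--\eqref{6ncda}, and all of those summands vanish on the spot.

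Next I would clear the cubic terms, which are images of $N(r)$, $N(\tau r)$, and $M(\tau r)$ under operator prefactors and the permutations $\tau_{12},\tau_{13},\tau_{23}$. From $\tau r=r$ I get $N(\tau r)=N(r)$ and $M(\tau r)=M(r)$, and, as already noted, symmetry of $r$ also gives $N(r)=M(r)$. The hypothesis $N(r)=0$ then makes $N(r)$, $N(\tau r)$, and $M(\tau r)$ all vanish; a permutation of the zero element is still zero, and each operator prefactor sends zero to zero, so every cubic summand drops out.

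What remains are a few summands in \eqref{3ncda} and \eqref{4ncda} that carry neither an explicit $(r-\tau r)$ nor an $N$/$M$ factor — for example the pair $\sum_i\bigl(b_i\otimes(\id\otimes L_\dashv(a_i))r-a_i\otimes(\id\otimes L_\dashv(b_i))\tau r\bigr)$ in \eqref{3ncda}, and an analogous difference in \eqref{4ncda}. For these I would invoke the elementary principle that, for symmetric $r=\sum_i a_i\otimes b_i$ and any linear map $\Phi$, one has $\sum_i\Phi(a_i\otimes b_i)=\Phi(r)=\Phi(\tau r)=\sum_i\Phi(b_i\otimes a_i)$; that is, the labels $a_i$ and $b_i$ may be interchanged inside any sum that is linear in a given copy of $r$. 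Interchanging the labels in each copy of $r$ in turn carries the first summand of such a pair onto the second, so the difference cancels. Once these pairs are disposed of, \eqref{1ncda}--\eqref{6ncda} and \eqref{1nbda}--\eqref{9nbda} all hold, and Theorem~\ref{thm-cob-dN} delivers the conclusion. I expect this label-swapping step for the leftover terms of \eqref{3ncda} and \eqref{4ncda} to be the only part needing genuine care; everything else is an immediate consequence of $r-\tau r=0$ and $N(r)=0$.
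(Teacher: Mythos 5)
Your proposal is correct and takes essentially the same route as the paper: the paper's entire proof of Corollary~\ref{condba} is the one-line remark that it follows straightforwardly from Theorem~\ref{thm-cob-dN}, and your argument just supplies the routine verification left implicit there. Your accounting is accurate — symmetry kills every $(r-\tau r)$ factor in \eqref{1ncda}--\eqref{6ncda} and all of \eqref{1nbda}--\eqref{9nbda}, the identities $\tau r=r$ and $N(r)=M(r)$ (stated in the paper for symmetric $r$) together with $N(r)=0$ annihilate all $N$ and $M$ terms, and the residual paired sums in \eqref{3ncda} and \eqref{4ncda} cancel by exactly the label-swapping principle you describe, which is valid since each such sum is linear in the outer copy of $r$.
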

\begin{proof}
It is straightforward by Theorem \ref{thm-cob-dN}.
\end{proof}
Next, we give the relationship between the CLCYBE and
the CDNYBE as follows.
\begin{pro}\label{relcdn}
Let $ (A,\dashv,\vdash)$ be a Novikov dialgebra, $(\bar{A}={\bf k}[\partial]A,[\cdot_\lambda\cdot])$ be the Leibniz conformal algebra corresponding to $(A,\dashv,\vdash)$ and $r\in A\otimes A$ be symmetric. 
Then $r$ is a solution of the CLCYBE in $\bar{A}$ if and only if $r$ is a solution of the CDNYBE in $A$.
\end{pro}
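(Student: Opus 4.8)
The plan is to reduce the statement to a single explicit identity in $A\otimes A\otimes A$ by substituting the bracket of $\bar A$, namely $[a_\lambda b]=\partial(b\dashv a)+\lambda(a\vdash b+b\dashv a)$, into the four constituents of $[[r,r]]$. Writing $r=\sum_i a_i\otimes b_i$ and carrying out the prescribed substitutions $\mu=\partial_2,\partial_2,\partial_1,-\partial_1-\partial_2$, every summand acquires a factor $\partial_1$, $\partial_2$ or $\partial_3$, and the only constituent carrying a $\partial_3$ is $[{r_{23}}_{\mu} r_{13}]$ (whose bracket sits in the third leg). Hence reducing modulo $\partial^{\otimes^3}$ amounts to the single substitution $\partial_3=-\partial_1-\partial_2$. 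After collecting coefficients I expect $[[r,r]]\equiv \partial_1 P_1+\partial_2 P_2 \pmod{\partial^{\otimes^3}}$ for two tensors $P_1,P_2\in A\otimes A\otimes A$ with no remaining $\partial$-dependence. Since $\partial_1$ and $\partial_2$ are algebraically independent in the relevant quotient, $r$ solves the CLCYBE in $\bar A$ if and only if $P_1=0$ and $P_2=0$.

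The heart of the argument is to identify $P_1$ and $P_2$ with $N(r)$. Using only that $r$ is symmetric, so that $\sum_i a_i\otimes b_i=\sum_i b_i\otimes a_i$ and one may relabel the pair $(a_k,b_k)\mapsto(b_k,a_k)$ in each leg, together with the summation swap $i\leftrightarrow j$, I would show that $P_2=\tau_{13}N(r)$. Concretely, the $\vdash$-summands of $P_2$ that already have the shape of a term of $N(r)$ match directly, while the $\dashv$-summands of $P_2$, whose products sit in a different tensor leg, are carried onto the remaining terms of $N(r)$ precisely by the leg-swap $\tau_{13}$ after the above relabelings; for instance $\sum_{i,j}(a_j\dashv a_i)\otimes b_i\otimes b_j=\tau_{13}\sum_{i,j}a_i\otimes a_j\otimes(b_i\dashv b_j)$. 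For $P_1$ I would invoke Proposition \ref{pprtr}: its proof yields $[[\tau r,\tau r]]=\tau_{13}[[r,r]]\pmod{\partial^{\otimes^3}}$, so for symmetric $r$ the element $[[r,r]]$ is $\tau_{13}$-invariant modulo $\partial^{\otimes^3}$. Comparing the $\partial_1$- and $\partial_2$-coefficients of $\partial_1 P_1+\partial_2 P_2$ with those of its $\tau_{13}$-image (under which $\partial_1\leftrightarrow\partial_3=-\partial_1-\partial_2$) then forces $P_1=(\id-\tau_{13})P_2=(\tau_{13}-\id)N(r)$.

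With these two identities the conclusion is immediate: since $\tau_{13}$ is an involution, $N(r)=0$ gives $P_1=P_2=0$ and hence a solution of the CLCYBE, while conversely $P_2=0$ forces $\tau_{13}N(r)=0$, that is, $N(r)=0$. I expect the only genuine obstacle to be the bookkeeping in the second paragraph: organizing the summands of $P_2$ (and of $P_1$) and matching each, after the correct combination of leg-relabelings and the index swap, to a term of $\tau_{13}N(r)$. Notably the Novikov dialgebra axioms are not needed for this matching, as they are already encoded in the hypothesis that $\bar A$ is a Leibniz conformal algebra; the entire identification is a purely formal consequence of the symmetry of $r$, which is what makes the relations $P_2=\tau_{13}N(r)$ and $P_1=(\tau_{13}-\id)N(r)$ robust once the coefficients have been computed correctly.
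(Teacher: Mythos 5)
Your proposal is correct and is essentially the paper's own argument: the paper likewise expands $[[r,r]]$ term by term using $[a_\lambda b]=\partial(b\dashv a)+\lambda(a\vdash b+b\dashv a)$ and reduces modulo $\partial^{\otimes^3}$ to $[[r,r]]\equiv -\partial_1 N(r)-\partial_3\,\tau_{13}N(r)$, which under your normalization $\partial_3=-\partial_1-\partial_2$ is precisely $\partial_1(\tau_{13}-\id)N(r)+\partial_2\,\tau_{13}N(r)$, i.e.\ your $P_1$ and $P_2$. The only (harmless) difference is organizational: the paper computes both coefficients directly from the expansion, whereas you compute only $P_2$ and recover $P_1$ from the $\tau_{13}$-equivariance of Proposition \ref{pprtr} --- a valid shortcut, since $\tau r=r$ makes $[[r,r]]$ $\tau_{13}$-invariant modulo $\partial^{\otimes^3}$.
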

\begin{proof} Set $r=\sum_{i} a_i\otimes b_i$.
Note that
{\small
\begin{align*}
[[r,r]]=&\sum_{i,j}( \partial_1(a_i\dashv a_i \otimes b_i\otimes b_j-a_i\otimes(b_i\vdash a_j+a_j\dashv b_i)\otimes b_j )\\
&+\partial_2((a_i\vdash a_j+a_j\dashv a_i )\otimes b_i\otimes b_j+a_i\otimes a_j\otimes (b_j\vdash b_i+b_i\dashv b_j)\\
&-a_i\otimes a_j(\dashv b_i+b_i\dashv a_j)\otimes b_j )+\partial_3(a_i\otimes a_j\otimes b_i\dashv b_j-a_i \otimes  (a_j\vdash b_i+b_i\dashv a_j)\otimes b_j))\\
=&\sum_{i,j}(\partial_1(-a_i\vdash a_j\otimes b_i\otimes b_j -a_i\otimes (b_i\vdash a_j-b_i\dashv a_j)\otimes b_j -a_i\otimes a_j\otimes (b_j\vdash b_i+b_i\dashv b_j))   \\
&+\partial_3 ( -a_i\otimes a_j\otimes (b_j\vdash b_i)   -a_i\otimes (a_j\vdash b_i-a_j\dashv b_i )\otimes b_j\\
&-(a_i\vdash a_j+a_j\dashv a_i)\otimes b_i\otimes b_j    ) )\;\;\text{mod} \;\;(\partial^{\otimes ^3}).
\end{align*}}
Since $r$ is symmetric, we have $[[r,r]]= -\partial_1N(r)-\partial_3\tau_{13}N(r)$. Thus this conclusion holds.
\end{proof}


\begin{cor}
Let $ (A,\dashv,\vdash)$ be a Novikov dialgebra and $(\bar{A}={\bf
k}[\partial]A,[\cdot_\lambda\cdot])$ be the Leibniz conformal
algebra corresponding to $(A,\dashv,\vdash)$. Suppose that $r\in
A\otimes A$ is a symmetric solution of the CDNYBE in $A$. On the
one hand, there is a Novikov bi-dialgebra $
(A,\dashv,\vdash,\delta_r,\Delta_r)$ by Corollary \ref{condba}
where  $\Delta_r$ and $\delta_r$ are defined by \eqref{ndadr1} and
\eqref{ndadr2} respectively. By Theorem~\ref{thnblb}, let
$(\bar{A},[\cdot_\lambda \cdot],\alpha_r) $ be the corresponding
Leibniz conformal bialgebra, where  $\alpha_r$ is defined by
\eqref{cmnda}. On the other hand,  $r$ is also the symmetric
solution of the CLCYBE in $\bar{A}$ by Proposition \ref{relcdn}.
By Corollary \ref{condba}, there is a triangular Leibniz conformal
bialgebra $(\bar{A},[\cdot_\lambda\cdot],\alpha_r')$, where
$\alpha_r'$ is defined by {\rm (\ref{dr})}. Therefore both
the Leibniz conformal bialgebras
$(\bar{A},[\cdot_\lambda\cdot],\alpha_r)$ and
$(\bar{A},[\cdot_\lambda\cdot],\alpha_r')$ coincide. That is,
there is the following commutative diagram.
\begin{equation}\label{diag2}
{\small \xymatrix@C=1.5cm{
  & \txt{   symmetric solutions of \\the CDNYBE } \ar[d]^{\text{Prop.} \ref{relcdn}} \ar[r]^{\quad \text{Coro.} \ref{condba}}  &\txt{ Novikov\\ bi-dialgebras} \ar[d]^{\text{Thm.}\ref{thnblb}}         \\
  & \txt{ symmetric solutions of \\the CLCYBE} \ar[u]\ar[r]^{\quad \text{Coro.} \ref{colcba}} & \txt{Leibniz \\ conformal bialgebras}\ar[u] }               }
\end{equation}
\end{cor}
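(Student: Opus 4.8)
Both Leibniz conformal bialgebras in the statement live on the \emph{same} Leibniz conformal algebra $\bar A={\bf k}[\partial]A$, so the assertion that they coincide is exactly the equality of the two co-brackets, $\alpha_r=\alpha_r'$. Since both are ${\bf k}[\partial]$-module homomorphisms and $\bar A$ is free on $A$, it suffices to verify $\alpha_r(a)=\alpha_r'(a)$ for every $a\in A$. The existence of the two bialgebras is already supplied by the preceding results: Corollary \ref{condba} turns the symmetric solution $r$ into a Novikov bi-dialgebra $(A,\dashv,\vdash,\delta_r,\Delta_r)$, Theorem \ref{thnblb} produces from it the Leibniz conformal bialgebra $(\bar A,[\cdot_\lambda\cdot],\alpha_r)$ with $\alpha_r$ given by \eqref{cmnda}; on the other hand Proposition \ref{relcdn} shows the same $r$ solves the CLCYBE in $\bar A$, and Corollary \ref{colcba} (with $r-\tau r=0$ trivially invariant) gives the triangular bialgebra $(\bar A,[\cdot_\lambda\cdot],\alpha_r')$ with $\alpha_r'=F(\cdot)r$ defined by \eqref{dr}. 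Thus only the computational identity $\alpha_r=\alpha_r'$ remains.

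\textbf{Computing the two co-brackets.} Writing $r=\sum_i a_i\otimes b_i$, I would first compute $\alpha_r'(a)=F(a)r$ from \eqref{f} using the $\lambda$-bracket of the Leibniz conformal algebra corresponding to $(A,\dashv,\vdash)$ (Proposition \ref{lcnd}). This needs $L(a)_\lambda a_i=[a_\lambda a_i]$ and $R(a)_\lambda a_i=[a_i{}_{-\lambda-\partial}a]$, the latter evaluated through conformal sesquilinearity; then the substitution $\lambda=-\partial^{\otimes^2}$ makes the $\partial$ introduced by the substitution interact with the $\partial$ already present inside the bracket, producing several cancellations. After collecting, every summand has total $\partial$-degree one and is classified by the position of the single $\partial$ (first or second tensor slot) together with the position of the Novikov multiplication operator (first or second slot). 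This gives four ``types,'' and within each type the operator is a ${\bf k}$-linear combination of $L_\dashv,R_\dashv,L_\vdash,R_\vdash$. In parallel I would expand $\alpha_r(a)=(\partial\otimes\id)\Delta_r(a)-\tau(\partial\otimes\id)\delta_r(a)$ directly from \eqref{ndadr1} and \eqref{ndadr2}, using $\tau r=\sum_i b_i\otimes a_i$, obtaining the same four types.

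\textbf{Matching.} I would then compare the two expressions type by type. Three of the comparisons are immediate: the $(\partial\otimes\id,\ \text{first slot})$, $(\partial\otimes\id,\ \text{second slot})$ and $(\id\otimes\partial,\ \text{second slot})$ types give on both sides the operator combinations $L_\dashv-L_\vdash$, $-(L_\dashv+R_\vdash)$ and $-L_\dashv$ respectively. The only genuine input is the remaining $(\id\otimes\partial,\ \text{first slot})$ type, where the Novikov side contributes $-L_\star$ and the conformal side contributes $L_\dashv+R_\vdash-L_\vdash-R_\dashv$; these agree precisely because of the definition $L_\star=-L_\dashv+R_\dashv+L_\vdash-R_\vdash$ (equivalently $a\star b=-a\dashv b+b\dashv a+a\vdash b-b\vdash a$). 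Hence $\alpha_r=\alpha_r'$, and combining this with Proposition \ref{relcdn}, Corollary \ref{condba}, Theorem \ref{thnblb} and Corollary \ref{colcba} yields the commutativity of the displayed diagram.

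\textbf{Main obstacle.} I expect the difficulty to be purely the bookkeeping of the second step: evaluating $R(a)_\lambda$ through $[a_i{}_{-\lambda-\partial}a]$ and tracking the cancellations produced by $\lambda=-\partial^{\otimes^2}$, where it is easy to confuse $R_\dashv$ with $L_\dashv$ in the coefficients. It is worth noting that, once the four-type organization is in place, the matching is an identity of operators that holds for arbitrary $r$; the symmetry of $r$ is needed only upstream, to invoke Corollaries \ref{condba} and \ref{colcba} so that both objects are genuine bialgebras.
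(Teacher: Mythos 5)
Your proposal is correct and follows essentially the same route as the paper's proof: reduce to checking $\alpha_r(a)=\alpha_r'(a)$ on generators $a\in A$ (existence of both bialgebras being supplied by Corollary \ref{condba}, Theorem \ref{thnblb}, Proposition \ref{relcdn} and Corollary \ref{colcba}), then expand $F(a)r|_{\lambda=-\partial^{\otimes^2}}$ via the bracket $[a_\lambda b]=\partial(b\dashv a)+\lambda(a\vdash b+b\dashv a)$ of Proposition \ref{lcnd} and match it term by term against $(\partial\otimes\id)\Delta_r(a)-\tau(\partial\otimes\id)\delta_r(a)$. Your four-type bookkeeping, the operator combinations $L_\dashv-L_\vdash$, $-(L_\dashv+R_\vdash)$, $-L_\dashv$ and $-L_\star=L_\dashv+R_\vdash-L_\vdash-R_\dashv$, and the observation that the identity $\alpha_r=\alpha_r'$ needs no symmetry of $r$ all agree exactly with the paper's single displayed computation.
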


\begin{proof}
Let $r=\sum_ia_i\otimes b_i$ and $a\in A$. By Proposition
\ref{lcnd}, we have

\begin{align*}
\alpha_r'(a):=&( L(a)_\lambda\otimes \id+R(a)_\lambda\otimes
\id-\id\otimes
R(a)_\lambda)(r)|_{\lambda=-\partial^{\otimes^2}}\\
=&\sum_i ([a_{-\partial^{\otimes^2}} a_i]\otimes b_i+[{a_i}_{\id\otimes \partial} a]\otimes b_i-a_i\otimes [{b_i}_{\partial\otimes \id} a] )\\
=& (\partial\otimes \id)((-a\vdash a_i\otimes b_i)+(a\dashv a_i\otimes b_i) -(a_i\otimes b_i\vdash a)-(a_i\otimes a\dashv b_i)  )\\
&-(\id \otimes \partial)( (a_i\dashv a  +a\vdash a_i-a_i\vdash a-a\dashv a_i)\otimes b_i +(a_i\otimes a\dashv b_i ) )\\
=&(\partial\otimes \id)\Delta_r(a)-\tau (\partial\otimes \id
)\delta_r(a):=\alpha_r(a).
\end{align*}
Hence the conclusion holds.
\end{proof}


\delete{Let $A$ and $V$ be vector spaces. We identify $r\in A\otimes A$ as a linear map  $T_r: A^*\rightarrow A$ by
\begin{align*}
\langle T_r(a^*),b^*\rangle=\langle r,a^*\otimes b^*\rangle, \qquad a^*,b^*\in A^*.
\end{align*}
\delete{ for all $a,b\in A$, $x,y\in A^*$, $f\in \text{Hom}_{\bf k}(A^*,A)$. and then introduce the notion of $\mathcal{O}$-operators of Novikov dialgebras and use it to interpret the CDNCYBE.}
\begin{defi}
Let $(A,\dashv,\vdash) $ be a Novikov dialgebra and $(V,l_A,r_A,\phi_A,\psi_A )$ be a representation of $(A,\dashv,\vdash)$. A linear map $T: V\rightarrow A$ is called an {\bf $\mathcal{O}$-operator} on the Novikov dialgebra $(A,\dashv,\vdash) $ associated to   the representation $(V,l_A,r_A,\phi_A,\psi_A )$, if it satisfies
\begin{equation}\label{rneq}
T(x)\dashv T(y)=T(l_A(T(x))y +r_A(T(y))x),\qquad T(x)\vdash T(y)=T(\phi_A(T(x))y +\psi_A(T(y))x),\;\;x, y\in V.
\end{equation}
In particular, if $V=A$ is the regular representation of $(A,\dashv,\vdash) $, then $T$ is called a {\bf Rota-Baxter operator} on $(A,\dashv,\vdash) $.
\end{defi}
\begin{pro}\label{pprbeq}
Let $(A,\dashv,\vdash) $ be a Novikov dialgebra, $(V,l_A,r_A,\phi_A,\psi_A )$ be a representation of $(A,\dashv,\vdash)$, $(\bar{A}={\bf k}[\partial] A,[\cdot_\lambda\cdot ])$ be the Leibniz conformal algebra corresponding to $(A,\dashv,\vdash)$ and $(M={\bf k}[\partial]V,l_{r_A,\phi_A},r_{l_A,\psi_A})$ be  a representation of $\bar{A}$. Let $T$ be a {\bf k}$[\partial]$-module homomorphism from $ M$ to $\bar{A} $,  which satisfies $T(V) \subset A$.
 Then $T$ is an $\mathcal{O}$-operator on $(A,\dashv,\vdash)$ associated to   $(V,l_A,r_A,\phi_A,\psi_A )$ if and only if $ T$ is an $\mathcal{O}$-operator on $\bar{A}$ associated to  $(M,l_{r_A,\phi_A},r_{l_A,\psi_A})$.
\end{pro}
\begin{proof}
For all $x,y\in V$, we have
\begin{align*}
[T(x)_\lambda T(y)]=\partial(T(y)\dashv T(x) )+\lambda( T(x)\vdash T(y) +T(y)\dashv T(x) ),
\end{align*}
and
\begin{align*}
&T( l_{r_A,\phi_A}(T(x))_\lambda y+r_{l_A,\psi_A}( T(y))_{-\lambda-\partial}x)\\
=&T(\partial( l_A(T(y))x +r_A(T(x) )y)+\lambda( (\phi_A+r_A)(T (x))y+(\psi_A+l_A)(T(y) )x    )   ),
\end{align*}
which means that \eqref{oeq} holds if and only if \eqref{rneq} holds. Thus this conclusion holds.
\end{proof}
\begin{pro}
    Let $(A,\dashv,\vdash) $ be a Novikov dialgebra. 
    Define the co-multiplications $\delta_r$ and $\Delta_r$ by \eqref{ndadr1} and \eqref{ndadr2}. Then $A^*$ is endowed with a Novikov dialgebra structure $(A^*,\dashv^*,\vdash^*)$, where $\dashv^*$ and $\vdash^*$ are defined by
    \begin{align*}
        &x\dashv^* y={L_\star^*}(T_{\tau r}x)y-R_\dashv^*(T_r y)x,\\
        &x\vdash^* y=(R_\vdash^*-R_\dashv^*)(T_{\tau r}y)x+(R_\dashv^*+L_\vdash^*)(T_r x)y,
    \end{align*}
    for all $x,y\in A^*$.
\end{pro}
\begin{proof}
    It can be obtained by some straightforward calculation.
\delete{
    For $a\in A$ and $x,y\in A^*$, we have
    \begin{align*}
        \langle x\dashv^* y,a\rangle=&\langle x\otimes y,\delta_r (a)\rangle
        =-\langle (\id\otimes {L_\star^*}(a)+L_\dashv^*(a)\otimes \id)x\otimes y,\tau r \rangle\\
        =&-\langle {L_\star^*}(a)y,\xi(\tau r)x \rangle-\langle L_\dashv^*(a)x,\xi(r)y \rangle\\
        =&\langle {L_\star^*}(\xi(\tau r)x)y- R_\dashv^*(\xi(r)y)x, a\rangle,
    \end{align*}
    which means that $ x\dashv^* y={L_\star^*}(\xi(\tau r)x)y-R_\dashv^*(\xi(r)y)x$.
    Similarly, we can get the other equation by some calculation.}
\end{proof}
\begin{pro}
    Let $(A,\dashv,\vdash) $ be a Novikov dialgebra and $r\in A\otimes A$.
    Then $r$ is a solution of the CDNYBE in $(A,\dashv,\vdash)$ if and only if the following equality holds:
    \begin{align}
        &T_{\tau r} x\vdash T_{\tau r}y=T_{\tau r}((R_\vdash^*-R_\dashv^* )(T_{\tau r}y)x+((L_\vdash^*+R_\dashv^*)(T_{r}x)y  )    ).\label{ndop1}
    \end{align}
    In particular, if $r$ is symmetric, then $T_r$ is an $\mathcal{O}$-operator on $(A,\dashv,\vdash)$ associated to $(A^*, L_\star^\ast, -R_\dashv^*, L_\vdash^*+R_\dashv^*, R_\vdash^*-R_\dashv^*)$.
\end{pro}
\delete{
For $x,y,z\in A^*$, we have
\begin{align*}
&\sum_{i,j} \langle x\otimes y\otimes z, a_i\vdash a_j\otimes b_i\otimes b_j \rangle
=\sum_{i,j}\langle x,\langle y,b_i\rangle a_i\vdash\langle z,b_j\rangle a_j \rangle=\langle x,\xi(\tau r) y\vdash \xi(\tau r)z\rangle.
\end{align*}
Similarly, we can get
\begin{align*}
&\sum_{i,j} \langle x\otimes y\otimes z, a_i\otimes(b_i\vdash a_j-b_i\dashv a_j) \otimes b_j \rangle
=\langle x,\xi(\tau r )( (r_A^*-\psi_A^*)( \xi (\tau r) z)y      )\rangle,\\
&\sum_{i,j} \langle x\otimes y\otimes z, a_i\otimes a_j\otimes(b_j\vdash b_i+b_i\dashv b_j) \rangle
=-\langle x,\xi(\tau r)((r^*+\phi^*)(\xi (r) y)z   ) \rangle.
\end{align*}
Thus \eqref{x1} holds. We also can get
\begin{align*}
&\sum_{i,j} \langle x\otimes y\otimes z, a_i\vdash a_j\otimes b_i\otimes b_j \rangle\\
=&-\langle z,\xi(r)(\phi_A^*(\xi (\tau r)y)x)\rangle,\\
&\sum_{i,j} \langle x\otimes y\otimes z, a_i\otimes(b_i\vdash a_j-b_i\dashv a_j) \otimes b_j \rangle \\
=&\langle z, \xi(r)( (l_A^*-\phi_A^*)(\xi (r)x)y  )  \rangle,\\
&\sum_{i,j} \langle x\otimes y\otimes z, a_i\otimes a_j\otimes(b_j\vdash b_i+b_i\dashv b_j) \rangle \\
=&\langle z, \rangle,
\end{align*}
 which means that \eqref{x2} holds.
}
\begin{proof}
We set $r=\sum_i a_i\otimes b_i\in A\otimes A$. For all $x,y,z\in A^*$, we have
\begin{align*}
    &\sum_{i,j} \langle  a_i\vdash a_j\otimes b_i\otimes b_j, x\otimes y\otimes z\rangle
    =\sum_{i,j}\langle a_i \vdash a_j\rangle \langle b_i,y\rangle \langle b_j,z\rangle\\
    =&\sum_{i,j}\langle  \langle b_i,y\rangle  a_i\vdash \langle b_j,z\rangle a_j,x \rangle
    =\langle T_{\tau r}y\vdash T_{\tau r}z,x \rangle,\\
&\sum_{i,j} \langle  a_i\otimes(b_i\vdash a_j-b_i\dashv a_j) \otimes b_j,x\otimes y\otimes z \rangle
=\sum_{i,j}\langle a_i,x\rangle \langle b_i\vdash T_{\tau r}z-b_i\dashv T_{\tau r}z ,y  \rangle\\
=&-\sum_{i,j} \langle a_i,x\rangle \langle b_i,(R^*_\vdash -R^*_\dashv)(T_{\tau r}z)y \rangle
=-\langle T_{\tau r} (R^*_\vdash -R^*_\dashv )(T_{\tau r} z)y ,x     \rangle,\\
&\sum_{i,j} \langle  a_i\otimes a_j\otimes(b_j\vdash b_i+b_i\dashv b_j) ,x\otimes y\otimes z  \rangle
=\langle a_i,x\rangle \langle (T_r y)\vdash b_i+b_i\dashv (T_r y),z\rangle\\
=&-\langle  T_{\tau r}(  L^*_\vdash +L^*_\dashv)(T_r y)z,x\rangle.
\end{align*}
Thus we have
\begin{equation*}
\langle N(r), x\otimes y\otimes z\rangle =\langle  T_{\tau r}y\vdash T_{\tau r}z-T_{\tau r} (R^*_\vdash -R^*_\dashv )(T_{\tau r} z)y  -T_{\tau r}(  L^*_\vdash +L^*_\dashv)(T_r y)z,x\rangle,
\end{equation*}
which means that $r$ is a solution of the CDNYBE in $(A,\dashv,\vdash)$ if and only if \eqref{ndop1} holds.
Similarly, we get
\begin{equation*}
\langle N(r), x\otimes y\otimes z\rangle =\langle  -T_r(L_\vdash^*(T_{\tau r}y)x-(L_\vdash^*-L_\dashv^*)(T_r x)y +T_r y\vdash T_r x+T_r x\dashv T_r y ),z \rangle,
\end{equation*}
which means that $r$ is a solution of the CDNYBE in $(A,\dashv,\vdash)$ if and only if the following equality holds:
$$
T_r y\vdash T_r x+T_r x\dashv T_r y=T_r(L_\vdash^*(T_{\tau r}y)x -(L_\dashv^*-L_\vdash^*)(T_r x)y  ).
$$
Thus, if $r$ is symmetric, then $T_r$ is an $\mathcal{O}$-operator on $(A,\dashv,\vdash)$ associated to $(A^*, L_\star^\ast, -R_\dashv^*, L_\vdash^*+R_\dashv^*, R_\vdash^*-R_\dashv^*)$.

\end{proof}

By Theorem \ref{thamc}, Propositions \ref{relcdn} and \ref{pprbeq}, we obtain the following theorem.
\begin{thm}\label{lastth}
Let $(A,\dashv,\vdash) $ be a Novikov dialgebra and $(V,l_A,r_A,\phi_A,\psi_A )$ be its representation. Let $T:V\rightarrow A$ be a linear map which is identified with  
$r_T \in V^*\otimes A\subset (A\ltimes_{{l_\star^*}_A,-r_A^*,\phi_A^*+r_A^*,\psi_A^*-r_A^*} V^*)\otimes (A\ltimes_{{l_\star^*}_A,-r_A^*,\phi_A^*+r_A^*,\psi_A^*-r_A^*} V^*)$ through $Hom_{\bf k}(V,A)\cong A\otimes V^*$. Then $r=r_T+\tau r_T$ is a symmetric solution of the CDNYBE in the Novikov dialgebra $A\ltimes_{{l_\star^*}_A,-r_A^*,\phi_A^*+r_A^*,\psi_A^*-r_A^*} V^* $ if and only if $T$ is an $\mathcal{O}$-operator on $(A,\dashv,\vdash) $ associated to  $(V,l_A,r_A,\phi_A,\psi_A )$.
\end{thm}}
Finally, we present an example to construct Novikov bi-dialgebras by symmetric solutions of the CDNYBE, and hence Leibniz conformal bialgebras.

\delete{
\begin{ex}
Let $(A={\bf k}x\oplus {\bf k}y,\dashv,\vdash)$ be a Novikov dialgebra whose nonzero-products are given by
$$
x\dashv x=ay,\qquad x\vdash x=y,
$$
for some $a\in {\bf k}$.
Let $T:A\rightarrow A$ be a linear map defined by
$T(x)=y$ and $T(y)=y$.
It is obvious that $T$ is a Rota-Baxter operator on $(A,\dashv,\vdash) $. Let $\{x^* ,y^*\}$ be the basis of $A^*$ dual to $\{x, y\}$. Then we consider the semi-direct product of $A$ and its representation $(A^*,{L_\star^*},-R_\dashv^*,L_\vdash^*+R_\dashv^*,R_\vdash^*-R_\dashv^*)$, and denote this semi-direct product Novikov dialgebra by $(A\ltimes A^* ={\bf k}x\oplus {\bf k}y            \oplus{\bf k}x^*\oplus{\bf k}y^* ,\dashv,\vdash )$.  The nonzero-products of  $A\ltimes A^*$ are given by
\begin{align*}
x\dashv x=ay,\qquad x\vdash x=y,\qquad x\vdash y^*=-(a+1)x^*,\qquad y^*\dashv x=ax^*,\qquad y^*\vdash x=(a-1)x^*,
\end{align*}
where $1$ is the identity element in {\bf k}.
 By Theorem \ref{lastth}, $r=r_T+\tau r_T=(x^*+y^*)\otimes y+y\otimes (x^*+y^*)$ is a symmetric solution of the CDNYBE in $A\ltimes A^*$. Thus by Corollary \ref{condba}, there is a Novikov bi-dialgebra  $(A\ltimes A^*,\dashv,\vdash,\delta,\Delta   )$, where nonzero co-multiplications $\delta$ and $\Delta$ are given by
\begin{align*}
\delta(x)=-a y\otimes x^*,\qquad \Delta(x)=(a+1)x^*\otimes y-(a-1)y\otimes x^*.
\end{align*}
Thus we obtain a Leibniz conformal bialgebra $(\overline{A\ltimes A^*}={\bf k}[\partial] (A\ltimes A^*),[\cdot_\lambda\cdot],\alpha)$ where the nonzero $\lambda$-bracket $ [\cdot_\lambda\cdot]$ and co-multiplication $\alpha$ are given by
\begin{align*}
&[x_\lambda x]=(a\partial +(a+1)\lambda )y,\qquad [x_\lambda y^*]=(a\partial-\lambda )x^*,\qquad [y^*_\lambda x]=(a-1)\lambda x^*,\\
&\alpha(x)=(a+1)\partial x^*\otimes y-(a-1)\partial y\otimes x^*+a x^*\otimes \partial y.
\end{align*}
\end{ex}
}

\delete{
\begin{ex}
Let $(A={\bf k}x\oplus {\bf k}y,\cdot)$ be a  perm algebra whose nonzero-products are given by
$$
x\cdot x=x,\qquad x\cdot y=y,\qquad y\cdot x=ax,\qquad y\cdot y=ay,\qquad a\neq0\in{\bf k}.
$$
Let $D$ be a derivation on $(A,\cdot)$. One has $D(x)=-b(ax-y),~ D(y)=-c(ax-y)$ where $b,c\in {\bf k}$.
Then according to the construction presented in the proof of Corollary \ref{copermlcn}, it follows that $(A,\dashv,\vdash)$ is a Novikov dialgebra whose nonzero-products are given by
\begin{align*}
y\vdash x= a x\vdash x= -ab(ax-y),\qquad y\vdash y= ax\vdash y=-ac(ax-y).
\end{align*}
Let $T: A\rightarrow A$ be a linear map defined by $T(x)=\frac{d}{a}(ax-y),~T(y)=d(ax-y)$. It is not difficult to check that $T$ is a Rota--Baxter operator on $(A,\dashv,\vdash)$. Let $\{x^*,y^*\}$ be the basis of $A^*$ dual to $\{x,y\}$. Then one has a semi-direct product $(A\ltimes A^* ={\bf k}x\oplus {\bf k}y            \oplus{\bf k}x^*\oplus{\bf k}y^* ,\dashv,\vdash )$ of $(A,\dashv,\vdash)$ and its representation  $(A^*,{L_\star^*},-R_\dashv^*,L_\vdash^*+R_\dashv^*,R_\vdash^*-R_\dashv^*)$, where the nonzero-products are given by
\begin{align*}
    &y\vdash x= a x\vdash x= -ab(ax-y),\qquad\quad\,\,\, y\vdash y= ax\vdash y=-ac(ax-y),\\
    &x^*\vdash x=-ay^*\vdash x=ab(x^*+ay^*),\qquad x^*\vdash y=-ay^*\vdash y=ac(x^*+ay^*),\\
    &x\dashv x^*=-ax \dashv y^*=a(c-ab)y^*,\qquad \,\,\, y\dashv x^*=-ay\dashv y^*=a(ab-c)x^*,\\
    &y\vdash x= ax\vdash x^*=-a y\vdash y^*=-a^2x\vdash y^* =a^2(bx^*+cy^*).
\end{align*}
By Theorem \ref{lastth},
$$
r=r_T+\tau r_T=\frac{d}{a}\left( (ay^*+x^*)\otimes (ay^*+x^*)   +(ax-y)\otimes (ay^*+x^*)     \right)
$$
is a symmetric solution of the CDNYBE in $(A\ltimes A^*,\dashv,\vdash)$.
Thus by Corollary \ref{condba}, there is a Novikov bi-dialgebra  $(A\ltimes A^*,\dashv,\vdash,\delta,\Delta  )$, where nonzero co-multiplications $\delta$ and $\Delta$ are given by
\begin{align*}
 -\delta(y)=-a\delta(x)=\tau \Delta(y)=a\tau \Delta(x)=d(c-ab)  (ax-y) \otimes (ay^*+x^*)    .
\end{align*}
Thus one  obtains a Leibniz conformal bialgebra $(\overline{A\ltimes A^*}={\bf k}[\partial] (A\ltimes A^*),[\cdot_\lambda\cdot],\alpha)$ where the nonzero $\lambda$-bracket $ [\cdot_\lambda\cdot]$ and co-multiplication $\alpha$ are given by
\begin{align*}
&[y_\lambda x]=a[x_\lambda x]=-ab \lambda (ax-y), \qquad[y_\lambda y]=a[x_\lambda y]=-ac\lambda(ax-y),\\
&[x^*_\lambda x]=-a[y^*_\lambda x]=(ac-a^2b)\partial y^*+a\lambda (bx^*+cy^*), \\
&[x^*_\lambda y ]=-a[y^*_\lambda y]= (a^2b-ac)\partial x^*+a^2\lambda (bx^*+cy^*),\\
&[y_\lambda x^*]=a[x_\lambda x^*]=-a[y_\lambda y^*]=-a^2[x_\lambda y^*]=a^2\lambda (bx^*+cy^*)\\
&\alpha(y)=a\alpha(x)=d(c-ab) (\partial(ay^*+x^*)\otimes  (ax-y)+       (ay^*+x^*)\otimes \partial (ax-y)    ).
\end{align*}
\end{ex}}

\begin{ex}
Let $A={\bf k}x\oplus {\bf k}y$ be a vector space with a basis $\{x,y\}$ and  $\{x^*,y^*\}$ be the basis of $A^*$ dual to $\{x,y\}$. Then $(B=A\oplus A^\ast, \dashv,\vdash)$ is a Novikov dialgebra with the nonzero-products given by
\begin{align*}
    &y\vdash x= x\vdash x= -b(x-y),\,\,\, y\vdash y= x\vdash y=-c(x-y),\;\;x^*\vdash x=-y^*\vdash x=b(x^*+y^*),\\
    & x^*\vdash y=-y^*\vdash y=c(x^*+y^*),\;\;x\dashv x^*=-x \dashv y^*=(c-b)y^*,\,\,\, y\dashv x^*=-y\dashv y^*=(b-c)x^*,\\
    &y\vdash x= x\vdash x^*=-y\vdash y^*=-x\vdash y^* =bx^*+cy^*,
\end{align*}
where $b$, $c\in {\bf k}$. It is straightforward to check that $r= (y^*+x^*)\otimes (x-y)   +(x-y)\otimes (y^*+x^*)$
is a symmetric solution of the CDNYBE in $(B,\dashv,\vdash)$.
Thus by Corollary \ref{condba}, there is a Novikov bi-dialgebra  $(B,\dashv,\vdash,\delta,\Delta )$, where nonzero co-multiplications $\delta$ and $\Delta$ are given by
\begin{align*}
 -\delta(y)=-\delta(x)=\tau \Delta(y)=\tau \Delta(x)=(c-b)  (x-y) \otimes (y^*+x^*)    .
\end{align*}
Thus one  obtains a Leibniz conformal bialgebra $(\bar{B}={\bf k}[\partial]B,[\cdot_\lambda\cdot],\alpha)$ where the nonzero $\lambda$-bracket $ [\cdot_\lambda\cdot]$ and co-multiplication $\alpha$ are given by
\begin{align*}
&[y_\lambda x]=[x_\lambda x]=-b \lambda (x-y), \qquad[y_\lambda y]=[x_\lambda y]=-c\lambda(x-y),\\
&[x^*_\lambda x]=-[y^*_\lambda x]=(c-b)\partial y^*+\lambda (bx^*+cy^*), \;\;[x^*_\lambda y ]=-[y^*_\lambda y]= (b-c)\partial x^*+\lambda (bx^*+cy^*),\\
&[y_\lambda x^*]=[x_\lambda x^*]=-[y_\lambda y^*]=-[x_\lambda y^*]=\lambda (bx^*+cy^*),\\
&\alpha(y)=\alpha(x)=(c-b) (\partial(y^*+x^*)\otimes  (x-y)+       (y^*+x^*)\otimes \partial (x-y)    ).
\end{align*}
\end{ex}

\noindent {\bf Acknowledgments.} This research is supported by
NSFC (12171129, 12271265, 12261131498, W2412041),  the Zhejiang
Provincial Natural Science Foundation of China (No. Z25A010006)
 and the Fundamental Research Funds for the Central Universities and Nankai Zhide Foundation.

\smallskip

\noindent
{\bf Declaration of interests. } The authors have no conflicts of interest to disclose.

\smallskip

\noindent
{\bf Data availability. } No new data were created or analyzed in this study.


\end{document}